\documentclass[a4paper,12pt,american]{article}

\usepackage{lmodern}
\usepackage[charter]{mathdesign}
\usepackage[scaled=.8]{helvet}

\usepackage{SymLexSubsetRS_packages}

\usepackage[unicode,pdfusetitle]{hyperref}
\usepackage{hyperxmp}

\usepackage[
type={CC},
modifier={by},
version={4.0},
lang={american},
imagewidth=8em,
]{doclicense}

\hypersetup{
  keeppdfinfo,
  pdfauthor={Jörg Rambau (University of Bayreuth)},
  pdfsubject={A new method to enumerate symmetric subsets of a finite set up to symmetry based on the reverse-search paradigm},
  pdfkeywords={enumeration, reverse search, symmetry, circuits, cocircuits, triangulations},
}

\usepackage{authblk}

\newtheorem{theorem}{Theorem}
\newtheorem{lemma}{Lemma}
\newtheorem{corollary}{Corollary}
\newtheorem{observation}{Observation}
\theoremstyle{definition}
\newtheorem{definition}{Definition}
\newtheorem{remark}{Remark}
\newtheorem{example}{Example}

\title{Symmetric lexicographic symmetric-subset\\
  reverse search\\
  for the\\
  enumeration of circuits, cocircuits, and triangulations\\
  up to symmetry\\[2ex]
  \textbf{Preprint~2}
}

\date{\today\\\footnotesize\doclicenseThis}

\author{Jörg Rambau}
\affil{University of Bayreuth, Germany}

\begin{document}

\maketitle

\begin{abstract}
  This paper introduces, analyzes, and applies variants of the
  enumeration framework symmetric lexicographic symmetric-subset
  reverse search for the enumeration of symmetric feasible subsets of
  a finite set up to symmetry.  The framework is implemented in detail
  for three applications: cocircuits, circuits, and triangulations of
  point configurations.  There are two new methods presented and
  analyzed to check the lexicographic minimality of a subset in its
  orbit: the critical-element method and the modified switch-table
  method.  Moreover, new application-dependent methods to reduce the
  number of necessary enumeration nodes are introduced: rank-pruning
  for cocircuits and lex-pruning for triangulations.  With a
  C++-implementation of the ideas in the software package
  \texttt{TOPCOM}, in all three applications known benchmarks can be
  computed faster by a large margin.  The following new numbers could
  be computed for the first time (among others): the number of
  cocircuits of the 9-cube, the number of circuits of the 8-cube, and
  the number of all triangulations of the product of a 5- and a
  3-simplex, as well as the number of all triangulations of a point
  configuration in dimension six with 17~points with disconnected
  flip-graph (constructed by Santos).  Moreover, for Santos's
  triangulation it has computationally been checked that its
  flip-graph component is indeed purely non-regular. Furthermore, in
  another instance in dimension five with 26 points (also constructed
  by Santos), a flaw has been detected: Santos's triangulation can be
  heuristically flipped to a regular triangulation in the original
  point configuration.  In a mildly modified version of the point
  configuration, the heuristics cannot flip Santos's triangulation to
  a regular triangulation anymore.
\end{abstract}


\section*{Acknowledgement}

The author is very grateful to Michael Joswig and Lars Kastner who
provided the exact chirotope of the regular dodecahedron, computed by
\texttt{polymake}. Chirotope data makes possible the exact computation
of all triangulations of a point configurations even if there are no
rational coordinates for it.  Moreover, the author thanks Lukas Kühne
who brought the importance of central and centrally symmetric
triangulations of the centered full root polytope to the author's
attention.  The author also thanks Lisa Lamberti and Komei Fukuda for
communicating the (at that time open) question about the number of
circuits of the $6$-cube, which led to the simultaneous consideration
of triangulations and circuits, which was beneficial for both
applications.  A special thanks goes to Francisco Santos who
contributed to the resolution of the seemingly contradictory results
on his dimension-five 26-points configuration.

Some calculations were performed using the \texttt{festus-cluster} of
the Bayreuth Centre for High Performance Computing
(\url{https://www.bzhpc.uni-bayreuth.de}), funded by the Deutsche
Forschungsgemeinschaft (DFG, German Research Foundation) --
523317330. The author thanks for the ressources and, in particular,
René Meißner for his excellent user support.

\tableofcontents
\listoftables
\listofalgorithms

\section{Introduction}
\label{sec:introduction}

This paper systematically studies how to enumerate symmetric subsets
of a finite set up to symmetry based on a specialization of the
reverse-search paradigm to orbits of subsets.  In order to demonstrate
the benefits of this approach, three applications are presented: for a
point or vector configuration, enumerate up to symmetry all its
cocircuits, all its circuits, and all its triangulations,
respectively.  In all three applications the scales of problem
instances that can be handled are extended significantly.

Fast enumeration in general is important also in optimization: in
combination with a dual-bound scheme, it gives rise to a
branch-and-bound optimization algorithm.  In dynamic
column-generation, the pricing problem is sometimes attacked by
enumerative algorithms (see, e.g.,
\cite{KiesslingKurzRambau_exactcolumngeneration_2021}).  Whenever
symmetries are present, an enumeration up to symmetry is the
appropriate procedure.  This is particularly important for large
symmetry groups in branch-and-bound (like the $n!$ symmetries emerging
by re-indexing $n$ equivalent decision variables), since the $n!$
branches containing the equivalent optimal solutions can never be
pruned.

The reverse-search paradigm is appealing because it generates an
enumeration \emph{tree}.  An enumeration based on organizing the
objects to be enumerated in a tree has two main advantages: first, it
can be implemented in a memory efficient way by depth-first-search in
the enumeration tree; second, it can be parallelized easily in a
lock-free fashion.  For example, enumeration based on the
reverse-search paradigm can become faster after parallelization than
alternative algorithms that are faster without parallelization
\cite{AvisJordan_mtslightframework_2021}.

A general account for the complexity of a variety of enumeration
algorithms can be found in
\cite{MaryStrozecki_Efficientenumerationsolutions_2019}, where the
objects to be enumerated are considered as the results of certain
abstract closure operations.  Symmetries are not considered.

The top-level enumeration method in this paper with no prescribed
symmetries can be seen as a specialization of \emph{reverse search}
\cite{AvisFukuda_Reversesearchenumeration_1996} to subsets up to
symmetry.  Reverse search has been specialized to the enumeration of
orbits in
\cite{ImaiMasadaTakeuchiImai_Enumeratingtriangulationsgeneral_2002}
and parallelized in \cite{AvisJordan_mtslightframework_2021}.  The
specialization of reverse-search to feasible subsets of a finite set
is probably folklore.  A specialization exploiting the lexicographic
ordering of subsets to enumerate subset \emph{orbits} was first
formalized (in a different language) in
\cite{PechReichard_EnumeratingSetOrbits_2009} with applications for
very large symmetry groups, where not all group elements can be held
in memory.  Though examples show the potential of the resulting
algorithm, no theoretical analysis of the speed-up compared to naive
approaches was provided.  In this paper, variants of the same basic
idea are designed, analyzed, and exploited in three example
applications.  One extension is the enumeration of \emph{symmetric}
subsets up to symmetry, which, to the best of my knowledge, has not
been done before.

The first application in this paper is concerned with the enumeration
of \emph{signed cocircuits} of a point or vector configuration.
Cocircuits in, e.g., hypercubes are in connection to the various ways
binary data can be weakly linearly separated and therefore related to
topics in data science.  In
\cite{AichholzerAurenhammer_ClassifyingHyperplanesHypercubes_1996} all
hyperplanes spanned by vertices of the $d$-dimensional hypercube
$\hypercube{d}$ were classified and enumerated up to $d = 8$ (this
took 12 days of cpu time on a computer that was fast according to the
standards of that time).  The authors estimated a cpu time of 35 years
for $d = 9$ for their method based on the classification of normal
vectors.  There seems to be no documented algorithm or code
enumerating the hyperplanes in a general point or vector
configuration.  In this paper, all hyperplanes for, e.g., the $9$-cube
are enumerated up to symmetry for the first time.

The second application in this paper is concerned with the enumeration
of \emph{signed circuits} of a point or vector configuration. This is
the dual problem to the first one (in the oriented-matroid sense, see
\cite{BjoernerLasVergnasSturmfelsWhiteZiegler_OrientedMatroids_1999}).
Circuits are in close connection with minimal infeasible subsystems
and sparse kernel elements, which are related to compressed sensing.
There is an incremental-polynomial-time algorithm for the enumeration
of (unsigned) circuits of a matroid in
\cite{KhachiyanBorosElbassioniGurvichMakino_ComplexitySomeEnumeration_2005}
based on the exchange axiom for circuits, which improves on older
algorithms like the one in \cite{Minieka_Findingcircuitsmatroid_1976}
based on using bases for constructing circuits.  However, it is not
clear how symmetries could be exploited in this algorithm.  Moreover,
the nature of the exchange axiom shows that the worst-case time to add
one more circuit is at least quadratic in the number of already
computed circuits, which seems prohibitive at least for the larger
examples in this paper.  In this paper, all circuits for, e.g., the
$8$-cube are enumerated up to symmetry for the first time.  Moreover,
the listing of all circuits of the \emph{matroid base polytope for the
  Fano matroid} obtained by the methods in this paper were a crucial
building block in
\cite{BackmanCheungLasonLiuMichalek_GrobnerVersionWhites_2026} in
answering in the negative the (over 20 years-old) \emph{Herzog-Hibi
  question} \cite[p.~242
Question~a)]{HerzogHibi_DiscretePolymatroids_2002} about the existence
of a regular unimodular flag triangulation.  Curiously enough, this
result was achieved independently a few days earlier in
\cite{LoeraFerroniMoralesRambau_Therearematroid_2026} by related
methods, albeit without using circuits.

The third application in this paper is concerned with the enumeration
of \emph{triangulations} of a point or vector configuration.  See
\cite{DeLoeraRambauSantos_TriangulationsStructuresApplications_2010}
for background on triangulations and why their enumeration is
interesting.  Recently, a parallel enumeration up to symmetry of all
\emph{subregular} triangulations was suggested in
\cite{JordanJoswigKastner_Parallelenumerationtriangulations_2018}.  A
triangulation is \emph{subregular}, if it can be flipped to a
regular-triangulation by \emph{upflips}, i.e., flips that
lexicographically increase the GKZ-vector.  Hence, whether or not a
triangulation is subregular usually depends on the order of points in
the input. The presented computational results were achieved based on
the freely available package \texttt{mptopcom}.  It enumerates all
subregular triangulations up to symmetry. It significantly extends
the scales of instances that can be handled compared to the earlier
\texttt{TOPCOM} \cite{Rambau_TOPCOMTriangulationsPoint_2002}, which
itself is a large step forward from de Loera's pioneering
\texttt{maple}-code \texttt{PUNTOS} from his thesis
\cite{Loera_TriangulationsPolytopesComputational_1995}.  The code
\texttt{mptopcom} has later been specialized for cyclic polytopes to
generate some new numbers \cite{JoswigKastner_NewCountsNumber_2018}
extending the computational results in
\cite{Rambau_TOPCOMTriangulationsPoint_2002,RambauReiner_surveyhigherStasheff_2012}.
This was possible although for \texttt{mptopcom} symmetries must
conserve simplex volumes, so that the standard cyclic polytopes do not
have valid symmetries in that stronger sense.

All mentioned enumeration algorithms for triangulations are
\emph{flip-based}, i.e., they explore the flip graph of
triangulations, where two triangulations share an edge if they are
connected by a \emph{bistellar flip}.  This is a generalization of
swapping diagonals in a convex quadrilateral in dimension two (see
\cite{DeLoeraRambauSantos_TriangulationsStructuresApplications_2010}
for a definition in all dimensions).  Since Santos's triangulation
without flips in \cite{Santos_pointsetwhose_2000} it has been known
that flip-based algorithms may not find all triangulations in general.

\texttt{TOPCOM} \cite{Rambau_TOPCOMTriangulationsPoint_2002} was the
first published software to enumerate all triangulations by building
them simplex-by-simplex, which will be called an
\emph{extension-based} algorithm.  However, the instances that could
be handled were only toy-size examples.  In
\cite{ImaiMasadaTakeuchiImai_Enumeratingtriangulationsgeneral_2002}
the extension-based enumeration of triangulations was reduced to the
enumeration of maximal cliques in the \emph{proper-intersection graph}
of all simplices.  However, no computational results were given. And
the results in this paper show evidence for the fact that a pure
max-clique enumeration cannot be efficient: there are too many maximal
cliques that correspond to a \emph{maximal incomplete triangulation}.
This is a set of simplices with proper intersections that is not a
triangulation and that cannot be extended (for exact definitions see
Section~\ref{sec:application-triangulations}).

The enumeration of all triangulations corresponds to an enumeration of
all vertices of the \emph{universal polytope}
\cite{LoeraHostenSantosSturmfels_polytopealltriangulations_1996};
since no complete outer description of this polytope is available, the
vertex enumeration problem for it is not straight-forward.  In this
paper, all triangulations of, e.g., the regular dodecahedron or the
product of a 5- and a 3-simplex are enumerated up to symmetry for the
first time.  Moreover, for the first time, all triangulations of a
point configuration constructed by Santos in dimension six with 17
points and disconnected flip-graph are computed up to symmetry.

Besides the fact that extension-based algorithms reach all
triangulations, there is one other motivation for them: If the search
shall be restricted to triangulations using only \emph{special
  simplices} (like unimodular simplices or simplices not containing
any points of the configuration other than their vertices), then the
flip-based algorithms have to explore the whole flip-graph anyway and
filter ex-post by the wanted triangulations (since the subgraph of all
wanted triangulations may be disconnected even for easy examples),
whereas an extension-based algorithm can exclude the unwanted
simplices right from the beginning.  Moreover, the new method to
enumerate subsets with \emph{prescribed symmetry} is applied to
triangulations with results that have never been achieved before.  In
this paper, e.g., all central and centrally symmetric triangulations
of the full root polytope in ambient 6-space are enumerated for the
first time, while the enumeration of all its triangulations seems
currently out of reach by far.  For Santos's point configuration in
dimension six with 17 points it is shown computationally that Santos's
triangulation is essentially the unique triangulation (up to symmetry)
having the same symmetries.

The overall contributions of this paper are both incremental and
original.  The top-level method used in this paper consists of a
generic enumeration framework for the enumeration of all orbits of a
\emph{downset}, i.e., a set of subsets closed under taking subsets.
This framework is called \emph{Symmetric Lexicographic Subset Reverse
  Search} (\symLSRS) in this paper.  Three variants of \symLSRS are
studied: enumerate orbits of \emph{maximal} elements \emph{in} a
downset, enumerate orbits of \emph{minimal} elements \emph{not in} a
downset, and enumerate orbits of \emph{feasible} antichains \emph{in}
a downset.  The generic algorithm \symLSRS and the variant for
feasible subsets without prescribed symmetries in this paper -- though
developed independently -- are essentially identical to the proposed
algorithm in~\cite{PechReichard_EnumeratingSetOrbits_2009} for the
enumeration of set orbits.  The methods in this paper can be seen as
variants, refinements, and new specializations of it and its
subroutines.  Without these incremental and original achievements, the
new results would not have been possible. All applications have been
implemented in the \texttt{TOPCOM} package, which is freely available
under the Gnu Public Licence on the author's webpage.

The first original contribution of this paper is an extension of
\symLSRS to the enumeration of feasible subsets with prescribed
symmetry: \emph{symmetric lexicographic symmetric-subset reverse
  search.}  Further original constribution are: for the general
framework, two alternative checks of lexicographic minimality of a
subset in its orbit (called the \emph{lex-min check}) are proposed;
for the particular applications, new methods are presented for
recognizing that a subset cannot be extended to a feasible subset by
adding larger elements (called the \emph{lex-ext check}).

Concerning the \emph{lex-min check}, the first new alternative is the
\emph{critical-element method}.  It is based on new theory presented
in Section~\ref{sec:theory}.  This alternative is mainly interesting
for cases in which the symmetry group is of moderate order, i.e., is
given as a list of all permutations in it, and of a degree in the same
order of magnitude, i.e., there are at least as many elements as there
are permutations.  The enumeration of all triangulations usually fits
into this scheme.

The second alternative is the \emph{modified switch-table method}. It
combines the ideas in \cite{PechReichard_EnumeratingSetOrbits_2009}
with the switch-table method
in~\cite{JordanJoswigKastner_Parallelenumerationtriangulations_2018}.
This alternative works best for symmetry groups whose order is large
compared to the degree.  The enumeration of circuits and cocircuits is
an appropriate use-case.

In order to assess the efficiency of each method for the lex-min check
a-priori, a \emph{hyper-amortized} analysis on uniform inputs is
presented.  Roughly speaking: for small order and large degree, the
critical-element method is faster. For large order and small degree,
the modified switch-table method is faster.  This analysis sheds light
on why, in computational experiments, for triangulations of hypercubes
the critical-element method is faster, whereas for (co-)circuits of
hypercubes the modified switch-table method wins.  This constitutes
the first ever theoretical analysis of algorithms based on
switch-tables.

Concerning the \emph{lex-ext checks} for the applications, the new
rank-based \emph{rank-pruning} accelerates the enumeration of
cocircuits up to symmetry.  This allowed the first ever enumeration of
all hyperplanes in the $9$-cube $\hypercube{9}$ up to symmetry in less
than 14 hours.  It is unclear how fast the code from
\cite{AichholzerAurenhammer_ClassifyingHyperplanesHypercubes_1996}
would run on today's computers.  However, \texttt{TOPCOM}'s
enumeration algorithm goes beyond the method in
\cite{AichholzerAurenhammer_ClassifyingHyperplanesHypercubes_1996}
anyway, since it works for general configurations and does not exploit
any theory about cubes.

For the enumeration of circuits no effective lex-ext check was found
so far.  Still, the algorithm could compute some new numbers, among
them the numbers of circuits up to symmetry of the hypercubes
$\hypercube{6}$, $\hypercube{7}$, and $\hypercube{8}$.  Note that in
order to enumerate circuits one can also enumerate cocircuits in the
\emph{Gale-transform} (see
\cite{DeLoeraRambauSantos_TriangulationsStructuresApplications_2010}
for more background on this).  Whether or not this is faster or slower
usually depends on the rank and the corank of the configuration.
Having specialized algorithms for both means that one can pick the
respective faster strategy.

For the enumeration of triangulations up to symmetry, the new lex-ext
check \emph{lex-pruning} is the single most important progress. It is
based on the property of any triangulation that each interior facet of
a simplex is covered by another simplex
\cite[Cor.~4.1.32]{DeLoeraRambauSantos_TriangulationsStructuresApplications_2010}.
From this one can derive the rather tight lex-ext check
\emph{full-pruning}.  A weaker variant is \emph{strong pruning}, which
can be implemented more easily.  The new lex-ext check lex-pruning
heavily exploits the lexicographic ordering of all simplices and their
interior facets in all data structures involved.  While strong-pruning
has to perform many subset operations, lex-pruning only compares two
certain integers.  Still it is almost as effective as strong-pruning.
The numbers of all triangulations of, e.g., the dodecahedron and the
product of a 5- and a 3-simplex could be computed this way for the
first time in a couple of days.  These are numbers that are several
orders of magnitudes larger than the largest numbers of triangulations
computed so far.

For the enumeration of triangulations up to symmetry with a prescribed
automorphism group, the algorithm symmetric lexicographic
symmetric-subset reverse search for the first time allows to restrict
the search space for triangulations directly to the symmetric
triangulations, thereby avoiding the need to implicitly enumerate all
triangulations during the process.  This contribution opens the door
for the application of a variant of the \emph{Kramer-Mesner method}
\cite{KramerMesner_tdesignshypergraphs_1976} from design theory
(restrict the search to symmetric designs) to otherwise intractable
search problems in the set of all triangulations.  This way, all
central and centrally symmetric triangulations of the full root
polytope in ambient 6-space have been found in about an hour.
Moreover, for the first time the number of all triangulations of a
six-dimensional point configuration with $17$ points by Santos
\cite{Santos_Geometricbistellarflips_2006} with disconnected
flip-graph could be computed with high-performance computing.
Furthermore, it could be shown computationally that the triangulation
constructed by Santos is the unique (up to symmetry) triangulation
having the same symmetries.

Symmetric lexicographic symmetric-subset reverse search can be used to
count, enumerate, or to list objects up to symmetry (details below).
The proposed specializations to the three applications are in most
cases provably not output-polynomial.  This is shown by concrete
pathological examples.  Still, for the computational examples in these
three applications no implementations have been published so far that
are nearly as fast as the implementations in TOPCOM
(see~\cite{Rambau_TOPCOMTriangulationsPoint_2002} for the foundations
of the version prior to this work) of the methods in this paper.

The paper is organized as follows.  Section~\ref{sec:preliminaries}
reviews some important notions and algorithms.  Moreover, the
notational conventions are fixed and a short general problem statement
is given.  In Section~\ref{sec:theory} the theoretical foundations
for the new lex-min checks are proven.  Section~\ref{sec:algorithm} is
devoted to an algorithmic analysis of the variants of the top-level
algorithm and the lex-min checks valid for arbitrary applications.
The discussion of three special applications starts in
Section~\ref{sec:applications-preliminaries} with some common
preliminaries on point and vector configurations.
Section~\ref{sec:applications-environment} describes the computational
environment used for the numerical experiments.  Then,
Sections~\ref{sec:application-cocircuits} through
\ref{sec:application-triangulations} present the new results that are
relevant for each application individually.  Finally,
Section~\ref{sec:conclusions} contains a summary and some conclusions.

\section{Preliminaries}
\label{sec:preliminaries}

In this section, some basic notions and notation are introduced.
Moreover, some known algorithms are formulated in terms of the
\emph{reverse-search} framework in order to make it easier to compare
the known with the novel.

Let $\indexSet{n}$ denote the set of integers $\{1, 2, \dots, n\}$
with the convention $\indexSet{0} = \emptyset$.  For
$k \in \mathbb{Z}$ the set of all $k$-element subsets of
$\indexSet{n}$ is written as~$\kSubsets{n}{k}$.  The power set of
$\indexSet{n}$ is denoted by~$\powerSet{\indexSet{n}}$.

The elements of~$\powerSet{\indexSet{n}}$ and, thus, also
of~$\kSubsets{n}{k}$ are totally ordered by the
\emph{subset-lexicographic order} given by
$\emptyset \lexsmaller \setStyle{S}$ for all
$\setStyle{S} \neq \emptyset$ and
$\setStyle{S} \lexsmaller \setStyle{R}$ if and only if either
$\min \setStyle{S} < \min \setStyle{R}$ or
$\min \setStyle{S} = \min \setStyle{R}$ and
$\setStyle{S} \setminus \{\min \setStyle{S}\} \lexsmaller \setStyle{R}
\setminus \{\min \setStyle{R}\}$.

For subsets $\setStyle{S}$ and $\setStyle{S}'$ of $\indexSet{n}$ the
\emph{lex-inclusion partial order} is defined by
$\setStyle{S} \lexsubset \setStyle{S}'$ if
$\setStyle{S} \subseteq \setStyle{S}'$ and $s < s'$ for all
$s \in \setStyle{S}$ and $s' \in \setStyle{S}'\setminus\setStyle{S}$.
In that case $\setStyle{S}$ is a \emph{lex-subset} of~$\setStyle{S}'$
or $\setStyle{S}'$ \emph{lex-contains}~$\setStyle{S}$.  Note that the
Hasse-diagram of this partial order is a tree rooted at the empty set.
For $0 \le k \le \abs{\setStyle{S}}$ let
$\lexksubset{\setStyle{S}}{k}$ be the unique $k$-element subset
lex-contained in~$\setStyle{S}$.

For an undirected graph $\GG = (\VV, \EE)$ and a node $\vv \in \VV$,
the \emph{neighborhood~$\adjSet(\vv)$ of~$\vv$ in~$\GG$} is the set of
nodes connected to~$\vv$ by an edge in~$\EE$.  Its cardinality
$\abs{\adjSet(\vv)}$ is the \emph{degree} $\degree(\vv)$ of
$\vv \in \VV$, and the maximum of all degrees over all nodes is
denoted by~$\maxDegree(\GG)$ (or $\maxDegree$ if $\GG$ is clear from
the context).

The symmetric group on $n$ elements is considered as the set of
bijections from $\indexSet{n}$ to itself and is denoted
by~$\symGroup{n}$.  Its elements are called \emph{permutations}, and
$n$ is called the \emph{degree} of the permutations.  Subgroups
of~$\symGroup{n}$ are usually denoted by~$\GrG$.  The \emph{order}
of~$\GrG$ is the number of permutations in~$\GrG$.  For a subgroup
$\GrG$ of~$\symGroup{n}$ and a finite set~$\setStyle{\Omega}$, a map
$\phi: \GrG \times \setStyle{\Omega} \to \Omega$ is a \emph{(left)
  group action} of $\GrG$ on $\setStyle{\Omega}$ if
$\phi(\grg \cdot \grh, \omega) = \phi\bigl(\grg, \phi(\grh,
\omega)\bigr)$ for all $\grg, \grh \in \GrG$ and all
$\omega \in \setStyle{\Omega}$.  Most of the times, certain clearly
induced group actions $\phi$ are denoted by
$\grg(\omega) := \phi(\grg, \omega)$ for $\grg \in \GrG$ and
$\omega \in \setStyle{\Omega}$, leading to
$(\grg\cdot\grh)(\omega) = \grg\bigl(\grh(\omega)\bigr)$.  Given such
a group action, the \emph{stabilizer subgroup} of $\omega \in \Omega$
in $\GrG$ is
$\stabGroup{\GrG}{\omega} := \{ \grg \in \GrG : \grg(\omega) = \omega
\}$.  For $\Omega'\subseteq \Omega$, the \emph{point-wise stabilizer
  of~$\Omega'$} is
$\stabGroup{\GrG}{\Omega'} := \{ \grg \in \GrG : \grg(\omega) = \omega
\text{ for all $\omega \in \Omega'$} \}$. This must be distinguished
from the \emph{set-wise stabilizer of~$\Omega'$}, denoted by
$\stabGroup{\GrG}{\{\Omega'\}}$, which is defined as
$\stabGroup{\GrG}{\{\Omega'\}} := \{ \grg \in \GrG : \grg(\omega) \in
\Omega' \text{ for all $\omega \in \Omega'$} \}$.  The
\emph{$\GrG$-orbit} of $\omega$ is
$\orbitSet{\GrG}{\omega} := \{ \grg(\omega) : \grg \in \GrG \}$.  The
set of all $\GrG$-orbits is
$\setOrbits{S}{G} := \{ \orbitSet{\GrG}{\omega} : \omega \in \Omega
\}$.

For a permutation $\grg \in \symGroup{n}$ and a graph $\GG$ with node
set $\VV = \indexSet{n}$ and edge set $\EE$ let
$\grg\bigl(\{\vv, \ww\}\bigr) = \bigl\{\grg(\vv), \grg(\ww)\bigr\}$
denote the induced action of $\grg$ on edges, for all edges
$\{\vv, \ww\} \in \EE$.  The definition
$\grg(\EE) := \{ \grg(\ee) : \ee \in \EE \}$ induces a new graph
$\grg(\GG) = \bigl(\grg(\indexSet{n}) = \indexSet{n}, \grg(\EE)\bigr)$
on the same node set.  This defines an action of $\symGroup{n}$ on the
set of all graphs on the node set~$\indexSet{n}$.  The automorphism
group $\autGroup(\GG)$ of a graph $\GG = (\indexSet{n}, \EE)$ is the
set of all $\grg \in \symGroup{n}$ with $\grg(\GG) = \GG$.  The
elements of~$\autGroup(\GG)$ are the \emph{symmetries of~$\GG$}.

Similarly, for $\grg \in \symGroup{n}$ the induced action of $\grg$ on
any $k$-element subset
$\setStyle{S} = \{s_1, \dots, s_k\} \in \kSubsets{n}{k}$ is denoted by
$\grg(\setStyle{S}) = \bigl\{\grg(s_1), \dots, \grg(s_k)\bigr\} \in
\kSubsets{n}{k}$.  For a subset
$\setsysStyle{S} = \{\setStyle{S}_1, \dots, \setStyle{S}_m\}$
of~$\powerSet{\indexSet{n}}$ the induced action of $\grg$
on~$\setsysStyle{S}$ is denoted by
$\grg(\setsysStyle{S}) = \bigl\{\grg(\setStyle{S}_1), \dots,
\grg(\setStyle{S}_m)\bigr\}$.  The automorphism group
$\autGroup(\setsysStyle{S})$ is the set of all $\grg \in \symGroup{n}$
with $\grg(\setsysStyle{S}) = \setsysStyle{S}$.

The analogous concept can be used for sets of subsets
of~$\powerSet{\indexSet{n}}$.  For example, in one application there
is the induced action of a permutation $\grg \in \symGroup{n}$ on a
triangulation $\setsysStyle{T}$ of a point configuration with $n$
labeled points given by the label sets of its maximal simplices. That
action is denoted by~$\grg(\setsysStyle{T})$ as well.

Here and in the following, the size of the actual output, generated in
the function $\output$ in all the algorithms below, is significant for
the assessment of whether or not an algorithm can be called
output-polynomial.  If the output is just printing a single symbol for
each found object, then the output is equivalent to a unary encoding
of the number of found objects, and an output-polynomial algorithm can
take time polynomial in the input size and the number of found
objects.  This problem setup is called the \emph{enumeration problem}
in this paper; if the output is skipped altogether, then the remaining
output is only the number of objects to be counted, and an
output-polynomial algorithm must be polynomial in the input size and
the logarithm of the number of found objects.  This problem setup is
called the \emph{counting problem} in this paper.  Finally, if for
each enumerated object the object has to be listed, then the ouput is
a list of encodings of all enumerated objects.  This problem setup is
called the \emph{listing problem} in this paper.  If the output size
of a single object is exponential in the input size, then an algorithm
that is polynomial in the input and output sizes for the listing
problem can be exponential in the input and output sizes for the
enumeration problem.  For example, the output of a single
triangulation can be exponential in the input size and even in the
number of triangulations (see Section~\ref{sec:triangs:analysis}
Theorem~\ref{thm:triang-alg-efficiency}~\ref{itm:triangs-effcross}).
A similar distinction will be necessary concerning the encoding of the
input. If the symmetries are given as an explicit, complete set of
permutations in tupel notation, then a polynomial-time algorithm can
take time polynomial in the order and the degree of the symmetry
group.  If the symmetries are given in terms of generators in
cycle-notation, then a polynomial-time algorithm needs to be
polynomial in the actual input-size of the generators, which can be
substantially more restrictive.

In the remainder of this section, the general-purpose algorithm
\emph{Symmetric Lexicographic Subset Reverse Search} (\symLSRS) is
developed in the language of the reverse-search paradigm
from~\cite{AvisFukuda_Reversesearchenumeration_1996}.  Starting at the
basic Reverse Search, the extensions to orbits and the specializations
to subsets are introduced one-by-one.  The algorithms in this section
are not new, but a summary of all variants in a unified notational
environment is helpful to understand the extensions.

As a simplification, a recursive form is used for presentation, which
usually increases the memory consumption of the algorithms.  However,
the more memory-efficient original Reverse-Search framework (with
non-recursive backtracking by pivoting) can be applied to all
presented algorithms.  In this paper, recursive implementations have
been used throughout, since they were faster in the presented
applications.

First, the original Reverse Search is formulated
\cite{AvisFukuda_Reversesearchenumeration_1996}.  Reverse Search (\RS)
is an algorithm to enumerate the nodes of a graph $\GG = (\VV, \EE)$
with known maximal degree~$\maxDegree$. The graph is implicitly given
by an adjacent-nodes function
$\adjNode: \VV \times \indexSet{\maxDegree} \to \VV \cup \{\nil\}$
that returns for each node $\vv \in \VV$ and each integer
$i \in \indexSet{\maxDegree}$ the $i$th neighbor of~$\vv$ if
$i \le \degree(\vv)$ and $\nil$ if $\degree(\vv) < i \le \maxDegree$.
The enumeration is structured by a non-degenerate cost function
$\nodeCost: \VV \to \mathbb{R}$ and a pivot-function
$\impNode: \VV \to \adjSet(\VV) \cup \{\nil\}$, which is a rule to
specify a $\nodeCost$-improving neighbor in~$\GG$ in case there is
one.  A recursive representation of Reverse Search in pseudo-code is
shown in Algorithm~\ref{alg:RS}.

The run-time complexity of \RS is
$O(\maxDegree \timeComplexity(\adjNode) \abs{\VV} +
\timeComplexity(\impNode) \abs{\EE})$, where $\timeComplexity(f)$
denotes the maximal time to compute a function value of a
function~$f$.  Since $2\abs{\EE} \le \maxDegree\abs{\VV}$, this
run-time complexity is
in~$O(\maxDegree(\timeComplexity(\adjNode) +
\timeComplexity(\impNode)) \abs{\VV})$.  This is particularly
interesting when $\timeComplexity(\adjNode)$ and
$\timeComplexity(\impNode)$ do not depend on~$\abs{\VV}$.  In that
case, \RS is linear in the output size for the enumeration
problem~\cite{AvisFukuda_Reversesearchenumeration_1996}.

\begin{algorithm}[htbp]
  \TitleOfAlgo{\RS{\adjNode, \nodeCost, \impNode, \vv}}

  \KwIn{a connected graph $\GG = (\VV, \EE)$ with maximal
    degree~$\maxDegree$, implicitly given by an adjacent-nodes
    function
    $\adjNode: \VV \times \indexSet{\maxDegree} \to \VV \cup
    \{\nil\}$, a cost function $\nodeCost: \VV \to \mathbb{R}$ with
    $\nodeCost(\vv) \neq \nodeCost(\ww)$ for all $\vv \neq \ww$
    in~$\VV$ and
    $\nodeCost(\vopt) = \min_{\vv \in \VV} \nodeCost(\vv)$, a pivot
    function $\impNode: \VV \to \adjSet(\VV) \cup \{\nil\}$ with
    $\nodeCost\bigl(\impNode(\vv)\bigr) < \nodeCost(\vv)$ for all
    $\vv \in \VV \setminus \{\vopt\}$ and $\impNode(\vopt) = \nil$, a
    seed node $\vv$ in~$\VV$}

  \KwOut{the number of nodes
    in~$\VV_{\ge \vv} := \{\ww \in \VV : \nodeCost(\ww) \ge
    \nodeCost(\vv)\}$}

  \tcc{build a depth-first-search tree with root node~$\vv$:} \output
  $\vv$ and $c \gets 1$ \tcc*{count $\vv$} \For(\tcc*[f]{iterate over
    neighbors of~$\vv$}){ $j = 1, \dots, \maxDegree$ }{
    $\ww \gets \adjNode(\vv, j)$ \tcc*{get $j$th neighbor}
    \If(\tcc*[f]{if past the last neighbor}){ $\ww = \nil$ }{ break
      \tcc*{exit the loop} } \If(\tcc*[f]{if $\ww$ pivots to~$\vv$}){
      $\vv = \impNode(\ww)$ }{
      $c \gets c + \RS{\adjNode, \nodeCost, \impNode, \ww}$
      \tcc*{recurse} } } \Return c\;
  
  \caption[Reverse Search]{The generic reverse search algorithm
    (cf.~\cite{AvisFukuda_Reversesearchenumeration_1996})
  }
  \label{alg:RS}
\end{algorithm}

\begin{algorithm}[htbp]
  \TitleOfAlgo{\symRS{\adjNode, \nodeCost, \impNode, \GrG, \vcan}}

  \KwIn{a connected graph $\GG = (\VV, \EE)$ with maximal
    degree~$\maxDegree$, implicitly given by an adjacent-nodes
    function
    $\adjNode: \VV \times \indexSet{\maxDegree} \to \VV \cup
    \{\nil\}$, a cost function $\nodeCost: \VV \to \mathbb{R}$ with
    $\nodeCost(\vv) \neq \nodeCost(\ww)$ for all $\vv \neq \ww$
    in~$\VV$ and
    $\nodeCost(\vopt) = \min_{\vv \in \VV} \nodeCost(\vv)$ , a
    pivot-function $\impNode: \VV \to \adjSet(\VV) \cup \{\nil\}$ with
    $\nodeCost\bigl(\impNode(\vv)\bigr) < \nodeCost(\vv)$ for all
    $\vv \in \VV \setminus \{\vopt\}$ and $\impNode(\vopt) = \nil$, a
    subgroup $\GrG$ of $\autGroup(\GG)$ and a canonical-representative
    function $\Can{\vv} =: \vcan$ with
    $\nodeCost(\vcan) < \nodeCost(\ww)$ for all
    $\ww \in \orbitSet{\GrG}{\vv}$, a seed node $\vcan$ in~$\VV$ that
    is the canonical representative of $\orbitSet{\GrG}{\vcan}$}
  
  \KwOut{the number of canonical $\wcan$ with
    $\nodeCost(\wcan) \ge \nodeCost(\vcan)$}
  
  \tcc{build a depth-first-search tree with root node~$\vcan$:}
  \output $\vcan$ and $c \gets 1$ \tcc*{count $\vcan$}
  $\Wcan \gets \{\vcan\}$ \tcc*{collects already processed canonicals}
  \For(\tcc*[f]{iterate over neighbors of~$\vcan$}){
    $j = 1, \dots, \maxDegree$ }{ $\ww \gets \adjNode(\vcan, j)$
    \tcc*{get $j$th neighbor} \If(\tcc*[f]{if past the last
      neighbor}){ $\ww = \nil$ }{ break \tcc*{exit the loop} }
    $\wcan \gets \Can{\ww}$ \tcc*{compute canonical representative}
    \If(\tcc*[f]{if $\wcan$ is new}){ $\wcan \notin \Wcan$ }{
      $\uu \gets \impNode(\wcan)$ \tcc*{compute pivot}
      $\ucan \gets \Can{\uu}$ \tcc*{compute canonical representative}
      \If(\tcc*[f]{if $\orbitSet{\GrG}{\ww}$ pivots to
        $\orbitSet{\GrG}{\vcan}$}){ $\ucan = \vcan$ }{
        $\Wcan \gets \Wcan \cup \{\wcan\}$ \tcc*{add $\wcan$ to set of
          processed canonicals}
        $c \gets c + \RS{\adjNode, \nodeCost, \impNode, \GrG, \wcan}$
        \tcc*{recurse} } } } \Return c\;
  
  \caption[Reverse Search for Orbits]{The application of
    reverse-search to the graph of all orbits in~$\setOrbits{\VV}{\GrG}$
  }
  \label{alg:symRS}
\end{algorithm}

If one has a group $\GrG$ acting on $\GG$ one is usually only
interested in $\abs{\VV}$ up to symmetry.  In other words, the number
of $\GrG$-orbits $\abs{\setOrbits{\VV}{\GrG}}$ of~$\VV$ shall be
determined.  How the reverse search principle can be adapted to this
setting, was first presented in
\cite{ImaiMasadaTakeuchiImai_Enumeratingtriangulationsgeneral_2002}.
The idea is to extend the adjacent-nodes function $\adjNode$ to
$\GrG$-orbits in the obvious way and to modify the pivot-function to
consist of two steps: In the first step, the $\nodeCost$-minimal
element, the \emph{canonical element}, in the current orbit is taken;
in the second step, the $\nodeCost$-reducing pivot-function on $\GG$
is followed.  Thus, another orbit is reached in case the minimal
element of the current orbit was no global minimum already.  The
downside is that all canonical elements in the neighborhood of a node
must be stored intermediately to avoid duplicate counting of orbits.
This undermines the memory efficiency of the reverse search principle.
Algorithm~\ref{alg:symRS} shows a detailed pseudo-code representation.

\begin{algorithm}[htbp]
  \TitleOfAlgo{\SRS{n, \downsetStyle{D}, \setStyle{S}}}

  \KwIn{$n \in \mathbb{N}$, a downset $\downsetStyle{D}$ of
    $\powerSet{\indexSet{n}}$, a seed set
    $\setStyle{S} \in \downsetStyle{D}$}

  \KwOut{the number of elements $\setStyle{S}'$ in~$\downsetStyle{D}$
    with $\setStyle{S} \lexsubset \setStyle{S}'$}
  
  \tcc{build a depth-first-search tree with root node $\setStyle{S}$:}
  \output $\setStyle{S}$ and $c \gets 1$ \tcc*{count~$\setStyle{S}$}
  \For(\tcc*[f]{for extensions with new max}){
    $j \in \indexSet{n} \setminus \setStyle{S}$ with
    $j > \maxElem{\setStyle{S}}$ }{
    $\setStyle{S}' \gets \setStyle{S} \cup \{j\}$ \tcc*{build new set}
    \If(\tcc*[f]{if new set belongs to $\downsetStyle{D}$}){
      $\isInD{\setStyle{S'}, \downsetStyle{D}}$}{
      $c \gets c + \LSRS{n, \downsetStyle{D}, \setStyle{S'}}$
      \tcc*{recurse} } } \Return c\;
  
  \caption[Reverse Search for Subsets]{The standard application of
    reverse-search to the Hasse-diagram of a downset
    $\downsetStyle{D}$ of subsets of an $n$-element set that
    exploits the straight-forward set-valued inverse of the pivot
    function
  }
  \label{alg:SRS}
\end{algorithm}

\begin{algorithm}[htbp]
  \TitleOfAlgo{\LSRS{n, \downsetStyle{D}, \setStyle{S}}}

  \KwIn{$n \in \mathbb{N}$, a downset $\downsetStyle{D}$ of
    $\powerSet{\indexSet{n}}$, a seed set
    $\setStyle{S} \in \downsetStyle{D}$}

  \KwOut{the number of elements $\setStyle{S}'$ in~$\downsetStyle{D}$
    with $\setStyle{S} \lexsubset \setStyle{S}'$}
  
  \tcc{build a depth-first-search tree with root node $\setStyle{S}$:}
  \output $\setStyle{S}$ and $c \gets 1$ \tcc*{count~$\setStyle{S}$}
  \For(\tcc*[f]{ordered traversal of new elements}){
    $j = \maxElem{\setStyle{S}} + 1, \dots, n$ }{
    $\setStyle{S}' \gets \setStyle{S} \cup \{ j \}$ \tcc*{build new
      set} \If(\tcc*[f]{if new set belongs to $\downsetStyle{D}$}){
      $\isInD{\setStyle{S'}, \downsetStyle{D}}$}{
      $c \gets c + \LSRS{n, \downsetStyle{D}, \setStyle{S'}}$
      \tcc*{recurse} } } \Return c\;
  
  \caption[Lexicographic Reverse Search for Subsets]{A specialized
    implementation of $\SRS{n, \downsetStyle{D}, \setStyle{S}}$
    traversing the pivot-inverse according to the lexicographic order of
    subsets of~$\indexSet{n}$
  }
  \label{alg:LSRS}
\end{algorithm}

A specialized form of Reverse Search arises when a set of ``feasible''
subsets of a finite set shall be enumerated by adding elements,
checking feasibility one-by-one, and back-tracking.  This
automatically will touch many subsets of feasible sets.  Thus, one can
restrict to the enumerations of downsets, i.e., sets of subsets, where
each subset of a feasible set is feasible itself.
Algorithm~\ref{alg:SRS} shows the specialization of Reverse Search,
which is a folklore observation.\footnote{Strictly speaking, for the
  algorithms in this paper it is enough for~$\downsetStyle{D}$ to be a
  \emph{left-downset}, i.e., closed under taking lex-subsets.  General
  downsets are taken in order to keep the problem statements
  independent of the algorithms and additional structures used for
  their solutions.}  A notable further specialization can be seen in
Algorithm~\ref{alg:LSRS}: by using the natural order for extending
subsets by a new element one can guarantee that the subsets are found
in lexicographic order.

\begin{algorithm}[htbp]
  \TitleOfAlgo{\symLSRS{n, \downsetStyle{D}, \GrG, \setStyle{S}}}

  \KwIn{$n \in \mathbb{N}$, a downset $\downsetStyle{D}$ of
    $\powerSet{\indexSet{n}}$, a subgroup $\GrG$ of the automorphism
    group of $\downsetStyle{D}$, a seed set
    $\setStyle{S} \in \downsetStyle{D}$ with
    $\setStyle{S} = \lexmin\orbitSet{\GrG}{\setStyle{S}}$}

  \KwOut{the number of canonical $\setStyle{S}'$ in~$\downsetStyle{D}$
    with $\setStyle{S} \lexsubset \setStyle{S}'$}
  
  \tcc{build a depth-first-search tree with root node $\setStyle{S}$:}
  \output $\setStyle{S}$ and $c \gets 1$ \tcc*{count~$\setStyle{S}$}
  \For(\tcc*[f]{ordered traversal of new maximal elements}){
    $i = \maxElem{\setStyle{S}} + 1, \dots, n$ }{
    $\setStyle{S}' \gets \setStyle{S} \cup \{ i \}$ \tcc*{add a new
      element on the right}

    $\mathrm{answer} \gets \isLexMin{\setStyle{S'}, \GrG}$
    \tcc*{lex-min check} \If(\tcc*[f]{if new set is not lex-min in its
      orbit}){ $\mathrm{answer} = \false$}{ continue \tcc*{next loop
        element} }
    $\mathrm{answer} \gets \isInD{\setStyle{S'}, \downsetStyle{D}}$
    \tcc*{membership check} \If(\tcc*[f]{if new set is not in
      $\downsetStyle{D}$}){ $\mathrm{answer} = \false$}{ continue
      \tcc*{next loop element} } $c \gets c + \symLSRS{$n,
      \downsetStyle{D}, \GrG, \setStyle{S}'$}$ \tcc*{recurse} }
  \Return c\;
  
  \caption[Symmetric Lexicographic Reverse Search for Orbits of
  Subsets]{A specialized implementation of the combination of $\LSRS$
    and $\symRS$ for orbits of subsets in a downset $\downsetStyle{D}$
    of $\powerSet{\indexSet{n}}$
    (cf.~\cite{PechReichard_EnumeratingSetOrbits_2009})
  }
  \label{alg:symLSRS}
\end{algorithm}

If downsets shall be enumerated up to symmetry, the lexicographic
order on subsets simplifies affairs substantially. If the canonical
representative is defined to be the lex-min element in each orbit,
then there is no need for computing and comparing canonical
representatives for all found elements anymore.  The only thing needed
is to check whether or not the found element can be lex-decreased
\emph{at all} by the action of an element in~$\GrG$. Since
Lemma~\ref{thm:lexmin-lemma} guarantees that a canonical subset can
never lex-contain a non-canonical subset, the enumeration can simply
ignore any non-canonical element and backtrack.  Therefore, in the
following, a subset $\setStyle{S}$ is called \emph{canonical} if it is
lex-min in its orbit. The resulting algorithm
(Algorithm~\ref{alg:symLSRS}) is essentially identical to the
algorithm proposed in~\cite{PechReichard_EnumeratingSetOrbits_2009}.

With this, the processing of a node in the enumeration of subsets up
to symmetry is reduced to checking whether a subset is lex-min in its
orbit and to checking, whether a subset is feasible.  Thus, the
collection of canonicals in the neighborhood of a subset need not be
stored.

In \cite{PechReichard_EnumeratingSetOrbits_2009}, a
\emph{generator-based recursive method} is proposed to check whether a
subset is lex-min in its orbit.  The applications in that paper
indicate that the methods aim at groups of very large order.  For
groups of medium-sized order (like in the millions) the, also
generator-based, \emph{switch-table method} (originally for vectors,
which is more general) is presented
in~\cite{JordanJoswigKastner_Parallelenumerationtriangulations_2018}.
A new combination of the methods in
\cite{PechReichard_EnumeratingSetOrbits_2009}
and~\cite{JordanJoswigKastner_Parallelenumerationtriangulations_2018}
(for subsets) turned out to be the fastest method for the enumeration
of cocircuits and circuits in the test cases of this paper.  For even
smaller group orders (like in the thousands) the overhead of all the
generator-based methods may not pay off.  Hence, a new taylor-made
\emph{element-based} method is developed in Section~\ref{sec:theory}
exactly for those cases.  This turned out to be the fastest method for
the enumeration of triangulations.

Switch tables
\cite{JordanJoswigKastner_Parallelenumerationtriangulations_2018} are
non-standard and are therefore introduced in the following.  Given a
subgroup $\GrG$ of $\symGroup{n}$, a \emph{switch table} is a function
$\switchPerm{\cdot}{\cdot}: \indexSet{n} \times \indexSet{n} \to \GrG$
with either $\switchPerm{i}{j} = \grg$ so that $\grg(j) = i$ for some
$j > i$ and $\grg(k) = k$ for all $k < i$, if such a $\grg \in \GrG$
exists, or $\switchPerm{i}{j} = \id$ otherwise.  A switch table need
not be unique. The entries $\switchPerm{i}{j}$ are called
\emph{switches}.  An entry of a switch table is \emph{trivial} if it
is the identity.  A row of a switch table is \emph{effective} if it
contains at least one non-trivial switch.  The set $\effRowSet$ of all
row indices $i$ so that $\switchPerm{i}{\cdot}$ is effective is called
the \emph{effective row set} of~$\switchPerm{\cdot}{\cdot}$.  The
(non-empty) set of column indices of non-trivial switches in an
effective row $i \in \effRowSet$ is the \emph{effective column set}
of~$i$, denoted by $\effColSet(i)$.  For the trivial group, there are
no effective rows, for which we adopt the convention that
$\max \effRowSet = -\infty$.

One key property of a switch table is that each element of $\GrG$ is
an essentially unique (up to the insertion of trivial factors) product
of switches consisting of at most one non-trivial switch from each
row.  Not all switches can contribute to a lex-decreasing
switch-product, so that not all products need to be considered.  A
straight-forward version of the algorithm for subsets is presented in
Algorithm~\ref{alg:isLexMinSwitches}.  Details for general vectors
instead of subsets -- which can be seen as special, namely
characteristic, vectors -- can be found
in~\cite{JordanJoswigKastner_Parallelenumerationtriangulations_2018}.
The algorithm uses global auxiliary data, which is given by a switch
table $\switchTable = \switchPerm{\cdot}{\cdot}$ for~$\GrG$.

\begin{algorithm}[htbp]
  \TitleOfAlgo{\isLexMinSwitchTable{$i, \setStyle{S}, \setStyle{S'},
      \GrG, \switchTable$}}

  \KwIn{an integer $i \in [1, n]$, a subset $\setStyle{S}$ (the
    original subset), a subset $\setStyle{S'}$ (the subset mapped by a
    partial switch product), a subgroup $\GrG$ of~$\symGroup{n}$, and
    a switch table~$\switchTable$ of~$\GrG$}

  \KwOut{$\true$ if
    $\setStyle{S} = \lexmin
    \stabGroup{\GrG}{\indexSet{i-1}}(\setStyle{S})$ and $\false$
    otherwise}
  
  \tcc{check for empty set:} \If{$\setStyle{S} = \emptyset$}{ \Return
    \true\; } \tcc{beyond the effective row set?}
  \If{$i > \max \effRowSet(\switchTable)$}{ \Return
    ($\setStyle{S}' \notlexsmaller \setStyle{S}$); } \tcc{recursively
    check a switch product with leading identity:}
  \If{$\isLexMinSwitchTable{$i + 1, \setStyle{S}, \setStyle{S'}, \GrG,
      \switchTable$} = \false$}{ \Return \false\; }

  \tcc{collect "good" switches lex-decreasing~$\setStyle{S}'$:}
  $\goodSwitchesSet \gets \{ j \in \effColSet(i) \;\vert\; i \notin
  \setStyle{S'}, j \in \setStyle{S'}\}$\;

  \If{$\goodSwitchesSet = \emptyset$}{ \tcc{collect "good" switches
      not preventing future lex-decrease:}
    $\goodSwitchesSet \gets \bigl\{ j \in \effColSet(i) \;\big\vert\;
    \abs{\setStyle{S'} \cap \{i, j\}} \in \{0, 2\} \bigr\}$\; }

  \For{$j \in \goodSwitchesSet$}{
    $\setStyle{S''} \gets \switchTable[i][j](\setStyle{S'})$ \tcc*{map
      subset} \If{$\setStyle{S''} \lexsmaller \setStyle{S}$}{ \Return
      \false \tcc*{decreasing switch product found} } \tcc{recurse
      with mapped subset:} \If{$\isLexMinSwitchTable{$i + 1,
        \setStyle{S}, \setStyle{S''}, \GrG, \switchTable$} = \false$}{
      \Return \false\; } } \Return \true \tcc*{no decreasing switch
    product found}

  \caption[Switch-Table Method]{The switch-table method from
    \cite{JordanJoswigKastner_Parallelenumerationtriangulations_2018}
    specialized to subsets
  }
  \label{alg:isLexMinSwitches}
\end{algorithm}

The membership-test in $\isInD$ can be difficult.  In cases, where the
downset to be enumerated up to symmetry has only been implicitly
defined as a downset by a set of really interesting subsets together
with all its subsets, $\isInD$ in fact needs to answer the question
whether or not a subset is a subset of a really interesting set.  This
problem arises in all of the applications in this paper and has to be
solved for each application seperately.  In
section~\ref{sec:algorithm}, variants of $\symLSRS$ will be designed
where this membership test is relaxed in order to make faster progress
at the cost of accepting deadends in the enumeration.


\paragraph*{Problem Statement}
\label{sec:problem}

In this paper, the following questions concerning the algorithm
\symLSRS and some variants are studied in detail:
\begin{itemize}
\item How can the subroutine \isLexMin of \symLSRS be implemented
  efficiently \emph{in general}?
\item How can the subroutines of \symLSRS and its variants be
  implemented efficiently \emph{for each of the applications}.  More
  specifically:
  \begin{itemize}
  \item How can one effectively prune subsets of vectors that are not
    lex-contained in a circuit or cocircuit, respectively?
  \item How can one effectively prune subsets of simplices that are
    not lex-contained in a triangulation of the point configuration?
  \end{itemize}
\end{itemize}
The term \emph{efficiently} here does \emph{not} mean
\emph{polynomial-time}.  It rather indicates that compared to naive
approaches the number of steps is reduced as much as possible.  The
resulting algorithms are, in general, not
(counting/enumerating/listing for cocircuits and circuits as well as
counting/enumerating for triangulations) or not known to be (listing
for triangulations) output-polynomial.

\section{On Lexicographically Minimal Elements in Subset-Orbits}
\label{sec:theory}

This section provides mathematical structures that help to decide
whether or not a subset is lexicographically minimal in its orbit.
For some of the upcoming arguments the following characterization of
the lexicographic order on $k$-element subsets of~$\indexSet{n}$ is
used.

\begin{lemma}[Lexicographic order on subsets]
  Let $n \in \mathbb{N}$.  Moreover, let $\setStyle{S}$ and
  $\setStyle{R}$ be $k$-element subsets of~$\indexSet{n}$.  Then
  $\setStyle{S}$ is lexicographically smaller than~$\setStyle{R}$ if
  the minimal element of their symmetric difference is
  in~$\setStyle{S}$.  In formulae:
  \begin{equation}
    \setStyle{S} \lexsmaller \setStyle{R} \iff \min (\setStyle{S}
    \symdiff \setStyle{R}) \in \setStyle{S},
    \label{eq:lexsmaller}
  \end{equation}
  where $\min \emptyset = \infty$.
\end{lemma}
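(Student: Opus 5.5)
We argue by induction on $k$, unrolling the recursive definition of $\lexsmaller$: compare minima, and if they agree, remove the common minimum and compare the rests. The equivalence is read as a statement about the strict relation. For $\setStyle{S} = \setStyle{R}$ both sides are false, since the symmetric difference is empty and $\min\emptyset = \infty \notin \setStyle{S}$. So we may assume $\setStyle{S} \neq \setStyle{R}$, which forces $k \ge 1$. The case $k = 0$ is then vacuous, because both sets are empty and equal.

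For the induction step, let $s = \min\setStyle{S}$ and $r = \min\setStyle{R}$. We split into cases.

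\emph{Case $s < r$.} Then $s \notin \setStyle{R}$, because every element of $\setStyle{R}$ is at least $r > s$. Hence $s \in \setStyle{S}\symdiff\setStyle{R}$. Every element of the symmetric difference is at least $\min(s,r) = s$, so $\min(\setStyle{S}\symdiff\setStyle{R}) = s \in \setStyle{S}$. By definition we also have $\setStyle{S}\lexsmaller\setStyle{R}$, so both sides hold.

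\emph{Case $s > r$.} Symmetrically, $\min(\setStyle{S}\symdiff\setStyle{R}) = r \in \setStyle{R}\setminus\setStyle{S}$. The right-hand side fails, and by the definition $\setStyle{R}\lexsmaller\setStyle{S}$. Since the order is total, $\setStyle{S}\not\lexsmaller\setStyle{R}$, so both sides fail. Asymmetry follows from the recursive definition by a trivial induction, so it can be used here.

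\emph{Case $s = r$.} Put $\setStyle{S}_0 = \setStyle{S}\setminus\{s\}$ and $\setStyle{R}_0 = \setStyle{R}\setminus\{s\}$. These are $(k-1)$-element sets with $\setStyle{S}_0 \symdiff \setStyle{R}_0 = \setStyle{S}\symdiff\setStyle{R}$. Moreover, membership of the minimum of this difference in $\setStyle{S}$ is equivalent to membership in $\setStyle{S}_0$, because that minimum is not $s$. The definition reduces $\setStyle{S}\lexsmaller\setStyle{R}$ to $\setStyle{S}_0\lexsmaller\setStyle{R}_0$, and the induction hypothesis then closes the case.

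The main subtlety is bookkeeping rather than a real obstacle. We must ensure that the clause $\emptyset \lexsmaller$ (nonempty) never interferes, which holds because equal cardinalities keep both sets nonempty or both empty simultaneously. We must also fix the convention that in the equal-minima case with $k=1$ the comparison of $\emptyset$ with $\emptyset$ is false, which is consistent with $\min\emptyset=\infty$.
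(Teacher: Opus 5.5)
Your proof is correct. The paper states this lemma without proof, treating it as immediate from the recursive definition of $\lexsmaller$. Your induction on $k$ is exactly the verification it leaves to the reader: you split according to whether $\min\setStyle{S}$ is smaller than, larger than, or equal to $\min\setStyle{R}$, and in the equal case you pass to $\setStyle{S}\setminus\{s\}$ and $\setStyle{R}\setminus\{s\}$, whose symmetric difference is unchanged.

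Two cosmetic remarks:
\begin{itemize}
\item In the case $s>r$ you need neither totality nor asymmetry. Totality is the wrong property here anyway, since it only guarantees that \emph{at least} one relation holds. Both disjuncts in the definition of $\setStyle{S}\lexsmaller\setStyle{R}$ require $\min\setStyle{S}\le\min\setStyle{R}$, so $\setStyle{S}\not\lexsmaller\setStyle{R}$ can be read off directly.
\item The separate treatment of $\setStyle{S}=\setStyle{R}$, which tacitly uses irreflexivity, can be dropped. Your equal-minima step works verbatim when $\setStyle{S}\symdiff\setStyle{R}=\emptyset$, leaving only the base case $k=0$.
\end{itemize}
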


The following lemma states that if subsets are built
element-by-element with backtracking and only the lexicographically
minimal ones in their orbits are followed, then one will reach all
subsets that are lexicographically minimal in their orbits.  This
result was already presented
in~\cite{PechReichard_EnumeratingSetOrbits_2009}.  Here, some more
details for the proof are provided.
\begin{lemma}[cf.~\cite{PechReichard_EnumeratingSetOrbits_2009}] Let
  $\setStyle{S}$ be a non-empty subset of~$\indexSet{n}$ and
  $\setStyle{S}^- := \setStyle{S} \setminus \{\max
  \setStyle{S}\}$. Then, for all subgroups $\GrG$ of~$\symGroup{n}$ we
  have:
  \begin{equation}
    \setStyle{S} = \lexmin \orbitSet{\GrG}{\setStyle{S}}
    \Rightarrow
    \setStyle{S}^- = \lexmin \orbitSet{\GrG}{\setStyle{S}^-}
    \label{thm:lexmin-lemma}
  \end{equation}
\end{lemma}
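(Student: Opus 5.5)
We argue by contraposition. Write $m := \max \setStyle{S}$, so that $\setStyle{S} = \setStyle{S}^- \cup \{m\}$ and every element of $\setStyle{S}^-$ is smaller than~$m$. Assume $\setStyle{S}^-$ is not lex-min in its orbit. Then there is some $\grg \in \GrG$ with $\grg(\setStyle{S}^-) \lexsmaller \setStyle{S}^-$. We will show that this same $\grg$ satisfies $\grg(\setStyle{S}) \lexsmaller \setStyle{S}$, so that $\setStyle{S}$ is not lex-min in $\orbitSet{\GrG}{\setStyle{S}}$. Both pairs of sets have equal cardinality ($k-1$ and $k$ respectively), so the characterization~\eqref{eq:lexsmaller} applies to both comparisons.

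Let $d := \min\bigl(\grg(\setStyle{S}^-) \symdiff \setStyle{S}^-\bigr)$. By~\eqref{eq:lexsmaller} this element is finite and lies in $\grg(\setStyle{S}^-)$. The next step is to identify how the symmetric differences change when we pass from $\setStyle{S}^-$ to $\setStyle{S}$. We have $\grg(\setStyle{S}) = \grg(\setStyle{S}^-) \cup \{\grg(m)\}$ and $\setStyle{S} = \setStyle{S}^- \cup \{m\}$. So $\grg(\setStyle{S}) \symdiff \setStyle{S}$ is obtained from $\grg(\setStyle{S}^-) \symdiff \setStyle{S}^-$ by toggling at most the two elements $m$ and $\grg(m)$. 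Precisely: $m$ is added to $\setStyle{S}$ (and $m \notin \setStyle{S}^-$), and $\grg(m)$ is added to $\grg(\setStyle{S}^-)$ (and $\grg(m) \notin \grg(\setStyle{S}^-)$).

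Now compare $d$ with these two elements.

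\emph{Case $d < m$.} Elements below $m$ are unaffected by adding $m$ on the $\setStyle{S}$ side. If also $\grg(m) > d$, then all elements below $d$ are still not in the new symmetric difference, and $d$ remains in it on the $\grg$ side. Hence the minimum is $d \in \grg(\setStyle{S})$, and $\grg(\setStyle{S}) \lexsmaller \setStyle{S}$. If instead $\grg(m) \le d$, then $\grg(m) \ne d$, since $d \in \grg(\setStyle{S}^-)$ and $\grg(m) \notin \grg(\setStyle{S}^-)$. So $\grg(m) < d$. Then $\grg(m)$ is not in $\setStyle{S}^-$, because it lies below $d$, where the two sets agree, and it is not in $\grg(\setStyle{S}^-)$ either. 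It is also different from $m$, since $\grg(m) < d < m$. Hence $\grg(m)$ lies in the new symmetric difference on the $\grg(\setStyle{S})$ side, it is the new minimum, and again $\grg(\setStyle{S}) \lexsmaller \setStyle{S}$.

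\emph{Case $d \ge m$.} This case is impossible. Since $d \in \grg(\setStyle{S}^-)$ and $d \notin \setStyle{S}^-$, the two sets agree below $d$, so every element of $\setStyle{S}^-$, all of which are less than $m \le d$, already lies in $\grg(\setStyle{S}^-)$. Together with $d$ itself, this gives $\abs{\grg(\setStyle{S}^-)} \ge \abs{\setStyle{S}^-} + 1$, which contradicts the equality of cardinalities.

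The main obstacle is this case analysis involving the new maximum. Specifically, one must confirm that the interaction of the positions of $m$ and $\grg(m)$ with $d$ never flips the comparison, and the key fact needed for that is that $m$ exceeds every element of $\setStyle{S}^-$. Everything else is a direct application of~\eqref{eq:lexsmaller}.
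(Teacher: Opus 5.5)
Your proof is correct and follows essentially the paper's argument: the same permutation $g$ that lex-decreases $S^-$ also lex-decreases $S$, and equal cardinalities force the critical element $d$ below $m=\max S$. The only difference is cosmetic: the paper avoids your case split on the position of $g(m)$ by observing directly that $\min\bigl(g(S)\setminus S\bigr)\le d<\min\bigl(S\setminus g(S)\bigr)$.
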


\begin{proof}
  Let $\setStyle{S}$ be lex-min in its $\GrG$-orbit.  Assume, for the
  sake of contradiction, that $\setStyle{S}^-$ is not lex-min in its
  $\GrG$-orbit.  Then, there is a set $\setStyle{R}$ which is
  lex-smaller than~$\setStyle{S}^-$ and a permutation $\pi \in \GrG$
  with $\pi(\setStyle{S}^-) = \setStyle{R}$.  In particular,
  $\setStyle{S}^-$ and $\setStyle{R}$ are non-empty and non-identical.
  That means,
  $\min (\setStyle{S}^- \symdiff \setStyle{R}) =: r \in \setStyle{R}
  \setminus \setStyle{S}^-$. Moreover,
  $\pi(\max \setStyle{S}) \notin \setStyle{R}$, by the bijectivity of
  any permutation. Consider
  $\setStyle{R}^+ := \setStyle{R} \cup \{\pi(\max \setStyle{S})\}$.
  
  Since
  $\max \setStyle{S} > \min (\setStyle{S}^- \setminus \setStyle{R}) >
  \min (\setStyle{R} \setminus \setStyle{S}^-) = \min (\setStyle{S}^-
  \symdiff \setStyle{R}) = r$, we know that
  $\min (\setStyle{S} \setminus \setStyle{R}) > r$.  Moreover,
  $\setStyle{R} \subset \setStyle{R}^+$ implies
  $\min (\setStyle{S} \setminus \setStyle{R}^+) \ge \min (\setStyle{S}
  \setminus \setStyle{R}) > r$.  Since
  $r \in \setStyle{R} \setminus \setStyle{S}^-$ and
  $r < \max \setStyle{S}$, it follows that
  $r \in \setStyle{R} \setminus \setStyle{S} \subseteq \setStyle{R}^+
  \setminus \setStyle{S}$.  Thus,
  $\min (\setStyle{R}^+ \setminus \setStyle{S}) \le r < \min
  (\setStyle{S} \setminus \setStyle{R}^+)$, and, hence,
  $\min (\setStyle{S} \symdiff \setStyle{R}^+) \in \setStyle{R}^+$.
  Thus, by definition, $\setStyle{R}^+$ is lex-smaller
  than~$\setStyle{S}$ with $\pi(\setStyle{S}) = \setStyle{R}^+$:
  contradiction.
\end{proof}

Next, two methods for the lex-min check are presented: one for
``small'' symmetry groups (order small compared to degree) based on
the new concept of \emph{critical-element tables} and one for
``large'' symmetry groups (order large compared to degree) based on a
modified use of switch tables.

First, the method for small groups is presented.  It is based on the
new structure of \emph{critical-element tables}. The third application
in this paper concerns most often smaller symmetry groups with
relatively high degree that can easily be enumerated first.  How can
one reduce the number of evaluations of actions on subsets in that
check?  Remember that the orbits' lexicographically-minimal subsets
are built element-by-element.  That is, for the check of whether or
not a given $m$-subset $\setStyle{S}$ is lexicographically minimal in
its orbit, one can exploit that its predecessor-subset
$\setStyle{S}^-$ with $m-1$ elements is already known to be
lexicographically minimal in its orbit.  The new question is: has the
addition of the new maximal element led to the existence of a
permutation that lexicographically decreases (\emph{lex-decreases},
for short) the new subset $\setStyle{S}$ in its orbit?

The new idea in this paper is to keep the information about \emph{why}
the predecessor subset $\setStyle{S}^-$ is lexicographically minimal
in its orbit.  The reason is that for each permutation $\pi \in \GrG$
one has
\begin{equation}
  \min \bigl(\setStyle{S}^- \symdiff \pi(\setStyle{S}^-)\bigr) \notin \pi(\setStyle{S}^-).
  \label{eq:lexsmaller-in-orbit}
\end{equation}
These minimal elements of the symmetric differences of subsets and
their images are critical for the question at hand, which motivates
the following definition:
\begin{definition}
  Let $n \in \mathbb{N}$ and $\GrG$ be a subgroup of $\symGroup{n}$.
  For a given subset $\setStyle{S}$ the \emph{critical-element
    table with respect to~$\setStyle{S}$} is defined as follows:
  \begin{equation}
    \critelem{\setStyle{S}} \colon
    \left\{
      \begin{array}{rcl}
        \GrG     & \to      & \indexSet{n} \cup \{\infty\},\\
        \pi      & \mapsto  & \min \bigl(\setStyle{S} \symdiff \pi(\setStyle{S})\bigr),
      \end{array}
    \right.
    \label{eq:critical-element-table}
  \end{equation}
  where $\min \emptyset := \infty$.  The function value
  $\critelem{\setStyle{S}} (\pi)$ is called the \emph{critical element
    of~$\pi$ with respect to~$\setStyle{S}$}.
\end{definition}

From the critical-element table, some important of properties of the
action of a permutation~$\pi$ on the given subset~$\setStyle{S}$ can
be derived easily:
\begin{lemma}
  \label{thm:char-lex-min-cet}
  Let $n \in \mathbb{N}$ and $\GrG$ be a subgroup of $\symGroup{n}$.
  Moreover, let $\setStyle{S}$ be a non-empty subset
  of~$\indexSet{n}$.  Then:
  \begin{enumerate}[label=(\roman*)]
  \item The stabilizer of $\setStyle{S}$ in~$\GrG$ is the set of
    permutations with critical element~$\infty$.
  \item $\setStyle{S}$ is lexicographically minimal in its
    $\GrG$-orbit if and only if
    $\critelem{\setStyle{S}} (\pi) \notin \pi(\setStyle{S})$ for all
    $\pi \in \GrG$.
  \item Let
    $\setStyle{S}^- := \setStyle{S} \setminus \{\max\setStyle{S}\}$.
    Moreover, assume that $\setStyle{S}^-$ is lexicographically
    minimal in its $\GrG$-orbit.  Then a permutation $\pi \in \GrG$
    lex-decreases~$\setStyle{S}$ if and only if one of the following
    cases occurs:
    \begin{enumerate}[label=\Roman*.]
    \item $\critelem{\setStyle{S}^-} (\pi) = \infty$ and
      $\pi(\max\setStyle{S}) < \max\setStyle{S}$
    \item $\critelem{\setStyle{S}^-} (\pi) \in \setStyle{S}^-$ and
      $\pi(\max\setStyle{S}) < \critelem{\setStyle{S}^-} (\pi)$ or
    \item $\critelem{\setStyle{S}^-} (\pi) \in \setStyle{S}^-$,
      $\pi(\max\setStyle{S}) = \critelem{\setStyle{S}^-} (\pi)$ and
      $\critelem{\setStyle{S}} (\pi) \in \pi(\setStyle{S})$\qed
    \end{enumerate}
  \end{enumerate}
\end{lemma}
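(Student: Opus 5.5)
Parts (i) and (ii) follow directly from the definition of the critical-element table and the characterization~\eqref{eq:lexsmaller} of the lexicographic order. Note that $\pi(\setStyle{S})$ has the same cardinality as~$\setStyle{S}$. For (i), $\critelem{\setStyle{S}}(\pi) = \infty$ holds exactly when $\setStyle{S} \symdiff \pi(\setStyle{S}) = \emptyset$, i.e., when $\pi(\setStyle{S}) = \setStyle{S}$. For (ii), applying~\eqref{eq:lexsmaller} with $\setStyle{R} := \pi(\setStyle{S})$ gives: $\pi(\setStyle{S}) \lexsmaller \setStyle{S}$ if and only if $\critelem{\setStyle{S}}(\pi) \in \pi(\setStyle{S})$, the case $\infty$ being excluded automatically. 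So $\setStyle{S}$ is lex-min in its orbit if and only if no $\pi \in \GrG$ has $\critelem{\setStyle{S}}(\pi) \in \pi(\setStyle{S})$.

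For (iii), put $m := \max\setStyle{S}$ and $c := \critelem{\setStyle{S}^-}(\pi)$, so that $\pi(\setStyle{S}) = \pi(\setStyle{S}^-) \cup \{\pi(m)\}$. Since $\setStyle{S}^-$ is lex-min, (ii) applied to $\setStyle{S}^-$ gives $c \notin \pi(\setStyle{S}^-)$. Hence there are exactly two possibilities: $c = \infty$, or $c \in \setStyle{S}^- \setminus \pi(\setStyle{S}^-)$. In the latter case $c < m$. By (ii) again, $\pi$ lex-decreases $\setStyle{S}$ if and only if $\min(\setStyle{S} \symdiff \pi(\setStyle{S})) \in \pi(\setStyle{S})$, so in each case I compute this minimum.

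\emph{Case $c = \infty$.} Here $\pi(\setStyle{S}^-) = \setStyle{S}^-$, so $\setStyle{S} \symdiff \pi(\setStyle{S}) \subseteq \{m, \pi(m)\}$. If $\pi(m) = m$, nothing is decreased. Otherwise the symmetric difference is exactly $\{m, \pi(m)\}$, and its minimum lies in $\pi(\setStyle{S})$ precisely when $\pi(m) < m$. This gives condition~I.

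\emph{Case $c \in \setStyle{S}^-$.} Below $c$ the sets $\setStyle{S}^-$ and $\pi(\setStyle{S}^-)$ agree. Adding $m > c$ to $\setStyle{S}^-$ and $\pi(m)$ to $\pi(\setStyle{S}^-)$ changes the symmetric difference below $c$ only if $\pi(m) < c$. There are three subcases.
\begin{itemize}
\item If $\pi(m) < c$, then $\pi(m) \notin \setStyle{S}^-$: otherwise, being below $c$, it would lie in $\pi(\setStyle{S}^-)$, contradicting injectivity. Also $\pi(m) \neq m$. So $\pi(m)$ is the minimum of the symmetric difference and lies in $\pi(\setStyle{S})$; thus $\pi$ lex-decreases $\setStyle{S}$. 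This is condition~II.
\item If $\pi(m) > c$, then $c$ remains in $\setStyle{S} \setminus \pi(\setStyle{S})$ and nothing smaller enters the symmetric difference. Hence the minimum is $c \notin \pi(\setStyle{S})$, and $\pi$ does not lex-decrease $\setStyle{S}$.
\item If $\pi(m) = c$, the element $c$ drops out of the symmetric difference. The new minimum $\critelem{\setStyle{S}}(\pi)$ is not determined by $c$, so by (ii) the answer is exactly whether $\critelem{\setStyle{S}}(\pi) \in \pi(\setStyle{S})$. This is condition~III.
\end{itemize}
Combining the cases gives both directions of the equivalence.

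The only delicate point is the bookkeeping in the case $c \in \setStyle{S}^-$. One must justify that the symmetric difference below $c$ is unaffected unless $\pi(m) < c$, and that $\pi(m) < c$ forces $\pi(m) \notin \setStyle{S}$. The latter uses injectivity of $\pi$ together with the agreement of $\setStyle{S}^-$ and $\pi(\setStyle{S}^-)$ below $c$, and I expect it to be the main step requiring care.
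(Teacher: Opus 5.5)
Your proof is correct and takes the same approach as the paper. The paper states the lemma with \qed and gives no written proof, so the intended justification is exactly your argument: the characterization \eqref{eq:lexsmaller}, a case split on $\critelem{\setStyle{S}^-}(\pi) \in \{\infty\} \cup \setStyle{S}^-$, and injectivity of $\pi$. One cosmetic point: in (ii) you really apply \eqref{eq:lexsmaller} with $\pi(\setStyle{S})$ in the first slot and $\setStyle{S}$ in the second, not with $\setStyle{R} := \pi(\setStyle{S})$; this is harmless because the symmetric difference is symmetric.
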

Call the application of this the \emph{critical-element method} for
checking lexicographic minimality of a subset in its orbit.

The crucial gain of this lemma is the following: given the
critical-element table with respect to~$\setStyle{S}^-$, one can check
lexicographic minimality of $\setStyle{S}$ in its orbit without
actually computing~$\pi(\setStyle{S})$, with the only exception when
$\pi$ maps the new element of~$\setStyle{S}$ exactly to the critical
element of~$\setStyle{S}^-$.  And this exception roughly happens for a
$\frac{1}{n}$-fraction of the permutations, on average.

There are now two ways to implement the critical-element method.
\begin{enumerate}
\item iterate over all permutations and apply
  Lemma~\ref{thm:char-lex-min-cet} to each of them (the
  \emph{iteration-based critical-element method}; see
  Algorithm~\ref{alg:isLexMinCritelemIter} in
  Section~\ref{sec:algorithm} for a possible implementation);
\item first, from certain fixed, preprocessed subsets of permutations,
  compute the subsets of permutations that
  \begin{enumerate}
  \item certainly lexicographically decrease the given subset, and if
    empty,
  \item possibly lexicographically decrease the given subset.
  \end{enumerate}
  If the subset of certainly lexicographically decreasing permutations
  is non-empty, the subset is not lexicographically minimal in its
  orbit; if it is empty, iterate over the possibly lexicographically
  decreasing permutations and apply~Lemma~\ref{thm:char-lex-min-cet}
  to only those (the \emph{set-based critical-element method}; see
  Algorithm~\ref{alg:isLexMinCritelemSet} in
  Section~\ref{sec:algorithm} for a possible implementation).
\end{enumerate}

In the sequel, some details for the set-based method are
explained.\footnote{The approach is motivated by the fact that with a
  set structure based on dynamic bitstrings it is possible to compute
  unions, intersections, differences, and symmetric differences of
  sets fast in practice.  Nota bene: From a complexity standpoint,
  bitstrings only gain something if their length is uniformly bounded.
  However, in practice the reduction of the runtime by a constant
  factor by using dynamic bitstrings for set operations is not
  irrelevant.  Moreover, operations on dynamic bitstrings residing
  consecutively in memory are more cache coherent than data structures
  that are scattered in main memory.}  The following structures of the
possibly lexicographically decreasing permutations for a specific
subset.  The first three structures can be preprocessed prior to the
enumeration.  The fourth structure has to be updated for each
enumeration node.
\begin{definition}
  For $n \in \mathbb{N}$ and a subgroup $\GrG$ of~$\symGroup{n}$, the
  \emph{hit-element classification of~$\GrG$} is defined as
  \begin{equation}
    \hitclass \colon
    \left\{
      \begin{array}{rcl}
        \indexSet{n} \times \indexSet{n} & \to     & \powerSet{\GrG},\\
        (i, j)                           & \mapsto & \{ \pi \in \GrG :
                                                     \pi(i) = j \}.
      \end{array}
    \right.
    \label{eq:hitting-classification}    
  \end{equation}
  The \emph{increasing-element classification of~$\GrG$} is defined as
  \begin{equation}
    \incclass \colon
    \left\{
      \begin{array}{rcl}
        \indexSet{n} & \to     & \powerSet{\GrG},\\
        i            & \mapsto & \{ \pi \in \GrG : \pi(i) > i \} =
                                 \bigcup_{k=i + 1}^n \hitclass(i, k).
      \end{array}
    \right.
    \label{eq:incerasing-classification}    
  \end{equation}
  The \emph{decreasing-element classification of~$\GrG$} is defined as
  \begin{equation}
    \decclass \colon
    \left\{
      \begin{array}{rcl}
        \indexSet{n} \times \indexSet{n} & \to     & \powerSet{\GrG},\\
        (i, j)                           & \mapsto & \{ \pi \in \GrG :
                                                     \pi(i) < j \} =
                                                     \bigcup_{k=1}^{j - 1} \hitclass(i, k).
      \end{array}
    \right.
    \label{eq:passing-classification}    
  \end{equation}
  Moreover, for a subset $\setStyle{S}$ of~$\indexSet{n}$, the
  \emph{critical-element classification of~$\GrG$ with respect
    to~$\setStyle{S}$} is defined as
  \begin{equation}
    \critclass{\setStyle{S}} \colon
    \left\{
      \begin{array}{rcl}
        \indexSet{n} \cup \{\infty\} & \to     & \powerSet{\GrG},\\
        i                            & \mapsto & \{ \pi \in \GrG :
                                                 \critelem{\setStyle{S}}
                                                 (\pi) = i \}.
      \end{array}
    \right.    
    \label{eq:critical-element-classification}
  \end{equation}
\end{definition}
The next lemma characterizes lexicographical minimality in an orbit by
intersections of certain decreasing-element and critical-element
classifications.
\begin{lemma}
  \label{thm:char-lex-min-cet-set}
  Let $n \in \mathbb{N}$ and $\GrG$ a subgroup of~$\symGroup{n}$.
  Moreover, let $\setStyle{S}$ be a non-empty subset
  of~$\indexSet{n}$, and let
  $\setStyle{S}^- = \setStyle{S} \setminus \{ \max \setStyle{S} \}$ be
  lexicographically minimal in its $\GrG$-orbit.  Then $\setStyle{S}$
  is not lexicographically minimal in its $\GrG$-orbit if and only if
  at least one of the following cases occurs:
  \begin{enumerate}[label=\Roman*.]
  \item The set $\decclass(\max \setStyle{S}, \max \setStyle{S})
    \cap \critclass{\setStyle{S}^-}(\infty)$ is non-empty.
  \item There is an element $i \in \setStyle{S}^-$
    such that the set
    $\decclass(\max \setStyle{S}, i) \cap
    \critclass{\setStyle{S}^-}(i)$ is non-empty.
  \item There is an $i \in \setStyle{S}^-$ and a
    permutation $\pi$ in the set
    $\hitclass(\max \setStyle{S}, i) \cap
    \critclass{\setStyle{S}^-}(i)$ such that
    $\critelem{\setStyle{S}} (\pi) \in \pi(\setStyle{S})$.\qed
  \end{enumerate}
\end{lemma}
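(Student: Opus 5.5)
The plan is to derive Lemma~\ref{thm:char-lex-min-cet-set} as a direct reformulation of part~(iii) of Lemma~\ref{thm:char-lex-min-cet}, translating each pointwise condition on a permutation $\pi$ into membership of $\pi$ in the appropriate classification sets. Write $m := \max \setStyle{S}$. The statement ``$\setStyle{S}$ is not lexicographically minimal in its $\GrG$-orbit'' is the same as ``there exists $\pi \in \GrG$ that lex-decreases $\setStyle{S}$.'' Since $\setStyle{S}^-$ is assumed to be lex-min, Lemma~\ref{thm:char-lex-min-cet}(iii) applies. It says such a $\pi$ exists if and only if some $\pi$ satisfies one of the cases I, II, III there. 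So it suffices to show that, for each of the three cases, the existence of a $\pi$ satisfying that case of Lemma~\ref{thm:char-lex-min-cet}(iii) is equivalent to the corresponding case of the present lemma.

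The translation is a matter of unfolding definitions.

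\emph{Case I.} By definition of $\critclass{\setStyle{S}^-}$, the condition $\critelem{\setStyle{S}^-}(\pi) = \infty$ means $\pi \in \critclass{\setStyle{S}^-}(\infty)$. By definition of $\decclass$, the condition $\pi(m) < m$ means $\pi \in \decclass(m, m)$. So a $\pi$ as in case~I exists iff $\decclass(m,m) \cap \critclass{\setStyle{S}^-}(\infty) \neq \emptyset$.

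\emph{Case II.} Set $i := \critelem{\setStyle{S}^-}(\pi) \in \setStyle{S}^-$. Then $\pi \in \critclass{\setStyle{S}^-}(i)$, and $\pi(m) < i$ means $\pi \in \decclass(m, i)$. Conversely, if $\pi \in \decclass(m,i) \cap \critclass{\setStyle{S}^-}(i)$ for some $i \in \setStyle{S}^-$, then $\critelem{\setStyle{S}^-}(\pi) = i \in \setStyle{S}^-$ and $\pi(m) < i$.

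\emph{Case III.} This is handled identically, with $\hitclass(m,i)$ encoding $\pi(m) = i$; the additional condition $\critelem{\setStyle{S}}(\pi) \in \pi(\setStyle{S})$ is carried over verbatim.

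Since Lemma~\ref{thm:char-lex-min-cet} is stated without proof (marked \qed), I would briefly justify part~(iii) so that the argument is self-contained. Write $c := \critelem{\setStyle{S}^-}(\pi)$. Because $\setStyle{S}^-$ is lex-min, by part~(ii) we have $c \notin \pi(\setStyle{S}^-)$, so either $c = \infty$ or $c \in \setStyle{S}^-$. Then:
\begin{itemize}
\item If $c = \infty$, then $\pi(\setStyle{S}) = \setStyle{S}^- \cup \{\pi(m)\}$. Its symmetric difference with $\setStyle{S}$ is $\{m, \pi(m)\}$ or empty, so $\pi$ lex-decreases $\setStyle{S}$ iff $\pi(m) < m$.
\item If $c \in \setStyle{S}^-$, then $\setStyle{S}$ and $\pi(\setStyle{S})$ agree below $c$ except possibly at $\pi(m)$. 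Note that $m \ge c$ does not matter below $c$.
\begin{itemize}
\item If $\pi(m) < c$, then $\pi(m) \notin \setStyle{S}^-$: elements of $\setStyle{S}^-$ below $c$ lie in $\pi(\setStyle{S}^-)$, and $\pi$ is injective. Also $\pi(m) \neq m$, since $m > c$. So $\pi(m)$ is the minimum of $\setStyle{S} \symdiff \pi(\setStyle{S})$ and lies in the image, so $\pi$ lex-decreases $\setStyle{S}$.
\item If $\pi(m) > c$, then $c$ remains the minimum of the symmetric difference and lies in $\setStyle{S}$, so $\pi$ does not lex-decrease $\setStyle{S}$.
\item If $\pi(m) = c$, the critical element moves past $c$, and we can only test $\critelem{\setStyle{S}}(\pi) \in \pi(\setStyle{S})$ directly, using \eqref{eq:lexsmaller}.
\end{itemize}
\end{itemize}

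The main care point is this subcase $\pi(m) < c$ in case~II. There one must verify that $\pi(m)$ really is the new minimum of $\setStyle{S} \symdiff \pi(\setStyle{S})$, i.e.\ that it does not coincide with an element of $\setStyle{S}$, and that nothing smaller enters the symmetric difference. Everything else is mechanical unfolding of the definitions of $\hitclass$, $\decclass$, and $\critclass{\setStyle{S}^-}$.
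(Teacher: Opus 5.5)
Your proposal is correct and follows the paper's approach. The paper states this lemma without proof as an immediate reformulation of Lemma~\ref{thm:char-lex-min-cet}(iii), translating each pointwise condition on $\pi$ into membership in the hit-element, decreasing-element and critical-element classifications, exactly as you do; your added check of part~(iii) itself, which the paper also leaves unproved, is sound, including the key point that for $\pi(m) < c$ the element $\pi(m)$ cannot lie in $\setStyle{S}$, by injectivity and $m > c$.
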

The set-based method has the advantage that the checks that
potentially lead to an immediate answer can be done prior to the
complicated cases, whereas in the iteration method it depends on the
order of the permutations when the complicated cases have to be
handled.  And whether or not an order of the permutations is
advantageous heavily depends on the subset at hand. The disadvantage
of the set-based method is that for large~$n$ especially the
decreasing-element classification can grow large.  Intermediate
preprocessing structures are possible trading speed for
size. Corresponding details, however, are not discussed any further in
this work, since this is rather a topic of software-engineering.

For the applications presented in this paper the following
observations can be made: For enumerating triangulations (one
prominent example $\simplexproduct{6}{2}$ has
$\abs{\GrG} = 30{,}240$ and $n = 35{,}721$), the iteration method is
mostly faster, whereas for the large cases in the enumeration of
circuits and cocircuits (our largest example, the cocircuits of the
$9$-cube~$\hypercube{9}$, has $\abs{\GrG} = 185{,}794{,}560$ and
$n = 512$) the set-based method is significantly quicker.

However, for the latter application with a large group order compared
to the degree the second new method based on switch tables is even
faster than the set-based critical-element method.  This method is
explained in the following.  It works by combining switch tables with
the recursive algorithm
in~\cite{PechReichard_EnumeratingSetOrbits_2009}.  The recursive
algorithm in \cite{PechReichard_EnumeratingSetOrbits_2009} answers a
slightly more general question: for two given subsets $\Simg$
and~$\Sorg$ with the same number of elements, does there exist a
permutation $\grg \in \GrG$ with $\grg(\Simg) \lexsmaller \Sorg$?  The
answer is certainly ``no'' if the subsets or their complements are
empty.  The answer is ``yes'' if the some element of $\Simg$ can be
mapped to something smaller than the minimal element $\sMin$
of~$\Sorg$.  The answer is ``no'' if all elements of $\Simg$ are
mapped by $\GrG$ to something strictly larger than $\sMin$.  And if
some $\grg \in \GrG$ maps some element $k \in \Simg$ exactly
to~$\sMin$, the answer is given by answering the same question
recursively for $\grg(\Simg) \setminus \{\sMin\}$,
$\Sorg \setminus \{\sMin\}$, and the group~$\pointStab{\GrG}{\sMin}$.
Switch tables allow to run this recursive algorithm without the
administration of new stabilizer groups.  The new adaption is based on
the following.

\begin{lemma}
  \label{thm:char-lex-min-st}
  Consider a switch table $\switchPerm{\cdot}{\cdot}$ for a subgroup
  $\GrG$ of~$\symGroup{n}$. Let $1 \le i \le n$, and let
  $\pointStab{\GrG}{i-1}$ be the point-wise stabilizer of
  $\indexSet{i-1}$ in $\GrG$. Moreover, let $\Sorg$ and $\Simg$ be
  subsets of~$\indexSet{n} \setminus \indexSet{i - 1}$ with identical
  cardinality.  Then:
  \begin{enumerate}[leftmargin=*, label=(\roman*)]
  \item\label{itm:char-lex-min-st:empty} If $\Sorg = \emptyset$ or
    $\Sorg = \indexSet{n} \setminus \indexSet{i - 1}$, then there is
    no permutation $\grg \in \pointStab{\GrG}{i-1}$ with
    $\grg(\Simg) \lexsmaller \Sorg$.
  \item\label{itm:char-lex-min-st:trivial-group} If
    $\GrG$ is the trivial group or $i > \maxElem{\effRowSet}$, then
    there is a permutation $\grg \in \pointStab{\GrG}{i-1}$ with
    $\grg(\Simg) \lexsmaller \Sorg$ if and only if $\Simg \lexsmaller
    \Sorg$. 
  \item\label{itm:char-lex-min-st:member-row} If $i \in \Sorg$, then:
    \begin{enumerate}[label=(\alph*)]
    \item\label{itm:char-lex-min-st:member-row:id-recursive} If $i \in \Simg$
      and there is a permutation $\grg' \in \pointStab{\GrG}{i}$
      with
      $\grg' \bigl(\Simg \setminus \{ i \})\bigr)
      \lexsmaller \Sorg \setminus \{ i \}$, or
    \item\label{itm:char-lex-min-st:member-row:recursive} if there is
      a non-trivial switch $\switchPerm{i}{j_i}$ with $j_i \in \Simg$
      and a permutation $\grg' \in \pointStab{\GrG}{i}$ with
      $\grg' \bigl(\switchPerm{i}{j_i}(\Simg \setminus \{j_i\})\bigr)
      \lexsmaller \Sorg \setminus \{ i \}$,
    \end{enumerate}
    then there is a permutation $\grg \in \pointStab{\GrG}{i-1}$ with
    $\grg(\Simg) \lexsmaller \Sorg$.  If none of these two cases occurs,
    then there is no permutation $\grg \in \pointStab{\GrG}{i-1}$ with
    $\grg(\Simg) \lexsmaller \Sorg$.
  \item\label{itm:char-lex-min-st:non-member-row} If $i \notin \Sorg$, then:
    \begin{enumerate}[label=(\alph*)]
    \item\label{itm:char-lex-min-st:non-member-row:id} If $i \in \Simg$,
      or
    \item\label{itm:char-lex-min-st:non-member-row:one-switch}
      $\effColSet(i) \cap \Simg \neq \emptyset$, or
    \item\label{itm:char-lex-min-st:non-member-row:id-recursive} there
      is a permutation $\grg' \in \pointStab{\GrG}{i}$ with
      $\grg' \bigl(\Simg\bigr) \lexsmaller \Sorg$, or
    \item\label{itm:char-lex-min-st:non-member-row:recursive} there is
      a switch $\switchPerm{i}{j_i}$ with
      $j_i \notin \Simg$ and a permutation
      $\grg' \in \pointStab{\GrG}{i}$ with
      $\grg' \bigl(\switchPerm{i}{j_i}(\Simg)\bigr) \lexsmaller
      \Sorg$,
    \end{enumerate}
    then there is a permutation $\grg \in \pointStab{\GrG}{i-1}$ with
    $\grg(\Simg) \lexsmaller \Sorg$.  If none of these four cases
    occurs, then there is no permutation
    $\grg \in \pointStab{\GrG}{i-1}$ with
    $\grg(\Simg) \lexsmaller \Sorg$.
  \end{enumerate}

  In particular, for $i = 1$ and $\Simg = \Sorg$ these conditions
  characterize whether there is a permutation $\grg \in \GrG$ with
  $\grg(\Sorg) \lexsmaller \Sorg$.
\end{lemma}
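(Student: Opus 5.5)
The plan is to base everything on a coset decomposition of the point-wise stabilizers induced by the switch table, and then to handle each item of Lemma~\ref{thm:char-lex-min-st} by comparing lexicographic order through \eqref{eq:lexsmaller}.

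\textbf{Step 1 (decomposition).} Show that
\[
  \pointStab{\GrG}{i-1} = \pointStab{\GrG}{i} \cdot \bigl(\{\id\} \cup \{\switchPerm{i}{j} : j \in \effColSet(i)\}\bigr).
\]
For the inclusion ``$\supseteq$'', every switch $\switchPerm{i}{j}$ fixes $\indexSet{i-1}$ by definition, so every such product lies in $\pointStab{\GrG}{i-1}$. For ``$\subseteq$'', take $\grg \in \pointStab{\GrG}{i-1}$ and put $j := \grg^{-1}(i)$; since $\grg$ fixes $\indexSet{i-1}$, we have $j \ge i$.
\begin{itemize}
\item If $j = i$, then $\grg \in \pointStab{\GrG}{i}$.
\item If $j > i$, then $\grg$ itself witnesses that a permutation of the required kind exists, so $\switchPerm{i}{j}$ is non-trivial and $j \in \effColSet(i)$. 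Then $\grg' := \grg \cdot \switchPerm{i}{j}^{-1}$ fixes $\indexSet{i}$, and $\grg = \grg' \cdot \switchPerm{i}{j}$.
\end{itemize}
As a by-product, if no row $\ge i$ is effective, induction over $i$ gives $\pointStab{\GrG}{i-1} = \{\id\}$. This settles \ref{itm:char-lex-min-st:trivial-group}.

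\textbf{Step 2 (item \ref{itm:char-lex-min-st:empty}).} Every $\grg \in \pointStab{\GrG}{i-1}$ maps $\indexSet{n}\setminus\indexSet{i-1}$ onto itself and preserves cardinalities. Hence if $\Sorg$ is empty or the whole of $\indexSet{n}\setminus\indexSet{i-1}$, then $\grg(\Simg) = \Sorg$ is forced, and $\grg(\Simg)$ cannot be lex-smaller.

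\textbf{Step 3 (item \ref{itm:char-lex-min-st:member-row}, $i \in \Sorg$).} All sets involved avoid $\indexSet{i-1}$, so $i$ is the smallest possible element.
\begin{itemize}
\item If $i \notin \grg(\Simg)$, then $\min(\grg(\Simg)\symdiff\Sorg) = i \in \Sorg$, so by \eqref{eq:lexsmaller} $\grg(\Simg)$ is not lex-smaller. A lex-decreasing $\grg$ therefore needs $j = \grg^{-1}(i) \in \Simg$.
\item If $i$ lies in both sets, removing $i$ from both does not change their symmetric difference, so $\grg(\Simg) \lexsmaller \Sorg$ iff $\grg(\Simg)\setminus\{i\} \lexsmaller \Sorg\setminus\{i\}$.
\item Write $\grg = \grg'\cdot s$ with $s = \id$ (case $j = i$) or $s = \switchPerm{i}{j}$ (case $j > i$), as in Step~1. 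Then $\grg(\Simg)\setminus\{i\} = \grg'\bigl(s(\Simg\setminus\{j\})\bigr)$, and $s(\Simg\setminus\{j\})$ is a subset of $\indexSet{n}\setminus\indexSet{i}$.
\end{itemize}
The two choices of $s$ give exactly cases \ref{itm:char-lex-min-st:member-row:id-recursive} and \ref{itm:char-lex-min-st:member-row:recursive}. Conversely, any witness $\grg'$ in either case yields $\grg = \grg' s$ with the required property.

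\textbf{Step 4 (item \ref{itm:char-lex-min-st:non-member-row}, $i \notin \Sorg$).} If $i \in \grg(\Simg)$, then $\min(\grg(\Simg)\symdiff\Sorg) = i \in \grg(\Simg)$, so $\grg$ lex-decreases immediately. By Step~1 such a $\grg$ exists iff $\grg^{-1}(i) \in \Simg$ is possible, that is, iff $i \in \Simg$ or $\effColSet(i)\cap\Simg \neq \emptyset$. These are cases (a) and (b).

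Otherwise every decreasing $\grg$ has $j = \grg^{-1}(i) \notin \Simg$. Decompose $\grg = \grg' s$ as before. Since $s$ fixes $\indexSet{i-1}$, maps $j$ to $i$ and $j \notin \Simg$, the set $s(\Simg)$ avoids $\indexSet{i}$. The same holds for $\Sorg$, because $i \notin \Sorg$. So the question passes to $\pointStab{\GrG}{i}$ with these sets, which gives cases (c) and (d); the converse direction is again by composition.

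The final sentence of the lemma is the special case $i = 1$, $\pointStab{\GrG}{0} = \GrG$.

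The main obstacle is Step~1 together with the bookkeeping in Steps~3 and~4. One must ensure that the switch used is exactly $\switchPerm{i}{\grg^{-1}(i)}$ and that it is non-trivial precisely when needed. One must also check that the recursive sets really lie in $\indexSet{n}\setminus\indexSet{i}$ and have equal cardinality, so that the characterization applies one level deeper.
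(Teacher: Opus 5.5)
Your proposal is correct and follows the same route as the paper: you factor $\grg = \grg' \cdot \switchPerm{i}{j}$ (or $\grg = \grg'$) with $\grg' \in \pointStab{\GrG}{i}$, then split on whether $i \in \grg(\Simg)$ via \eqref{eq:lexsmaller}. The only difference is that you prove the one-step decomposition $\pointStab{\GrG}{i-1} = \pointStab{\GrG}{i}\cdot\bigl(\{\id\}\cup\{\switchPerm{i}{j} : j \in \effColSet(i)\}\bigr)$ directly from the switch-table definition, whereas the paper cites the full switch-product factorization from \cite{JordanJoswigKastner_Parallelenumerationtriangulations_2018}; this makes your argument self-contained, and it also handles the identity case explicitly in the converse of item (iii).
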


\begin{proof}
  There is a permutation $\grg \in \pointStab{\GrG}{i-1}$ with
  $\grg(\Simg) \lexsmaller \Sorg$ if and only if there is a switch
  product $\switchPerm{n}{j_n} \cdots \switchPerm{i}{j_i}$ with
  $\bigl(\switchPerm{n}{j_n} \cdots \switchPerm{i}{j_i}\bigr)(\Simg)
  \lexsmaller \Sorg$,
  by \cite{JordanJoswigKastner_Parallelenumerationtriangulations_2018}.

  Case~\ref{itm:char-lex-min-st:empty}
  and~\ref{itm:char-lex-min-st:trivial-group} are straight-forward.

  Case~\ref{itm:char-lex-min-st:member-row}\ref{itm:char-lex-min-st:member-row:id-recursive}
  is formally required to account for the identity switch in row~$i$;
  the argument is then the same as for the next case.  In
  case~\ref{itm:char-lex-min-st:member-row}\ref{itm:char-lex-min-st:member-row:recursive},
  consider the case when there is a non-trivial switch
  $\switchPerm{i}{j_i}$ with $j_i \in \Simg$ and a permutation
  $\grg' \in \pointStab{\GrG}{i}$ with
  $\grg' \bigl(\switchPerm{i}{j_i}(\Simg \setminus \{j_i\})\bigr)
  \lexsmaller \Sorg \setminus \{ i \}$. Consider
  $\grg := \grg' \cdot \switchPerm{i}{j_i}$.  Then,
  $\grg(j_i) = i = \minElem{\Sorg}$, since
  $\grg' \in \pointStab{\GrG}{i}$ stabilizes~$i$.  Because
  $j_i \in \Simg$, it follows that $\grg(\Simg) \lexsmaller \Sorg$ if
  and only if
  $\grg'(\Simg \setminus \{j_i\}) \lexsmaller \Sorg \setminus \{i\}$,
  which is the case by the choice of~$\grg'$.  If there is no
  non-trivial switch $\switchPerm{i}{j_i}$ with $j_i \in \Simg$, then
  all switch products $\switchPerm{n}{j_n} \cdots \switchPerm{i}{j_i}$
  map all elements of~$\Simg$ to elements strictly larger than
  $i = \minElem{\Sorg}$, and $\Sorg$ is therefore strictly lex-smaller
  than any subset in the $\pointStab{\GrG}{i-1}$-orbit of~$\Simg$.
  If, moreover, for all switches $\switchPerm{i}{j_i}$ with
  $j_i \in \Simg$ the $\pointStab{\GrG}{i}$-orbit of
  $\switchPerm{i}{j_i}(\Simg \setminus \{j_i\})$ contains no
  lex-smaller element than $\Sorg \setminus \{i\}$, then no switch
  product $\switchPerm{n}{j_n} \cdots \switchPerm{i}{j_i}$ with
  $j_i \in \Simg$ can lex-decrease~$\Simg$ below~$\Sorg$.

  In
  case~\ref{itm:char-lex-min-st:non-member-row}\ref{itm:char-lex-min-st:non-member-row:id}
  the minimal element of $\Sorg$ is strictly larger than the minimal
  element~$i$ of~$\Simg$, thus $\Simg \lexsmaller \Sorg$ so that
  $\grg = \id$ will do the job.  In
  case~\ref{itm:char-lex-min-st:non-member-row}\ref{itm:char-lex-min-st:non-member-row:one-switch}
  there is a switch $\switchPerm{i}{j_i}$ mapping an element
  of~$\Simg$ to $i < \minElem{\Sorg}$, so
  $\grg = \switchPerm{i}{j_i} = \id \cdots \id \cdot
  \switchPerm{i}{j_i}$ maps $\Simg$ to a lex-smaller subset
  than~$\Sorg$.  The
  cases~\ref{itm:char-lex-min-st:non-member-row}\ref{itm:char-lex-min-st:non-member-row:id-recursive}
  and \ref{itm:char-lex-min-st:non-member-row:recursive} are
  essentially analogous to
  case~\ref{itm:char-lex-min-st:member-row}\ref{itm:char-lex-min-st:member-row:id-recursive}
  and \ref{itm:char-lex-min-st:member-row:recursive}.

  The final assertion follows from backward induction on~$i$, rooted
  at the case $i = n$ with the trivial group~$\pointStab{\GrG}{n}$.
\end{proof}
Call the application of Lemma~\ref{thm:char-lex-min-st} the
\emph{modified switch-table method} for checking lexicographic
minimality of a subset in its orbit; see
Algorithm~\ref{alg:isLexMinModifiedSwitches} in
Section~\ref{sec:algorithm} for a possible implementation.

\section{Algorithms}
\label{sec:algorithm}

In this section, the following are introduced and analyzed:
\begin{itemize}
\item Variants of $\symLSRS$ (Algorithm~\ref{alg:symLSRS} in
  Section~\ref{sec:preliminaries}) supporting global and local
  auxiliary data, counting only maximal elements, minimal
  non-elements, and feasible symmetric subsets, resp., and utilizing
  \emph{semi-deciding} algorithms to prune the enumeration, i.e.,
  roughly speaking, algorithms that correctly answer ``\true'' (i.e.,
  ``prune'') only if a subset is not a subset of an interesting set
  and ``\false'' (i.e., ``continue'') if a subset might or might not
  be a subset of an interesting set.
\item Three new ways to implement $\isLexMin$ in those
  variants. Recall that this subroutine checks whether or not a subset
  is lex-min in its orbit.
\end{itemize}

\subsection{Variants of Symmetric Lexicographic Subset Reverse Search}
\label{sec:vari-symm-lexic}

First, the algorithm $\symLSRSwithData$ is presented, which is a
slightly modified form of~$\symLSRS$.  The reason is that for most
non-trivial problems auxiliary data has to be computed and to be kept
in memory.  The are two principal ways auxiliary data can be made
available.  Data that depends on the current subset in the
reverse-search tree is stored in a node of the reverse-search tree
together with the subset.  Data independent of the subset can be
stored globally, e.g., together with the problem data.

\begin{algorithm}[t]
  \TitleOfAlgo{\symLSRSwithData{n, \downsetStyle{D}, \GrG,
      \globalData, \RSnode}}

  \KwIn{$n \in \mathbb{N}$, a downset $\downsetStyle{D}$ of
    $\powerSet{\indexSet{n}}$, a subgroup $\GrG$ of the automorphism
    group of $\downsetStyle{D}$, global data $\globalData$, a node
    $\RSnode = (\setStyle{S}, \localData)$ with canonical
    $\setStyle{S} \in \downsetStyle{D}$ and $\localData$ local data}

  \KwOut{the number of canonical $\setStyle{S}'$ in~$\downsetStyle{D}$
    with $\setStyle{S} \lexsubset \lexmin \setStyle{S}'$}
  
  \output $\setStyle{S}$ and $c \gets 1$
  \tcc*{count~$\setStyle{S}$}
  \For(\tcc*[f]{ordered traversal of new maximal elements}){
    $i = \maxElem{\setStyle{S}} + 1, \dots, n$
  }{
    $\setStyle{S}' \gets \setStyle{S} \cup \{ i \}$
    \tcc*{add a new element on the right}
    default initialize $\localData'$
    \tcc*{prepare a local data structure}
    $(\mathrm{answer}, \localData') \gets \isLexMin{$\setStyle{S}', \GrG, \globalData, \localData, \localData'$}$
    \tcc*{lex-min check}
    \If(\tcc*[f]{if new set is not lex-min in its orbit}){
      $\mathrm{answer} = \false$}{
      continue
      \tcc*{next loop element}
    }
    $(\mathrm{answer}, \localData') \gets \isInD{$\setStyle{S}', \downsetStyle{D}, \globalData, \localData, \localData'$}$
    \tcc*{membership check}
    \If(\tcc*[f]{if new set is not in $\downsetStyle{D}$}){
      $\mathrm{answer} = \false$}{
      continue
      \tcc*{next loop element}
    }      
    $\nextRSnode \gets (\setStyle{S}', \localData')$
    \tcc*{build new node}
    $c \gets c + \symLSRSwithData{n, \downsetStyle{D}, \GrG,
      \globalData, \nextRSnode}$
    \tcc*{recurse}
  }
  \Return c\;
  
  \caption[Symmetric Lexicographic Reverse Search for Orbits of
  Subsets with Data]{A variant of symmetric lexicographic subset
    reverse search with explicit use of global and local auxiliary
    data}
  \label{alg:symLSRSwithData}  
\end{algorithm}

To keep track of this in the following, specify by
$\RSnode = (\setStyle{S}_{\RSnode}, \localData_{\RSnode})$ a node in the
reverse-search tree consisting of a subset
$\setStyle{S}_{\RSnode} \in \powerSet{\indexSet{n}}$ and a subset-specific
collection of data~$\localData_{\RSnode}$, which formally is just a tupel
of mathematical objects.  The global collection of data is denoted
by~$\globalData$.
Any implementation of $\symLSRSwithData$ must specify the exact
structure of $\localData$ and~$\globalData$.  It is desirable that the
local data for a new node can be computed during one or both of the
rather expensive subroutines $\isLexMin$ and $\isInD$. A possible such
layout can be seen in Algorithm~\ref{alg:symLSRSwithData}.  The
run-time complexity depends on the run-time complexities of the
subroutines and can be derived readily from the underlying reverse
search structure -- it is in
$O\bigl(n(\timeComplexity(\isLexMin) + \timeComplexity(\isInD))
\abs{\setOrbits{\downsetStyle{D}}{\GrG}}\bigr)$, assuming initializing
local data and building a new node are dominated by the other
subroutines.

At times, not the cardinality of the downset $\downsetStyle{D}$ up to
symmetry is of interest but only the number of its maximal elements.
This is, e.g., the case for the enumeration of cocircuits up to
symmetry.  In such situations, the downset $\downsetStyle{D}$ is only
implicitly defined as the downset of all subsets of the subsets one is
really interested in.  This models the process building interesting
objects from scratch element-by-element.  In the ordered
reverse-search tree of subsets considered in the algorithms so far,
the added elements are always larger than the current maximal element.
Therefore, the following notions are defined:
\begin{definition}
  \label{def:expansion}
  For a subset $\setStyle{S} \in \downsetStyle{D}$ an \emph{expansion
    of $\setStyle{S}$} is an element
  $i \in \indexSet{n} \setminus \setStyle{S}$ so that
  $\setStyle{S} \cup \{i\} \in \downsetStyle{D}$.  A
  \emph{right-expansion} is an expansion $i$ with
  $i > \max \setStyle{S}$.  The subset $\setStyle{S}$ is
  \emph{maximal} if there is no expansion for it, and it is
  \emph{right-maximal} if there is no right-expansion for it.  The
  subset~$\setStyle{S}$ is \emph{right-completable} if there is a set
  $\setStyle{S}'$ that is maximal in~$\downsetStyle{D}$ so that
  $\setStyle{S}'$ lex-contains~$\setStyle{S}$.
\end{definition}

By definition, each right-expansion of a set lex-contains that set.
Algorithm~\ref{alg:symLSRSMaxwithData} shows the adapted algorithm
enumerating the maximal elements in a downset. It might appear that
the leaves of the enumeration tree automatically correspond to maximal
subsets in~$\downsetStyle{D}$.  This, however, is unfortunately not
the case -- the leaves are, by construction, only right-maximal:
Consider an arbitrary maximal non-empty subset $\setStyle{S}$
in~$\downsetStyle{D}$.  Then, e.g., the enumeration branch starting
with the second smallest element in~$\setStyle{S}$ will have
$\setStyle{S} \setminus \min \setStyle{S}$ as one of its leaves, which
is obviously not maximal in~$\setStyle{D}$. Thus, a maximality check
has to be performed on the leaves, and non-maximal leaves are simply
ignored for the count.  Sometimes the reverse-search tree can be
pruned by semi-deciding whether or not a subset can be
right-completed.  The subroutine $\semiIsNotRightComp$ in
Algorithm~\ref{alg:symLSRSMaxwithData} takes care of that.  Whenever
it can be detected locally that an expansion is unavoidable on the
path to a maximal subset, one can skip the traversal of supersets not
containing it.  This idea is supported by the subroutine
$\isInEachMax$.  The run-time complexity is in
$O\bigl((n(\timeComplexity(\isLexMin) + \timeComplexity(\isInD) +
\timeComplexity(\semiIsNotRightComp) + \timeComplexity(\isInEachMax))
+ \timeComplexity(\isMaxInD))
\abs{\setOrbits{\nonPrunables}{\GrG}}\bigr)$, where $\nonPrunables$ is
the subset of~$\downsetStyle{D}$ containing subsets for which
$\semiIsNotRightComp$ returns ``\false''.

\begin{algorithm}[t]
  \TitleOfAlgo{\symLSRSMaxwithData{n, \downsetStyle{D}, \GrG,
      \globalData, \RSnode}}

  \KwIn{$n \in \mathbb{N}$, a downset $\downsetStyle{D}$ of
    $\powerSet{\indexSet{n}}$, a subgroup $\GrG$ of the automorphism
    group of $\downsetStyle{D}$, global data $\globalData$, a node
    $\RSnode = (\setStyle{S}, \localData)$ with a canonical
    $\setStyle{S} \in \downsetStyle{D}$ and
    $\localData$ local data}

  \KwOut{the number of canonical $\setStyle{S}'$ maximal
  in~$\downsetStyle{D}$ with $\setStyle{S} \lexsubset \setStyle{S}'$}
  
  $c \gets 0$
  \tcc*{do not count~$\setStyle{S}$ yet}
  \For(\tcc*[f]{ordered traversal of new maximal elements}){
    $i = \maxElem{\setStyle{S}} + 1, \dots, n$
  }{
    $\setStyle{S}' \gets \setStyle{S} \cup \{ i \}$
    \tcc*{add a new element on the right}
    default initialize $\localData'$
    \tcc*{prepare a local data structure}
    $(\mathrm{answer}, \localData') \gets \isLexMin{$\setStyle{S}', \GrG, \globalData, \localData, \localData'$}$
    \tcc*{lex-min check}
    \If(\tcc*[f]{if new set is not lex-min in its orbit}){
      $\mathrm{answer} = \false$}{
      continue
      \tcc*{next loop element}
    }
    $(\mathrm{answer}, \localData') \gets \isInD{$\setStyle{S}', \downsetStyle{D}, \globalData, \localData, \localData'$}$
    \tcc*{membership check}
    \If(\tcc*[f]{if new set is not in $\downsetStyle{D}$}){
      $\mathrm{answer} = \false$}{
      continue
      \tcc*{next loop element}
    }      
    $(\mathrm{answer}, \localData') \gets \semiIsNotRightComp{$\setStyle{S}', \downsetStyle{D}, \globalData, \localData, \localData'$}$
    \tcc*{completability}
    \If(\tcc*[f]{if new set is not right-completable}){
      $\mathrm{answer} = \true$}{
      continue
      \tcc*{next loop element}
    }      
    $\nextRSnode \gets (\setStyle{S}', \localData')$
    \tcc*{build new node}
    $c \gets c + \symLSRSMaxwithData{n, \downsetStyle{D}, \GrG,
      \globalData, \nextRSnode}$
    \tcc*{recurse}
    $(\mathrm{answer}, \localData') \gets \isInEachMax{$\setStyle{S}', \downsetStyle{D}, \globalData, \localData, \localData'$}$
    \tcc*{check unavoidability}
    \If(\tcc*[f]{if new set is in each right-completion}){
      $\mathrm{answer} = \true$}{
      break
      \tcc*{exit the loop}
    }    
  }
  \If(\tcc*[f]{if supersets in~$\downsetStyle{D}$ found}){
    $c > 0$
  }{
    \Return $c$
    \tcc*{return no.~of max.~supersets of~$\setStyle{S}$ in~$\downsetStyle{D}$}
  }
  \If(\tcc*[f]{if $\setStyle{S}$ is maximal in~$\downsetStyle{D}$}){
    $\isMaxInD{\setStyle{S}, \globalData, \localData}$
  }{
    \output $\setStyle{S}$ and \Return $1$
    \tcc*{return the count for~$\setStyle{S}$}
  }
  \caption[Symmetric Lexicographic Reverse Search for Orbits of
  Maximal Subsets]{A variant of symmetric lexicographic
    subset reverse search for maximal subsets with explicit use of
    global and local auxiliary data}
  \label{alg:symLSRSMaxwithData}  
\end{algorithm}

Other applications are rather interested in objects that can be better
represented as the minimal subsets \emph{not} contained in a
downset. The enumeration of all circuits up to symmetry is an example.
\begin{definition}
  For a subset
  $\setStyle{R} \in \powerSet{\indexSet{n}} \setminus
  \downsetStyle{D}$ a \emph{reduction of~$\setStyle{R}$} is an element
  $i \in \setStyle{R}$ so that
  $\setStyle{R} \setminus \{i\} \in \powerSet{\indexSet{n}} \setminus
  \downsetStyle{D}$. The subset $\setStyle{R}$ is \emph{co-minimal
    w.r.t.~$\downsetStyle{D}$} if it contains no reduction.  It is
  \emph{right-co-minimal} if its maximal element is not a reduction.
  A subset $\setStyle{S}$ in~$\downsetStyle{D}$ is \emph{right-exitable
    w.r.t.~$\downsetStyle{D}$} if there is a set~$\setStyle{R}$ that
  is co-minimal w.r.t.~$\downsetStyle{D}$ so that $\setStyle{R}$
  lex-contains~$\setStyle{S}$.
\end{definition}
The idea for a reverse-search algorithm is to adapt
$\symLSRSMaxwithData$: add elements until the first non-member is met
and check afterwards if the resulting set is a minimal
non-member. Algorithm~\ref{alg:symLSRSCominwithData} shows a possible
layout for this.  The subroutine $\semiIsNotRightExit$ can help to
prune the tree.  For the enumeration of circuits as in
Section~\ref{sec:application-circuits} there is no sensible such
option known, but for other applications there may be one. The run-time
complexity is in
$O\bigl(n(\timeComplexity(\isLexMin) + \timeComplexity(\isInD) +
\timeComplexity(\semiIsNotRightExit) + \timeComplexity(\isCominOfD))
\abs{\setOrbits{\nonPrunables}{\GrG}}\bigr)$, where $\nonPrunables$ is
the subset of~$\downsetStyle{D}$ containing subsets for which
$\semiIsNotRightExit$ returns ``\false''.

\begin{algorithm}[t]
  \TitleOfAlgo{\symLSRSCominwithData{n, \downsetStyle{D}, \GrG,
      \globalData, \RSnode}}

  \KwIn{$n \in \mathbb{N}$, a downset $\downsetStyle{D}$ of
    $\powerSet{\indexSet{n}}$, a subgroup $\GrG$ of the automorphism
    group of $\downsetStyle{D}$, global data $\globalData$, a node
    $\RSnode = (\setStyle{S}, \localData)$ with canonical
    $\setStyle{S} \in \downsetStyle{D}$ and $\localData$ local data}

  \KwOut{the number of canonical $\setStyle{S}'$ cominimal
    w.r.t.~$\downsetStyle{D}$ with
    $\setStyle{S} \lexsubset \setStyle{S}'$}

  $c \gets 0$
  \tcc*{as a member, $\setStyle{S}$ cannot be co-minimal}
  \For(\tcc*[f]{ordered traversal of new maximal elements}){
    $i = \maxElem{\setStyle{S}} + 1, \dots, n$
  }{
    $\setStyle{S}' \gets \setStyle{S} \cup \{ i \}$
    \tcc*{add a new element on the right}
    default initialize $\localData'$
    \tcc*{prepare a local data structure}
    $(\mathrm{answer}, \localData') \gets \isLexMin{$\setStyle{S}', \GrG, \globalData, \localData, \localData'$}$
    \tcc*{lex-min check}
    \If(\tcc*[f]{if new set is not lex-min in its orbit}){
      $\mathrm{answer} = \false$}{
      continue
      \tcc*{next loop element}
    }
    $(\mathrm{answer}, \localData') \gets \isInD{$\setStyle{S}', \downsetStyle{D}, \globalData, \localData, \localData'$}$
    \tcc*{membership check}
    \If(\tcc*[f]{if new set is right-co-minimal}){
      $\mathrm{answer} = \false$}{
      \If(\tcc*[f]{if set is co-minimal}){
        $\isCominOfD{$\setStyle{S}', \globalData, \localData, \localData'$}$
      }{
        \output $\setStyle{S}'$ and $c \gets c + 1$ 
        \tcc*{count~$\setStyle{S}'$}
        continue
        \tcc*{next loop element}
      }
    }      
    $(\mathrm{answer}, \localData') \gets \semiIsNotRightExit{$\setStyle{S}', \downsetStyle{D}, \globalData, \localData, \localData'$}$
    \tcc*{exitability}
    \If(\tcc*[f]{if new set is not right-exitable}){
      $\mathrm{answer} = \true$}{
      continue
      \tcc*{next loop element}
    }      
    $\nextRSnode \gets (\setStyle{S}', \localData')$
    \tcc*{build new node}
    $c \gets c + \symLSRSCominwithData{n, \downsetStyle{D}, \GrG,
      \globalData, \nextRSnode}$
    \tcc*{recurse}
  }
  \Return $c$\;
  \caption[Symmetric Lexicographic Reverse Search for Orbits of
  Co-Minimal Subsets]{A variant of symmetric lexicographic
    subset reverse search for co-minimal subsets with explicit use of
    global and local auxiliary data}
  \label{alg:symLSRSCominwithData}  
\end{algorithm}

Occasionally, the representation of interesting objects as the maximal
or co-minimal elements of a downset leads to a very difficult
membership test.  For example, it is NP-complete to decide whether or
not a set of simplices is a subset of any triangulation. See
Section~\ref{sec:application-triangulations} for more details.  Then,
it can be preferable to travers an auxiliary downset with simpler
membership test that contains all feasible subsets and check for
extendability to a feasible subset separately.

In the following, distinguish for a subset $\setStyle{S}$ of a
feasible subset between an \emph{expansion} (see
Definition~\ref{def:expansion}) of~$\setStyle{S}$ by an element, which
remains in~$\downsetStyle{D}$, and an \emph{extension}
of~$\setStyle{S}$ by an element, which remains a subset of a feasible
subset.  Most interesting is the case where the feasible subsets
form an \emph{antichain} in~$\indexSet{n}$, i.e., no feasible subset
contains any other feasible subset.
\begin{definition}
  \label{def:extension}
  Let $\setsysStyle{F}$ be an antichain in
  $\powerSet{\indexSet{n}}$. Members of~$\setsysStyle{F}$ are called
  \emph{feasible subsets}. Each $\setStyle{S} \in \setsysStyle{F}$ is
  said to \emph{represent a solution}.  \emph{Extendable subsets} are
  subsets of feasible subsets.  A subset $\setStyle{S}$ is
  \emph{right-extendable} if it is extendable to a feasible subset
  $\setStyle{S}'$ so that $\setStyle{S}'$ lex-contains~$\setStyle{S}$.
  In this case, $\setStyle{S}'$ is called a \emph{feasible
    right-completion} of~$\setStyle{S}$.
\end{definition}

\begin{algorithm}[t]
  \TitleOfAlgo{\symLSRSFeaswithData{n, \downsetStyle{D},
      \setsysStyle{F}, \GrG, \globalData, \RSnode}}
  
  \KwIn{$n \in \mathbb{N}$, a downset $\downsetStyle{D}$ of
    $\powerSet{\indexSet{n}}$, a subset
    $\setsysStyle{F} \subseteq \downsetStyle{D}$ of feasible subsets,
    a subgroup $\GrG$ of the automorphism group of $\downsetStyle{D}$,
    global data $\globalData$, a node
    $\RSnode = (\setStyle{S}, \localData)$ with canonical
    $\setStyle{S} \in \downsetStyle{D}$ and $\localData$ local data
    encompassing the right-expansion sequence
    $\setStyle{E}(\setStyle{S})$}
  
  \KwOut{the number of canonical $\setStyle{S}'$ feasible
    in~$\downsetStyle{D}$ with
    $\setStyle{S} \lexsubset \setStyle{S}'$}

  \If(\tcc*[f]{if $\setStyle{S}$ is feasible}){
    $\isFeasible{$\setStyle{S}, \setsysStyle{F}, \globalData, \localData$}$
  }{
    \output $\setStyle{S}$ and \Return $1$
    \tcc*{count $\setStyle{S}$}
  }
  $c \gets 0$
  \tcc*{do not count~$\setStyle{S}$}
  \For(\tcc*[f]{ordered traversal of right-expansions}){
    $j = 1, \dots, \abs{\setStyle{E}(\setStyle{S})}$
  }{
    default initialize $\localData'$
    \tcc*{prepare a local data structure}
    $(\mathrm{break}, \setStyle{S}', \localData') \gets \expand{$\setStyle{S}, \setStyle{E}(\setStyle{S})_j, \globalData, \localData, \localData'$}$
    \tcc*{expand to the right}
    \If(\tcc*[f]{if expansion returns to stop}){
      $\mathrm{break} = \true$}{
      break
      \tcc*{exit loop}
    }    
    $(\mathrm{answer}, \localData') \gets \isLexMin{$\setStyle{S}', \GrG, \globalData, \localData, \localData'$}$
    \tcc*{lex-min check}
    \If(\tcc*[f]{if new set is not lex-min in its orbit}){
      $\mathrm{answer} = \false$}{
      continue
      \tcc*{next loop element}
    }
    \tcc{incomplete right-extendability check:}
    $(\mathrm{answer}, \localData') \gets
    \semiIsNotRightExt{$\setStyle{S}', \downsetStyle{D},
    \setsysStyle{F}, \globalData, \localData, \localData'$}$ \;
    \If(\tcc*[f]{if new set is certainly not right-ext.}){
      $\mathrm{answer} = \true$}{
      continue
      \tcc*{next loop element}
    }      
    $\nextRSnode \gets (\setStyle{S}', \localData')$
    \tcc*{build new node}
    $c \gets c + \symLSRSFeaswithData{n, \downsetStyle{D}, \GrG,
      \globalData, \nextRSnode}$
    \tcc*{recurse}
  }
  \Return $c$ \;
  \caption[Symmetric Lexicographic Reverse Search for Orbits of
  Feasible Subsets]{A variant of symmetric lexicographic
    subset reverse search for feasible subsets with pruning by an
    incomplete extendability check}
  \label{alg:symLSRSFeaswithPruning}  
\end{algorithm}

If only a small fraction of the non-elements in a subset lead to an
element of the downset, then the iteration over all new maximal
elements followed by membership tests can lead to an unnecessarily
large number of loop traversals.  In the enumeration of triangulations
this case occurs, since, in general, out of the many simplices not in
a partial triangulation only a few can be added.  In such cases it may
be possible to compute all possible right-expansions of a subset
before the loop and iterate only over those. The following notion
supports this idea.
\begin{definition}
  \label{def:expansion-sequence}
  The \emph{right-expansion sequence} $\setStyle{E}(\setStyle{S})$
  of~$\setStyle{S}$ is the sequence
  $i_1 < \dots < i_{\abs{\setStyle{E}(\setStyle{S})}}$ of all $i_k$,
  $k = 1, \dots, \abs{\setStyle{E}(\setStyle{S})}$, with
  $i_1 > \max S$ and $\setStyle{S} \cup \{i_k\} \in \downsetStyle{D}$.
\end{definition}

For the enumeration of feasible subsets with expensive exact
extendability check it is desirable to prune the enumeration tree as
soon as one knows that a subset is not right-extendable. This paper
suggests an incomplete check that returns $\true$ if a subset is not
right-extendable and $\false$ if the subset might be right-extendable.
Algorithm~\ref{alg:symLSRSFeaswithPruning} shows a possible
pseudo-code for this method that will be used in the enumeration of
all triangulations of a point configuration in
Section~\ref{sec:application-triangulations}.  If $\maxexpcard$ is the
largest cardinality of any expansion sequence, then the run-time
complexity is in
$O\bigl((\maxexpcard(\timeComplexity(\isLexMin) +
\timeComplexity(\expand) + \timeComplexity{\semiIsNotRightExt}) +
\timeComplexity(\isFeasible))
\abs{\setOrbits{\nonPrunables}{\GrG}}\bigr)$, where $\nonPrunables$ is
the subset of~$\downsetStyle{D}$ containing subsets for which
$\semiIsNotRightExt$ returns ``\false''.

If membership in the downset~$\downsetStyle{D}$ is governed by a
\emph{compatibility graph} $\GG = (\indexSet{n}, \EE)$ with
$\GrG(\GG) = \GG$, then more can be done as follows.
\begin{definition}
  Let $\GG = (\indexSet{n}, \EE)$ be a simple graph.  The
  \emph{downset $\downsetStyle{D}$ induced by the compatibility
    graph~$\GG$} is defined as the downset of all cliques in~$\GG$.
  That is:
  \begin{equation}
    \downsetStyle{D}
    :=
    \bigl\{
    \setStyle{S} \subseteq \indexSet{n}
    :
    \text{$\{i, j\} \in \EE$ for all $i,j \in \setStyle{S}$}
    \bigr\}.
  \end{equation}
\end{definition}
For a downset induced by a compatibility graph, the expansion sequence
is easy to compute from~$\GG$.
\begin{observation}
  An expansion sequence for a downset induced by a compatibility
  graph~$\GG$ can be generated as follows, where $\adjSet(i)$ is the
  neighborhood of~$i$ in~$\GG$.
  \begin{equation}
    \label{eq:expansion-via-neighborhood}
    \setStyle{E}(\setStyle{S})
    =
    \bigcap_{i \in \setStyle{S}} \adjSet(i)
    =
    \setStyle{E}(\setStyle{S}\setminus\max\setStyle{S})
    \cap \adjSet(\max \setStyle{S}).
  \end{equation}
\end{observation}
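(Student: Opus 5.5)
The plan is to unfold the definition of the downset induced by $\GG$ and compare the two sides of \eqref{eq:expansion-via-neighborhood} elementwise. Two conventions have to be fixed first. For a right-expansion sequence (Definition~\ref{def:expansion-sequence}) the intersection is read as restricted to indices larger than $\max\setStyle{S}$; for a plain expansion set it is read without that restriction. Since $\GG$ is simple, $i \notin \adjSet(i)$, so every element of $\bigcap_{i\in\setStyle{S}}\adjSet(i)$ automatically lies outside $\setStyle{S}$ whenever $\setStyle{S}\neq\emptyset$. For $\setStyle{S}=\emptyset$ the empty intersection is taken to be $\indexSet{n}$. This agrees with the definition, because every singleton is a clique and hence lies in $\downsetStyle{D}$.

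For the first equality, let $\setStyle{S}\in\downsetStyle{D}$ and $j\notin\setStyle{S}$. By definition, $\setStyle{S}\cup\{j\}\in\downsetStyle{D}$ holds exactly when all pairs in $\setStyle{S}\cup\{j\}$ are edges. Pairs inside $\setStyle{S}$ are edges already, because $\setStyle{S}$ is a clique. So the condition reduces to $\{i,j\}\in\EE$ for all $i\in\setStyle{S}$, which says $j\in\adjSet(i)$ for all $i \in \setStyle{S}$, i.e.\ $j\in\bigcap_{i\in\setStyle{S}}\adjSet(i)$. Conversely, any $j$ in the intersection is not in $\setStyle{S}$ by simplicity, and by the same computation $\setStyle{S}\cup\{j\}$ is a clique. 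Imposing $j>\max\setStyle{S}$ on both sides yields the right-expansion version. The ordering requirement of a sequence is met simply by listing the elements increasingly.

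The second equality is associativity of intersection. Split $\setStyle{S}=(\setStyle{S}\setminus\{\max\setStyle{S}\})\cup\{\max\setStyle{S}\}$ and apply the first equality to $\setStyle{S}\setminus\{\max\setStyle{S}\}$, which is again a clique and so lies in $\downsetStyle{D}$. In the right-expansion reading, intersecting with $\adjSet(\max\setStyle{S})$ and keeping only indices above $\max\setStyle{S}$ is consistent: elements of $\setStyle{E}(\setStyle{S}\setminus\max\setStyle{S})$ are larger than $\max(\setStyle{S}\setminus\max\setStyle{S})$, and those not exceeding $\max\setStyle{S}$ are discarded.

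This is what makes the recursive update in the algorithm work: at each node only one neighborhood intersection is needed.

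The only real obstacle is bookkeeping rather than mathematics. One must fix the reading of the identity, either as a set of expansions or with the implicit filter $j>\max\setStyle{S}$, and handle the empty-set base case, where $\adjSet$ must be replaced by $\indexSet{n}$. Everything else is a direct unfolding of the clique definition.
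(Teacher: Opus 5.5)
Your proposal is correct and is exactly the definitional unfolding the paper leaves implicit: the paper states this as an Observation with no proof beyond the remark that, for a downset induced by a compatibility graph, the expansion sequence is easy to compute from~$\GG$. Your extra bookkeeping is a real gain in precision. For the right-expansion sequence of Definition~\ref{def:expansion-sequence}, both equalities hold only after restricting to indices $j > \max\setStyle{S}$, because the bare intersection $\setStyle{E}(\setStyle{S}\setminus\max\setStyle{S}) \cap \adjSet(\max\setStyle{S})$ can contain indices lying strictly between $\max(\setStyle{S}\setminus\max\setStyle{S})$ and $\max\setStyle{S}$. They also require the convention that the empty intersection is $\indexSet{n}$.
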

Consequently, if the expansion sequence is maintained in the local
auxiliary data, then the next expansion sequence can be computed by a
single set intersection.  This essentially encodes how cliques in a
graph can be enumerated up to symmetry by extending them node-by-node.
However, more is possible.  In the following, the focus is on feasible
subsets that have prescribed symmetries.
\begin{definition}
  \label{def:symLsymSRS-notions}
  Let $\downsetStyle{D}$ be a downset in $\powerSet{\indexSet{n}}$
  induced by a compatibility graph $\GG = (\indexSet{n}, \EE)$ and
  $\GrG$ be a subgroup of the automorphism group of~$\GG$.  Elements
  $i$ and $j$ with $\{i, j\} \in \EE$ are \emph{compatible}.  For a
  subgroup $\groupStyle{H}$ of~$\symGroup{n}$, a subset $\setStyle{S}$
  in $\downsetStyle{D}$ is \emph{$\groupStyle{H}$-invariant} if for
  each $\grh \in \groupStyle{H}$ one has
  $\grh(\setStyle{S}) = \setStyle{S}$.  An element
  $i \in \indexSet{n}$ is \emph{$\groupStyle{H}$-feasible} if its
  $\groupStyle{H}$-orbit is a clique in $\GG$, i.e., for each
  $\grh \in \groupStyle{H}$, either $\grh(i) = i$ or
  $\{\grh(i), i\} \in \EE$.  Some $\groupStyle{H}$-feasible elements
  $i, j \in \indexSet{n}$ are \emph{$\groupStyle{H}$-compatible} if
  the union of their $\groupStyle{H}$-orbits is a clique in~$\GG$,
  i.e., for each $\grh \in \groupStyle{H}$ and each
  $k, \ell \in \orbitSet{H}{i} \cup \orbitSet{H}{j}$ either
  $\grh(k) = \ell$ or $\{\grh(k), \ell\} \in \EE$.  The
  \emph{$\groupStyle{H}$-compatibility graph of $\GG$} is the graph
  $\GG_{\groupStyle{H}} = \bigl(\VV_{\groupStyle{H}},
  \EE_{\groupStyle{H}} \bigr)$ with
  $\VV_{\groupStyle{H}} := \{ i \in \indexSet{n} : \text{$i$ is
    $\groupStyle{H}$-feasible} \}$ and
  $\EE_{\groupStyle{H}} := \bigl\{ \{i, j\} \in \EE : \text{$i$ and
    $j$ are $\groupStyle{H}$-compatible} \bigr\}$.  A permutation
  $\grg \in \GrG$ is \emph{$\groupStyle{H}$-feasible}, if it is in the
  set-wise stabilizer of the $\groupStyle{H}$-feasible elements, i.e.,
  if for all $i \in \indexSet{n}$ one has that $\grg(i)$ is
  $\groupStyle{H}$-feasible if and only if $i$ is
  $\groupStyle{H}$-feasible.
\end{definition}
Assume next that each feasible $\setStyle{S} \in \downsetStyle{D}$ is
a maximal clique in~$\GG$ (not necessarily vice versa).  Then,
feasible subsets with prescribed symmetries can be enumerated as
follows:
\begin{theorem}
  \label{thm:symLsymSRS-main}
  Let $\downsetStyle{D}$ be a downset in $\powerSet{\indexSet{n}}$
  induced by a compatibility graph~$\GG = (\indexSet{n}, \EE)$ and
  $\GrG$ be a subgroup of the automorphism group of~$\GG$.  Assume
  that each feasible $\setStyle{S} \in \downsetStyle{D}$ is a maximal
  clique in~$\GG$.  Moreover, let $\groupStyle{H}$ be a subgroup
  of~$\symGroup{n}$, and let $\GG_{\groupStyle{H}}$ be the
  $\groupStyle{H}$-compatibility graph of~$\GG$.  Let $\setStyle{S}$
  be a feasible subset of pairwise $\groupStyle{H}$-compatible
  elements.  Then $\setStyle{S}$ is an $\groupStyle{H}$-invariant
  feasible subset.
  
  In particular, $\symLSRSFeaswithData$ applied to the downset
  $\downsetStyle{D}_{\groupStyle{H}}$ induced by the
  $\groupStyle{H}$-compatibility graph with unaltered feasibility
  check enumerates all $\groupStyle{H}$-invariant feasible subsets
  of~$\downsetStyle{D}$ up to $\groupStyle{H}$-feasible symmetries.
\end{theorem}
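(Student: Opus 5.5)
The main claim follows from maximality of feasible cliques; the algorithmic consequence follows by checking that the restricted enumeration meets the hypotheses of the earlier correctness argument. I would do these in order.

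\emph{Invariance.} Let $\setStyle{S}$ be feasible, so $\setStyle{S}$ is a maximal clique in~$\GG$, and let its elements be pairwise $\groupStyle{H}$-compatible. Put $\setStyle{S}^{\groupStyle{H}} := \bigcup_{i \in \setStyle{S}} \orbitSet{\groupStyle{H}}{i} \supseteq \setStyle{S}$. I claim $\setStyle{S}^{\groupStyle{H}}$ is a clique in~$\GG$. Take $k \in \orbitSet{\groupStyle{H}}{i}$ and $\ell \in \orbitSet{\groupStyle{H}}{j}$ with $i,j \in \setStyle{S}$ and $k \neq \ell$.
\begin{itemize}
\item If $i \neq j$, then $i,j$ are $\groupStyle{H}$-compatible, so $\orbitSet{\groupStyle{H}}{i} \cup \orbitSet{\groupStyle{H}}{j}$ is a clique. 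Hence $\{k,\ell\} \in \EE$; formally, apply the definition with $\grh = \id$.
\item If $i = j$, use that $i$ is $\groupStyle{H}$-feasible: its orbit is a clique.
\end{itemize}
Maximality of $\setStyle{S}$ now forces $\setStyle{S}^{\groupStyle{H}} = \setStyle{S}$, so $\grh(\setStyle{S}) \subseteq \setStyle{S}$ for every $\grh \in \groupStyle{H}$. Equality $\grh(\setStyle{S}) = \setStyle{S}$ follows because $\grh$ is a bijection and $\setStyle{S}$ is finite. The only delicate point is reading the clause ``either $\grh(k) = \ell$ or $\{\grh(k),\ell\} \in \EE$'' in Definition~\ref{def:symLsymSRS-notions} as saying that the union of orbits is a clique; specialising to $\grh = \id$ gives exactly that.

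\emph{Algorithmic part.} The argument has four steps.
\begin{enumerate}
\item \emph{Every $\groupStyle{H}$-invariant feasible $\setStyle{S}$ is a clique in $\GG_{\groupStyle{H}}$.} For $i \in \setStyle{S}$, invariance gives $\orbitSet{\groupStyle{H}}{i} \subseteq \setStyle{S}$. Since $\setStyle{S}$ is a clique, $i$ is $\groupStyle{H}$-feasible. Similarly, for $i,j \in \setStyle{S}$, both orbits lie in~$\setStyle{S}$, so $i,j$ are $\groupStyle{H}$-compatible. Hence $\setStyle{S} \in \downsetStyle{D}_{\groupStyle{H}}$.
\item \emph{Every feasible clique of $\GG_{\groupStyle{H}}$ is $\groupStyle{H}$-invariant.} This is the first part of the theorem. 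Together with step~1, the feasible members of $\downsetStyle{D}_{\groupStyle{H}}$ are exactly the $\groupStyle{H}$-invariant feasible subsets of~$\downsetStyle{D}$.
\item \emph{$\groupStyle{H}$-feasible symmetries act on $\GG_{\groupStyle{H}}$.} Let $\grg \in \GrG$ be $\groupStyle{H}$-feasible, so $\grg$ maps $\VV_{\groupStyle{H}}$ onto itself; it also preserves~$\EE$. Using Lemma~\ref{thm:lexmin-lemma}, the correctness of $\symLSRSFeaswithData$ for the group of $\groupStyle{H}$-feasible symmetries requires that this group preserve $\downsetStyle{D}_{\groupStyle{H}}$ and the feasibility predicate. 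Feasibility is preserved because $\GrG$ preserves the feasible sets of~$\downsetStyle{D}$.
\item \emph{Edges of $\GG_{\groupStyle{H}}$ are preserved.} I must show that if $i,j$ are $\groupStyle{H}$-compatible, then so are $\grg(i),\grg(j)$. This is the main obstacle: it holds when $\grg$ normalises $\groupStyle{H}$, since then $\grg$ maps $\groupStyle{H}$-orbits to $\groupStyle{H}$-orbits.
\end{enumerate}
For step~4 I would first try to derive the general case directly. If that fails, I would argue that the stated equivalence notion is the restriction to $\groupStyle{H}$-feasible symmetries that preserve $\GG_{\groupStyle{H}}$, and make that reading explicit. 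Once $\downsetStyle{D}_{\groupStyle{H}}$ is invariant under the acting group, the algorithm's completeness and non-redundancy for orbits of feasible sets transfer verbatim.
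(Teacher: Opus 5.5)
Your invariance argument is correct and is essentially the paper's proof. The paper fixes $\grh\in\groupStyle{H}$ and $i\in\setStyle{S}$ and shows that $\grh(i)$ equals or is adjacent to every element of $\setStyle{S}$, using $\groupStyle{H}$-feasibility of $i$ and $\groupStyle{H}$-compatibility of $i$ with each $j$. Maximality of the clique $\setStyle{S}$ then gives $\grh(i)\in\setStyle{S}$. You package the same facts as ``$\bigcup_{i\in\setStyle{S}}\orbitSet{\groupStyle{H}}{i}$ is a clique'' and apply maximality once. The paper's proof stops there and gives no argument for the ``in particular'' clause. Your steps 1--3 therefore go beyond it. This is especially true of step 1, which shows every $\groupStyle{H}$-invariant feasible set is a clique of $\GG_{\groupStyle{H}}$; that is exactly what completeness needs.

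Your step 4, however, cannot be derived in the stated generality. With $\groupStyle{H}$-feasible symmetries defined as the set-wise stabilizer of $\VV_{\groupStyle{H}}$ in $\GrG$, the clause is false. Here is a counterexample:
\begin{itemize}
\item \emph{Setup.} Let $\GG$ be the $4$-cycle $1,2,3,4$, with feasible sets its four edges (the maximal cliques). Let $\GrG=\autGroup(\GG)$, dihedral of order~$8$, and $\groupStyle{H}=\{\id,(1\,4)(2\,3)\}\le\autGroup(\GG)$.
\item \emph{The restricted graph.} Both $\groupStyle{H}$-orbits $\{1,4\}$ and $\{2,3\}$ are edges. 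Hence $\VV_{\groupStyle{H}}=\indexSet{4}$, and every element of $\GrG$ is $\groupStyle{H}$-feasible. On the other hand $\EE_{\groupStyle{H}}=\{\{1,4\},\{2,3\}\}$, because for $\{1,2\}$ and $\{3,4\}$ the orbit union is $\indexSet{4}$, which is not a clique.
\item \emph{Edges are not preserved.} The rotation $(1\,2\,3\,4)$ sends the $\GG_{\groupStyle{H}}$-edge $\{1,4\}$ to $\{1,2\}\notin\EE_{\groupStyle{H}}$. It does not normalize $\groupStyle{H}$.
\item \emph{The enumeration fails.} The two $\groupStyle{H}$-invariant feasible sets $\{1,4\}$ and $\{2,3\}$ form a single class. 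But the lex-min element of their $\GrG$-orbit is $\{1,2\}\notin\downsetStyle{D}_{\groupStyle{H}}$. So the lex-min check rejects $\{1,4\}$ below the node $\{1\}$, the other singletons are not canonical, and the enumeration outputs nothing.
\end{itemize}

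So drop the attempt at the general case and commit to your fallback. Use only $\groupStyle{H}$-feasible symmetries that normalize $\groupStyle{H}$; these map $\groupStyle{H}$-orbits to $\groupStyle{H}$-orbits and hence preserve $\VV_{\groupStyle{H}}$, $\EE_{\groupStyle{H}}$ and $\downsetStyle{D}_{\groupStyle{H}}$. More minimally, it suffices to use symmetries that map $\groupStyle{H}$-invariant feasible sets to $\groupStyle{H}$-invariant feasible sets. That is all the correctness argument needs: Lemma~\ref{thm:lexmin-lemma} holds for any group and $\downsetStyle{D}_{\groupStyle{H}}$ is a downset, so what matters is only that the lex-min representative of each relevant orbit lies in $\downsetStyle{D}_{\groupStyle{H}}$. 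With either restriction, your steps 1--3 complete the proof.

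This reading is also consistent with the paper's tables. The reported numbers of feasible symmetries are exactly normalizer orders, for example $8$ for $\mathbb{Z}_4\le\symGroup{4}$ and $20$ for $\mathbb{Z}_5\le\symGroup{5}$.
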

\begin{proof}
  By the assumptions, $\setStyle{S}$ is feasible.  It remains to show
  that $\setStyle{S}$ is $\groupStyle{H}$-invariant.  Let $\grh$ be a
  permutation in~$\groupStyle{H}$ and $i$ an element
  in~$\setStyle{S}$.  Since $i$ is $\groupStyle{H}$-feasible,
  $\grh(i)$ is identical to or compatible with~$i$.  Since all
  elements of~$\setStyle{S}$ are pairwise $\groupStyle{H}$-compatible,
  $\grh(i)$ is identical to or compatible with all elements
  in~$\setStyle{S}$.  Since $\setStyle{S}$ is feasible, it is a
  maximal clique in the compatibility graph.  Thus,
  $\grh(i) \in \setStyle{S}$.  Since this holds for all
  $i \in \setStyle{S}$, one has
  $\grh(\setStyle{S}) \subseteq \setStyle{S}$.  This implies
  $\grh(\setStyle{S}) = \setStyle{S}$ because each permutation $\grh$
  is a bijection.  Since $\grh$ was chosen as an arbitrary
  permutation in~$\groupStyle{H}$, the subset $\setStyle{S}$ is
  $\groupStyle{H}$-invariant.
\end{proof}
The resulting method is finally called \emph{symmetric lexicographic
  symmetric-subset reverse search}.  This will allow for the
enumeration of symmetric triangulations in
Section~\ref{sec:triangs:enhancements}.

\subsection{New Checks for the Lexicographic Minimality of Subsets}
\label{sec:new-checks-lexic}

Next, the findings from Section~\ref{sec:theory} are used to specify
suitable local data to accelerate the lex-min check in orbits. In
particular, the critical-element-table $\critelem{\setStyle{S}}$ of a
subset will be used as local data stored together with $\setStyle{S}$
in a node.  It can be seen that one can update the critical-element
table for the next node during its usage in the lex-min check.

First, the iterative method $\isLexMinIter{$\setStyle{S'}, \GrG,
  \globalData, \localData, \localData'$}$ is described, where
$\localData$ contains $\critelemTable$, which is a function-value
table of~$\critelem{\setStyle{S}}$.  It neither needs global data
$\globalData$ nor the initialized local data $\localData'$
for~$\setStyle{S}'$. In case the answer is ``\true'', the complete
updated critical-element table of~$\setStyle{S}'$ is stored
in~$\critelemTable'$ representing a function-value table of
$\critelem{\setStyle{S}'}$, which will be part of~$\localData'$.  In
case the answer is ``\false'', $\critelemTable'$ will only partially
be updated.  However, this is irrelevant, since this answer leads to
immediate pruning.  The function iterates over the whole symmetry
group but is quite lean inside the loop.
Algorithm~\ref{alg:isLexMinCritelemIter} shows a possible listing in
pseudo-code.  Its run-time complexity amortized over all possible
instances is in $O(\abs{\GrG})$ under the assumption that on average
for only a $\frac{1}{n}$-fraction of the permutations the added new
maximal element of the new subset is mapped exactly to the critical
element of the previous subset.

\begin{algorithm}[t]
  \TitleOfAlgo{\isLexMinIter{\setStyle{S'}, \GrG, \critelemTable}}

  \KwIn{a set $\setStyle{S}'$, a subgroup $\GrG$ of~$\symGroup{n}$,
    and the critical-element table
    $\critelemTable = \critelem{\setStyle{S}}$
    of~$\setStyle{S} = \setStyle{S}' \setminus \max
      \setStyle{S}'$}
  
  \KwOut{$(\true, \critelemTable' = \critelem{\setStyle{S}'})$ if
    $\setStyle{S}' = \lexmin \GrG(\setStyle{S}')$ and $(\false, -)$
    otherwise}
  
  $\critelemTable' \gets \critelemTable$
  \tcc*{copy the old critical-element table}
  \For(\tcc*[f]{iterate over $\GrG$}){
    $\grg \in \GrG$
  }{
    $j \gets \critelemTable[\grg]$
    \tcc*{retrieve critical element}
    \If(\tcc*[f]{if $\grg$ stabilizes~$\setStyle{S}$}){
      $j = \infty$
    }{
      \If(\tcc*[f]{if $\grg$ decreases $\max \setStyle{S}'$}){
        $\grg(\max \setStyle{S}') < \max \setStyle{S}'$
      }{
        \Return $(\false, -)$
        \tcc*{$\grg$ is lex-decreasing}
      }
      \ElseIf(\tcc*[f]{if $\grg$ increases $\max \setStyle{S}'$}){
        $\grg(\max \setStyle{S}') > \max \setStyle{S}'$
      }{
        $\critelemTable'[\grg] \gets \max \setStyle{S}'$
        \tcc*{update critical element}
      }
    }
    \Else(\tcc*[f]{$\grg$ strictly lex-increases $\setStyle{S}$}){
      \If(\tcc*[f]{if $\grg$ decreases $\max \setStyle{S}'$ beyond
        $j$}){
        $\grg(\max \setStyle{S}') < j$
      }{
        \Return $(\false, -)$
        \tcc*{$\grg$ is lex-decreasing}
      }
      \ElseIf(\tcc*[f]{if $\grg$ maps $\max \setStyle{S}'$ to
        crit.~element}){
        $\grg(\max \setStyle{S}') = j$
      }{
        $j' \gets \min \bigl(\setStyle{S}' \symdiff \grg(\setStyle{S}')\bigr)$
        \tcc*{compute critical element of~$\setStyle{S}'$}
        \If(\tcc*[f]{if new critical element is in~$\grg(\setStyle{S}')$}){
          $j' \in \grg(\setStyle{S}')$
        }{
          \Return $(\false, -)$
          \tcc*{$\grg$ is lex-decreasing}
        }
        \Else(\tcc*[f]{if new critical element is not in~$\grg(\setStyle{S}')$}){
          $\critelemTable'[\grg] \gets j'$
          \tcc*{update critical-element table}
        }
      }
    }
  }
  \Return $(\true, \critelemTable')$\;
  
  \caption[Iteration-Based Critical-Element Method]{The
    iteration-based critical-element method}
  \label{alg:isLexMinCritelemIter}
\end{algorithm}

Next, the set-based variant $\isLexMinSets{$\setStyle{S}', \GrG,
  \globalData, \localData, \localData'$}$ is described, where
$\localData$ contains $\critelemClass$, which is a function-value
table of $\critclass{\setStyle{S}}$.  Global data $\globalData$
containing $(\hitelemClass, \incelemClass, \decelemClass)$ are needed,
which are function-value tables for $\hitclass$, $\incclass$, and
$\decclass$, respectively. It does not need the initialized local data
$\localData'$
for~$\setStyle{S}'$. Algorithm~\ref{alg:isLexMinCritelemSet} shows the
pseudo-code for this.

\begin{algorithm}[t]
  \TitleOfAlgo{\isLexMinSets{$\setStyle{S}', \GrG, (\hitelemClass,
      \incelemClass, \decelemClass), \critelemClass$}}

  \KwIn{a set $\setStyle{S}'$, a subgroup $\GrG$ of~$\symGroup{n}$
    represented by its hit-element, increasing-element, and
    decreasing-element classifications~$\hitelemClass$,
    $\incelemClass$, and $\decelemClass$, the critical-element
    classification $\critelemClass = \critclass{\setStyle{S}}$
    of~$\setStyle{S} = \setStyle{S}' \setminus \max \setStyle{S}'$}
  
  \KwOut{$(\true, \critelemClass' = \critclass{\setStyle{S}'})$ if
    $\setStyle{S}' = \lexmin \GrG(\setStyle{S}')$ and $(\false, -)$
    otherwise}

  \If(\tcc*[f]{check for case~I}){
    $\decelemClass[\max \setStyle{S}'][\max \setStyle{S}']
    \cap \critelemClass[\infty] \neq \emptyset$}{
    \Return $(\false, -)$
    \tcc*{$\setStyle{S}'$ is not lex-min}
  }
  \If(\tcc*[f]{check for case~II}){
    $\exists i \in \setStyle{S}: \decelemClass[\max \setStyle{S}'][i] \cap
    \critelemClass[i] \neq \emptyset$}{
    \Return $(\false, -)$
    \tcc*{$\setStyle{S}'$ is not lex-min}
  }
  $\critelemClass' \gets \critelemClass$
  \tcc*{copy the old critical-element classification}
  \For(\tcc*[f]{iterate over $\setStyle{S}$}){
    $j \in \setStyle{S}$
  }{
    \For(\tcc*[f]{check $j$ for case~III}){
      $\grg \in \critelemClass[j] \cap \hitelemClass[\max \setStyle{S}'][j]$
    }{
      $j' \gets \min \bigl(\setStyle{S}' \symdiff \grg(\setStyle{S}')\bigr)$
      \tcc*{compute critical element of~$\setStyle{S}'$}
      \If(\tcc*[f]{if new critical element is in~$\grg(\setStyle{S}')$}){
        $j' \in \grg(\setStyle{S}')$
      }{
        \Return $(\false, -)$
        \tcc*{$\grg$ is lex-decreasing}
      }
      \Else(\tcc*[f]{if new critical element is not in~$\grg(\setStyle{S}')$}){
        $\critelemClass'[j] \gets \critelemClass'[j] \setminus \{\grg\}$
        \tcc*{update classification}
        $\critelemClass'[j'] \gets \critelemClass'[j'] \cup \{\grg\}$
        \tcc*{update classification}
      }
    }
  }
  \For(\tcc*[f]{permutations no longer stabilizing}){
    $\grg \in \incelemClass[\max \setStyle{S}'] \cap \critelemClass[\infty]$
  }{   
    $\critelemClass'[\infty] \gets \critelemClass'[\infty] \setminus \{\grg\}$
    \tcc*{update classification}
    $\critelemClass'[\max \setStyle{S}'] \gets \critelemClass'[\max \setStyle{S}'] \cup \{\grg\}$
    \tcc*{update classification}
  }
  \Return $(\true, \critelemClass')$\;
  
  \caption[Set-Based Critical-Element Method]{The set-based
    critical-element method}
  \label{alg:isLexMinCritelemSet}  
\end{algorithm}

Finally, the modified switch-table method for larger symmetry groups is
implemented.  Algorithm~\ref{alg:isLexMinModifiedSwitches} is a
utilization of Lemma~\ref{thm:char-lex-min-st} from
Section~\ref{sec:theory}.  The advantage compared to
Algorithm~\ref{alg:isLexMinSwitches} is that it recursively removes
elements from the subsets that play no role in the lex-comparison.
The advantage compared to the algorithm in
\cite{PechReichard_EnumeratingSetOrbits_2009} is that one only uses a
single group representation, namely a switch table, which avoids the
creation and deletion of non-trivial local data structures, namely
stabilizers, during the recursion.

\begin{algorithm}[p]
  \TitleOfAlgo{\isLexMinModSwitchTable{$i, \setStyle{S}, \setStyle{S'}, \GrG, \switchTable$}}

  \KwIn{an integer $i \in [1, n]$, a subset $\setStyle{S}$ of
    $\{i, \dots, n\}$ (the original subset), a subset $\setStyle{S'}$
    of $\{i, \dots, n\}$ (the image of~$\setStyle{S}$ under a partial
    switch product), a subgroup $\GrG$ of~$\symGroup{n}$, and a switch
    table~$\switchTable$ of~$\GrG$}

  \KwOut{$\true$ if
    $\lexmin \stabGroup{\GrG}{\indexSet{i-1}}(\setStyle{S'})
    \notlexsmaller \setStyle{S}$ and $\false$ otherwise}
  
  \If(\tcc*[f]{no or all elements?}){
    $\abs{\setStyle{S}} = 0$ or $\abs{\setStyle{S}} = n - i + 1$}{
    \Return \true
    \tcc*{subsets identical and invariant} 
  }
  \If(\tcc*[f]{no non-trivial permutation in group?}){
    $i > \max \effRowSet(\switchTable)$}{
    \Return ($\setStyle{S}' \notlexsmaller \setStyle{S}$)
    \tcc*{subset invariant}
  }

  \tcc{case distinction whether $i \in \setStyle{S}$:}

  \If{
    $i \in \setStyle{S}$
  }{
    \If{
      $i \in \setStyle{S}'$
    }{
      \tcc{recursively check a switch product with leading identity:}
      \If{$\isLexMinModSwitchTable{$i + 1, \setStyle{S} \setminus \{i\},
          \setStyle{S'} \setminus \{i\}, 
          \GrG, \switchTable$} = \false$}{
        \Return \false\;
      }
    }
    \tcc{travers all switches mapping an element of~$\setStyle{S}'$
      to~$i$:}
    \For{
      $j \in \effColSet(i) \cap \setStyle{S}'$
    }{
      \If{$\isLexMinModSwitchTable{$i + 1, \setStyle{S} \setminus \{i\},
          \switchTable[i][j]\bigl(\setStyle{S'} \setminus \{j\}\bigr),
          \GrG, \switchTable$} = \false$}{
        \Return \false\;
      }
    }      
    \Return \true
    \tcc*{no decreasing switch product found}
  }
  \Else{
    \tcc{compare minimal elements using $\minElem{\setStyle{S}} > i$:}
    \If{
      $i \in \setStyle{S}'$
    }{
      \Return \false\;
    }

    \tcc{check for a switch mapping an element of~$\setStyle{S}'$ to~$i$:}
    \If{
      $\effColSet(i) \cap \setStyle{S}' \neq \emptyset$
    }{
      \Return \false\;
    }
    
    \tcc{recursively check a switch product with leading identity:}
    \If{$\isLexMinModSwitchTable{$i + 1, \setStyle{S}, \setStyle{S'},
        \GrG, \switchTable$} = \false$}{
      \Return \false\;
    }
  
    \tcc{travers all switches mapping a non-element of~$\setStyle{S}'$
      to~$i$:}
    \For{
      $j \in \effColSet(i) \setminus \setStyle{S}'$
    }{
      \If{$\isLexMinModSwitchTable{$i + 1, \setStyle{S},
          \switchTable[i][j](\setStyle{S'}), \GrG, \switchTable$} = \false$}{
        \Return \false\;
      }
    }
    \Return \true
    \tcc*{no decreasing switch product found}
  }

  \caption[Modified Switch-Table Method]{The modified switch-table
    method}
  \label{alg:isLexMinModifiedSwitches}  
\end{algorithm}

It is clear that for the critical-element method a very large order of
the symmetry group is prohibitive, so that in those cases the modified
switch table method is mandatory.  For groups of moderate order that
can be stored completely in main memory, it is difficult to predict
precisely which method is faster.  This is mainly due to the fact that
in an analysis of the recursion for the modified switch-table method
it is difficult to exploit the group structure.

Therefore, as a simplification, a \emph{hyper-amortized} analysis is
presented indicating the parameter values for which the
critical-element method is typically preferable.  The idea is based on
a drastically simplified probability model for the objects involved.
In hyper-amortized analysis, the input subsets are considered
uniformly and independently distributed among \emph{all} possible
subsets, not considering their extendability to feasible subsets or
their dependence on the earlier subsets.  Finally, instead of a
probability model for the possible permutation \emph{groups}, a
probability model for permutation \emph{subsets} with uniform
probabilities is considered.  This probability model ignores that the
subsets occurring as inputs actually are feasible subsets in a given
downset and that they have been reached by the enumeration algorithm
somehow.  Moreover, it ignores that the actual sets of permutations
form groups.  Thus, the respective analysis cannot give the amortized
effort for a particular problem instance.  It can, however, be roughly
interpreted as a higher-aggregated information about the average
effort when the respective methods are applied to all conceivable
problem instances with similar parameters, where all subsets are
equally likely to occur as feasible subsets in \emph{some} instance
and where all permutations are individually equally likely to occur in
\emph{some} group.  To stress the reference to the uniform probability
model all expected values with respect to this model are called
\emph{typical values}.

For the analysis it is important to identify from which operations the
main effort results.  Profiling of actual code gives the hint that
three types of operations cause the main part of the computation
times: applying a permutation to a subset or a singleton, passing a
copy of a subset as an argument to a recursive call, and lex-comparing
a subset or a singleton with its image under a permutation.  Applying
a permutation $\pi$ to a subset $\setStyle{S}$ means generating a new
subset consisting of all $\pi(i)$ with $i \in \setStyle{S}$.  Passing
a subset is essentially a special case with $\pi(i) = i$.
Lex-comparing a subset or a singleton with its image requires the
comparison of some the elements with image elements, which will not be
counted extra, since for each comparison there is at least one image
computation.  The only substantial effort that does not lead to
single-element evaluations is the lex-comparison of two subsets, which
happens in exactly one spot in the modified switch-table method.  In
order to unify the effort estimation, we count $m$ single-element
evaluations for the lex-comparison of two $m$-subsets.  In this
generalized sense, the typical number of \emph{single-element
  evaluations} $\pi(i)$ is investigated for
permutations~$\pi \in \GrG$ and elements $i \in \setStyle{S}$ during
the execution of the algorithms.

In the following, the effort of $\isLexMin$ for either method is
estimated.  In ``\false''-instances the typical run-time is difficult to
tell because it is not clear how early a lex-decreasing permutation
will be found.  Therefore, it is assumed for the analysis that any
premature answer does not lead to an immediate stop; rather the
algorithm is carried out until all relevant (depending on the method)
permutations have been processed.  This is equivalent to the typical
effort for \emph{counting} the lex-decreasing permutations.

The analysis of the naive method (check each permutation whether
or not it lex-decreases the subset) and the critical-element method is
straight-forward.
\begin{theorem}
  \label{thm:nve-cet:analysis}
  Consider subsets $\GrG$ of $\symGroup{n}$ with order~$k$ and
  degree~$n$.  Then:
  \begin{enumerate}[label=(\roman*)]
  \item The typical number $\seenve(m)$ of single-element evaluations
    of the naive method on instances with random $m$-element subsets
    $\Sorg$ of $\isLexMin$ is $\seenve(m) = k m$.
  \item The typical number $\seecet(m)$ of single-element evaluations
    of the iteration-based critical-element method $\isLexMinIter$ on
    instances with random $m$-element subsets $\Sorg$ is
    $\seecet(m) = k\frac{n + m - 1}{n} = \frac{n + m - 1}{nm} km$.
  \end{enumerate}
\end{theorem}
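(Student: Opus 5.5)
The plan is to count single-element evaluations permutation by permutation and then sum over the $k$ permutations, using linearity of expectation in the uniform (``typical'') model. In this model $\Sorg$ is a uniformly random $m$-subset, and each $\pi$ is an independently, uniformly chosen permutation of $\indexSet{n}$. Following the convention fixed before the theorem, a premature ``\false'' does not stop the computation, so every relevant permutation is processed.

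For (i), the naive method takes each $\pi\in\GrG$, computes $\pi(\Sorg)$ and lex-compares it with $\Sorg$. Computing the image costs exactly $m$ single-element evaluations. By the unifying convention, the comparison is absorbed into these evaluations, since each comparison is charged against an image computation. Hence each permutation costs exactly $m$, and summing over the $k$ permutations gives $\seenve(m)=km$. No randomness is needed here.

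For (ii), I would walk through Algorithm~\ref{alg:isLexMinCritelemIter} for a fixed $\pi$, with critical element $j=\critelemTable[\pi]$ taken from the table of $\Sorg\setminus\{\max\Sorg\}$. Every branch first evaluates $\pi(\max\Sorg)$, which is one evaluation. All branches then finish with comparisons of this value against $\max\Sorg$ or $j$, except one: when $j\neq\infty$ and $\pi(\max\Sorg)=j$. In that branch the algorithm computes $\min(\Sorg\symdiff\pi(\Sorg))$, which requires the images of the remaining $m-1$ elements; the image of $\max\Sorg$ is already known and is reused. So the cost of $\pi$ is $1+(m-1)\cdot\mathbf 1[\pi(\max\Sorg)=j]$.

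The main obstacle is justifying that the indicator has expectation $\frac1n$. The value $j$ is determined by $\pi$ restricted to $\Sorg\setminus\{\max\Sorg\}$. In the drastically simplified hyper-amortized model, which ignores the dependence of the inputs on earlier subsets and the group structure, $\pi(\max\Sorg)$ is treated as uniform on $\indexSet{n}$ and independent of $j$. This is exactly the ``$\frac1n$-fraction'' assumption already stated when Algorithm~\ref{alg:isLexMinCritelemIter} was introduced. Under it, the probability that $\pi(\max\Sorg)=j$ is $\frac1n$; the stabilizer case $j=\infty$ is absorbed into the same typical count. I would state this modelling step explicitly rather than try to derive an exact conditional probability such as $\frac{1}{n-m+1}$.

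With that step in place, the expected cost per permutation is $1+\frac{m-1}{n}=\frac{n+m-1}{n}$. Summing over the $k$ permutations gives $\seecet(m)=k\frac{n+m-1}{n}=\frac{n+m-1}{nm}\,km$, where the last form is just a rewriting for comparison with (i).
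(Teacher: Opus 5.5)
Your proposal is correct and takes essentially the same approach as the paper: $m$ evaluations per permutation for the naive method, and for the critical-element method one evaluation per permutation, plus a full image computation (your $1+(m-1)$, the paper's $m$) for the typical $\frac{1}{n}$-fraction of permutations that map $\max\Sorg$ to the critical element, giving $k\frac{n+m-1}{n}$. Your explicit treatment of the $\frac{1}{n}$ rate as a modelling assumption of the hyper-amortized model, rather than an exact conditional probability, matches how the paper uses it.
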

\begin{proof}
  For the naive method, $k$ permutations need to be applied to the
  $m$-element subset.  For the critical-element method, typically
  $\frac{1}{n}k$ permutations map the new maximal element of~$\Sorg$
  to the critical element, so that, in the worst case, $m$
  single-element evaluations are necessary to compute the new critical
  element from scratch.  Moreover, typically $\frac{n - 1}{n} k$
  permutations do not map the maximal element of~$\Sorg$ to the
  critical element, incurring only one single-element evaluation each.
\end{proof}

Recall that the modified switch-table method takes two subsets of
identical cardinality as input and decides whether or not the second
can be lex-decreased below the first.  The analysis for the modified
switch-table now works in two steps.  First, for a \emph{given}
switch-table the typical number of single-element evaluations is
analyzed on two subsets chosen uniformly and independently at
random.\footnote{Note that this means already a simplification because
  for the applications of the modified switch-table in this paper,
  $\Sorg$ and $\Simg$ are not stochastically independent. On the
  contrary: in the top-level call, they are identical.  The hope is
  that over all instances that might ever be dealt with in any
  recursion level, they are sufficiently independent.}  Second, for a
\emph{typical} switch table of a random subset of permutations the
typical number of non-trivial switches in each row and the typical
maximal effective row are analyzed.

It is often more instructive to estimate the factor by which a
(non-trivial) method for $\isLexMin$ is faster than the naive method.
\begin{definition}
  Let $\method$ be a method for $\isLexMin$. Then \emph{relative
    effort $\releffort{\method}{\ell}$} of~$\method$ is defined as the
  typical number of single-element evaluations $\seemth(\ell)$ of
  $\method$ divided by the typical number $\seenve(\ell)$ of
  single-element evaluations of the naive method on random
  $\ell$-element subsets.
\end{definition}

\begin{example}
  By Theorem~\ref{thm:nve-cet:analysis}, the relative effort of the
  critical-element method $\releffort{cet}{\ell}$ is
  $\frac{n + m - 1}{nm} \le \frac{2}{m}$, which is a speed-up linear
  in~$m < n$.\qed
\end{example}

It will turn out that the following parameters are crucial for the
efficiency of a switch table.  They all contribute to measures for how
many times the various branches in the recursions are typically
traversed.
\begin{definition}
  \label{def:mst-parameters}
  Consider a switch table $\switchTable$ for a group of permutations
  of degree~$n$ with maximal effective row~$r$ and $c_i$ non-trivial
  switches in row~$i$, $i = 1, 2, \dots, r$.  Moreover, consider
  subsets of cardinality~$\ell$ with $0 \le \ell \le m < n$.

  The \emph{level-$i$ order} $\iorder{i}$ of $\switchTable$ is defined
  as the number of switch-products in $\switchTable$ from rows at
  least~$i$, that is
  \begin{equation}
    \label{eqn:mst-parameters:order}
    \iorder{i} := \prod_{j = i}^r (c_j + 1).
  \end{equation}
  With this, the order $k$ of the group equals~$\iorder{1}$.

  The \emph{level-$i$ subset density and the level-$i$ subset
    codensity of $\switchTable$} are defined for
  $0 < \ell < n - i + 1$ as
  \begin{equation}
    \label{eqn:mst-parameters:density}
    \density{i}{\ell} := \frac{\ell}{n - i + 1}
    \text{ and }
    \codensity{i}{\ell} := \frac{n - i + 1 - \ell}{n - i + 1} =
    1 - \density{i}{\ell} \text{, respectively.}
  \end{equation}
  For $\ell \le 0$ and $\ell \ge n - i + 1$ define the exceptional
  values
  \begin{equation}
    \label{eqn:mst-parameters:density-exception}
    \density{i}{\ell} := \codensity{i}{\ell} := 0.
  \end{equation}
  The \emph{level-$i$ transitivity gap of $\switchTable$} is defined
  for $0 \le \ell \le n - i - c_i$ as
  \begin{equation}
    \label{eqn:mst-parameters:transitivitygap}
    \transitivitygap{i}{\ell} := \frac{\bigl(n - i + 1 - (c_i + 1)\bigr)_{\ell}}{(n - i + 1)_{\ell}},
  \end{equation}
  where $(x)_{\ell} := x(x - 1) \cdots (x - \ell + 1)$, as usual.  For
  $\ell > n - i + 1 - (c_i + 1)$ the transitivity gap is defined to be
  zero.

  In algorithm $\isLexMinModSwitchTable$, call the branch at recursion
  level~$i$ with $i \in \Sorg$ the \emph{level-$i$ element branch} and
  the branch with $i \notin \Sorg$ the \emph{level-$i$ non-element
    branch}.  The \emph{level-$i$ reduction factor of the element
    branch of $\switchTable$} is defined as
  \begin{equation}
    \label{eqn:mst-parameters:elm-reduction-factor}
    \elmreductionfactor{i}{\ell} := \density{i}{\ell}^2,
  \end{equation}
  and the \emph{level-$i$ reduction factor of the non-element branch
    of $\switchTable$} is defined as
  \begin{equation}
    \label{eqn:mst-parameters:nel-reduction-factor}
    \nelreductionfactor{i}{\ell} := \codensity{i}{\ell} \transitivitygap{\ell}.
  \end{equation}
  For $p \in \mathbb{N}$, a \emph{branch-selection vector of
    length~$p$} is a binary vector
  $\branchvec = (\bb{1}, \bb{2}, \dots, \bb{p}) \in \{0, 1\}^p$, where
  $()$ is the only branch selection vector for $p = 0$.  Let
  $\branchVecs{p}$ be the set of all branch-selection vectors of
  length~$p$ with $\branchVecs{p} = \emptyset$ for all $p < 0$.  Let
  $\ones{\branchvec}$ be the number of ones in~$\branchvec$, and
  denote by $\onesupto{j}{\branchvec}$ the number of ones
  in~$\branchvec$ among the first $j-1$ coordinates of~$\branchvec$,
  $1 \le j \le p$.

  For $i \in \indexSet{r}$ and a branch-selection vector of
  length~$p \le r - i + 1$ let its \emph{level-$i$ branch weight}
  $\branchweight{i}{\ell}{\branchvec}$ be the following product of $p$
  reduction factors, where an empty product ($p = 0$) is defined to be
  one:
  \begin{equation}
    \label{eqn:mst-parameters:branch-weight}
    \branchweight{i}{\ell}{\branchvec}
    :=
    \prod_{t = 1}^{p}
    \reductionfactor{i + t - 1}{\ell - \onesupto{t}{\branchvec}}{\bb{t}}.
  \end{equation}
  Finally, let its \emph{node weight} be
  \begin{equation}
    \label{eqn:mst-parameters:node-weight}
    \nodeweight{\ell}{\branchvec}
    :=
    (\ell - \ones{\branchvec})^+
    =
    \max(\ell - \ones{\branchvec}, 0).
  \end{equation}
\end{definition}
These notions are motivated by the following: The subset density and
codensity determine the probabilities with which the two branches in
the modified switch-table method are entered.  The exceptional
zero-values for both model the cases where the modified switch-table
method returns $\true$ without entering any branch. The transitivity
gaps indicate the fraction of $\ell$-subsets for which no image of a
switch in row~$i$ of $\switchTable$ contains~$i$.  For example, in a
switch table of a transitive group, the level-$1$-transitivity gap is
zero even for $\ell = 1$ (one-element subsets), because any element
can be mapped to~$1$.  The reduction factors compute the probabilities
that one of the switches in a row has to be processed during the
execution of the algorithm at the corresponding recursion level.  A
branch selection vector of length~$p$ encodes for recursion
levels~$i, i + 1, \dots, i + p - 1$ and a fixed~$\Sorg$ which of the
branches is entered by the algorithm: A ``$1$'' selects the
element-branch, a ``$0$'' selects the non-element branch.  In this
sense, all paths in the recursion tree are indexed by branch-selection
vectors.  The branch weights are products of reduction factors, where
the first index (indicating the row of $\switchTable$) is increased
with each new factor, and the second index (indicating the cardinality
of a processed subset) is decreased directly after an element branch
has been entered.

With these parameters, all of which can easily be derived directly
from the switch table and the subsets under consideration, the
following can be said about its efficiency:
\begin{theorem}
  \label{thm:mst:analysis}
  Let $\switchTable$ be a switch table for permutations in
  $\symGroup{n}$ with maximal effective row~$r$ and $c_i$ non-trivial
  switches in row~$i$ for $i = 1, 2, \dots, r$.  Let $\branchVecs{i}$
  be the set of all branch-selection vectors of length~$i$,
  $1 \le i \le r$.
  
  Then, the typical number $\seemst(m)$ of single-element evaluations
  for two independently random $m$-subsets $\Sorg, \Simg$ is
  \begin{equation}
    \label{eqn:mst:see-exact}
    \seemst(m)
    =
    \biggl(
      \sum_{\branchvec \in \branchVecs{r}}
      \branchweight{1}{m}{\branchvec} \nodeweight{m}{\branchvec}
    \biggr)
    k
    +
    \sum_{j = 1}^r
    \Biggl(
    \biggl(
      \sum_{\branchvec \in \branchVecs{j}}
      \branchweight{1}{m}{\branchvec} \nodeweight{m}{\branchvec}
    \biggr)
    \frac{c_j}{c_j + 1} \cdot \frac{k}{\iorder{j + 1}}
    \Biggr)
  \end{equation}  
\end{theorem}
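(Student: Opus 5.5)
The plan is to set up a recursion for the expected cost of a call $\isLexMinModSwitchTable(i,\Sorg,\Simg,\dots)$ and then unroll it along branch-selection vectors. For $1\le i\le r+1$ and $0\le\ell\le m$, let $T_i(\ell)$ denote the typical number of single-element evaluations of such a call, where $\Sorg,\Simg$ are independent uniform random $\ell$-subsets of $\{i,\dots,n\}$. The claim is that $T_1(m)$ equals the right-hand side of \eqref{eqn:mst:see-exact}.

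The recursion is read off from Algorithm~\ref{alg:isLexMinModifiedSwitches}, as justified by Lemma~\ref{thm:char-lex-min-st}. Following the convention of the analysis, every branch is carried out completely, so no early \false{} is allowed.

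The base case is $i=r+1>\max\effRowSet$. Here only the lex-comparison of $\Simg$ with $\Sorg$ is performed, which costs $\ell$ evaluations. If $\ell=0$ or $\ell=n-i+1$, the first test returns immediately; the exceptional values of $\density{i}{\ell}$ and $\codensity{i}{\ell}$ being zero encode this.

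At a level $i\le r$ there are two branches.

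\emph{Element branch.} This happens with probability $\density{i}{\ell}$, namely when $i\in\Sorg$. The identity subcall occurs with probability $\density{i}{\ell}$ (namely $i\in\Simg$). Each switch column $j\in\effColSet(i)$ is used with probability $\density{i}{\ell}$ (namely $j\in\Simg$), and then costs $\ell-1$ evaluations for applying $\switchTable[i][j]$ plus a subcall on $(\ell-1)$-subsets of $\{i+1,\dots,n\}$.

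\emph{Non-element branch.} This happens with probability $\codensity{i}{\ell}$. The subcalls are entered only if $i\notin\Simg$ and $\effColSet(i)\cap\Simg=\emptyset$, which is exactly the event of probability $\transitivitygap{i}{\ell}$. In that case there is one identity subcall, plus $c_i$ switch subcalls, each costing $\ell$ evaluations for the image set plus a subcall on $\ell$-subsets.

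Next I would argue that the images $\switchTable[i][j](\Simg)$ are again uniformly distributed and independent of $\Sorg\setminus\{i\}$. This holds because a fixed permutation maps a uniform random subset to a uniform random subset, and we condition only on the membership events above. It is the simplification the paper already announces in the footnote. With it, each subcall is again a typical instance one level deeper.

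Unrolling the recursion, each path of the recursion tree is indexed by a branch-selection vector $\branchvec$. The probability weight of the path, multiplied over levels, is $\branchweight{1}{m}{\branchvec}$, using the reduction factors $\density{}{}^2$ and $\codensity{}{}\transitivitygap{}{}$. The subset size decreases exactly after each $1$-entry, which explains the index $\ell-\onesupto{t}{\branchvec}$. The multiplicity of subcalls at level $j$ is $c_j+1$, so each full path of length $r$ is replicated $\prod_{j\le r}(c_j+1)=k$ times. The leaf comparison costs $\nodeweight{m}{\branchvec}$ evaluations, which gives the first sum in \eqref{eqn:mst:see-exact}.

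The second sum collects the evaluations spent applying non-trivial switches at level $j$. Of the $c_j+1$ calls at level $j$, $c_j$ are non-trivial switches. The number of times level $j$ is reached along paths is $k/\iorder{j}$. This gives the multiplicity $\frac{c_j}{c_j+1}\cdot\frac{k}{\iorder{j+1}}$, and each application costs the current size $\nodeweight{m}{\branchvec}$ for a length-$j$ prefix $\branchvec$.

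The main obstacle I expect is the bookkeeping. First, the two branches assign the factor $\density{}{}$ differently: the identity subcall versus the switch subcalls are gated by different events. I must check that for a uniform $\Simg$ each of the $c_i+1$ candidate subcalls in the element branch is entered with the same probability $\density{i}{\ell}$, so that the factor $\density{}{}^2$ and the uniform multiplicity $c_i+1$ are consistent. Second, the size charged for a switch application has to match $\nodeweight{}{}$ of the prefix including level $j$. In the element branch this size is $\ell-1$, so the prefix convention of $\onesupto{}{}$ versus $\ones{}$ must be aligned. If a mismatch appears, I would first attempt to absorb it by reinterpreting which prefix length the inner sum ranges over.
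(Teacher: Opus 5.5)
Your plan follows the paper's own route. The paper sets up exactly your recursion for $f_i(\ell)$ and then checks that the closed form $g_i(\ell)$ satisfies it; unrolling along branch-selection vectors, as you do, is equivalent. Your bookkeeping is right, and neither worry in your last paragraph materializes. Level $j$ is reached $k/\iorder{j}$ times, and each call there applies $c_j$ non-trivial switches, which gives the multiplicity $\tfrac{c_j}{c_j+1}\cdot\tfrac{k}{\iorder{j+1}}$. Each such switch costs $\ell$ in the non-element branch and $\ell-1$ in the element branch, which is exactly $m-\ones{\branchvec}=\nodeweight{m}{\branchvec}$ for the length-$j$ prefix $\branchvec$. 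In the element branch the identity and every switch are gated with the same probability $\density{i}{\ell}$. One small inconsistency: ``no early false'' can only mean that answers returned by subcalls are ignored. The gate of probability $\transitivitygap{i}{\ell}$ that you use in the non-element branch is itself an early return, and the paper keeps it as well.

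The genuine gap is your claim that every subcall is again a typical instance because a permutation maps uniform subsets to uniform subsets. In the element branch this is correct: given $j\in\Simg$, the set $\Simg\setminus\{j\}$ is uniform on $\{i,\dots,n\}\setminus\{j\}$, and $\switchTable[i][j]$ maps that set bijectively onto $\{i+1,\dots,n\}$. In the non-element branch it fails whenever $c_i>0$. Conditioned on $\Simg\cap(\{i\}\cup\effColSet(i))=\emptyset$, the set $\Simg$ and each image $\switchTable[i][j](\Simg)$ are uniform only on the $\ell$-subsets of a fixed $(n-i-c_i)$-element subset of $\{i+1,\dots,n\}$. So these subcalls do not cost $f_{i+1}(\ell)$ in expectation.

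Concretely, take $\GrG=\langle(1\,2),(3\,4)\rangle$ on $\indexSet{5}$, so $c_1=1$, $c_2=0$, $c_3=1$, $r=3$, $k=4$, and take $m=1$. The formula gives $\tfrac{21}{25}$. With the same cost conventions, the exact expectation for independent uniform singletons is $\tfrac{24}{25}$. The reason is that after the level-$1$ non-element gate one always has $\Simg\subseteq\{3,4,5\}$. The level-$2$ gate $2\notin\Simg$ therefore holds with probability $1$, not $\transitivitygap{2}{1}=\tfrac34$.

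So the identity is not an exact consequence of the top-level distribution. It is exact only in the hyper-amortized model, where every recursive call is assumed to receive fresh independent uniform subsets. The paper's proof works in that model implicitly, charging $f_{i+1}$ to the subcalls without justification. You should state this as a modelling hypothesis rather than claim to derive it.
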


\begin{corollary}
  \label{thm:mst:analysis:alternative}
  Since the $r$th summand of the second sum is actually
  $\frac{c_r}{c_r + 1}$ times the first summand, an alternative
  expression for $\seemst(m)$ is:
  \begin{align}
    \label{eqn:mst:see-exact-alt}
    \seemst(m)
    &=
    \Bigl(
    1 + \frac{c_r}{c_r + 1}
    \Bigr)
    \biggl(
      \sum_{\branchvec \in \branchVecs{r}}
      \branchweight{1}{m}{\branchvec} \nodeweight{m}{\branchvec}
    \biggr)
    k\\
    &\quad
      {}
      +
      \sum_{j = 1}^{r - 1}
      \Biggl(
      \biggl(
      \sum_{\branchvec \in \branchVecs{j}}
      \branchweight{1}{m}{\branchvec} \nodeweight{m}{\branchvec}
      \biggr)
      \frac{c_j}{c_j + 1} \cdot \frac{k}{\iorder{j + 1}}
      \Biggr).\qed
  \end{align}
\end{corollary}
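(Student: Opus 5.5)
The plan is to derive Corollary~\ref{thm:mst:analysis:alternative} directly from formula~\eqref{eqn:mst:see-exact} of Theorem~\ref{thm:mst:analysis}. I split off the summand $j = r$ of the second sum and show that it equals $\frac{c_r}{c_r+1}$ times the first term. No probabilistic argument is needed beyond what Theorem~\ref{thm:mst:analysis} already provides; the whole proof is an algebraic rearrangement.

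First I evaluate the factor $\frac{k}{\iorder{r+1}}$ in the $j=r$ summand. By definition~\eqref{eqn:mst-parameters:order}, $\iorder{r+1} = \prod_{j=r+1}^{r}(c_j+1)$ is an empty product and hence equals $1$. So the $j = r$ summand reads
\begin{equation*}
\biggl(\sum_{\branchvec \in \branchVecs{r}} \branchweight{1}{m}{\branchvec}\nodeweight{m}{\branchvec}\biggr)\frac{c_r}{c_r+1}\, k .
\end{equation*}
The bracketed sum here runs over $\branchVecs{r}$ with the same weights as the first term of~\eqref{eqn:mst:see-exact}. The summand is therefore literally $\frac{c_r}{c_r+1}$ times that first term, which is the claim made in the statement of the corollary.

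Finally, I add this summand to the first term and factor out the common expression. This produces the coefficient $1 + \frac{c_r}{c_r+1}$, and the remaining summands $j = 1,\dots,r-1$ stay unchanged, which gives~\eqref{eqn:mst:see-exact-alt}.

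The only point needing care is the convention for the empty product defining $\iorder{r+1}$. This convention is consistent with $k = \iorder{1}$ and with the recursion bottoming out at row~$r$ (rows beyond $\max\effRowSet$ contain only trivial switches, so $c_j = 0$ there). I would state this explicitly. If $r \ge 1$ the split is legitimate; the degenerate case of a trivial group, with $r = -\infty$, is excluded because the theorem assumes $1 \le i \le r$.
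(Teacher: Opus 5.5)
Your proposal is correct and matches the paper, whose entire justification is the remark in the statement itself: since $\iorder{r+1} = 1$ (an empty product), the $j = r$ summand of the second sum in~\eqref{eqn:mst:see-exact} is $\frac{c_r}{c_r+1}$ times the first term, and merging the two gives~\eqref{eqn:mst:see-exact-alt}. The empty-product convention, which you make explicit, is the same boundary case $\iorder{r+1} = 1$ that the paper uses in the proof of Theorem~\ref{thm:mst:analysis}, so nothing is missing.
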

\begin{proof}
  Let $\switchTable$ be as in the assumptions of the theorem. For
  $1 \le i \le r$ and $1 \le \ell \le m < n$ call $f_i(\ell)$ the
  number of single-element evaluations the modified switch-table
  method with start row~$i$ typically needs on two random
  $\ell$-element subsets~$\Sorg$ and $\Simg$ of
  $\indexSet{n} \setminus \indexSet{i - 1}$. For $i = r + 1$, no
  non-trivial switches are available, and after one lex-comparison of
  the given $\ell$-element subsets incurring $\ell$ single-element
  evaluations the answer is immediate. Consequently,
  $f_{r+1}(\ell) = \ell$. Moreover, if $\ell = 0$ and
  $\ell = n - i + 1$ both sets are either the empty or the complete
  subset, thus fixed under any action of a switch in row~$i$ and
  higher, and the recursion subtree needs no further single-element
  evaluations, leading to $f_i(0) = f_i(n - i + 1) = 0$.

  The element-branch in the modified switch-table method has
  $i \in \Sorg$. Given a random~$\Sorg$, this branch is entered with
  probability $\frac{\ell}{n - i + 1} = \density{i}{\ell}$.
 
  In the element branch, the identity is considered only if
  $i \in \Simg$, and non-trivial switches $\switchTable[i][j]$ are
  considered only if $j \in \Simg$.  For fixed $j \ge i$ one has
  $j \in \Simg$ with probability
  $\frac{\ell}{n - i + 1} = \density{i}{\ell}$.\footnote{At this point
    a difference occurs for $\Simg = \Sorg$, which occurs in the
    lex-min check for the case $i = 1$ and $\ell = m$. Then,
    $1 \in \Sorg$ implies (with probability one) $1 \in \Simg$, and,
    therefore, the probability that the identity has to be processed
    is $1$ instead of $\density{1}{m}$, changing the expression
    $\density{1}{m}^2 (c_1 + 1)$ to
    $\density{1}{m} (\density{1}{m}c_1 + 1)$ in the results below.
    This increases the typical number of single-element evaluations in
    the first recursion level.  The difference is the larger the
    smaller $c_1$ is.  Since this leads to more case-distinctions, in
    this paper only the analysis for two independently random subset
    is carried out.}  Thus, only a fraction of $\density{i}{\ell}$ of
  the $(c_i + 1)$ many switches including one identity in row~$i$ are
  processed at all.  Processing a switch incurs $\ell - 1$
  single-element evaluations for the non-trivial switches plus a
  recursive call with two $\ell - 1$-element subsets for row~$i + 1$
  of the switch table for the $c_i$ non-trivial switches plus the
  identity.  The typical number of single-element evaluations in the
  element branch is, therefore,
  $\density{i}{\ell} \bigl(c_i (\ell - 1) + (c_i + 1) f_{i+1}(\ell -
  1)\bigr)$.

  In the non-element branch with $i \notin \Sorg$, the answer is
  immediately $\false$ in this branch whenever $i \in \Simg$ or there
  exists a non-trivial switch $\switchTable[i][j]$ with $j \in \Simg$;
  in this case there are no subsequent single-element evaluations in
  this branch.  If all $\ell$ elements
  of~$\Simg \subseteq \{i, i + 1, \dots, n\}$ are among the trivial
  columns $j > i$, then the remaining switches have to be
  processed. Therefore, with
  probability~$\frac{(n - i + 1 - (c_i + 1))_{\ell}}{(n - i +
    1)_{\ell}} = \transitivitygap{i}{\ell}$ neither the $c_i$
  non-trivial switches nor the identity lead to an immediate $\false$
  in this branch.  This event incurs $\ell$ single-element evaluations
  for all non-trivial switches plus a recursive call with two
  $\ell$-element subsets with row~$i + 1$ for all non-trivial switches
  and the identity.  Hence, the typical number of single-element
  evaluations in the non-element branch is
  $\transitivitygap{i}{\ell} \bigl(c_i \ell + (c_i + 1)
  f_{i+1}(\ell)\bigr)$.


  Summarized, the following recursion for $f_i(\ell)$ is obtained with
  $f_1(m) = \seemst(m)$:
  \begin{align}
    f_i(\ell) &= \elmreductionfactor{i}{\ell} 
                \bigl(
                c_i (\ell - 1) + (c_i + 1)
                f_{i+1}(\ell - 1)
                \bigr)
                \notag \\
              &\quad{} + \nelreductionfactor{i}{\ell} 
                \bigl
                (c_i \ell + (c_i + 1)
                f_{i+1}(\ell)
                \bigr).
                \label{eqn:mst:analysis:recursion}
  \end{align}
  Recall that the final comparison of $\ell$-subsets at maximal
  recursion level~$r + 1$ as well as the empty-set case $\ell = 0$ and
  the full-set case $\ell = n - i + 1$ lead to the boundary conditions
  \begin{equation}
    \label{eqn:mst:analysis:basecase}
    f_{r + 1}(\ell) = \ell \text{ for all $1 \le \ell \le m$}, \quad
    f_i(0) = f_i(n - i + 1) = 0 \text{ for all $1 \le i \le r$}.
  \end{equation}
  In order to prove the theorem, it is sufficient show that the
  following function $g$ is a solution to the recursion
  for~$f$, since the claim of the theorem can then be expressed as
  $\seemst(m) = g_1(m)$:
  \begin{equation}
    \label{eqn:mst:analysis:solution}
    g_i(\ell) :=
    \biggl(
      \sum_{\branchvec \in \branchVecs{r - i + 1}}
      \branchweight{i}{\ell}{\branchvec} \nodeweight{\ell}{\branchvec}
    \biggr)
    \iorder{i}
    +
    \sum_{j = i}^{r}
    \Biggl(
    \biggl(
      \sum_{\branchvec \in \branchVecs{j - i + 1}}
      \branchweight{i}{\ell}{\branchvec} \nodeweight{\ell}{\branchvec}
    \biggr)
    \frac{c_j}{c_j + 1} \cdot \frac{\iorder{i}}{\iorder{j + 1}}
    \Biggr).
  \end{equation}

  To this end, note that the following recursion holds for all
  $1 \le i \le r$ and $1 \le \ell \le m$ by definition for the
  products of branch weights and node weights:
  \begin{equation}
    \label{eqn:mst:analysis:weight-recursion}
    \sum_{\branchvec \in \branchVecs{i}}
    \branchweight{i}{\ell}{\branchvec}
    \nodeweight{\ell}{\branchvec}
    =
    \elmreductionfactor{i}{\ell}
    \sum_{\branchvec \in \branchVecs{i-1}}
    \branchweight{i+1}{\ell-1}{\branchvec}
    \nodeweight{\ell-1}{\branchvec}
    +
    \nelreductionfactor{i}{\ell}
    \sum_{\branchvec \in \branchVecs{i-1}}
    \branchweight{i+1}{\ell}{\branchvec}
    \nodeweight{\ell}{\branchvec}.
  \end{equation}
  Moreover, the following recursion holds for the level-$i$ orders:
  \begin{equation}
    \label{eqn:mst:analysis:k-recursion}
    \iorder{i} = (c_i + 1) \iorder{i + 1}.
  \end{equation}
  The boundary condition $g_{r+1}(\ell) = \ell$ holds true, because
  the boundary cases $\iorder{r + 1} = 1$, $\branchweight{i}{\ell}{()}
  = 1$, and $\nodeweight{\ell}{()} = \ell$ imply
  \begin{align}
    \lefteqn{g_{r + 1}(\ell)}\\
    &=
    \biggl(
      \sum_{\branchvec \in \branchVecs{0}}
      \branchweight{r + 1}{\ell}{\branchvec} \nodeweight{\ell}{\branchvec}
    \biggr)
    \iorder{r + 1}
    +
    \sum_{j = r + 1}^{r}
    \Biggl(
    \biggl(
      \sum_{\branchvec \in \branchVecs{j - r}}
      \branchweight{r + 1}{\ell}{\branchvec} \nodeweight{\ell}{\branchvec}
    \biggr)
    \frac{c_j}{c_j + 1} \cdot \frac{\iorder{r + 1}}{\iorder{j + 1}}
      \Biggr)\\
    &=
      \branchweight{r + 1}{\ell}{()} \nodeweight{\ell}{()}\\
    &=
      \ell.
  \end{align}
  The boundary conditions $g_i(0) = 0$ and $g_i(n - i + 1) = 0$ hold
  true, because
  $\density{i}{n - i + 1} = \codensity{i}{n - i + 1} = 0$ leading to
  zero reduction factors throughout.  Moreover, $g_i(\ell)$ satisfies
  the recursion formula for $1 \le \ell \le m$ and $1 \le i \le r$,
  because \allowdisplaybreaks
  \begin{align}
    \label{eqn:mst:analysis:solutioncheck}
    g_i(\ell)
    &=
      \biggl(
      \sum_{\branchvec \in \branchVecs{r - i + 1}}
      \branchweight{i}{\ell}{\branchvec} \nodeweight{\ell}{\branchvec}
      \biggr)
      \iorder{i}
      +
      \sum_{j = i}^{r}
      \biggl(
      \sum_{\branchvec \in \branchVecs{j - i + 1}}
      \branchweight{i}{\ell}{\branchvec} \nodeweight{\ell}{\branchvec}
      \biggr)
      \frac{c_j}{c_j + 1} \cdot \frac{\iorder{i}}{\iorder{j + 1}}\\
    &\stackrel{\eqref{eqn:mst:analysis:weight-recursion},\eqref{eqn:mst:analysis:k-recursion}}{=}
      \biggl(
      \elmreductionfactor{i}{\ell}
      \sum_{\branchvec \in \branchVecs{r - i}}
      \branchweight{i+1}{\ell-1}{\branchvec} \nodeweight{\ell - 1}{\branchvec}
      +
      \nelreductionfactor{i}{\ell}
      \sum_{\branchvec \in \branchVecs{r - i}}
      \branchweight{i+1}{\ell}{\branchvec} \nodeweight{\ell}{\branchvec}
      \biggr)
      (c_i + 1)\iorder{i + 1}
    \\
    &\quad
      {} +
      \sum_{j = i + 1}^{r}
      \Biggl(
      \biggl(
      \elmreductionfactor{i}{\ell}
      \sum_{\branchvec \in \branchVecs{j - i}}
      \branchweight{i+1}{\ell-1}{\branchvec} \nodeweight{\ell - 1}{\branchvec}
      +
      \nelreductionfactor{i}{\ell}
      \sum_{\branchvec \in \branchVecs{j - i}}
      \branchweight{i+1}{\ell}{\branchvec} \nodeweight{\ell}{\branchvec}
      \biggr)
    \\
    &\quad
      \hphantom{{} + \sum_{j = i + 1}^{r} \Biggl(}
      {} \cdot
      \frac{c_j}{c_j + 1} \cdot \frac{(c_i + 1)\iorder{i + 1}}{\iorder{j + 1}}
      \Biggr)
    \\
    &\quad
      {} +      
      \sum_{\branchvec \in \branchVecs{1}}
      \branchweight{i}{\ell}{\branchvec} \nodeweight{\ell}{\branchvec}
      \frac{c_i}{c_i + 1} \cdot \frac{(c_i + 1)\iorder{i + 1}}{\iorder{i + 1}}
    \\
    &=
      \elmreductionfactor{i}{\ell}
      \biggl(
      \sum_{\branchvec \in \branchVecs{r - i}}
      \branchweight{i+1}{\ell-1}{\branchvec} \nodeweight{\ell - 1}{\branchvec}
      \biggr)
      (c_i + 1) \iorder{i + 1}
    \\
    &\quad
      {} +
      \nelreductionfactor{i}{\ell}
      \biggl(
      \sum_{\branchvec \in \branchVecs{r - i}}
      \branchweight{i+1}{\ell}{\branchvec} \nodeweight{\ell}{\branchvec}
      \biggr)
      (c_i + 1) \iorder{i + 1}
    \\
    &\quad
      +
      \elmreductionfactor{i}{\ell}
      \sum_{j = i + 1}^{r}
      \biggl(
      \sum_{\branchvec \in \branchVecs{j - i}}
      \branchweight{i+1}{\ell-1}{\branchvec} \nodeweight{\ell - 1}{\branchvec}
      \biggr)
      \frac{c_j}{c_j + 1} \cdot \frac{(c_i + 1) \iorder{i + 1}}{\iorder{j + 1}}    
    \\
    &\quad
      {} +
      \nelreductionfactor{i}{\ell}
      \sum_{j = i + 1}^{r}
      \biggl(
      \sum_{\branchvec \in \branchVecs{j - i}}
      \branchweight{i+1}{\ell}{\branchvec} \nodeweight{\ell}{\branchvec}
      \biggr)
      \frac{c_j}{c_j + 1} \cdot \frac{(c_i + 1) \iorder{i + 1}}{\iorder{j + 1}}    
    \\
    &\quad
      {} +
      c_i
      \underbrace{\branchweight{i}{\ell}{(1)}}_{= \elmreductionfactor{i}{\ell}}
      \underbrace{\nodeweight{\ell}{(1)}}_{= \ell - 1}
      {}
      +
      {}
      c_i
      \underbrace{\branchweight{i}{\ell}{(0)}}_{= \nelreductionfactor{i}{\ell}}
      \underbrace{\nodeweight{\ell}{(0)}}_{= \ell}
    \\
    &=
      \elmreductionfactor{i}{\ell} (c_i + 1)
      g_{i + 1}(\ell - 1)
      +
      \nelreductionfactor{i}{\ell} (c_i + 1)
      g_{i + 1}(\ell)
      +
      \elmreductionfactor{i}{\ell} c_i (\ell - 1)
      +
      \nelreductionfactor{i}{\ell} c_i (\ell).
  \end{align}
  Thus, $g$ satisfies recursion \eqref{eqn:mst:analysis:recursion},
  which completes the proof.
\end{proof}
Theorem~\ref{thm:mst:analysis} can be used to derive simpler upper and
lower bounds for the application of the modified switch-table method.
In order to prepare this, some monotonicity properties of the
quantities involved in Theorem~\ref{thm:mst:analysis} and
Corollary~\ref{thm:mst:analysis:alternative}, respectively, are
presented.  Note, that the monotonicity properties below do not imply
any kind of monotonicity for the function~$f_i(\ell)$ from the
previous proof. In fact, it is not monotone at all, as plotting
experiments easily reveal. Thus, the exact evaluation of the recursion
in Theorem~\ref{thm:mst:analysis} is the key to simpler bounds.
\begin{lemma}
  \label{thm:mst:basic-facts}
  For all $1 \le j \le r \le m$, all $1 \le i \le j$, all
  $m - j + 1 \le \ell \le m$, and all $\branchvec \in \branchVecs{j}$
  the following hold:
  \begin{enumerate}[label=(\roman*)]
  \item $\density{1}{m - j + 1} \le \density{i}{\ell} \le
    \density{j}{m}$.
  \item $\codensity{j}{m} \le \codensity{i}{\ell} \le \codensity{1}{m
      - j + 1}$.
  \item $\transitivitygap{j}{m} \le \transitivitygap{i}{\ell} \le
    \transitivitygap{1}{m - j + 1}$.
  \item $\elmreductionfactor{1}{m - j + 1} \le \elmreductionfactor{i}{\ell} \le \elmreductionfactor{m}{j}$.
  \item $\nelreductionfactor{j}{m} \le
    \nelreductionfactor{i}{\ell} \le \nelreductionfactor{1}{m - j + 1}$.
  \item \label{itm:mst:basic-facts:branch-weights} $\left(
      \elmreductionfactor{1}{m - j + 1}
    \right)^{\ones{\branchvec}}
    \left(
      \nelreductionfactor{j}{m}\right)^{j - \ones{\branchvec}}
    \le
    \branchweight{1}{m}{\branchvec}
    \le
    \left(
      \elmreductionfactor{j}{m}
    \right)^{\ones{\branchvec}}
    \left(
      \nelreductionfactor{1}{m - j + 1}
    \right)^{j - \ones{\branchvec}}$.
  \item \label{itm:mst:basic-facts:node-weights} $m - j
    \le
    \nodeweight{m}{\branchvec}
    \le
    m$.
  \end{enumerate}
\end{lemma}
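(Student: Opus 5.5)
The plan is to reduce everything to elementary monotonicity of the explicit formulas in Definition~\ref{def:mst-parameters}, viewing each quantity as a function of the two indices $(i,\ell)$ over the box $1 \le i \le j$, $m-j+1 \le \ell \le m$, and locating its extreme values at corners of this box. Throughout write $N_i := n-i+1$, which is decreasing in~$i$. Since $\ell \le m < n$ and $i \le j \le r \le m$, we have $1 \le \ell$ and, in the relevant range, $\ell < N_i$. So the generic (non-exceptional) formulas apply, and this has to be checked first.

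For (i), $\density{i}{\ell} = \ell/N_i$ is increasing in $\ell$ and, because $N_i$ decreases, increasing in~$i$. Hence the minimum is at $(1, m-j+1)$ and the maximum at $(j, m)$. Item (ii) follows at once from $\codensity{i}{\ell} = 1-\density{i}{\ell}$. For (iii), rewrite
\begin{equation*}
  \transitivitygap{i}{\ell} = \prod_{t=0}^{\ell-1}\Bigl(1 - \frac{c_i+1}{N_i - t}\Bigr),
\end{equation*}
a product of factors in $[0,1]$. Increasing $\ell$ appends further factors, and increasing $i$ shrinks every denominator, so the product decreases in both indices. This gives the corners $(j,m)$ and $(1,m-j+1)$, with the zero convention covering the range where some factor becomes non-positive.

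The main obstacle is that $c_i$ also depends on~$i$, so monotonicity in $i$ is not automatic. I would first try to argue within the stated comparison by reading the bound with the row parameter handled uniformly, i.e.\ with a common bound $c$ on the $c_{i'}$ of the rows involved. If that fails, I would restrict to the natural situation of switch tables in which the $c_i$ are nonincreasing, and state that hypothesis explicitly. Items (iv) and (v) are then products of the previous bounds, since the reduction factors are $\density{i}{\ell}^2$ and $\codensity{i}{\ell}\,\transitivitygap{i}{\ell}$, products of nonnegative monotone quantities. For (iv), the upper bound should read $\elmreductionfactor{j}{m}$, which is the form used in (vi).

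For (vi), write $\branchweight{1}{m}{\branchvec}$ as the product of its $j$ factors $\reductionfactor{t}{m-\onesupto{t}{\branchvec}}{\bb{t}}$. Each factor has row index $t \in [1,j]$ and cardinality $m-\onesupto{t}{\branchvec} \in [m-j+1, m]$, because at most $t-1 \le j-1$ ones precede position~$t$. So each factor lies in the box, and I bound it by (iv) or (v) according to $\bb{t}$. Multiplying the $\ones{\branchvec}$ element factors and the $j-\ones{\branchvec}$ non-element factors yields both inequalities. Finally, (vii) is immediate: $0 \le \ones{\branchvec} \le j \le m$ gives $m-j \le m-\ones{\branchvec} = \nodeweight{m}{\branchvec} \le m$, and the positive part is inactive there.
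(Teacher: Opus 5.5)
There is a genuine gap, and it sits exactly at the obstacle you identified: $\transitivitygap{i}{\ell}$ depends on the row both through $n-i+1$ and through $c_i$. The second dependence cannot be argued away, and your fallback hypothesis points the wrong way. Every factor $1-\frac{c_i+1}{n-i+1-t}$ becomes \emph{smaller} as $c_i$ grows. So the gap is nonincreasing in $i$ when $c_1\le c_2\le\dots\le c_j$, whereas decreasing $c_i$ works against the shrinking denominators.

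Already for transitive groups (iii) breaks, since $c_1=n-1$ forces $\transitivitygap{1}{\ell}=0$ for all $\ell\ge 1$. Concretely, let the dihedral group of order $20$ act on the cyclically labelled vertices $1,\dots,10$ of a decagon. Then $c_1=9$, $c_2=1$ (the reflection fixing $1$ swaps $2$ and $10$), and $r=2$. For $m=3$, $j=2$ one gets $\transitivitygap{2}{3}=\frac{7\cdot 6\cdot 5}{9\cdot 8\cdot 7}=\frac{5}{12}>0=\transitivitygap{1}{2}=\transitivitygap{1}{3}$, so both inequalities in (iii) fail. Also $\nelreductionfactor{2}{3}=\frac{2}{3}\cdot\frac{5}{12}>0=\nelreductionfactor{1}{2}$ violates (v). For $\branchvec=(0,1)$ the weight $\branchweight{1}{3}{\branchvec}=\nelreductionfactor{1}{3}\,\elmreductionfactor{2}{3}=0$ lies below the claimed lower bound $\elmreductionfactor{1}{2}\,\nelreductionfactor{2}{3}=\frac{1}{90}$ in (vi).

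Hence (iii), (v) and (vi) are false as stated for general switch tables, and no derivation from the definitions can establish them. The paper's one-line proof (``all claims follow directly from the definitions'') passes over the same $c_i$-dependence. Your first idea, a common bound on the $c_{i'}$, does give valid inequalities. But it does so only after replacing $c_1$ in the upper corner value by $\min_{i'\le j}c_{i'}$ and $c_j$ in the lower corner value by $\max_{i'\le j}c_{i'}$, which is a different statement.

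A second, smaller gap: the claim that $\ell<n-i+1$ holds throughout the box does not follow from $j\le r\le m<n$; it requires $m+j\le n$. Without it, the exceptional value $0$ breaks (i) and (iv). With the same table, $n=10$, $m=9$, $j=2$ give $\density{2}{9}=0<\frac{4}{5}=\density{1}{8}$.

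Under the two extra hypotheses $m+j\le n$ and $c_1\le\dots\le c_j$, your argument is complete. Corner monotonicity gives (i)--(v), with the upper bound in (iv) corrected to $\elmreductionfactor{j}{m}$ as you rightly noted. Your check that every factor of $\branchweight{1}{m}{\branchvec}$ has row index in $[1,j]$ and cardinality in $[m-j+1,m]$ gives (vi), and (vii) is as you wrote.
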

\begin{proof}
  All claims follow directly from the definitions of the respective
  quantities.
\end{proof}
From this, the following conclusion can be drawn.
\begin{corollary}
  \label{thm:mst:analysis:effort-bounds}
  Let $\switchTable$ be a switch table for permutations in
  $\symGroup{n}$ with maximal effective row~$r$ and $c_i$ non-trivial
  switches in row~$i$ for $i = 1, 2, \dots, r$.  Let $\branchVecs{i}$
  be the set of all branch-selection vectors of length~$i$,
  $1 \le i \le r$.
  
  Then, the typical number $\seemst(m)$ of single-element evaluations
  for two independently random $m$-subsets $\Sorg, \Simg$ with $m \ge
  r$ is bounded as follows:
  \begin{align}
    \label{eqn:mst:analysis:effort-bounds}    
    \lefteqn{
    \Bigl(
    1 + \frac{c_r}{c_r + 1}
    \Bigr)
    \biggl(
    \elmreductionfactor{1}{m - r + 1} + \nelreductionfactor{r}{m}
    \biggr)^r
    k(m - r)
    }\\
    &\quad
      {}
      +
      \sum_{j = 1}^{r - 1}
      \biggl(
      \elmreductionfactor{1}{m - j + 1} + \nelreductionfactor{j}{m}
      \biggr)^j
      \frac{c_j}{c_j + 1} \cdot \frac{k(m - j)}{\iorder{j + 1}}\\
    &\le
      \seemst(m)\\
    &\le
      \Bigl(
      1 + \frac{c_r}{c_r + 1}
      \Bigr)
      \biggl(
      \elmreductionfactor{r}{m} + \nelreductionfactor{1}{m - r + 1}
      \biggr)^r
      km\\
    &\quad
      {}
      +
      \sum_{j = 1}^{r - 1}
      \biggl(
      \elmreductionfactor{j}{m} + \nelreductionfactor{1}{m - j + 1}
      \biggr)^j
      \frac{c_j}{c_j + 1} \cdot \frac{km}{\iorder{j + 1}}.
  \end{align}
\end{corollary}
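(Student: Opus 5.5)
The plan is to start from the exact formula in Corollary~\ref{thm:mst:analysis:alternative} and bound each of the inner sums
$\sum_{\branchvec \in \branchVecs{j}} \branchweight{1}{m}{\branchvec}\nodeweight{m}{\branchvec}$, for $j = 1, \dots, r$, from above and from below using Lemma~\ref{thm:mst:basic-facts}. The outer coefficients $\bigl(1 + \frac{c_r}{c_r+1}\bigr) k$ and $\frac{c_j}{c_j+1}\cdot\frac{k}{\iorder{j+1}}$ are nonnegative and do not depend on $\branchvec$. Hence termwise bounds on the inner sums carry over directly to $\seemst(m)$.

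First, fix $j \le r \le m$ and $\branchvec \in \branchVecs{j}$. Along the path encoded by $\branchvec$, the $t$-th factor of $\branchweight{1}{m}{\branchvec}$ has row index $i = t \in \{1,\dots,j\}$ and cardinality $\ell = m - \onesupto{t}{\branchvec} \in \{m-j+1,\dots,m\}$. These are exactly the index ranges covered by Lemma~\ref{thm:mst:basic-facts}, so item~\ref{itm:mst:basic-facts:branch-weights} applies. It gives
\[
\bigl(\elmreductionfactor{1}{m-j+1}\bigr)^{\ones{\branchvec}}\bigl(\nelreductionfactor{j}{m}\bigr)^{j-\ones{\branchvec}} \le \branchweight{1}{m}{\branchvec} \le \bigl(\elmreductionfactor{j}{m}\bigr)^{\ones{\branchvec}}\bigl(\nelreductionfactor{1}{m-j+1}\bigr)^{j-\ones{\branchvec}}.
\]
Item~\ref{itm:mst:basic-facts:node-weights} gives $m-j \le \nodeweight{m}{\branchvec} \le m$, and both weights are nonnegative, so the product inequalities are legitimate.

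Second, sum over $\branchvec \in \branchVecs{j}$. The bounds depend on $\branchvec$ only through $\ones{\branchvec}$, and there are $\binom{j}{s}$ vectors with $s$ ones. The binomial theorem therefore collapses the sums:
\[
(m-j)\bigl(\elmreductionfactor{1}{m-j+1}+\nelreductionfactor{j}{m}\bigr)^j \le \sum_{\branchvec\in\branchVecs{j}} \branchweight{1}{m}{\branchvec}\nodeweight{m}{\branchvec} \le m\bigl(\elmreductionfactor{j}{m}+\nelreductionfactor{1}{m-j+1}\bigr)^j.
\]
Substituting these into Corollary~\ref{thm:mst:analysis:alternative}, with $j = r$ for the first term and $j = 1,\dots,r-1$ for the sum, yields exactly the displayed two-sided bound.

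The main point to check is that Lemma~\ref{thm:mst:basic-facts} really covers every factor. This means verifying that the cardinality argument $m - \onesupto{t}{\branchvec}$ stays in $[m-j+1, m]$ and that the exceptional zero values of density and codensity do not break monotonicity. The hypothesis $m \ge r \ge j$ keeps $\ell \ge 1$. The zero conventions only matter for $\ell \ge n-i+1$, which cannot occur since $m < n - r$ is implicit in the setup. A second small point is that the lemma's upper bound for $\elmreductionfactor{i}{\ell}$ is written as $\elmreductionfactor{m}{j}$, which is presumably a typo for $\elmreductionfactor{j}{m}$. This follows from the density bound $\density{i}{\ell} \le \density{j}{m}$ by squaring, so I will use it in that corrected form.
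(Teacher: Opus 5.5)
Your route is the paper's own: substitute items~\ref{itm:mst:basic-facts:branch-weights} and~\ref{itm:mst:basic-facts:node-weights} of Lemma~\ref{thm:mst:basic-facts} into Corollary~\ref{thm:mst:analysis:alternative}, group by $\ones{\branchvec}$, and collapse with the binomial theorem. Your bookkeeping is also correct: row $t$ and cardinality $m-\onesupto{t}{\branchvec}\in[m-j+1,m]$ in the $t$-th factor, nonnegativity, and the typo in item~(iv).

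The gap is in the step you flag as the main point to check. You verified that every factor lies in the \emph{range} of the lemma, not that the lemma is \emph{true}. Item~(iii), on which (v) and hence (vi) rest, is false in general. The gap $\transitivitygap{i}{\ell}=(n-i-c_i)_\ell/(n-i+1)_\ell$ depends on $c_i$, which varies arbitrarily with the row. It is non-increasing in $i$ if, e.g., $c_1\le c_2\le\dots\le c_r$, but typically $c_1$ is the largest. For a transitive group $c_1=n-1$, so $\transitivitygap{1}{\ell}=0$ for all $\ell\ge 1$, while $\transitivitygap{2}{\ell}>0$ for $\ell\le n-2-c_2$. The switch table of Example~\ref{exa:mst:bounds-transitive:upper:cube}, with $c_1=63$ and $c_2=5$, is already such a case. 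Consequently $\nelreductionfactor{1}{m-j+1}=0$ is not an upper bound, and $\nelreductionfactor{j}{m}$ is not a lower bound, for the non-element factors occurring in $\branchweight{1}{m}{\branchvec}$.

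So the statement itself is false, and no write-up can rescue it. Take the dihedral group of order $k=40$ acting on the vertices of a $20$-gon, labeled so that $2$ is a neighbor of $1$. Then $n=20$, $c_1=19$, $c_2=1$, $r=2$, $\iorder{2}=2$. For $m=3$ one has
\begin{itemize}
\item $\nelreductionfactor{1}{\ell}=0$,
\item $\nelreductionfactor{2}{2}=\tfrac{17}{19}\cdot\tfrac{272}{342}\approx 0.712$,
\item $\nelreductionfactor{2}{3}=\tfrac{16}{19}\cdot\tfrac{240}{342}\approx 0.591$.
\end{itemize}
Corollary~\ref{thm:mst:analysis:alternative} gives $\seemst(3)=60\cdot\tfrac{9}{400}\bigl(\tfrac{4}{361}+2\,\nelreductionfactor{2}{2}\bigr)+19\cdot\tfrac{18}{400}\approx 2.79$; the recursion in the proof of Theorem~\ref{thm:mst:analysis} gives the same value. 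By contrast, the claimed upper bound is $180\,(9/361)^2+1.2825\approx 1.39$, and the claimed lower bound is $60\,(0.01+\nelreductionfactor{2}{3})^2+0.855\approx 22.5$. Both inequalities fail.

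What your binomial argument does prove, verbatim, is the version with honest extremes. Let $\underline{\beta}_j$ and $\overline{\beta}_j$ be the minimum and maximum of $\nelreductionfactor{i}{\ell}$ over $1\le i\le j$ and $m-j+1\le \ell\le m$. Then
\[ (m-j)\bigl(\elmreductionfactor{1}{m-j+1}+\underline{\beta}_j\bigr)^j \le \sum_{\branchvec\in\branchVecs{j}}\branchweight{1}{m}{\branchvec}\nodeweight{m}{\branchvec} \le m\bigl(\elmreductionfactor{j}{m}+\overline{\beta}_j\bigr)^j . \]
This reduces to the stated bounds only under the extra hypothesis $c_1\le\dots\le c_r$.

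Finally, $m\le n-r$ is not implicit in the setup; only $m<n$ is assumed. It must be added as a hypothesis, because otherwise the zero convention already breaks item~(i).
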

\begin{proof}
  Substituting the monotonicity properties
  \ref{thm:mst:basic-facts}~\ref{itm:mst:basic-facts:branch-weights}
  and~\ref{itm:mst:basic-facts:node-weights} for branch and node
  weights into the formula in
  Corollary~\ref{thm:mst:analysis:alternative} yields the following:
  \begin{align}
    \lefteqn{
    \Bigl(
    1 + \frac{c_r}{c_r + 1}
    \Bigr)
    \Biggl(
    \sum_{\lambda = 0}^r
    \binom{\lambda}{r}
    \biggl(
    \elmreductionfactor{1}{m - r + 1}
    \biggr)^{\lambda}
    \biggl(
    \nelreductionfactor{r}{m}
    \biggr)^{r - \lambda}
    \Biggr)
    (m - r)
    k
    }\\
    &\quad
      {}
      +
      \sum_{j = 1}^{r - 1}
      \Biggl(
      \sum_{\lambda = 0}^j
      \binom{\lambda}{r}
      \biggl(
      \elmreductionfactor{1}{m - j + 1}
      \biggr)^{\lambda}
      \biggl(
      \nelreductionfactor{j}{m}
      \biggr)^{j - \lambda}
      \Biggr)
      (m - j)
      \cdot
      \frac{c_j}{c_j + 1} \cdot \frac{k}{\iorder{j + 1}}\\
    &\le
        \seemst(m)\\
    &\le
      \Bigl(
      1 + \frac{c_r}{c_r + 1}
      \Bigr)
      \Biggl(
      \sum_{\lambda = 0}^r
      \binom{\lambda}{r}
      \biggl(
      \elmreductionfactor{r}{m}
      \biggr)^{\lambda}
      \biggl(
      \nelreductionfactor{1}{m - r + 1}
      \biggr)^{r - \lambda}
      \Biggr)
      m
      k\\
    &\quad
      {}
      +
      \sum_{j = 1}^{r - 1}
      \Biggl(
      \sum_{\lambda = 0}^j
      \binom{\lambda}{r}
      \Bigl(
      \elmreductionfactor{j}{m}
      \Bigr)^{\lambda}
      \Bigl(
      \nelreductionfactor{1}{m - j + 1}
      \Bigr)^{j - \lambda}
      \Biggr)
      m
      \cdot
      \frac{c_j}{c_j + 1} \cdot \frac{k}{\iorder{j + 1}}.
  \end{align}
  The assertion now follows directly from an application of the
  binomial theorem.
\end{proof}
One key learning can be derived right away: Whenever the sum of the
two respective largest possible reduction factors in the element and
non-element branches is strictly smaller than one (which is clear for
identical parameters and is the common case otherwise), then the gain
of the switch table is exponential in $r$ at the cost of an overhead
that is essentially linear in~$r$.  For a small~$r$ the overhead may
outweigh the gain.  For a large~$r$ the gain will in most cases
outweigh the overhead.  Note that conclusions based on asymptotics
in~$r$ alone may be misleading, since $r$ is not an unbounded, free
input parameter but bounded by~$n$ and determined by both the
structure of the symmetry group and the order of the elements
in~$\indexSet{n}$.  In the third application of this paper
(triangulations, see Section~\ref{sec:application-triangulations}),
$r$ is often small, as will be shown later in this section.

The following result shows that if the reduction factors are uniformly
close to $\frac{1}{4}$, then the gain of the switch table is
substantial.
\begin{corollary}
  \label{thm:mst:bounds:upper}
  Let $\switchTable$ be a switch table for a subgroup~$\GrG$
  of~$\symGroup{n}$ with maximal effective row~$r$, exactly $s$
  effective rows in total, and $c_i$ non-trivial switches in row~$i$,
  $i = 1, 2, \dots, r$.

  Assume there is a $\mu \in [1, 2)$ with
  $\elmreductionfactor{i}{\ell} \le \frac{\mu}{4}$ and
  $\nelreductionfactor{i}{\ell} \le \frac{\mu}{4}$ for all effective rows
  $i \in \indexSet{r}$ and $\ell = m - r + 1, m - r + 2, \dots, m$.  
  Then, the relative effort $\releffort{mst}{m}$ of~$\switchTable$ is
  at most $(s + 1) \left(\frac{\mu}{2}\right)^s$, i.e.,
  \begin{equation}
    \label{eqn:mst:bounds:upper}
    \seemst(m) \le
    (s + 1)
    \left(\frac{\mu}{2}\right)^s
    km
    =
    (s + 1)
    \left(\frac{\mu}{2}\right)^s
    \seenve(m).
  \end{equation}

  Moreover, the assumptions are, for example, satisfied if
  $\density{r}{m} = \frac{m}{n - r + 1} \le \frac{\sqrt{2 - \mu}}{2}$
  and
  \begin{equation}
    \label{eqn:mst:bounds:upper:sufficient}
    c_i + 1
    \ge
    \left(
    1 - \sqrt[\uproot{\theuprootcounter}\leftroot{\theleftrootcounter}{m - i + 1}]{\frac{\mu(n - i + 1)}{4(n - m)}}
    \right)
    (n - i + 1)
    \text{ for all $i = 1, 2, \dots, r$}.
  \end{equation}
\end{corollary}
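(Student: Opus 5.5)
The plan is to start from the exact formula of Corollary~\ref{thm:mst:analysis:alternative} rather than from the binomial bounds of Corollary~\ref{thm:mst:analysis:effort-bounds}. In that formula every branch weight $\branchweight{1}{m}{\branchvec}$ for $\branchvec \in \branchVecs{j}$ is a product of $j$ reduction factors. There is one factor per recursion level $t = 1, \dots, j$, and it is either $\elmreductionfactor{t}{\ell}$ or $\nelreductionfactor{t}{\ell}$ with $m - j + 1 \le \ell \le m$. Node weights are bounded by $m$, by Lemma~\ref{thm:mst:basic-facts}~\ref{itm:mst:basic-facts:node-weights}.

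The first key observation concerns ineffective rows $t$, that is, rows with $c_t = 0$. At such a level the recursion in~\eqref{eqn:mst:analysis:recursion} simply passes through with total weight $\elmreductionfactor{t}{\ell} + \nelreductionfactor{t}{\ell} = \density{t}{\ell}^2 + \codensity{t}{\ell}\transitivitygap{t}{\ell} \le \density{t}{\ell}^2 + \codensity{t}{\ell}^2 + \dots \le 1$. More directly, one can bound this by $\density{t}{\ell} + \codensity{t}{\ell} = 1$, using $\density{t}{\ell}^2 \le \density{t}{\ell}$ and $\transitivitygap{t}{\ell} \le 1$. Hence ineffective levels contribute a factor of at most one. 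At effective rows, the hypothesis gives a factor of at most $\frac{\mu}{4} + \frac{\mu}{4} = \frac{\mu}{2}$.

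Summing over $\branchvec \in \branchVecs{j}$ factorwise, with the binomial-type expansion done level by level, yields
\[
\sum_{\branchvec \in \branchVecs{j}} \branchweight{1}{m}{\branchvec}\nodeweight{m}{\branchvec} \le m \left(\frac{\mu}{2}\right)^{s_j},
\]
where $s_j$ is the number of effective rows among $1, \dots, j$. The factorwise summation can be justified by induction on $j$ via the weight recursion~\eqref{eqn:mst:analysis:weight-recursion}.

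Next, consider the second sum in Corollary~\ref{thm:mst:analysis:alternative}. Only indices $j$ with $c_j > 0$ contribute, and for those, $\frac{c_j}{c_j+1}\frac{k}{\iorder{j+1}} = \frac{c_j k}{\iorder{j}} \le k\,2^{-(s - s_j)}$. This holds because each of the $s - s_j$ effective rows beyond $j$ multiplies $\iorder{j}$ by at least $2$. The $j$-th term is therefore at most $km(\mu/2)^{s_j}2^{-(s-s_j)} \le km(\mu/2)^{s}$, using $\mu \ge 1$. The first term, including the factor $1 + \frac{c_r}{c_r+1}$, is at most $km(\mu/2)^s$ plus one more such term. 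This gives at most $s + 1$ terms of size $km(\mu/2)^s$ in total, and hence the bound~\eqref{eqn:mst:bounds:upper}. Counting the terms carefully is the main bookkeeping point: the first term absorbs the $j = r$ summand.

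For the sufficient condition, the element factor is handled by the monotonicity $\density{i}{\ell} \le \density{r}{m}$ from Lemma~\ref{thm:mst:basic-facts}. The hypothesis then gives $\density{i}{\ell}^2 \le \frac{2-\mu}{4} \le \frac{\mu}{4}$, since $\mu \ge 1$. For the non-element factor, use $\codensity{i}{\ell} \le 1$ and $\transitivitygap{i}{\ell} \le \bigl(\frac{n-i+1-(c_i+1)}{n-i+1}\bigr)^{\ell}$, since the falling factorial ratio is dominated by the power of the first ratio. The hardest step is to show that~\eqref{eqn:mst:bounds:upper:sufficient} makes this at most $\mu/4$ uniformly in $\ell \ge m - r + 1$. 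Here the exponent $m - i + 1$ and the factor $\frac{n-i+1}{n-m}$ must come out of a more careful estimate, namely keeping $\codensity{i}{\ell} = \frac{n-i+1-\ell}{n-i+1}$ rather than $1$ and choosing the worst $\ell$. I would first try $\ell = m - i + 1$, which is the smallest cardinality reachable at level $i$, and check that the product matches the stated root.
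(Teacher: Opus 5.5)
Your proof of the main bound~\eqref{eqn:mst:bounds:upper} is correct. It differs from the paper's proof in how the branch-weight sums are bounded.

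\begin{itemize}
\item \emph{The paper's route.} It starts from the binomial bounds of Corollary~\ref{thm:mst:analysis:effort-bounds}. It bounds brackets such as $\elmreductionfactor{j}{m}+\nelreductionfactor{1}{m-j+1}$ by $\frac{\mu}{2}$, which gives $(\frac{\mu}{2})^j$ with $j$ counting \emph{all} levels. Only afterwards does it pass to effective rows: the $t$-th effective row has index at least $t$, so $(\frac{\mu}{2})^j\le(\frac{\mu}{2})^t$ since $\mu<2$. It then uses $s\le r$ on the leading term.
\item \emph{Your route.} You bound the exact expression of Corollary~\ref{thm:mst:analysis:alternative} one level at a time. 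Effective levels cost at most $\frac{\mu}{2}$. Ineffective levels cost at most $\density{t}{\ell}^2+\codensity{t}{\ell}\transitivitygap{t}{\ell}\le\density{t}{\ell}+\codensity{t}{\ell}\le 1$. This gives $m(\frac{\mu}{2})^{s_j}$ directly.
\item \emph{Shared bookkeeping.} From there the steps are the paper's: $\frac{c_j}{c_j+1}\cdot\frac{k}{\iorder{j+1}}\le 2^{-(s-s_j)}k$, then $(\frac12)^{s-s_j}\le(\frac{\mu}{2})^{s-s_j}$ from $\mu\ge1$, then $s-1$ summands plus $2$ from the leading term.
\end{itemize}

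What your route buys is that it uses the hypothesis only where it is stated: at effective rows, and at the cardinalities $\ell\in[m-t+1,m]$ that actually occur at level $t$. The paper's chain effectively charges $\frac{\mu}{2}$ to every level. That includes ineffective rows and row~$1$, which need not be effective. It also relies on the cross-row comparisons of Lemma~\ref{thm:mst:basic-facts}, and for transitivity gaps these depend on how the $c_i$ vary with $i$. In exchange, the paper reuses bounds it has already established.

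For the sufficient condition, your element-branch argument is the same as the paper's. For the non-element branch you have the right idea, but you stop at ``I would first try''. The missing computation is short:
\begin{itemize}
\item Put $q_i:=\frac{n-i-c_i}{n-i+1}\in[0,1]$.
\item For $\ell\ge m-i+1$ both factors are nonincreasing in $\ell$. Hence $\codensity{i}{\ell}\le\frac{n-m}{n-i+1}$.
\item Likewise $\transitivitygap{i}{\ell}\le q_i^{\ell}\le q_i^{m-i+1}\le\frac{\mu(n-i+1)}{4(n-m)}$. The last inequality is~\eqref{eqn:mst:bounds:upper:sufficient} raised to the power $m-i+1$.
\item Multiplying gives $\nelreductionfactor{i}{\ell}\le\frac{\mu}{4}$.
\end{itemize}

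Be careful with the range. This controls row~$i$ only for $\ell\in[m-i+1,m]$, not ``uniformly in $\ell\ge m-r+1$'' as you wrote. For $i<r$ and smaller $\ell$, both factors increase and~\eqref{eqn:mst:bounds:upper:sufficient} gives no control. This is enough, because your level-by-level bound only evaluates row~$i$ at those $\ell$. The paper makes the same restriction. Its detour through monotonicity of $\ell$-th roots, which needs $\frac{\mu(n-i+1)}{4(n-m)}<1$, is unnecessary in this direct form.
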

\begin{proof}
  The starting point is
  Corollary~\ref{thm:mst:analysis:effort-bounds}. Denote by
  $r(j) \in \indexSet{r - j}$ the number of effective rows with index
  strictly larger than~$j$.  Then, for an effective row~$j$ one has
  $r(j) = r(j-1) - 1$ and $0 < \tfrac{c_j}{c_j + 1} < 1$; for a
  trivial row~$j$ one has $r(j) = r(j-1)$ and
  $\tfrac{c_j}{c_j + 1} = 0$.  Moreover, the $j$th effective row has
  row index at least~$j$.  For each row~$j$ one has
  $\iorder{j + 1} \ge 2^{r(j)}$.  This implies
  $\frac{k}{\iorder{j + 1}} \le \bigl(\frac{1}{2}\bigr)^{r(j)}k$.


  Therefore:
  \begin{align}
    \seemst(m)
    &\le
      \Bigl(
      1 + \frac{c_r}{c_r + 1}
      \Bigr)
      \biggl(
      \elmreductionfactor{r}{m} + \nelreductionfactor{1}{m - r + 1}
      \biggr)^r
      km\\
    &\quad
      {}
      +
      \sum_{j = 1}^{r - 1}
      \biggl(
      \elmreductionfactor{j}{m} + \nelreductionfactor{1}{m - j + 1}
      \biggr)^j
      \frac{c_j}{c_j + 1} \cdot \frac{km}{\iorder{j + 1}}\\
    &\stackrel{\mathllap{\text{row~$r$ eff.}}}{\le}
      \left(
      2 \biggl(\frac{\mu}{2}\biggr)^r
      +
      \sum_{\substack{j \in \indexSet{r-1}\\\text{$j$ eff.}}}
    \biggl(\frac{\mu}{2}\biggr)^j
    \cdot
    \biggl(\frac{1}{2}\biggr)^{r(j)}
    \right)
    km\\
    &\stackrel{\mathllap{\text{$\mu < 2$}}}{\le}
      \left(
      2\biggl(\frac{\mu}{2}\biggr)^r
      +
      \sum_{j = 1}^{s - 1}
      \biggl(\frac{\mu}{2}\biggr)^j 
      \biggl(\frac{1}{2}\biggr)^{s - j}
      \right)
      km\\
    &\stackrel{\mathllap{\mu \ge 1}}{\le}
      \left(
      2\biggl(\frac{\mu}{2}\biggr)^r
      +
      \sum_{j = 1}^{s - 1}
      \biggl(\frac{\mu}{2}\biggr)^j 
      \biggl(\frac{\mu}{2}\biggr)^{s - j}
      \right)
      km\\
    &=
      \left(
      2\biggl(\frac{\mu}{2}\biggr)^r
      +
      (s - 1)
      \biggl(\frac{\mu}{2}\biggr)^s
      \right)
      km\\
    &\stackrel{\mathllap{s \le r \text{ and } \mu < 2}}{\le}
      (s + 1)
      \biggl(\frac{\mu}{2}\biggr)^s
      km. 
  \end{align}
  If the modified switch-table method is called on $m$-element
  subsets, then the minimal $\ell$ in a reduction factor of level~$i$
  is $m - i + 1$ and the maximal $\ell$ is~$m$.  Therefore,
  $\density{i}{\ell} \le \density{i}{m} \le \frac{\sqrt{2 - \mu}}{2}$
  implies
  $\elmreductionfactor{i}{\ell} = \density{i}{\ell}^2 \le \frac{2 -
    \mu}{4} \le \frac{1}{4} \le \frac{\mu}{4}$.  Moreover,
  $\frac{\mu}{4} < \frac{1}{2} \le 1 - \frac{\sqrt{2 - \mu}}{2} \le 1 -
  \density{i}{\ell} = \codensity{i}{\ell} \le \codensity{i}{m - i + 1}
  = \frac{n - i + 1 -(m - i + 1)}{n - i + 1} = \frac{n - m}{n - i +
    1}$.  This implies $\frac{\mu(n - i + 1)}{4(n - m)} < 1$, and, hence,
  $\sqrt[\ell]{\frac{\mu(n - i + 1)}{4(n - m)}}$ is monotonically
  increasing in~$\ell$.

  With this, one can conclude for all
  $\ell = m, m - 1, \dots, m - i + 1$ and all $i = 1, 2, \dots, r$:
  \allowdisplaybreaks
  \begin{align}
    c_i + 1
    &\ge
      \left(
      1 - \sqrt[\uproot{\theuprootcounter}\leftroot{\theleftrootcounter}{m - i + 1}]{\frac{\mu(n - i + 1)}{4(n - m)}}
      \right)
      (n - i + 1)\\
    \Rightarrow
    c_i + 1
    &\ge
      \left(
      1 - \sqrt[\uproot{\theuprootcounter}\leftroot{\theleftrootcounter}{\ell}]{\frac{\mu}{4\codensity{i}{\ell}}}
      \right)
      (n - i + 1)\\
    \Rightarrow
    c_i + 1
    &\ge
      (n - i + 1)
      -
      (n - i + 1)
      \sqrt[\uproot{\theuprootcounter}\leftroot{\theleftrootcounter}{\ell}]{\frac{\mu}{4\codensity{i}{\ell}}}\\
    \Rightarrow
    \left(\frac{n - i + 1 - (c_i + 1)}{n - i + 1}\right)^{\ell}
    &\le
      \frac{\mu}{4 \codensity{i}{\ell}}\\
    \Rightarrow
    \codensity{i}{\ell}
    \left(
    \frac{\bigl(n - i + 1 - (c_i + 1)\bigr)_{\ell}}{(n - i +
    1)_{\ell}}
    \right)
    &\le
      \frac{\mu}{4}\\
    \Rightarrow
    \nelreductionfactor{i}{\ell} = \codensity{i}{\ell}
    \transitivitygap{i}{\ell}
    &\le
      \frac{\mu}{4}.
  \end{align}
\end{proof}
The following result shows that if the reduction factors are not too
much smaller than~$\frac{1}{2}$ and $r$ is sufficiently smaller
than~$m$, then the switch table may even be slower than the naive
method.
\begin{corollary}
  \label{thm:mst:bound:lower}
  Let $\switchTable$ be a switch table for a subgroup~$\GrG$
  of~$\symGroup{n}$ with maximal effective row~$r$ and $c_i$
  non-trivial switches in row~$i$, $i = 1, 2, \dots, r$.

  Assume that there is a $\mu \in (0, 1]$ with
  $\elmreductionfactor{i}{\ell} \ge \frac{\mu}{2}$ and
  $\nelreductionfactor{i}{\ell} \ge \frac{\mu}{2}$ for all
  $i = 1, 2, \dots, r$ and $\ell = m - r, m - r + 1, \dots, m$.  Then,
  the relative effort $\releffort{mst}{m}$ of~$\switchTable$ is at
  least $\frac{3}{2} \mu^r \left(1 - \frac{r}{m}\right)$, i.e.,
  \begin{equation}
    \label{eqn:mst:bounds:lower}
    \seemst(m) \ge \frac{3}{2} \mu^r \left(1 - \frac{r}{m}\right) k m
    = \frac{3}{2} \mu^r \left(1 - \frac{r}{m}\right) \seenve(m).
  \end{equation}
  In particular, if additionally $\mu > \sqrt[\uproot{0}\leftroot{\theleftrootcounter}{r}]{\frac{2}{3}}$ and
  $r < m\left(1 - \frac{2}{3\mu^r}\right)$, then the modified
  switch-table method typically needs more single-element evaluations
  on $m$-element subsets than the naive method.
\end{corollary}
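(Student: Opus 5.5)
The plan is to start from the lower bound of Corollary~\ref{thm:mst:analysis:effort-bounds}, or more precisely from the lower-bound form of Corollary~\ref{thm:mst:analysis:alternative}, and to drop all terms except the leading one. Every summand in the second sum is nonnegative: it is a product of nonnegative reduction factors, nonnegative node weights, and the nonnegative fractions $\frac{c_j}{c_j+1}$ and $\frac{k}{\iorder{j+1}}$. So it can be omitted. This leaves
\begin{equation*}
  \seemst(m) \ge \Bigl(1 + \frac{c_r}{c_r+1}\Bigr)
  \biggl(\sum_{\branchvec \in \branchVecs{r}}
  \branchweight{1}{m}{\branchvec} \nodeweight{m}{\branchvec}\biggr) k.
\end{equation*}
Row~$r$ is the maximal effective row, so $c_r \ge 1$ and the prefactor is at least $\frac{3}{2}$.

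Next I bound the inner sum from below. Each branch weight $\branchweight{1}{m}{\branchvec}$ with $\branchvec \in \branchVecs{r}$ is a product of exactly $r$ reduction factors. Their indices are $i = 1, \dots, r$, and their second arguments $\ell$ lie between $m - r + 1$ and $m$, which is inside the range $m - r, \dots, m$ covered by the hypothesis. Each factor is therefore at least $\frac{\mu}{2}$, so $\branchweight{1}{m}{\branchvec} \ge (\mu/2)^r$. By Lemma~\ref{thm:mst:basic-facts}\ref{itm:mst:basic-facts:node-weights}, $\nodeweight{m}{\branchvec} \ge m - r$. Since $\abs{\branchVecs{r}} = 2^r$, the inner sum is at least $2^r (\mu/2)^r (m - r) = \mu^r (m - r)$. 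Multiplying gives
\[
  \seemst(m) \ge \tfrac{3}{2}\mu^r (m - r) k = \tfrac{3}{2}\mu^r\bigl(1 - \tfrac{r}{m}\bigr) km.
\]
Theorem~\ref{thm:nve-cet:analysis} gives $km = \seenve(m)$, which yields the claimed relative effort.

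For the final claim, the relative effort exceeds $1$ exactly when $\frac{3}{2}\mu^r(1 - r/m) > 1$, that is, when $r < m\bigl(1 - \frac{2}{3\mu^r}\bigr)$. The bound on $r$ is a nonvacuous condition on positive $r$ precisely when $1 - \frac{2}{3\mu^r} > 0$, i.e.\ when $\mu > \sqrt[r]{2/3}$.

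The only delicate point is bookkeeping: I must check that every reduction factor occurring in the branch weights for $r$-vectors really has its second index in the hypothesis range. There is also one subtlety in the hypothesis itself. The exceptional zero densities must not occur, but they cannot, because the hypothesis asserts the reduction factors are at least $\mu/2 > 0$. Apart from this, the argument is a direct specialization of the earlier bounds.
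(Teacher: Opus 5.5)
Your proof is correct and has the same skeleton as the paper's: keep only the length-$r$ term, use $c_r \ge 1$ for the factor $\frac{3}{2}$, bound node weights below by $m-r$, and extract $\mu^r$ from the $2^r$ branch-selection vectors.

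The only difference is where the hypothesis enters. The paper plugs it into the binomially collapsed lower bound $\bigl(\elmreductionfactor{1}{m-r+1}+\nelreductionfactor{r}{m}\bigr)^r$ of Corollary~\ref{thm:mst:analysis:effort-bounds}. You instead apply it factor by factor in the exact expression of Corollary~\ref{thm:mst:analysis:alternative}, after checking that every second index lies in $[m-r+1,m]$. This bypasses the monotonicity claims of Lemma~\ref{thm:mst:basic-facts}, which Corollary~\ref{thm:mst:analysis:effort-bounds} relies on. Those claims are not automatic for the transitivity gaps, because the gaps depend on the row-dependent $c_i$: for a transitive group $\transitivitygap{1}{\ell}=0$, while the gap of a later, sparsely filled row is positive. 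So your route is, if anything, the more robust one.

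One small wording point for the last claim: it is the lower bound $\frac{3}{2}\mu^r\bigl(1-\frac{r}{m}\bigr)$ that exceeds $1$ exactly under the stated condition. For the relative effort itself this is only a sufficient condition, which is all the corollary asserts.
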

\begin{proof}
  The starting point is again
  Corollary~\ref{thm:mst:analysis:effort-bounds}.  The fact that
  $c_r > 0$ implies $\frac{c_r}{c_r + 1} \ge \frac{1}{2}$.  Therefore:
  \begin{align}
    \seemst(m)
    &\ge
      \Bigl(
      1 + \frac{c_r}{c_r + 1}
      \Bigr)
      \biggl(
      \elmreductionfactor{1}{m - r + 1} + \nelreductionfactor{r}{m}
      \biggr)^r
      k(m - r)\\
    &\quad
      {}
      +
      \sum_{j = 1}^{r - 1}
      \biggl(
      \elmreductionfactor{1}{m - j + 1} + \nelreductionfactor{j}{m}
      \biggr)^j
      \frac{c_j}{c_j + 1} \cdot \frac{k(m - j + 1)}{\iorder{j + 1}}\\
    &\ge
      \frac{3}{2} \cdot \mu^r \cdot 2^r \cdot \left(\frac{1}{2}\right)^r k (m - r)
    \\
    &\ge
      \frac{3}{2}
      \mu^r
      k(m - r)\\
    &=
      \frac{3}{2} \mu^r \biggl(1 - \frac{r}{m}\biggr) km.
  \end{align}
  The additional claim follows from solving
  $\frac{3}{2} \mu^r \bigl(1 - \frac{r}{m}\bigr) > 1$.
\end{proof}

Since the assumptions in both Corollaries~\ref{thm:mst:bounds:upper}
and~\ref{thm:mst:bound:lower} are somewhat unrealistic (uniformly
bounded reduction factors are rare in practice), the focus of the
following is on bounds that actually explain the difference in
performance for important use cases, namely the enumeration of
circuits and triangulations of hypercubes.

A closer inspection of the non-element reduction factors reveals that
the transitivity of a symmetry group has a positive influence on the
performance of the modified switch-table method.  An important example
for this are the symmetry groups of hypercubes: for each pair of
vertices of the hypercube there is a permutation in its symmetry group
that maps one vertex to the other.  Switch tables allow for a somewhat
continuous measure for the transitivity of a symmetry group: the
level-$i$ transitivity gaps.  A switch table encodes a transitive
group if and only if $n - (c_1 + 1) = 0$. If an $m$-element subset
shall be checked for lexicographic minimality in its orbit, then it is
more relevant whether the level-$1$ transitivity gap is zero.  Such a
zero-transitivity gap is already guaranteed whenever
$n - (c_1 + 1) < m$.  This immediately annihilates the 50\,\% branch
weights of all branch-selection vectors starting with a ``$0$'' and
guarantees that the first reduction factor in the element branch is
active for all remaining branches.  Similarly, if also the level-$2$
transitivity gap is zero, then another 25\,\% of branch weights
disappears with an active second reduction factor of the element
branch, etc.  The reduction factors in the element branch are
particularly small if $m$ is small compared to~$n$. The following
corollary presents an analysis of the resulting relative effort for
level-$1$ transitivity gaps of zero.
\begin{corollary}
  \label{thm:mst:bounds-transitive:upper}
  Let $\switchTable$ be a switch table for a subgroup~$\GrG$
  of~$\symGroup{n}$ with maximal effective row~$r$ and $c_i$
  non-trivial switches in row~$i$, $i = 1, 2, \dots, r$ so that
  $\transitivitygap{1}{m} = 0$. This is, e.g., the case whenever
  $\GrG$ is transitive.  Denote by $r(j)$ the number of effective rows
  with index strictly larger than~$j$.

  Define $\mu := \frac{n-1}{n}$ and assume that
  $\density{m}{r}^2 + \mu^2 \le 1$. Then:
  \begin{equation}
    \releffort{mst}{m}
    \le
    \biggl(
    1  + \sum_{j = 1}^{r} \frac{1}{2^{r(j)}} \cdot \frac{c_j}{c_j + 1}
    \biggr)
    \density{1}{m}^2
    \le
    3 \density{1}{m}^2.    
  \end{equation}
\end{corollary}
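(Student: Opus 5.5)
The plan is to start from the exact expression in Corollary~\ref{thm:mst:analysis:alternative} and use the hypothesis $\transitivitygap{1}{m} = 0$ to remove half of the branch-selection vectors at once. Then I would bound what remains by a geometric-type sum. The relative effort is $\seemst(m)/(km)$, so it suffices to show that each inner sum $\sum_{\branchvec \in \branchVecs{j}} \branchweight{1}{m}{\branchvec}\nodeweight{m}{\branchvec}$ is at most $\density{1}{m}^2\, m$.

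\emph{Step 1 (factorization).} By \eqref{eqn:mst:parameters:branch-weight}, the first factor of $\branchweight{1}{m}{\branchvec}$ is $\elmreductionfactor{1}{m} = \density{1}{m}^2$ if $\bb{1} = 1$. If $\bb{1} = 0$, it is $\nelreductionfactor{1}{m} = \codensity{1}{m}\transitivitygap{1}{m} = 0$. So only vectors $\branchvec = (1, \branchvec')$ with $\branchvec' \in \branchVecs{j-1}$ contribute. For these, $\branchweight{1}{m}{\branchvec} = \density{1}{m}^2 \branchweight{2}{m-1}{\branchvec'}$, which is exactly the first step of recursion \eqref{eqn:mst:analysis:weight-recursion}. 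I bound the node weight crudely by $\nodeweight{m}{\branchvec} \le m$, as in Lemma~\ref{thm:mst:basic-facts}\ref{itm:mst:basic-facts:node-weights}. This leaves $\sum_{\branchvec' \in \branchVecs{j-1}} \branchweight{2}{m-1}{\branchvec'}$ to be bounded.

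\emph{Step 2 (uniform bound on the remaining reduction factors).} For every level $i \ge 2$ and every $\ell$ that occurs, I claim $\elmreductionfactor{i}{\ell} \le \density{r}{m}^2$ and $\nelreductionfactor{i}{\ell} \le \mu^2$.
\begin{itemize}
\item The first bound is the monotonicity of the density in Lemma~\ref{thm:mst:basic-facts}, read as $\density{i}{\ell} \le \density{r}{m}$. I interpret $\density{m}{r}$ in the statement as this quantity.
\item For the second bound, each factor is at most $(n-1)/n$ whenever $1 \le \ell$ and $c_i \ge 0$. For the codensity, $\codensity{i}{\ell} = 1 - \ell/(n-i+1) \le 1 - 1/n = \mu$. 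For the transitivity gap, $\transitivitygap{i}{\ell} \le (n-i-c_i)/(n-i+1) \le (n-i)/(n-i+1) \le \mu$, since its leading factor is at most this and the other factors are at most one.
\item The exceptional values $\ell \le 0$ and $\ell \ge n-i+1$ give zero factors and only help.
\end{itemize}
The binomial theorem, as in the proof of Corollary~\ref{thm:mst:analysis:effort-bounds}, then gives $\sum_{\branchvec'} \branchweight{2}{m-1}{\branchvec'} \le (\density{r}{m}^2 + \mu^2)^{j-1} \le 1$ by the hypothesis.

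\emph{Step 3 (assembling).} Inserting this into Corollary~\ref{thm:mst:analysis:alternative} gives
\[
\seemst(m) \le \density{1}{m}^2\, km \Bigl(1 + \frac{c_r}{c_r+1} + \sum_{j=1}^{r-1} \frac{c_j}{c_j+1}\cdot\frac{1}{\iorder{j+1}}\Bigr).
\]
As in the proof of Corollary~\ref{thm:mst:bounds:upper}, $\iorder{j+1} \ge 2^{r(j)}$, and $r(r) = 0$ lets the term $c_r/(c_r+1)$ be absorbed as the $j = r$ summand. This yields the first inequality. For the second, only effective rows contribute, and along the effective rows $r(j)$ runs through the distinct values $s-1, \dots, 1, 0$. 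Hence the sum is at most $\sum_{t \ge 0} 2^{-t} = 2$, and the total is at most $3\density{1}{m}^2$.

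The main obstacle is Step~2: making sure the uniform bounds on the reduction factors hold for \emph{every} pair $(i,\ell)$ reachable in the recursion, including the shifts of $\ell$ after element branches and the boundary conventions. Getting the squared $\mu^2$ for the non-element factor requires pairing the codensity bound with the transitivity-gap bound. I would first check the transitivity-gap estimate via its leading factor, since that is where the hypothesis $\density{r}{m}^2 + \mu^2 \le 1$ has to match exactly.
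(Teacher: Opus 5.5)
Your proposal is correct and follows the paper's proof essentially step for step. The vanishing level-$1$ transitivity gap removes every branch-selection vector starting with a $0$. The remaining element and non-element reduction factors are bounded by the squared level-$r$ density and by $\mu^2$, respectively. The binomial theorem together with the hypothesis then bounds each inner sum by $m$ times the squared level-$1$ density. Finally, the level-$(j+1)$ orders are at least $2^{r(j)}$, which gives the first inequality, and the geometric series gives the second.

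Your bookkeeping is, if anything, slightly cleaner than the paper's. You use the exponent $j-1$ for vectors of length $j-1$, where the paper writes $r-1$. You also state explicitly that the hypothesis is read with the level-$r$ density at $m$.
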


\begin{proof}
  First note that both $\codensity{i}{\ell} \le \mu$ and
  $\transitivitygap{i}{\ell} \le \mu$, whence
  $\nelreductionfactor{i}{\ell} \le \mu^2$ for all
  $i = 1, 2, \dots, r$ and $\ell = 1, 2, \dots, m$.  The assumptions
  and Lemma~\ref{thm:mst:basic-facts} guarantee that for a
  branch-selection vector $\branchvec = \tbinom{1}{\branchvec'}$ with
  $\branchvec' \in \branchVecs{j - 1}$ for $j = 1, 2, \dots, r - 1$
  its weight is bounded from above as follows:
  \begin{align}
    \branchweight{1}{m}{\tbinom{1}{\branchvec'}} \le
    \density{1}{m}^2
    \cdot
    \bigl(\density{r}{m}^{2}\bigr)^{\ones{\branchvec'}}
    \cdot
    \bigl(\mu^2\bigr)^{r - 1 - \ones{\branchvec'}}.
  \end{align}
  All branch-selection vectors with $b_1 = 0$ have weight zero.  Thus,
  the first sum in Corollary~\ref{thm:mst:analysis:alternative} can be
  bounded as follows:
  \allowdisplaybreaks
  \begin{align}
    \lefteqn{\Bigl(
    1 + \frac{c_r}{c_r + 1}
    \Bigr)
    \sum_{\branchvec \in \branchVecs{r}}
    \branchweight{1}{m}{\branchvec} \nodeweight{m}{\branchvec} k}\\
    &\le
      k
      \Bigl(
      1 + \frac{c_r}{c_r + 1}
      \Bigr)
      \elmreductionfactor{1}{m} \sum_{\branchvec' \in \branchVecs{r - 1}}
      \branchweight{1}{m}{\branchvec'} \nodeweight{m}{\branchvec'}\\
    &\le
      km
      \Bigl(
      1 + \frac{c_r}{c_r + 1}
      \Bigr)
      \density{1}{m}^2 \sum_{\lambda = 0}^{r - 1} \binom{r - 1}{\lambda}
      \bigl(\density{r}{m}^2\bigr)^{\lambda}
      \bigl(\mu^2 \bigr)^{r - 1 - \lambda}\\
    &=
      km
      \left[
      \Bigl(
      1 + \frac{c_r}{c_r + 1}
      \Bigr)
      \density{1}{m}^2
      \bigl(\density{r}{m}^2 + \mu^2\bigr)^{r - 1}
      \right]\\
    &\le
      km\left[
      \Bigl(
      1 + \frac{c_r}{c_r + 1}
      \Bigr)
      \density{1}{m}^2
      \right].
  \end{align}
  For the second sum,
  $\frac{k}{\iorder{j + 1}} \le \frac{k}{2^{r(j)}}$ for all
  $j = 1, 2, \dots, r-1$, since $c_j + 1 \ge 2$ for all effective
  rows~$j$.  Therefore:
  \allowdisplaybreaks
  \begin{align}
    \lefteqn{\sum_{j = 1}^{r - 1}
    \sum_{\branchvec \in \branchVecs{j}}
    \branchweight{1}{m}{\branchvec} \nodeweight{m}{\branchvec}
    \frac{c_j}{c_j + 1} \cdot \frac{k}{\iorder{j + 1}}}\\
    &\le
      k
      \elmreductionfactor{1}{m}
      \sum_{j = 1}^{r - 1}
      \frac{1}{2^{r(j)}} \cdot \frac{c_j}{c_j + 1}
      \sum_{\branchvec' \in \branchVecs{j - 1}}
      \branchweight{1}{m}{\branchvec'} \nodeweight{m}{\branchvec'}\\
    &\le
      k
      \density{1}{m}^2
      \sum_{j = 1}^{r - 1}
      \frac{1}{2^{r(j)}} \cdot \frac{c_j}{c_j + 1}
      \sum_{\branchvec' \in \branchVecs{j - 1}}
      \branchweight{1}{m}{\branchvec'} \nodeweight{m}{\branchvec'}\\
    &\le
      km
      \density{1}{m}^2
      \sum_{j = 1}^{r - 1}
      \frac{1}{2^{r(j)}} \cdot \frac{c_j}{c_j + 1}
      \sum_{\lambda = 0}^{j  - 1} \binom{j - 1}{\lambda}
      \bigl(\density{r}{m}^2\bigr)^{\lambda}
      \bigl(\mu^2\bigr)^{j - 1 - \lambda}\\
    &=
      km
      \density{1}{m}^2
      \sum_{j = 1}^{r - 1}
      \frac{1}{2^{r(j)}} \cdot \frac{c_j}{c_j + 1}
      \bigl(\density{r}{m}^2
      +
      \mu^2\bigr)^{j - 1}\\
    &=
      km
      \density{1}{m}^2
      \sum_{j = 1}^{r - 1}
      \frac{1}{2^{r(j)}} \cdot \frac{c_j}{c_j + 1}.
  \end{align}
  In total:
  \begin{equation}
    \seemst(m) \le
    km\left[
      \Bigl(
      1 + 
      \sum_{j = 1}^{r}
      \frac{1}{2^{r(j)}} \cdot \frac{c_j}{c_j + 1}
      \Bigr)
      \density{1}{m}^2
    \right].
  \end{equation}
  This implies the assertion.  For the simplified, weaker upper bound,
  note that $\frac{c_j}{c_j + 1} \le 1$ for each effective row~$j$ and
  $\frac{c_j}{c_j + 1} = 0$ for each trivial row~$j$.  Therefore, for
  $s$ effective rows a restriction of the sum to effective row
  indices, reversing the summation, and extension to the infinite
  series yields the claim as follows:
  \begin{equation}
    \sum_{j = 1}^{r}
    \frac{1}{2^{r(j)}} \cdot \frac{c_j}{c_j + 1}
    \le
    \sum_{j = 1}^{s}
      \frac{1}{2^{s - j}}
    =
      \sum_{j = 0}^{s - 1}
      \frac{1}{2^j}
    \le
      \sum_{j = 0}^{\infty}
      \frac{1}{2^j}
    =       
      2.
  \end{equation}
\end{proof}

\begin{example}
  \label{exa:mst:bounds-transitive:upper:cube}
  Consider the point configuration consisting of the set of vertices
  of the $\dimension$-dimensional hypercube~$\hypercube{\dimension}$.
  It has $n = 2^{\dimension}$ points and a transitive symmetry group
  of order~$\dimension! 2^{\dimension}$, generated by all permutations
  of coordinates and a 0/1-flip of an arbitrary coordinate.  If one is
  interested, e.g., in the circuits (see
  Section~\ref{sec:application-circuits}) of~$\hypercube{\dimension}$,
  then all subsets to be considered contain at least $4$ and at most
  $m = \dimension + 2$ points.  Consider an order of the points so
  that, recursively,
  $\hypercube{\dimension} = \begin{tmatrix}{\hypercube{\dimension -
        1}&\hypercube{\dimension - 1}\\0&1}\end{tmatrix}$ in
  non-homogeneous coordinates or with a row of ones at the top in
  homogeneous coordinates.  Then
  \begin{equation}
    \label{eq:1}
    \hypercube{\dimension} =
    \begin{pmatrix}
      \hypercube{\dimension - 2} & \hypercube{\dimension - 2} &
      \hypercube{\dimension - 2} & \hypercube{\dimension - 2}\\
      0 & 1 & 0 & 1\\
      0 & 0 & 1 & 1
    \end{pmatrix}
  \end{equation}
  There is exactly one symmetry that point-wise stabilizes the first
  $2^{\dimension - 2}$ columns, namely flipping the last two
  coordinates.  Similarly, flipping either of the last two coordinates
  with the third-to-last coordinate yields exactly two symmetries that
  stabilize the first $2^{\dimension - 3}$ columns from the left, and
  so on.  For $\dimension = 6$ this yields $1$, $2$, $3$, $4$, $5$
  non-trivial switches in rows $17$, $9$, $5$, $3$, $2$, respectively,
  and $63$ non-trivial switches in row~$1$ (because of transitivity).
  Since $k = 46{,}080 = 2 \cdot 3 \cdot 4 \cdot 5 \cdot 6 \cdot 64$,
  there are no further non-trivial switches.  Therefore, $r = 17$.
  Moreover, for $m = 8$, one obtains
  $\density{1}{m} = \frac{8}{64} = \frac{1}{8}$,
  $\density{r}{m} = \frac{8}{48} = \frac{1}{6}$, and
  $\mu = \frac{63}{64}$.  Therefore,
  $\density{r}{m}^2 + \mu^2 = \frac{1}{6^2} + \frac{63^2}{64^2} \le
  1$.  With this, Corollary~\ref{thm:mst:bounds-transitive:upper}
  yields:
  \begin{align}
    \label{eqn:mst:bounds-transitive:upper:cube}
    \releffort{mst}{8}
    &\le
      \Biggl(
      1
      + \frac{1}{1} \cdot \frac{1}{2}
      + \frac{1}{2} \cdot \frac{2}{3}
      + \frac{1}{4} \cdot \frac{3}{4}
      + \frac{1}{8} \cdot \frac{4}{5}
      + \frac{1}{16} \cdot \frac{5}{6}
      + \frac{1}{32} \cdot \frac{63}{64}
      \biggr)
      \Biggr)
      \cdot \frac{1}{64}\\
    &<
      0.035.
  \end{align}
  Hence, the modified switch-table method for the lex-min checks for
  $8$-element subsets in the $6$-cube (relevant for circuits)
  typically needs less than $3.5\,\%$ of the effort of the naive
  method.  The simplified upper bound (which does not need the
  detailed data of the switch table) still yields
  $\releffort{mst}{8} \le \frac{3}{64} < 0.047$.  Compare this to the
  relative effort of the iteration-based critical-element method,
  which, by Theorem~\ref{thm:nve-cet:analysis}, is always
  $\frac{n + m - 1}{nm}$, in this case $\frac{71}{512} > 13.8\,\%$.
  So, it can be assumed a-priori that for $8$-element subsets in the
  $6$-cube the modified switch-table method will be typically at least
  almost four times as fast (in terms of single-element evaluations)
  as the critical-element method.  The superiority in terms of
  single-element evaluations of the modified switch-table method in
  the \emph{enumeration} of circuits in the $6$-cube (see
  Table~\ref{tab:lexmin-methods}) is much more pronounced: First,
  Corollary~\ref{thm:mst:bounds-transitive:upper} is not very tight in
  order to arrive at a simpler formula.  Second, during the
  enumeration most checks run on subsets with fewer than~$8$ elements
  (some circuits have fewer elements, and the enumeration has to check
  at least all lex-subsets of circuits), which reduces the speed-up of
  the critical-element method.  Third, in this example $m < r$, so
  that all branch-selection vectors with more than $m$ one-components
  lead to summands that are zero (which was ignored in the estimates
  above, since taking this into account leads to ugly binomial tails).
  The superiority of the modified switch-table method in terms of CPU
  times is much less apparent.  This can be attributed to the fact
  that the modified switch-table method incurs a larger effort for the
  adminstration of local data structures and the recursion compared to
  the structurally simple, purely iterative critical-element method.
  In particular, in the critical-element method many single-element
  evaluations are only read-outs from a consecutive array while in the
  modified switch-table method many single-element evaluations are
  used to generate new subsets, incurring a memory management
  overhead. \qed
\end{example}
It may seem that the modified switch-table method should be the
superior method in most cases.  However, in the following it is shown
that for large degrees and small orders so that the non-trivial
elements in the switch table are extremely sparse the critical-element
method can be substantially better.
\begin{corollary}
  \label{thm:mst:bounds:lower-mu}
  Let $\switchTable$ be a switch table for a subgroup~$\GrG$
  of~$\symGroup{n}$ with maximal effective row~$r$ and $c_i$
  non-trivial switches in row~$i$, $i = 1, 2, \dots, r$.

  Assume that there is a $\mu \in (0,1)$ such that
  \begin{itemize}
  \item $m \le \mu(n - r + 1)$
  \item $c_i + 1 \le \mu(n - i + 1 - m + 1)$ for all $i = 1, 2, \dots, r$.
  \end{itemize}
  Then, the relative effort $\releffort{mst}{m}$ of~$\switchTable$ is
  at least $\frac{3}{2} (1 - \mu)^{r(m + 1)}$, i.e.,
  \begin{equation}
    \label{eqn:mst:bound:lower-mu}
    \seemst(m) \ge \frac{3}{2} (1 - \mu)^{r(m + 1)} km.
  \end{equation}
\end{corollary}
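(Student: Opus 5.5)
The plan is to bound $\seemst(m)$ from below by a single, explicitly computable summand of the exact formula in Corollary~\ref{thm:mst:analysis:alternative}. Using Corollary~\ref{thm:mst:analysis:effort-bounds} instead would lose the node weight down to $m-r$, whereas the claim has $km$. All summands there are nonnegative. So I keep only the first term and, inside it, only the all-zero branch-selection vector $\branchvec_0 := (0,\dots,0) \in \branchVecs{r}$. This vector has $\ones{\branchvec_0} = 0$, hence node weight $\nodeweight{m}{\branchvec_0} = m$. Its branch weight is $\prod_{i=1}^{r} \nelreductionfactor{i}{m}$, because the cardinality argument never decreases along an all-zero path. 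Since row~$r$ is effective, $c_r \ge 1$ and $1 + \frac{c_r}{c_r+1} \ge \frac32$. This gives
\begin{equation*}
  \seemst(m) \ge \frac{3}{2}\, k m \prod_{i=1}^{r} \codensity{i}{m}\, \transitivitygap{i}{m},
\end{equation*}
and it remains to show $\codensity{i}{m}\transitivitygap{i}{m} \ge (1-\mu)^{m+1}$ for each $i \le r$.

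For the codensity, $i \le r$ and $m \le \mu(n-r+1) \le \mu(n-i+1)$ give $\codensity{i}{m} = 1 - \frac{m}{n-i+1} \ge 1-\mu$. This also shows $0 < m < n-i+1$, so the exceptional zero values in Definition~\ref{def:mst-parameters} do not occur.

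For the transitivity gap, write $N := n-i+1$ and expand the quotient of falling factorials as
\begin{equation*}
  \transitivitygap{i}{m} = \prod_{t=0}^{m-1} \frac{N-(c_i+1)-t}{N-t}.
\end{equation*}
Each factor is at least $1-\mu$ if and only if $c_i+1 \le \mu(N-t)$. Since $N - t \ge N - m + 1 = n-i+1-m+1$ for $t \le m-1$, this is exactly what the second hypothesis provides. Because $\mu < 1$, the same hypothesis also gives $c_i + 1 < N - m + 1$, i.e.\ $m \le n-i-c_i$. Hence the transitivity gap is in its non-degenerate range and every factor is positive. Therefore $\transitivitygap{i}{m} \ge (1-\mu)^m$.

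Multiplying the two estimates over the $r$ rows gives $(1-\mu)^{r(m+1)}$, which yields \eqref{eqn:mst:bound:lower-mu}. Dividing by $\seenve(m) = km$ from Theorem~\ref{thm:nve-cet:analysis} gives the statement about the relative effort. The only delicate point is the index bookkeeping in the falling-factorial step: one must check that the smallest denominator $N-m+1$ is precisely the quantity in the hypothesis, and that the degenerate-case conventions for densities and gaps are excluded. Everything else is a direct substitution into Corollary~\ref{thm:mst:analysis:alternative}.
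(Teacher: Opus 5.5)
Your proposal is correct and follows the paper's proof essentially step for step. Like the paper, you keep only the all-zero branch-selection vector in the first summand of Corollary~\ref{thm:mst:analysis:alternative}, use $c_r \ge 1$ to get the factor $\frac{3}{2}$, and bound each codensity by $1-\mu$ and each transitivity gap factor-wise by $(1-\mu)^m$. Your explicit check that the hypotheses keep $m$ inside the non-degenerate ranges of the codensities and transitivity gaps is a useful detail that the paper leaves implicit.
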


\begin{proof}
  First, note that $m \le \mu(n - r + 1)$ implies
  $\codensity{i}{\ell} \ge \codensity{r}{m} \ge (1 - \mu)$.  Moreover,
  $c_i + 1 \le \mu(n - i + 1 - m + 1)$ implies
  $\transitivitygap{i}{\ell} \ge \transitivitygap{i}{m} \ge (1 - \mu)^m$.

  The starting point for the lower bound is now again
  Corollary~\ref{thm:mst:analysis:alternative}, where all
  branch-selection vectors are ignored except the zero vector.
  Moreover, since $c_i$ might be zero for $i = 2, 3, \dots, r - 1$
  only the expressions with branch-selection vectors of length~$r$ are
  taken into account.  With this one obtains:
  \begin{align}
    \seemst(m)
    &\ge
      \Bigl(1 + \frac{c_r}{c_r + 1}\Bigr)
      \branchweight{1}{m}{\vectorStyle{0}} \nodeweight{m}{\vectorStyle{0}}\\
    &\ge
      km\Bigl(1 + \frac{c_r}{c_r + 1}\Bigr)
      \prod_{t = 1}^r \nelreductionfactor{t}{m}\\
    &\ge
      km\Bigl(1 + \frac{c_r}{c_r + 1}\Bigr)
      \prod_{t = 1}^r (\codensity{t}{m} \transitivitygap{t}{m})\\
    &\ge
      km\Bigl(1 + \frac{c_r}{c_r + 1}\Bigr)
      \Bigl((1 - \mu) (1 - \mu)^m\Bigr)^r\\
    &\ge
      km\frac{3}{2}
      \Bigl((1 - \mu) (1 - \mu)^m\Bigr)^r\\
    &\ge
      \frac{3}{2} (1 - \mu)^{r(m + 1)} km.
  \end{align}
\end{proof}
Thus, for a very small $\mu$ it may happen that the exponential
speed-up in~$r$ of the modified switch-table method is outperformed by
the linear speed-up in~$m$ of the critical-element method.  For the
enumeration of triangulations this is the case in all instances
computed in this paper.  More specifically: The result explains why
for triangulations of the $4$-cube, the critical-element method is
faster, as is shown in the following example.
\begin{example}
  Consider the point configuration consisting of the set of vertices
  of the $\dimension$-dimensional hypercube~$\hypercube{\dimension}$,
  in particular for $\dimension = 4$.  If one is interested, e.g., in
  triangulations (see Section~\ref{sec:application-triangulations}),
  then the relevant action of its symmetry group of order~$k = 384$ is
  the action on the full-dimensional simplices.  Concrete computer
  calculations show the following. There are $3008$ such simplices.
  Thus, the degree here is~$n = 3008$.  Note that $384 < 3008$ already
  implies that the symmetry group is not acting transitively on
  simplices.  Moreover, for the order of points as in
  Example~\ref{exa:mst:bounds-transitive:upper:cube} and the
  lexicographic order on simplices a switch table has $r = 3$ with
  $c_1 = 15$, $c_2 = 11$, and $c_3 = c_r = 1$.  Finally,
  triangulations of~$\hypercube{4}$ have between $16$ and $24$
  simplices.  Corollary~\ref{thm:mst:bounds:lower-mu} can, therefore,
  be applied to this particular switch table, e.g., with
  $\mu = \frac{1}{100}$.  Then, in the best-case $m = 24$ for the
  modified switch-table method one obtains:
  \begin{equation}
    \releffort{mst}{24} \ge \frac{3}{2}\biggl(\frac{99}{100}\biggr)^{3 \cdot 25} > 70\,\%.
  \end{equation}
  In contrast to this, in the worst-case $m = 16$ for the
  iteration-based critical-element method one obtains:
  \begin{equation}
    \releffort{cet}{16} \le \frac{3008 + 16 - 1}{3008 \cdot 16} < 7\,\%.
  \end{equation}
  Thus, for triangulations one can a-priori guarantee that for the
  lex-min check on $16$- to $24$-element subsets of simplices in the
  $4$-cube the critical-element method will typically be at least $10$
  times as fast (in terms of single-element evaluations) as the
  modified switch-table method. Again, in
  Table~\ref{tab:lexmin-methods} it can be seen that inside the
  \emph{enumeration} of triangulations of the $4$-cube the
  critical-element method is only slightly more than $3$ times as fast
  (in terms of single-element evaluations) as the modified
  switch-table method, because of the many checks of smaller subsets
  of triangulations.  As before, in terms of CPU times the superiority
  of the critical-element method is more pronounced, probably because
  of its overall simpler implementation structure.\qed
\end{example}

So far, a switch table was considered given, and the input subsets
were considered random.  Now, a switch table on a random $k$-subset
$\GrG$ of permutations is discussed.  The suitable probability space
for this is $(\Omega, 2^{\Omega}, \probability{\cdot})$, where
$\Omega$ is the set of all $k$-subsets
$\setStyle{P} \in \tbinom{\symGroup{n}}{k}$ of permutations
$\pi \in \symGroup{n}$ with $\abs{\Omega} = \tbinom{n!}{k}$ and
$\probability{\setStyle{P}} = \frac{1}{\abs{\Omega}}$ for all
$\setStyle{P} \in \tbinom{\symGroup{n}}{k}$.

Motivated by the applications in this paper (see the examples in
Sections~\ref{sec:application-cocircuits}
through~\ref{sec:application-triangulations}), the focus in the
following is on orders~$k$ that are much smaller than the number $n!$
of all permutations in~$\symGroup{n}$.  For these cases, a binomial
approximation is added based on sampling $k$ permutations uniformly
and independently at random with replacement to generate a sequence
of $k$ (not necessarily distinct) permutations, which simplifies
formulas.
\begin{theorem}
  \label{thm:mst:exp-nontrivial-switches}
  Let $\GrG$ be a $k$-subset of permutations in~$\symGroup{n}$ that
  has been chosen uniformly at random from all such $k$-subsets.
  Then, the typical number $c_i$ of non-trivial switches in row~$i$
  for $i = 1, 2, \dots, n$ is given by
  \begin{equation}
    \label{eqn:mst:exp-nontrivial-switches}
    \expectation{c_i}
    =
    (n - i)
    \Biggl(
    1
    -
    \prod_{t = 0}^{k - 1}
    \biggl(1 - \frac{(n - i)!}{n! - t}\biggr)
    \Biggr)
  \end{equation}

  If $k$ is sufficiently smaller than~$n!$, then a binomial
  approximation of this is given as follows: Let $\GrG$ be a random
  sequence of $k$ permutations in~$\symGroup{n}$, where each permutation
  has been chosen uniformly and independently at random with
  replacement.  Then, the typical number $c_i$ of non-trivial switches
  in row~$i$ for $i = 1, 2, \dots, n$ is given by
  \begin{equation}
    \label{eqn:mst:exp-nontrivial-switches-binomial}
    \expectation{c_i}
    =
    (n - i)
    \Biggl(1 - \biggl(1 - \frac{1}{(n)_i}\biggr)^k\Biggr).
  \end{equation}
\end{theorem}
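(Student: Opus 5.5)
Write $c_i = \sum_{j=i+1}^n X_{ij}$, where $X_{ij}$ is the indicator that the switch table has a non-trivial entry at $(i,j)$. By the definition of a switch table, $X_{ij}=1$ if and only if $\GrG$ contains a permutation $\grg$ with $\grg(k)=k$ for all $k<i$ and $\grg(j)=i$. Call such a permutation an \emph{$(i,j)$-switch permutation}. By linearity of expectation, $\expectation{c_i} = \sum_{j>i} \probability{X_{ij}=1}$. So the task reduces to counting the $(i,j)$-switch permutations and computing the probability that a random $k$-subset (or $k$-sequence) contains at least one of them. If this probability is the same for every $j$, the factor $(n-i)$ follows at once.

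First, I count the $(i,j)$-switch permutations in $\symGroup{n}$. Fixing $1,\dots,i-1$ pointwise and prescribing $\grg(j)=i$ fixes $i$ values of the permutation. The remaining $n-i$ points map bijectively onto the remaining $n-i$ values, so there are exactly $N_i := (n-i)!$ such permutations, independent of $j$. Their fraction among all permutations is $(n-i)!/n! = 1/(n)_i$.

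For the exact formula, a uniformly random $k$-subset avoids all $N_i$ switch permutations with probability $\binom{n!-N_i}{k}\big/\binom{n!}{k}$. Writing both binomial coefficients as falling factorials gives $\prod_{t=0}^{k-1}\frac{n!-N_i-t}{n!-t} = \prod_{t=0}^{k-1}\bigl(1-\frac{(n-i)!}{n!-t}\bigr)$. Taking the complement and summing over the $n-i$ columns $j>i$ yields \eqref{eqn:mst:exp-nontrivial-switches}. For the binomial version, each of the $k$ independent draws avoids the switch permutations with probability $1-1/(n)_i$, so all draws avoid them with probability $(1-1/(n)_i)^k$, which gives \eqref{eqn:mst:exp-nontrivial-switches-binomial}.

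The main subtlety is interpretive rather than computational. The switch-table definition presupposes a group, since a switch must lie in $\GrG$. For an arbitrary subset of permutations, I will read ``non-trivial switch at $(i,j)$ exists'' literally as the existence in the subset of a permutation with the stated properties, not in a generated stabilizer chain. I will state this convention explicitly. I will also note that the identity never counts as a switch, since $j>i$ forces $\grg(j)\neq j$. Beyond this point the argument is routine.
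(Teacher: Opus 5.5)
Your proposal is correct and follows the paper's proof essentially step by step: indicators for the columns $j>i$, the count $(n-i)!$ of permutations fixing $\indexSet{i-1}$ pointwise and sending $j$ to $i$, the ratio $\binom{n!-(n-i)!}{k}/\binom{n!}{k}$ rewritten as a product of falling-factorial quotients, and linearity of expectation. The only minor difference is in the with-replacement formula: you derive it exactly from the independence of the $k$ draws, whereas the paper obtains it by approximating each factor of the exact product by $1 - 1/(n)_i$.
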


\begin{proof}
  Call $A_i(j)$ the set of all permutations $\pi \in \symGroup{n}$ that
  stabilize $1, 2, \dots, i - 1$ and map a given $j > i$ to~$i$.  Since
  for permutations in~$A_i(j)$ exactly $i$ images are fixed and the
  others are arbitrary, there are $(n - i)!$ permutations in~$A_i(j)$.
  The number of $k$-subsets of permutations that contain no permutation
  in~$A_i(j)$ is, therefore, $\tbinom{n! - (n - i)!}{k}$.  Let
  $\setsysStyle{C}_i(j)$ be the event containing those $k$-subsets of
  permutations for which row~$i$ and column~$j$ in a switch table
  of~$\GrG$ is non-trivial.  The cardinality of~$\setsysStyle{C}_i(j)$
  is $\tbinom{n!}{k} - \tbinom{n! - (n - i)!}{k}$.  Hence, the
  probability $\probability{\setsysStyle{C}_i(j)}$ that row~$i$ and
  column~$j$ of a switch table for a random $k$-subset of permutations
  is non-trivial can be computed as follows.
  \allowdisplaybreaks
  \begin{align}
    \probability{\setsysStyle{C}_i(j)}
    &=
      \frac{\binom{n!}{k} - \binom{n! - (n - i)!}{k}}{\binom{n!}{k}}\\
    &=
      1
      - 
      \frac{\binom{n! - (n - i)!}{k}}{\binom{n!}{k}}\\
    &=
      1
      -
      \frac{\bigl(n! - (n - i)!\bigr)_k}{(n!)_k}\\
    &=
      1
      -
      \prod_{t = 0}^{k - 1}
      \frac{n! - (n - i)! - t}{n! - t}\\
    &=
      1
      -
      \prod_{t = 0}^{k - 1}
      \frac{(n! - t) - (n - i)!}{n! - t}\\
    &=
      1
      -
      \prod_{t = 0}^{k - 1}
      \biggl(1 - \frac{(n - i)!}{n! - t}\biggr)\\
      \label{eqn:mst:prob-non-trivial-switches}
  \end{align}
  Using the indicator variable $\indVar{\setsysStyle{C}_i(j)}$ of
  $\setsysStyle{C}_i(j)$ and the linearity of expectation, the typical
  number of non-trivial switches $\expectation{c_i}$ in row~$i$ of a
  switch-table for~$\GrG$ is given by
  \begin{equation}
    \label{eqn:mst:expectation:c}
    \expectation{c_i}
    =
    \sum_{j = i + 1}^n \expectation{\indVar{\setsysStyle{C}_i(j)}}
    =
    \sum_{j = i + 1}^n \probability{\setsysStyle{C}_i(j)}
    =
    (n - i)
    \Biggl(1 - \prod_{t = 0}^{k - 1} \biggl(1 - \frac{(n - i)!}{n! - t}\biggr)\Biggr).
  \end{equation}
  This equals the first assertion.  If $k$ is sufficiently smaller
  than~$n!$, then the $k$ factors in the product are all approximately
  equal to $1 - \frac{(n - i)!}{n!} = 1 - \frac{1}{(n)_i}$, which
  yields the assertation about the binomial approximation.
\end{proof}
The formula shows that for small orders~$k$ and large degrees~$n$
there are typically only very few non-trivial switches in rows
$i > 1$, which leads to large level-$i$ transitivity gaps and,
therefore, a smaller performance gain of a switch table.

For the following bound on the maximal effective row the
considerations are based on the binomial approximation.  It shows that
in the absence of special structure the maximal effective row $r$ is
usually small when the degree $n$ is large compared to the order $k$
of the symmetry group.  Of course, in such cases $k$ is, in
particular, much smaller than $n!$, so that a binomial approximation
is justified.  Recall that for a small $r$ the linear overhead of a
switch table may outweigh the exponential gain; therefore, this is an
important finding.
\begin{theorem}
  \label{thm:mst:typical-r}
  The typical maximal effective row $r$ of a switch table for a
  sequence of $k$ permutations of degree~$n$ chosen uniformly and
  independently at random with replacement is at most
  $1 + \frac{2k}{n}$. In particular, whenever the degree is at least
  twice the order, then the typical maximal effective row~$r$ is at
  most two.
\end{theorem}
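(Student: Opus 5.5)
We bound $\expectation{r}$ through the tail-sum formula $\expectation{r} = \sum_{i \ge 1} \probability{r \ge i}$, working in the binomial model of Theorem~\ref{thm:mst:exp-nontrivial-switches} (a sequence of $k$ independent uniform permutations). For the trivial case $\max\effRowSet = -\infty$ we count $r$ as $0$ (i.e.\ we bound $\expectation{r^+}$), which only helps. The first term is bounded trivially by $\probability{r \ge 1} \le 1$; this is the source of the summand $1$.

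The key step is the event $r \ge i$ for $i \ge 2$. By the definition of a switch table, row $i'$ is effective if and only if some $\grg$ in the sequence satisfies $\grg(j) = i'$ for some $j > i'$ and $\grg(t) = t$ for all $t < i'$. Such a $\grg$ fixes $\indexSet{i'-1}$ and is not the identity. Hence $r \ge i$ implies that some permutation of the sequence point-wise fixes $\indexSet{i-1}$ and is non-trivial. Therefore $r \ge i$ implies that some permutation in the sequence fixes $\indexSet{i-1}$ point-wise. A uniform random permutation fixes $\indexSet{i-1}$ point-wise with probability $\frac{(n-i+1)!}{n!} = \frac{1}{(n)_{i-1}}$. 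A union bound over the $k$ sampled permutations then gives
\begin{equation*}
  \probability{r \ge i} \le \frac{k}{(n)_{i-1}} \quad \text{for all } i \ge 2 .
\end{equation*}
This is the same computation as in the proof of Theorem~\ref{thm:mst:exp-nontrivial-switches}.

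Summing, and using $(n)_{t} \ge n (n-1)^{t-1}$ for $t \ge 1$, we obtain
\begin{equation*}
  \expectation{r} \le 1 + \sum_{t = 1}^{n} \frac{k}{(n)_{t}}
  \le 1 + \frac{k}{n} \sum_{t \ge 0} \frac{1}{(n-1)^{t}}
  = 1 + \frac{k}{n} \cdot \frac{n-1}{n-2} .
\end{equation*}
For $n \ge 3$ the last factor is at most $2$. The small cases $n \le 2$ are checked directly: here $r \le 1$ always, since row $n$ has no column $j > n$.

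This yields the first claim, $\expectation{r} \le 1 + \frac{2k}{n}$. If $n \ge 2k$, then $\frac{2k}{n} \le 1$, so the typical maximal effective row is at most $2$, which is the second claim.

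The only delicate point is the reduction from "row $i$ is effective" to "some sampled permutation fixes $\indexSet{i-1}$". It must be argued from the definition of switches themselves, not from group closure, because $\GrG$ here is merely a sequence of permutations. Everything else is a routine geometric-series estimate.
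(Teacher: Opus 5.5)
Your route is the same as the paper's. You use the tail-sum formula for $r$, the observation that $r\ge i$ forces some sampled permutation to fix $1,\dots,i-1$ point-wise, and a union bound (the paper phrases it as $\Pr[b_i\ge 1]$ being at most the expected number of such permutations) giving $\Pr[r\ge i]\le k/(n)_{i-1}$. The step that fails is the final series estimate.

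The inequality $(n)_t\ge n(n-1)^{t-1}$ is reversed: $(n)_t=n(n-1)(n-2)\cdots(n-t+1)\le n(n-1)^{t-1}$, strictly for $t\ge 3$. So your bound $\sum_{t=1}^{n}\frac{k}{(n)_t}\le\frac{k}{n}\cdot\frac{n-1}{n-2}$ is not merely unjustified but false. Since $1+\frac{1}{n-1}+\frac{1}{(n-1)(n-2)}=\frac{n-1}{n-2}$ exactly, the terms $t=1,2,3$ already reach the right-hand side, and any further term pushes the sum above it. For $n=4$ the sum is $k\bigl(\frac14+\frac1{12}+\frac1{24}+\frac1{24}\bigr)=\frac{5k}{12}>\frac{3k}{8}$. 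As written, the chain of inequalities does not prove the theorem, and the sharper constant $\frac{n-1}{n-2}$ does not follow from this argument.

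The repair is the paper's estimate. Row $n$ has no column $j>n$, so only rows $i\le n$ can be effective, and it suffices to sum over $t=i-1\le n-1$. Each of the $t-1$ factors $n-1,\dots,n-t+1$ is then at least $2$, so $(n)_t\ge n\,2^{t-1}$ and $\sum_{t=1}^{n-1}\frac{k}{(n)_t}\le\frac{k}{n}\sum_{s\ge 0}2^{-s}=\frac{2k}{n}$. Alternatively, $\sum_{t=1}^{n}\frac{1}{(n)_t}=\frac{1}{n!}\sum_{j=0}^{n-1}j!\le\frac{2}{n}$ by a one-line induction, which even works with your summation range. Either way you get $1+\frac{2k}{n}$ for all $n$, and your separate check of $n\le 2$ is no longer needed.
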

Note that a group may have special structure leading to more effective
rows in a switch table.  The claim is about the hyper-amortized
average for $k \ll n!$ only and provides a hint what to expect in the
long run over many instances.
\begin{proof}
  Call a random permutation \emph{$i$-non-trivial} for
  $i = 2, \dots, n$ if it could be chosen as a non-trivial switch in
  row $i$ or larger of a switch table.  Note that the probability
  $\probability{\pi \text{ $i$-non-trivial}}$ is given by the
  probability that $\pi$ stabilizes $1, 2, \ldots, i - 1$, which
  yields
  \begin{equation}
    \probability{\pi \text{ $i$-non-trivial}} = \frac{1}{n (n - 1) \cdots (n - i + 2)}.
  \end{equation}
  Moreover, the number $b_i$ of $i$-non-trivial permutations is
  typically
  $\expectation{b_i} = k \probability{\pi \text{ $i$-non-trivial}}$
  (approximation by the expectation in a binomial distribution).  The
  maximal effective row $r$ is at least~$i$ if and only if there is at
  least one $i$-non-trivial permutation.  Consequently:
  \allowdisplaybreaks
  \begin{equation}
    \expectation{b_i}
    =
    \sum_{j = 0}^k j \probability{b_i = j}
    \ge
    \sum_{j = 1}^k \probability{b_i = j}
    =
    \probability{b_i \ge 1}
    =
    \probability{r \ge i}.
  \end{equation}
  This leads to the following estimate:
  \begin{align}
    \label{eq:estimate-effrow-number}
    \expectation{r}
    &= \sum_{i = 1}^n i \probability{r = i}\\
    &= \sum_{i = 1}^n \probability{r \ge i}\\
    &= 1 + \sum_{i = 2}^n \probability {r \ge i}\\
    &\le 1 + \sum_{i = 2}^n \expectation{b_i}\\
    &= 1 + \sum_{i = 2}^n k \probability{\pi \text{ $i$-non-trivial}}\\
    &= 1 + k \sum_{i = 2}^n \frac{1}{n (n - 1) \cdots (n - i + 2)}\\
    &= 1 + \frac{k}{n} + \frac{k}{n} \sum_{i = 3}^n \frac{1}{(n - 1) \cdots (n - i + 2)}\\
    & \le 1 + \frac{k}{n} \sum_{i = 2}^n \left(\frac{1}{2}\right)^{i - 2}\\
    & = 1 + \frac{k}{n} \sum_{i = 0}^{n - 2} \left(\frac{1}{2}\right)^i\\
    & \le 1 + \frac{2k}{n}.
  \end{align}
  For the special case $n \ge 2k$, consequently,
  $\expectation{r} \le 2$.
\end{proof}



The key learning of the previous theorems beyond the complicated
formulas is that the typical efficiency of the modified switch-table
method becomes better (ceteris paribus) as the order increases
compared to the degree, and vice versa.

The analysis is qualitatively confirmed by the computational
experiments for the applications in this paper.
Table~\ref{tab:lexmin-methods} shows for the circuits of the $6$-cube
$\hypercube{6}$ and the triangulations of the $4$-cube $\hypercube{4}$
the qualitative consistency of
Corollaries~\ref{thm:mst:bounds-transitive:upper} and
\ref{thm:mst:bounds:lower-mu} with computational experience
single-threaded on an M1Max machine (for details on the computational
environment see Section~\ref{sec:applications-environment}).
\begin{table}[htbp]
  \centering
  {%
    \sffamily\footnotesize
    \begin{tabular}{l*{4}{r@{\:\:\:\:}}*{6}{r@{\:\:\:\:}}}
      \toprule
      comput.
      & $n$
      & $k$
      & $r$
      & $m$
      & \multicolumn{3}{c}{CPU time [s]}
      & \multicolumn{3}{c}{\# realized single-element evaluations}\\
      &
      &
      &
      &
      & \multicolumn{1}{c}{\codeStyle{cei}} & \multicolumn{1}{c}{\codeStyle{ces}} & \multicolumn{1}{c}{\codeStyle{mst}}
      & \multicolumn{1}{c}{\codeStyle{cei}} & \multicolumn{1}{c}{\codeStyle{ces}} & \multicolumn{1}{c}{\codeStyle{mst}}\\
      \midrule
      $\hypercube{6}$ circuits
      &   64 & 46{,}080 & 17 &   4--8 & 14 & 7 &  3 & 4{,}259{,}939{,}467 & 498{,}266{,}262 &      50{,}735{,}629\\
      $\hypercube{4}$ triang's
      & 3008 &      384 &  3 & 16--24 &  5 & - & 32 &     796{,}884{,}547 &               - & 2{,}673{,}110{,}644\\
      \bottomrule
    \end{tabular}
  }
  \caption[Comparison of lex-min checks]{Spot-light comparison of
    methods for lex-min checks \codeStyle{cei} =
    ``critical-element method via iteration'', \codeStyle{ces} =
    ``critical-element method via sets'',  and \codeStyle{mst} = ``modified switch-table
    method'' (single-threaded)
  }
  \label{tab:lexmin-methods}
\end{table}
\begin{remark}
  The analysis in this section neglects the necessary
  memory-management overhead for non-trivial temporary local
  data-structures for subsets.  In \texttt{TOPCOM}, e.g., this
  overhead -- which in naive implementations can dominate anything
  else -- is more prevalent in the modified switch-table method than
  in the critical-element method.
\end{remark}

\subsection{Parallelization}
\label{sec:parallelization}

The enumeration of trees is much easier to parallelize than general
enumeration, since the enumerations of distinct subtrees do not
influence each other.

How to explore reverse-search trees in parallel using multiple
processes with only little inter-process communication was proposed in
\cite{AvisJordan_mtslightframework_2021}.  The core idea is
\emph{budgeted load balancing}.  That means, a coordinating process
assigns to each worker process the root of a subtree and a limited
budget of nodes to enumerate in that subtree.  The enumeration results
in that subtree roots are stored with the worker process.  After the
completion of its budget, the worker process returns the control to
the coordinating process, which in turn merges the results from the
worker process, including any unprocessed nodes in the subtree, to the
results obtained so far.  The coordinating process then assigns to
each idle worker process a new subtree root and a new budget.  Note
that, because of independent enumerations in each subtree, there can
be no guarantee anymore for the order in which the objects of interest
are output.  However, the ``is-canonical'' check implemented via the
lex-min check in \symLSRS remains correct since lex-minimality and,
thus, canonicalicity does not depend on when a subset orbit was found
for the first time.

The big advantage of tree-based parallelization is that such a
parallelization scheme can carry out the actual enumeration and the
merge of each worker's results completely lock-free.  (The experience
with various load-balancing mechanisms shows that a lock-free
implementation is the single most important success factor for fast
parallelizations of enumeration algorithms like the ones in this
paper.)

\texttt{TOPCOM} uses a modified version of this paradigm using a lean shared
memory for multiple threads in one process instead of multiple
processes: the coordinating thread updates a count of currently
unassigned subtree roots.  Each worker thread reads out this count
after each discovery of a new node.  If the count falls below the
number of worker threads (a threshold that can be configured), the
worker thread stops and returns its results -- including the
unprocessed nodes in the subtree -- as well as the control to the
coordinating thread.  The read-out of the count can be done lock-free
because it does not matter at which node exactly the worker thread
interrupts.

In \texttt{\texttt{TOPCOM}} the new method is called \emph{workbuffer control}.  It was
used throughout the computational experiments.  The advantage of
workbuffer control compared to budgeted load balancing is that the
number of unprocessed subtree roots (which must be stored in the
coordinating process) turns out to be much smaller for large problems.
Moreover, the worker processes usually stop less often unnecessarily
this way.  The cpu times of budgeted load balancing versus workbuffer
control did not show any significant differences in the examples of
this paper.

\section{Applications: Common Preliminaries}
\label{sec:applications-preliminaries}

In this section, some basic notions and notation are summarized for
the common setup of the applications presented in the upcoming
sections.

Consider a point or vector configuration of rank~$\rank$ with $\no$
points or vectors.  It is represented by an $\rank \times \no$-matrix
$\Conf$ containing the coordinates
$\col_1, \dots, \col_{\no} \in \mathbb{R}^{\rank}$ of the points (as
homogeneous coordinates) or vectors as columns.  The number
$\corank := \no - \rank$ is the \emph{corank} of~$\Conf$.  For a
subset $\setStyle{S}$ of column indices of~$\Conf$ denote by
$\subConf{S}$ the submatrix consisting of the columns with increasing
indices in~$\setStyle{S}$.  Moreover, for a matrix $\matrixStyle{M}$
in column-echelon form denote by $\matrixStyle{M}_{\mathrm{NZ}}$ its
submatrix of non-zero columns.

The usual language of linear algebra can be adapted to subsets of
indices as follows.
\begin{definition}
  \label{def:subset-properties}
  A subset $\setStyle{B} \in \indexSet{\no}$ is \emph{spanning} if
  $\rankOf(\subConf{B}) = \rank$, otherwise it is \emph{not-spanning}
  or \emph{coplanar}.  A coplanar subset $\setStyle{B}$ with
  $\rankOf{\subConf{B}} = \rank - 1$ is \emph{hyperplanar}. It is
  \emph{independent} if $\rankOf(\subConf{B}) = \abs{\setStyle{B}}$,
  otherwise it is \emph{dependent}. A \emph{basis} is an independent
  spanning subset.  A \emph{simplex} (in the context of
  triangulations) is a basis $\setStyle{S}$ such that
  $\cone(\subConf{S})$ is a pointed polyhedral cone, i.e., the origin
  is a vertex of it.  An $\rank - 1$-subset of a simplex is a
  \emph{simplex facet} or simply a \emph{facet} whenever no confusion
  with facets of~$\Conf$ can arise.
\end{definition}

The first application deals with cocircuits, which can be seen as
hyperplanes spanned by the configuration.
\begin{definition}
  \label{def:cocircuits}
  Any inclusion-maximal coplanar subset~$\setStyle{C}^*_0$ of column
  indices of the configuration~$\Conf$ is called \emph{(the zero part
    of) a cocircuit}.  A \emph{cocircuit signature}
  of~$\setStyle{C}^*_0$ is a map
  $\signature^*\colon \indexSet{\no} \to \{-, 0, +\}$ with
  $\setStyle{C}^*_0 = \signature^{-1}(\{0\})$ so that there is a
  $\vectorStyle{c} \in \mathbb{R}^{\rank}$ with
  $\vectorStyle{c}^{\top}\col_i = 0$ for all
  $i \in (\signature^*)^{-1}(\{0\})$, $\vectorStyle{c}^{\top}\col_i > 0$
  for all $i \in (\signature^*)^{-1}(\{+\})$, and
  $\vectorStyle{c}^{\top}\col_i < 0$ for all
  $i \in (\signature^*)^{-1}(\{-\})$. Here,
  $\setStyle{C}^*_+ := (\signature^*)^{-1}(\{+\})$ is called the
  \emph{positive part}, and
  $\setStyle{C}^*_- := (\signature^*)^{-1}(\{-\})$ is called the
  \emph{negative part} of the cocircuit signature~$\signature^*$.  By
  elementary linear algebra, there are exactly two \emph{opposite}
  cocircuit signatures of~$\setStyle{C}^*_0$.  Moreover, these two
  signatures are uniquely determined by any hyperplanar subset
  of~$\setStyle{C}^*_0$.  The pair
  $(\setStyle{C}^*_+, \setStyle{C}^*_-)$ is a \emph{signed cocircuit}.
\end{definition}
Intuitively, a cocircuit is the subset of \emph{all} elements lying on
a hyperplane spanned by \emph{some} of the elements in~$\Conf$.
Counting all cocircuits is, therefore, the same as counting all
hyperplanes spanned by elements of the configuration.

Another application is concerned with circuits, which can be seen as
unique intersection points of subpolytopes spanned by the
configuration.
\begin{definition}
  \label{def:circuits}  
  Any inclusion-minimal dependent subset $\setStyle{C}$ of the column
  indices of~$\Conf$ is \emph{(the support of) a circuit}.  A
  \emph{circuit signature} of a circuit $\setStyle{C}$ is a map
  $\signature\colon \indexSet{\no} \to \{-,0,+\}$ so that
  $\setStyle{C} = \signature^{-1}(\{-, +\})$ and
  $\sum_{i \in \signature^{-1}(\{+\})} \lambda_i \col_i = \sum_{i \in
    \signature^{-1}(\{-\})} \lambda_i \col_i$ for suitable
  $\lambda_i > 0$, $i = 1, 2, \dots, n$.  Here,
  $\setStyle{C}_+ := \signature^{-1}(\{+\})$ is called the
  \emph{positive part}, $\setStyle{C}_- := \signature^{-1}(\{-\})$ the
  \emph{negative part}, and
  $\setStyle{C}_0 := \signature^{-1}(\{ 0\})$ the \emph{zero-part} of
  the circuit signature~$\signature$.  By elementary linear algebra,
  there are exactly two \emph{opposite} circuit signatures
  of~$\setStyle{C}$.  The pair $(\setStyle{C}_+, \setStyle{C}_-)$ is a
  \emph{signed circuit}.
\end{definition}

Symmetries of a configuration are the (co-)circuit-maintaining
permutations of elements.
\begin{definition}
  \label{def::symmetry-group}
  A permutation $\grg \in \symGroup{n}$ is a \emph{(combinatorial)
    symmetry of $\Conf$}, if the following holds for each pair
  $(\setStyle{S}, \setStyle{R})$ of subsets of $\indexSet{n}$:
  $(\setStyle{S}, \setStyle{R})$ is a signed cocircuit of~$\Conf$ if
  and only if $\bigl(\grg(\setStyle{S}), \grg(\setStyle{R})\bigr)$ is
  a signed cocircuit of~$\Conf$, or, equivalently,
  $(\setStyle{S}, \setStyle{R})$ is a signed circuit of~$\Conf$ if and
  only if $\bigl(\grg(\setStyle{S}), \grg(\setStyle{R})\bigr)$ is a
  signed circuit of~$\Conf$.  A symmetry is \emph{affine} if it is
  induced by an affine isomorphism $f: \aff(\Conf) \to \aff(\Conf)$
  via $\grg(i) = j$ if and only if $f(\col_i) = \col_j$.  The
  \emph{(combinatorial) symmetry group
    $\autGroup(\Conf) = \autGroupComb(\Conf)$ of~$\Conf$} is the group
  of all (combinatorial) symmetries of~$\Conf$.  The \emph{affine
    symmetry group $\autGroupAff(\Conf)$} is the group of all affine
  symmetries.
\end{definition}
By this definition, it makes sense to enumerate all (co-)circuits
of~$\Conf$ up to (combinatorial) symmetry.

The third application in this paper deals with triangulations
of~$\Conf$.  It is advantageous to use a well-known characterization
of triangulations of point configurations as the definition
\cite[Cor.~4.1.32]{DeLoeraRambauSantos_TriangulationsStructuresApplications_2010}.
\begin{definition}
  \label{def:triangulations}
  Two simplices $\setStyle{S}_1$ and $\setStyle{S}_2$ are
  \emph{intersecting properly} if there is no signed circuit
  $\setStyle{C}$ with $\setStyle{C}_+ \subseteq \setStyle{S}_1$ and
  $\setStyle{C}_- \subseteq \setStyle{S}_2$.  A simplex facet
  $\setStyle{F}$ is \emph{interior} if there is a cocircuit
  $\setStyle{C}^*_0$ with $\setStyle{F} \subseteq \setStyle{C}^*_0$ so
  that $\setStyle{C}^*_+$ and $\setStyle{C}^*_-$ are both non-empty.
  A non-empty subset $\setsysStyle{T}$ of simplices is \emph{covering}
  if for each interior facet $\setStyle{F}$ of some simplex
  $\setStyle{S} \in \setsysStyle{T}$ there is another simplex
  $\setStyle{S}' \in \setsysStyle{T}$ containing~$\setStyle{F}$.  A
  \emph{triangulation} is a non-empty covering subset
  $\setsysStyle{T}$ of pairwise properly intersecting simplices.
\end{definition}
Proper intersection of two simplices $\setStyle{S}_1$ and
$\setStyle{S}_2$ roughly means geometrically that the convex hulls
$\conv \subConf{S_1}$ and $\conv \subConf{S_2}$ intersect in a common
(possibly empty) face of both.  The covering property together with
proper intersection roughly means geometrically that all facets of a
simplex in $\setsysStyle{T}$ interior in~$\Conf$ are covered by
simplices from boths sides in $\setsysStyle{T}$.

Whether or not a subset of simplices of~$\Conf$ is a triangulation
depends only on the combinatorics of~$\Conf$ as given by its set of
(co-)circuits
\cite[Thm.~4.1.31]{DeLoeraRambauSantos_TriangulationsStructuresApplications_2010}.
Therefore:
\begin{lemma}
  \label{thm:triangulation-symmetries}
  For any combinatorial symmetry $\grg$ of~$\Conf$ and any subset
  $\setStyle{T}$ of~$\indexSet{n}$ one has that $\setStyle{T}$ indexes
  a triangulation of~$\Conf$ if and only if the set
  $\grg(\setStyle{T})$ indexes a triangulation of~$\Conf$.\qed
\end{lemma}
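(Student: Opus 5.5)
The proof works directly with Definition~\ref{def:triangulations}. Both properties that make $\setStyle{T}$ a triangulation, proper intersection and covering, are stated only in terms of signed circuits, signed cocircuits, and the derived notions of simplex and interior facet. A combinatorial symmetry $\grg$ preserves signed circuits and signed cocircuits by Definition~\ref{def::symmetry-group}. It therefore suffices to check that every ingredient of the definition is carried to its counterpart by $\grg$. Since $\grg^{-1}$ is also a symmetry, only one direction is needed; the converse follows by applying the argument to $\grg^{-1}$.

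The first step handles the linear-algebraic notions, which must be expressed purely through (co)circuits. A subset $\setStyle{B}$ is dependent if and only if it contains the support of a signed circuit. It is spanning if and only if it is not contained in the zero part of any cocircuit, since cocircuit zero parts are exactly the maximal coplanar sets. Consequently $\grg$ maps bases to bases.

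For a basis $\setStyle{S}$, the cone $\cone(\subConf{S})$ is always pointed, because its generators are linearly independent. A simplex is therefore just a basis (pointedness can also be phrased through the absence of a positive circuit). Hence $\grg$ maps simplices to simplices, and it maps their $(\rank-1)$-subsets, the facets, to facets.

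The second step transfers the two defining properties.

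\emph{Proper intersection.} Suppose some signed circuit $(\setStyle{C}_+,\setStyle{C}_-)$ satisfies $\setStyle{C}_+\subseteq\grg(\setStyle{S}_1)$ and $\setStyle{C}_-\subseteq\grg(\setStyle{S}_2)$. Then $(\grg^{-1}(\setStyle{C}_+),\grg^{-1}(\setStyle{C}_-))$ is a signed circuit witnessing that $\setStyle{S}_1$ and $\setStyle{S}_2$ do not intersect properly. So proper intersection is preserved.

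\emph{Interior facets.} If $\setStyle{F}\subseteq\setStyle{C}^*_0$ with $\setStyle{C}^*_\pm$ both nonempty, then applying $\grg$ gives the signed cocircuit $(\grg(\setStyle{C}^*_+),\grg(\setStyle{C}^*_-))$. Its zero part is $\grg(\setStyle{C}^*_0)$, the complement of the two signed parts, and it contains $\grg(\setStyle{F})$. So interior facets map to interior facets.

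\emph{Covering.} Let $\grg(\setStyle{F})$ be an interior facet of a simplex $\grg(\setStyle{S})$ with $\setStyle{S}\in\setsysStyle{T}$. By the previous point, $\setStyle{F}$ is an interior facet of $\setStyle{S}$. Covering of $\setsysStyle{T}$ yields another simplex $\setStyle{S}'\in\setsysStyle{T}$ containing $\setStyle{F}$. Then $\grg(\setStyle{S}')\ne\grg(\setStyle{S})$ contains $\grg(\setStyle{F})$, so $\grg(\setsysStyle{T})$ is covering. Nonemptiness is clearly preserved as well.

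The main obstacle is the first step: rigorously identifying rank-based notions (spanning, independent, simplex) with purely (co)circuit-based ones, especially the pointedness condition in the vector-configuration setting. I would first argue that for a basis, pointedness is automatic. If the paper's intended notion is subtler, I would instead appeal to the cited result \cite[Thm.~4.1.31]{DeLoeraRambauSantos_TriangulationsStructuresApplications_2010}, which says triangulations are determined by the oriented matroid, and so reduce the whole statement to that theorem.
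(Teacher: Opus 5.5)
Your proof is correct, but it takes a different route from the paper. The paper gives no argument beyond the sentence preceding the lemma. It cites \cite[Thm.~4.1.31]{DeLoeraRambauSantos_TriangulationsStructuresApplications_2010}: being a triangulation depends only on the signed circuits and cocircuits of $\Conf$. Since a combinatorial symmetry preserves these, the lemma follows immediately.

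You instead unwind Definition~\ref{def:triangulations}, which is already purely combinatorial, and check that each ingredient is carried over by $\grg$:
\begin{itemize}
\item independence, via circuit supports;
\item spanning, via the fact that cocircuit zero parts are exactly the maximal coplanar sets;
\item being a simplex, since pointedness is automatic for a basis and so ``simplex'' just means ``basis'';
\item proper intersection, via circuit preservation;
\item interior facets, via cocircuit preservation, with the zero part recovered as the complement of the two signed parts;
\item covering, via bijectivity.
\end{itemize}
Using both circuit and cocircuit preservation is legitimate, because Definition~\ref{def::symmetry-group} asserts that the two conditions are equivalent. In the covering step you implicitly apply the interior-facet argument to $\grg^{-1}$; that is fine, since the argument holds for every symmetry.

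Your route gives a self-contained verification that shows exactly which structure is used and needs no external theorem. In particular, your suggested fallback to the cited theorem is unnecessary, since you have already resolved the pointedness concern. The paper's citation buys brevity. It also guarantees that the lemma holds for the geometric notion of triangulation, of which Definition~\ref{def:triangulations} is a characterization.
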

By this lemma, it makes sense to enumerate all triangulations
of~$\Conf$ up to (combinatorial) symmetry.  Note that additional
restrictions on the triangulations to be enumerated (like regularity
or unimodularity) can break this property.  In that case, the
symmetries have to be restricted to such symmetries that maintain the
required structures imposed on the triangulations to be enumerated.
To enumerate regular triangulations up to symmetry the symmetries have
to be restricted to affine symmetries (maintaining the convexity of
liftings), and for the enumeration of unimodular triangulations the
symmetries have to be restricted to isometric symmetries (maintaining
the volumes of simplices).  In Section~\ref{sec:triangs:enhancements}
further examples for interesting restrictions can be found that
require a restriction of the combinatorial symmetries of~$\Conf$ to a
smaller subgroup.

\begin{table}[htbp]
  \sffamily\footnotesize
  \centering
  \begin{tabular}{lrrr}
    \toprule
    $\Conf$ & dimension & \# points & \# symmetries \\
    \midrule
    $\hypercube{5}$          &      5 &  32 &       38,040\\
    $\hypercube{6}$          &      6 &  64 &       46,080\\
    $\hypercube{7}$          &      7 & 128 &      645,120\\
    $\hypercube{8}$          &      8 & 256 &   10,321,920\\
    $\hypercube{9}$          &      9 & 512 &  185,794,560\\
    \midrule
    $\hypersimplex{8}{3}$    &      7 &  56 &       40,320\\
    $\hypersimplex{8}{4}$    &      7 &  70 &       40,320\\
    $\hypersimplex{9}{3}$    &      8 &  84 &      362,880\\
    $\hypersimplex{9}{4}$    &      8 & 126 &      362,880\\
    $\hypersimplex{10}{3}$   &      9 & 120 &    3,628,800\\
    $\hypersimplex{10}{4}$   &      9 & 210 &    3,628,800\\
    $\hypersimplex{10}{5}$   &      9 & 252 &    3,628,800\\
    \bottomrule
  \end{tabular}
  \caption[Parameters of large configurations]{Parameters of the
    configurations for the enumeration of (co-)circuits}
  \label{tab:conf-large-parameters}
\end{table}

\begin{table}[htbp]
  \sffamily\footnotesize
  \centering
  \begin{tabular}{lrrr}
    \toprule
    $\Conf$ & dimension & \# points & \# symmetries \\
    \midrule
    $\hypercube{4}$          &      4 &  16 &          384\\
    \midrule
    $\cyclic{n}{2k}$         &   $2k$ & $n$ &         $2n$\\
    $\cyclic{n}{2k-1}$       & $2k-1$ & $n$ &          $2$\\
    \midrule
    $\hypersimplex{6}{3}$    &      5 &  20 &         1440\\
    $\hypersimplex{7}{2}$    &      6 &  21 &         5040\\
    \midrule
    $\simplexproduct{6}{2}$  &      8 &  21 &       30,240\\
    $\simplexproduct{4}{3}$  &      7 &  20 &         2880\\
    $\simplexproduct{5}{3}$  &      8 &  24 &       17,280\\
    $\simplexproduct{4}{4}$  &      8 &  25 &       28,800\\
    \midrule
    $3\standardsimplex{3}$   &      3 &  20 &           24\\
    $4\standardsimplex{3}$   &      3 &  35 &           24\\
    $3\standardsimplex{4}$   &      4 &  35 &          120\\
    \midrule
    icosahedron              &      3 &  12 &          120\\
    pseudo-icosahedron       &      3 &  12 &           24\\
    dodecahedron             &      3 &  20 &          120\\
    pyritohedron             &      3 &  20 &           24\\
    \midrule
    $\rootpoly{2}$           &      2 &   7 &           12\\
    $\rootpoly{3}$           &      3 &  13 &           48\\
    $\rootpoly{4}$           &      4 &  21 &          240\\
    $\rootpoly{5}$           &      5 &  31 &         1440\\
    \midrule
    $\santospointconfseventeen{0}$ &      6 &  17 &    128\\
    $\santospointconftwentysix$    &      5 &  26 &     48\\
    $\santospointconftwentysixmod$ &      5 &  26 &     48\\
    \bottomrule
  \end{tabular}
  \caption[Parameters of small configurations]{Parameters of the
    configurations for the enumeration of triangulations}
  \label{tab:conf-small-parameters}
\end{table}

In the computational results, the following point configurations are
mentioned.  Since some results depend on the order of points, we also
mention the order used in this paper.  The homogenization of
coordinates is not explicitly mentioned in the following:
\begin{itemize}
\item The $d$-dimensional point configuration $\hypercube{d}$ is the
  $d$-dimensional \emph{hypercube}.  The $2^d$ points are ordered in
  this paper such that
  $\hypercube{d} = \bigl(\begin{smallmatrix}\hypercube{d-1} &
    \hypercube{d-1}\\ 0 & 1\end{smallmatrix}\bigr)$ with
  $\hypercube{0} := ()$ (which can be interpreted as the only point in
  $\mathbb{R}^0$).
\item The $d$-dimensional point configuration $\cyclic{n}{d}$ is the
  $d$-dimensional \emph{cyclic polytope} with~$n$ vertices.  The
  points are ordered in this paper with increasing first coordinate.
\item The $n+n'$-dimensional point configuration
  $\simplexproduct{n}{n'}$ is the \emph{product of simplices} of
  dimensions $n$ and~$n'$.  The $(n+1)(n'+1)$ points are ordered in
  this paper such that
  $\simplexproduct{n}{n'} = \bigl( \begin{smallmatrix} I_{n+1} &
    I_{n+1} & \dots & I_{n+1}\\e_1 & e_2 & \dots &
    e_{n'+1} \end{smallmatrix}\bigr)$.
\item The $d$-dimensional point configuration $\hypersimplex{d}{k}$ is
  the $d$-dimensional \emph{hypersimplex} with coordinate sum~$k$.
  The points are ordered lexicographically in this paper.
\item The $d$-dimensional point configuration $k\standardsimplex{d}$
  is the \emph{dilated simplex} in dimension~$d$ with \emph{dilation
    factor}~$k$; it may have interior points.  The points are ordered
  lexicographically in this paper.  The number of
  subregular/regular/unimodular triangulations of
  $3\standardsimplex{3}$ were computed for the first time
  in~\cite{JordanJoswigKastner_Parallelenumerationtriangulations_2018}
  with the help of high-performance computing with \texttt{mptopcom}
  -- the largest number of triangulations computed at that time.
\item The $3$-dimensional point configurations \emph{icosahedron} and
  \emph{dodecahedron} consist of the vertices of the regular polytopes
  corresponding to the respective Platonic solids with $12$ and $20$
  vertices, respectively.  There are no rational coordinates for them,
  but all their (co-)circuits can still be computed exactly using
  algebraic field extensions.  There are no results in this paper that
  depend on the order of points.
\item The $3$-dimensional point configuration
  \emph{pseudo-icosahedron} consists of the vertices of an
  approximation of the regular icosahedron by rational coordinates
  generated by ratios of Fibonacci-numbers approximating the golden
  ratio.  In this paper, the exact coordinates are chosen and ordered
  as follows:
  \begin{equation*}
    \begin{psmallmatrix*}[r]
      0 & 0 & 21 & -21 & 13 & -13 & 13 & -13 & 21 & -21 & 0 & 0\\
      21 & 21 & 13 & 13 & 0 & 0 & 0 & 0 & -13 & -13 & -21 & -21\\ 
      13 & -13 & 0 & 0 & 21 & 21 & -21 & -21 & 0 & 0 & 13 & -13
    \end{psmallmatrix*}
  \end{equation*}
\item The $3$-dimensional point configuration \emph{pyritohedron}
  consists of the vertices of a \emph{pseudo-dodecahedron} in the same
  sense known from cristallography.  In this paper, the exact
  coordinates are chosen and ordered as follows:
  \begin{equation*}
    \begin{psmallmatrix*}[r]
      -1 & 1 & -1 & -1 & 0 & -\sfrac{3}{4} & -\sfrac{3}{2} & 0 & -\sfrac{3}{4} & \sfrac{3}{2} & -\sfrac{3}{2} & \sfrac{3}{4} & 0 & \sfrac{3}{2} & \sfrac{3}{4} & 0 & 1 & 1 & -1 & 1 \\
      -1 & -1 & -1 & 1 & -\sfrac{3}{4} & -\sfrac{3}{2} & 0 & -\sfrac{3}{4} & \sfrac{3}{2} & 0 & 0 & -\sfrac{3}{2} & \sfrac{3}{4} & 0 & \sfrac{3}{2} & \sfrac{3}{4} & -1 & 1 & 1 & 1 \\
      -1 & -1 & 1 & -1 & -\sfrac{3}{2} & 0 & -\sfrac{3}{4} & \sfrac{3}{2} & 0 & -\sfrac{3}{4} & \sfrac{3}{4} & 0 & -\sfrac{3}{2} & \sfrac{3}{4} & 0 & \sfrac{3}{2} & 1 & -1 & 1 & 1
    \end{psmallmatrix*}
  \end{equation*}
  Its symmetry group of order $24$ contains certain rotations by $120$
  degrees (later referred to as $\mathbb{Z}_3$-symmetry), certain
  rotations by $180$ degrees, and the central reflection at the origin
  (later referred to as $\mathbb{Z}_2$-symmetry).
\item The \emph{$n$-dimensional full root polytope} consists of the
  vertices of the $n$-polytope
  $\rootpoly{n} = \conv\{e_j - e_i: 1 \le i, j \le n+1\} \subset
  \mathbb{R}^{n+1}$ in ambient $n+1$-space together with the origin.
  The points are ordered in this paper as $(e_j - e_i , e_i - e_j)$
  for $(i, j)$ ordered lexicographically and transformed to full rank
  by Gaussian elimination.  Of particular interest are the numbers of
  \emph{central} triangulations of full root polytopes, i.e., the ones
  in which each simplex contains the origin.  For details about the
  enumeration of triangulations, see
  Section~\ref{sec:application-triangulations}. In
  \cite{JoswigSchroeter_ParametricShortestPath_2022} these numbers are
  computed up to~$\rootpoly{4}$ to classify the combinatorial types of
  \emph{polytropes}.  This classification was achieved earlier in
  \cite{Tran_Enumeratingpolytropes_2017} via Gröbner fans. Among the
  central triangulations of full root polytopes, the centrally
  symmetric ones have attracted recent attention as well.  No results
  have been published so far about~$\rootpoly{5}$.  The new TOPCOM
  enumeration results for the number of central and centrally
  symmetric triangulations of~$\rootpoly{5}$ presented in
  Section~\ref{sec:triangs:enhancements} have meanwhile been utilized
  for the classification of certain finite metrics in
  \cite{DelucchiKuehneMuehlherr_Combinatorialinvariantsfinite_2024},
  where also the relevance of these numbers is explained.
\item The \emph{Santos-dim6-n17 configuration
    $\santospointconfseventeen{t}$} is a point configuration
  parametrized by $t \in \mathbb{R}$ that consists of $17$ points in
  dimension six.  The configuraion is interesting because Santos has
  shown that it has at least nine flip-graph components
  \cite{Santos_Geometricbistellarflips_2006} (see
  \cite[Sec.~7.4.2]{DeLoeraRambauSantos_TriangulationsStructuresApplications_2010}
  for a version with more explanations) by constructing a
  triangulation and eight equivalent versions of it that cannot be
  flipped to each other, and, thus, not to any regular triangulation.
  That is, an enumeration of its triangulations based on flip-graph
  exploration is impossible.  In this paper, only the configuration
  for parameter value $t = 0$ is investigated.  The resulting
  coordinates are chosen as rational numbers and ordered as follows:
  \begin{equation*}
    \begin{psmallmatrix*}[r]
      1 & 1 & 1 & 1 & 1 & 1 & 1 & 1 & 1 & 1 & 1 & 1 & 1 & 1 & 1 & 1 & 1 \\
      1 & 0 & 0 & 0 & 1 & 0 & 0 & 0 & -1 & 0 & 0 & 0 & -1 & 0 & 0 & 0 & 0 \\ 
      0 & 1 & 0 & 0 & 0 & 1 & 0 & 0 & 0 & -1 & 0 & 0 & 0 & -1 & 0 & 0 & 0 \\
      0 & 0 & 1 & 0 & 0 & 0 & 1 & 0 & 0 & 0 & -1 & 0 & 0 & 0 & -1 & 0 & 0 \\
      0 & 0 & 0 & 1 & 0 & 0 & 0 & 1 & 0 & 0 & 0 & -1 & 0 & 0 & 0 & -1 & 0 \\ 
      17 & 7 & -7 & -17 & -17 & -7 & 7 & 17 & 17 & 7 & -7 & -17 & -17 & -7 & 7 & 17 & 0 \\ 
      7 & 17 & 17 & 7 & -7 & -17 & -17 & -7 & 7 & 17 & 17 & 7 & -7 & -17 & -17 & -7 & 0
    \end{psmallmatrix*}
  \end{equation*}
  This configuration has $128$ symmetries. The automorphism group of
  Santos's triangulation of order $32$ is generated by the two
  permutations
  \begin{align*}
    (5,4,11,2,1,0,15,6,13,12,3,10,9,8,7,14,16)\\
    (14,7,0,1,10,3,4,5,6,15,8,9,2,11,12,13,16)
  \end{align*}
  and is denoted by~$\santossymgroupseventeen$ for the purpose of this
  paper.  In this paper, the first computational confirmation of
  Santos's result about the disconnected flip-graph is provided.
  Moreover, its number of triangulations is computed using the new
  extension-based method (which for disconnected flip-graphs is the
  only method as of today).
\item The \emph{Santos-dim5-n26 configuration
    $\santospointconftwentysix$} is a point configuration that
  consists of $26$ points in dimension five.  The configuration is
  interesting because Santos has claimed in
  \cite{Santos_Nonconnectedtoric_2005} that it has at least 17
  flip-graph components by constructing a non-regular triangulation an
  16 equivalent versions of it that cannot be flipped to each other,
  and, thus, not to any regular triangulation.  This led to the first
  known non-connected toric Hilbert-schemes.  The coordinates are the
  following:
  \begin{equation*}
    \begin{psmallmatrix*}[r]
      1 & 1 & 1 & 1 & 1 & 1 & 1 & 1 & 1 & 1 & 1 & 1 & 1 & 1 & 1 & 1 & 1 & 1 & 1 & 1 & 1 & 1 & 1 & 1 & 1 & 1 \\
      1 & 0 & 0 & 0 & -1 & 0 & 0 & 0 & 1 & 0 & 0 & 0 & -1 & 0 & 0 & 0 & 0 & 0 & \sfrac{1}{2} & -\sfrac{1}{2} & \sfrac{1}{2} & 0 & -\sfrac{1}{2} & -\sfrac{1}{2} & 0 & \sfrac{1}{2} \\
      0 & 1 & 0 & 0 & 0 & -1 & 0 & 0 & 0 & 1 & 0 & 0 & 0 & -1 & 0 & 0 & 0 & 0 & \sfrac{1}{2} & -\sfrac{1}{2} & -\sfrac{1}{2} & \sfrac{1}{2} & 0 & \sfrac{1}{2} & -\sfrac{1}{2} & 0 \\
      0 & 0 & 1 & 0 & 0 & 0 & -1 & 0 & 0 & 0 & 1 & 0 & 0 & 0 & -1 & 0 & 0 & 0 & \sfrac{1}{2} & -\sfrac{1}{2} & 0 & -\sfrac{1}{2} & \sfrac{1}{2} & 0 & \sfrac{1}{2} & -\sfrac{1}{2} \\
      0 & 0 & 0 & 1 & 0 & 0 & 0 & -1 & 0 & 0 & 0 & 1 & 0 & 0 & 0 & -1 & 0 & 0 & 0 & 0 & -\sfrac{1}{2} & -\sfrac{1}{2} & -\sfrac{1}{2} & \sfrac{1}{2} & \sfrac{1}{2} & \sfrac{1}{2} \\ 
      0 & 0 & 0 & 0 & 0 & 0 & 0 & 0 & 1 & 1 & 1 & 1 & 1 & 1 & 1 & 1 & 0 & 1 & \sfrac{1}{2} & \sfrac{1}{2} & \sfrac{1}{2} & \sfrac{1}{2} & \sfrac{1}{2} & \sfrac{1}{2} & \sfrac{1}{2} & \sfrac{1}{2}
    \end{psmallmatrix*}
  \end{equation*}
  This configuration has $48$ symmetries generated by cyclic
  permutation of coordinates $x_2$, $x_3$, and $x_4$, by the
  simultaneous 90-degree-rotations in the $x_2$-$x_3$-plane and the
  $x_4$-$x_5$-planes, as well as the central symmetry. The
  automorphism group of Santos's triangulation of order $24$ is
  generated by all these symmetries except the central symmetry and is
  denoted by~$\santossymgrouptwentysix$ for the purpose of this paper.
  In this paper, however, the computations show that all
  triangulations of $\santospointconftwentysix$ with prescribed
  symmetries in~$\santossymgrouptwentysix$ can be flipped to a regular
  triangulation.  Further experiments showed that changing some of the
  $\sfrac{1}{2}$-coordinates to, e.g., $\sfrac{1}{3}$ destroys the computed flip paths.
  Call the modified point configuration~$\santospointconftwentysixmod$.
  Meanwhile, Santos (personal communication) found that there is a
  flaw in only one spot in the proof in
  \cite{Santos_Nonconnectedtoric_2005} that can be rectified exactly
  by these $\sfrac{1}{2}\to\sfrac{1}{3}$-tweaks.  The draw-back of the corrected
  construction is that -- in contrast to the original one -- there is
  no obvious unimodular version anymore.  The modified coordinates of
  $\santospointconftwentysixmod$ are the following (symmetries are
  unchanged):
  \begin{equation*}
    \begin{psmallmatrix*}[r]
      1 & 1 & 1 & 1 & 1 & 1 & 1 & 1 & 1 & 1 & 1 & 1 & 1 & 1 & 1 & 1 & 1 & 1 & 1 & 1 & 1 & 1 & 1 & 1 & 1 & 1 \\
      1 & 0 & 0 & 0 & -1 & 0 & 0 & 0 & 1 & 0 & 0 & 0 & -1 & 0 & 0 & 0 & 0 & 0 & \sfrac{1}{3} & -\sfrac{1}{3} & \sfrac{1}{3} & 0 & -\sfrac{1}{3} & -\sfrac{1}{3} & 0 & \sfrac{1}{3} \\
      0 & 1 & 0 & 0 & 0 & -1 & 0 & 0 & 0 & 1 & 0 & 0 & 0 & -1 & 0 & 0 & 0 & 0 & \sfrac{1}{3} & -\sfrac{1}{3} & -\sfrac{1}{3} & \sfrac{1}{3} & 0 & \sfrac{1}{3} & -\sfrac{1}{3} & 0 \\
      0 & 0 & 1 & 0 & 0 & 0 & -1 & 0 & 0 & 0 & 1 & 0 & 0 & 0 & -1 & 0 & 0 & 0 & \sfrac{1}{3} & -\sfrac{1}{3} & 0 & -\sfrac{1}{3} & \sfrac{1}{3} & 0 & \sfrac{1}{3} & -\sfrac{1}{3} \\
      0 & 0 & 0 & 1 & 0 & 0 & 0 & -1 & 0 & 0 & 0 & 1 & 0 & 0 & 0 & -1 & 0 & 0 & 0 & 0 & -\sfrac{1}{3} & -\sfrac{1}{3} & -\sfrac{1}{3} & \sfrac{1}{3} & \sfrac{1}{3} & \sfrac{1}{3} \\ 
      0 & 0 & 0 & 0 & 0 & 0 & 0 & 0 & 1 & 1 & 1 & 1 & 1 & 1 & 1 & 1 & 0 & 1 & \sfrac{1}{2} & \sfrac{1}{2} & \sfrac{1}{2} & \sfrac{1}{2} & \sfrac{1}{2} & \sfrac{1}{2} & \sfrac{1}{2} & \sfrac{1}{2}
    \end{psmallmatrix*}
  \end{equation*}
\end{itemize}

\section{Applications: Computational Environment}
\label{sec:applications-environment}

For all computational tests a C++-implementation in \texttt{TOPCOM}
preview-versions \texttt{1.2.0b}--\texttt{1.2.0f} were used.  See
\cite{Rambau_TOPCOMTriangulationsPoint_2002} for a paper on an earlier
version of \texttt{TOPCOM}.

The computer is an Intel(R) Xeon(R) CPU E5-2690, 2.90GHz (384 GB RAM)
with 2 sockets of 8 cores each and two virtual threads per core.  The
operating system was \texttt{Ubuntu Linux 5.4.0-172-generic} with the
C++compiler from \texttt{gcc version 9.4.0 (Ubuntu
  9.4.0-1ubuntu1~20.04.2)}.  Unless state otherwise, for each run 16
threads were used on an otherwise idle machine since hyper-threading
has no advantages for computationally demanding tasks.

For some computations, also an Apple MacBookPro (2021) was used with
M1Max (64 GB RAM) with 8 performance cores and 2 efficiency cores.
The operating systems ranged over the time of the experiments from
\texttt{MacOSX Sonoma 14.5 (23F79)} with the C++-compiler
\texttt{Apple clang version 15.0.0 (clang-1500.3.9.4)} through
\texttt{MacOSX Sequoia 15.6.1} with the C++-compiler \texttt{Apple
  clang version 17.0.0 (clang-1700.4.4.1)}.  Unless stated otherwise,
this computer was run with 8 threads.  Computational results achieved
with this computer are marked with ``M1Max''.

The largest computations have been carried out in the \emph{Bayreuth
  Center for High-Performance Computing} on up to 192 threads; those
are marked with ``[HPC$^{192}$]''.  Likewise, ``[HPC$^{24}$]'' denotes
a HPC with up to 24 threads.

No multiple runs were executed for the experiments in this paper,
since for the larger instances this simply would have taken too much
time.  Therefore, the cpu times could vary a little.  Since the cpu
times in this paper are several times faster than for any earlier
effort, this does not affect the relevance of the results.

\section{Application I: Cocircuits}
\label{sec:application-cocircuits}

In this section it is shown how all cocircuits of a point
configuration~$\Conf$ can be enumerated up to symmetry by exploring
the downset $\downsetStyle{D}$ of all coplanar subsets using
$\symLSRSMaxwithData$ (Algorithm~\ref{alg:symLSRSMaxwithData}).  The
method to enumerate subsets with prescribed symmetry cannot be applied
straight-forwardly to cocircuits because the downset of all coplanar
subsets can not be represented as the set of cliques in a
compatibility graph.

\subsection{Specializations}
\label{sec:cocircuits:specializations}

In order to make the application of $\symLSRSMaxwithData$ explicit,
its subroutines $\isInD$, $\semiIsNotRightComp$, $\isInEachMax$, and
$\isMaxInD$ must be formally specified. Recall that in these
subroutines one can utilize global data (preprocessed prior to the
enumeration) and local data (updated in each enumeration node) to
speed up the computations and avoid duplicate work.

In this particular case, the local data $\localData$ consists of a
matrix $\colechMatrix$ that is a column-echelon form
of~$\Conf_{\setStyle{S}}$.  It can be computed by Gaussian elimination
on columns from left to right.  Storing it allows us to avoid the
repetition of identical eliminations in matrices with identical
initial segments of columns.  The local data of the node of a subset
$\setStyle{S}'$ with
$\setStyle{S} = \setStyle{S}' \setminus \max \setStyle{S'}$ is
initialized as the augmented matrix
$(\colechMatrix, \subConf{\max \setStyle{S'}})$. The global data used
is the full matrix~$\Conf$.

\begin{algorithm}[htbp]
  \TitleOfAlgo{\isInD{$\setStyle{S}', \downsetStyle{D}, \Conf,
      \colechMatrix, \colechMatrix'$}}

  \KwIn{A subset $\setStyle{S}'$ of column indices of~$\Conf$, the
    downset $\downsetStyle{D}$ of coplanar subsets, the
    configuration~$\Conf$, a column-echelon form $\colechMatrix$
    of~$\subConf{S' \setminus \max S'}$, and the
    augmented matrix
    $\colechMatrix' = (\colechMatrix, \subConf{\max \setStyle{S}'})$
    (not yet in column-echelon form, in general)}

  \KwOut{$(\true, \colechMatrix')$ with $\colechMatrix'$ a
    column-echelon form of $\subConf{S'}$ if
    $\rankOf(\colechMatrix') \le \rank - 1$ and
    $(\false, \colechMatrix')$ otherwise}

  $\colechMatrix' \gets \colechForm{$\colechMatrix'$}$
  \tcc*{compute column-echelon form}
  \If(\tcc*[f]{if at most $\rank - 1$ non-zero columns}){
    $\abs{\setStyle{S'}} < \rank$ or $\colechMatrix'_{*,\rank} = \vectorStyle{0}$
  }{
    \Return $(\true, \colechMatrix')$
    \tcc*{$\setStyle{S}$ is coplanar}
  }
  \Else{
    \Return $(\false, \colechMatrix')$ 
    \tcc*{$\setStyle{S}$ is spanning}
  }
  
  \caption[Coplanarity Check]{Check whether a subset of columns is
    coplanar}
  \label{alg:isInD_cocircuits}  
\end{algorithm}

The implementation of~$\semiIsNotRightComp$ in
Algorithm~\ref{alg:semiIsNotRightComp_cocircuits} is based on the
following theorem.
\begin{theorem}\label{thm:semiIsNotRightComp_cocircuits}
  Let $\setStyle{S}$ be right-completable to (the zero-set of) a
  cocircuit~$\setStyle{C}_0^*$ of $\Conf$.  Then:
  \begin{enumerate}[label=(\roman*)]
  \item\label{itm:cocircuits-not-right-comp:right} For the set
    $\setStyle{R} := \{ i \in \indexSet{n} : i > \max \setStyle{S} \}$
    the rank bound $\rankOf(\subConf{S \cup R}) \ge \rank - 1$ holds.
  \item\label{itm:cocircuits-not-right-comp:left} For all
    $i \in \indexSet{n} \setminus \setStyle{S}$ with
    $i < \max \setStyle{S}$ the rank bound
    $\rankOf(\subConf{S}) < \rankOf(\subConf{S \cup \{i\}})$ holds.
\end{enumerate}
\end{theorem}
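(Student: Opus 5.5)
Both parts follow from one observation. By Definition~\ref{def:expansion}, right-completability gives a set $\setStyle{C}_0^*$, maximal in the downset $\downsetStyle{D}$ of coplanar subsets, with $\setStyle{S} \lexsubset \setStyle{C}_0^*$. By the definition of the lex-inclusion order, $\setStyle{S} \subseteq \setStyle{C}_0^*$, and every element of $\setStyle{C}_0^* \setminus \setStyle{S}$ exceeds every element of $\setStyle{S}$, in particular $\max\setStyle{S}$. Hence
\begin{equation*}
  \setStyle{S} \subseteq \setStyle{C}_0^* \subseteq \setStyle{S} \cup \setStyle{R},
\end{equation*}
and every $i < \max\setStyle{S}$ with $i \notin \setStyle{S}$ lies outside $\setStyle{C}_0^*$. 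The remaining work is two linear-algebra facts about a cocircuit zero set: its rank is exactly $\rank - 1$, and it is closed under adding columns that lie in its span.

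For \ref{itm:cocircuits-not-right-comp:right}, I first show that $\rankOf(\subConf{\setStyle{C}_0^*}) = \rank - 1$. Suppose instead the rank were at most $\rank - 2$. Since $\Conf$ has rank $\rank$, there is a column $\col_j$ outside $\operatorname{span}(\subConf{\setStyle{C}_0^*})$. Adding it raises the rank by exactly one, so $\setStyle{C}_0^* \cup \{j\}$ still has rank at most $\rank - 1$ and is coplanar. This contradicts the inclusion-maximality of $\setStyle{C}_0^*$ among coplanar subsets from Definition~\ref{def:cocircuits}. Rank is monotone under taking supersets, so $\setStyle{S} \cup \setStyle{R} \supseteq \setStyle{C}_0^*$ gives $\rankOf(\subConf{S \cup R}) \ge \rank - 1$. (The degenerate case $\rank = 0$ is excluded, or trivial.)

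For \ref{itm:cocircuits-not-right-comp:left}, fix $i \notin \setStyle{S}$ with $i < \max\setStyle{S}$, and suppose for contradiction that $\rankOf(\subConf{S \cup \{i\}}) = \rankOf(\subConf{S})$. Then $\col_i \in \operatorname{span}(\subConf{\setStyle{S}}) \subseteq \operatorname{span}(\subConf{\setStyle{C}_0^*})$. So $\setStyle{C}_0^* \cup \{i\}$ has the same rank $\rank - 1$ and is coplanar. Maximality of $\setStyle{C}_0^*$ then forces $i \in \setStyle{C}_0^*$. But we showed above that every $i < \max\setStyle{S}$ with $i \notin \setStyle{S}$ lies outside $\setStyle{C}_0^*$, a contradiction. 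Hence the rank strictly increases, as claimed.

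The only delicate point is in \ref{itm:cocircuits-not-right-comp:right}: maximality has to be used to rule out rank $\le \rank - 2$, which is why I argue via adding a single column outside the span. Everything else is bookkeeping about lex-containment.
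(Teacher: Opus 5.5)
Your proof is correct and follows the paper's argument essentially step for step. For (i) you use that the right-completion lies inside $\setStyle{S} \cup \setStyle{R}$ and has rank $\rank - 1$; for (ii) you use that a column in the span of $\subConf{S}$ could be added to the completion without raising its rank, which contradicts maximality. The only addition is that you explicitly justify why an inclusion-maximal coplanar subset has rank exactly $\rank - 1$, which the paper takes for granted.
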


\begin{proof}
  For item~\ref{itm:cocircuits-not-right-comp:right} assume that
  $\setStyle{S}$ is right-completable to a maximal and, thus,
  maximally hyperplanar subset~$\setStyle{S'}$ with
  $\rankOf(\subConf{S \cup R}) < \rank - 1$.  Then, since
  $\setStyle{S'} \setminus \setStyle{S} \subseteq \setStyle{R}$, we
  have $\rankOf(\subConf{S'}) < \rank - 1$, contradicting the fact
  that $\setStyle{S'}$ is hyperplanar.

  For item~\ref{itm:cocircuits-not-right-comp:left} assume there is an
  $i \in \indexSet{n} \setminus \setStyle{S}$ with
  $i < \max \setStyle{S}$ so that
  $\rankOf(\subConf{S}) = \rankOf(\subConf{S \cup \{i\}})$, and assume
  there is a maximal hyperplanar subset $\setStyle{S'}$ that is a
  right-completion of~$\setStyle{S}$.  Then, $i$ is not
  in~$\setStyle{S'}$, and
  $\rankOf(\subConf{S' \cup \{i\}})
  = \rankOf\bigl(\subConf{(S' \setminus S) \cup (S \cup \{i\})}\bigr)
  = \rankOf\bigl(\subConf{(S' \setminus S) \cup S}\bigr)
  = \rankOf(\subConf{S'})$,
  contradicting the maximality of~$\setStyle{S'}$.
\end{proof}

\begin{algorithm}[htbp]
  \TitleOfAlgo{\semiIsNotRightComp{$\setStyle{S}', \downsetStyle{D}, \Conf,
      \colechMatrix, \colechMatrix'$}}

  \KwIn{A coplanar subset $\setStyle{S}'$ of column indices
    of~$\Conf$, the downset $\downsetStyle{D}$ of coplanar subsets,
    the configuration~$\Conf$, a column-echelon form $\colechMatrix$
    of~$\subConf{S' \setminus \max \setStyle{S'}}$, and a
    column-echelon form $\colechMatrix'$ of the augmented matrix
    $\subConf{S'}$}
  
  \KwOut{$(\true, \colechMatrix')$ if $\setStyle{S}'$ cannot be
    right-completed and $(\false, \colechMatrix')$ otherwise}

  $\rank' \gets \rankOf(\colechMatrix')$
  \tcc*{get current rank}
    \For(\tcc*[f]{traverse left non-elements}) {
      $i = 1, \dots, \max \setStyle{S'} - 1$ with $i \notin \setStyle{S'}$
    }{
      $\colechMatrix'' \gets \colechForm{$\colechMatrix', \col_i$}$
      \tcc*{compute column-echelon form}
      \If(\tcc*[f]{if column~$i$ is in the current span}){
        $\colechMatrix''_{*,\rank' + 1} = \vectorStyle{0}$
      }{
        \Return $(\true, \colechMatrix')$
        \tcc*{$\setStyle{S'}$ not right-completable}
      }
    }
    \If(\tcc*[f]{if rank not yet sufficient}){
      $\rank' < \rank - 1$
    }{
      $\colechMatrix'' \gets \colechMatrix'$
      \tcc*{prepare a rank-increase checker matrix}
      $\rank'' \gets \rank'$
      \tcc*{keep track of rank increase}
      \For(\tcc*[f]{traverse right non-elements}) {
        $i = \max \setStyle{S'} + 1, \dots, n$
      }{
        $\colechMatrix'' \gets \colechForm{$\colechMatrix'', \col_i$}$
        \tcc*{compute column-echelon form}
        \If(\tcc*[f]{if $i$ increases rank}){
          $\colechMatrix''_{*,\rank'' + 1} \neq \vectorStyle{0}$
        }{
          $\rank'' \gets \rank'' + 1$
          \tcc*{update rank increase}
          \If(\tcc*[f]{if rank increase sufficient}){
            $\rank'' = \rank - 1$
          }{
            \Return $(\false, \colechMatrix')$
            \tcc*{$\setStyle{S'}$ might be right-completable}
          }
        }
        \ElseIf(\tcc*[f]{if target rank unreachable}){
          $\rank'' + n - i < \rank - 1$
        }{
          \Return $(\true, \colechMatrix')$
          \tcc*{$\setStyle{S'}$ not right-completable}
        }
      }
    }  
    \Return $(\false, \colechMatrix')$
    \tcc*{$\setStyle{S'}$ might be right-completable}
    \caption[Rank Pruning]{The rank-pruning semi-check whether a
      coplanar subset of columns is right-completable based on a
      direct application of
      Theorem~\ref{thm:semiIsNotRightComp_cocircuits}}
  \label{alg:semiIsNotRightComp_cocircuits}  
\end{algorithm}
Call the use of the semi-check based on this theorem
\emph{rank-pruning}; skipping this semi-check is called
\emph{no-pruning} for later reference.  Moreover, call the set
$\nonPrunables$ of subsets in~$\downsetStyle{D}$ for which
rank-pruning returns ``\false'' the \emph{non-prunable} subsets.  In
Table~\ref{tab:cocircuits-results-nodesavings-pruning} the node counts
for no-pruning and rank-pruning are compared for tiny to small
examples.
\begin{table}[htbp]
  \centering
  {%
    \sffamily\footnotesize
    \begin{tabular}{l*{5}{r}}
      \toprule
      $\Conf$
      & \multicolumn{1}{c}{\# cocircuits}
      & \multicolumn{2}{c}{\# nodes by pruning method}
      & \multicolumn{2}{c}{CPU time [s] by pruning method}\\
      & \multicolumn{1}{c}{in total}
      & no & rank
      & no & rank\\
      \midrule
      $\hypercube{5}$         & 3254 & 1,026,636 &  78,050 & 13.62 & 3.37 \\
      $\simplexproduct{4}{4}$ &  460 & 2,018,570 &  87,440 & 37.52 & 5.36 \\
      $\hypersimplex{8}{2}$   & 1661 & 3,133,114 & 131,007 & 50.88 & 6.60 \\
      \bottomrule
    \end{tabular}
  }
  \caption[Comparison of pruning methods for cocircuits]{Comparison of no-pruning versus rank-pruning on tiny to 
    small examples (single-threaded, symmetries ignored)}
  \label{tab:cocircuits-results-nodesavings-pruning}
\end{table}

In order to find out whether an expansion is in each maximal superset,
one can use a similar observation: any element that does not increase
the rank of the current subset can be added to each rank-$(\rank - 1)$
superset thereof without increasing the rank. Thus, it is in each
maximal superset.  The pseudo-code is listed in
Algorithm~\ref{alg:isInEachMax_cocircuits}.  All data necessary for
this check have been computed before. Thus, this check is very fast,
and there is no point in skipping it.
\begin{algorithm}[htbp]
  \TitleOfAlgo{\isInEachMax{$\setStyle{S}', \downsetStyle{D}, \Conf,
      \colechMatrix, \colechMatrix'$}}

  \KwIn{A subset $\setStyle{S}'$ of column indices of~$\Conf$, the
    downset $\downsetStyle{D}$ of coplanar subsets, the
    configuration~$\Conf$, a column-echelon form $\colechMatrix$
    of~$\subConf{S' \setminus \max S'}$, and a column-echelon form
    $\colechMatrix'$ of the augmented
    matrix~$\bigl(M, \subConf{\max S'}\bigr)$}
  
  \KwOut{$(\true, \colechMatrix')$ if each maximal superset
    of~$\setStyle{S}$ contains $\setStyle{S}'$ and
    $(\false, \colechMatrix')$ otherwise}

  \If(\tcc*[f]{$\setStyle{S}'$ did not increase rank}){
    $\rankOf(\colechMatrix') = \rankOf(\colechMatrix)$
  }{
    \Return $(\true, \colechMatrix')$ \;
  }
  \Else{
    \Return $(\false, \colechMatrix')$ \;
  }
  
  \caption[Unavoidability Check]{Check whether the expansion
    from~$\setStyle{S}$ to $\setStyle{S}'$ is in each maximal superset
    of~$\setStyle{S}$; the local data $\colechMatrix'$ remains
    unchanged}
  \label{alg:isInEachMax_cocircuits}  
\end{algorithm}

In order to implement the maximality check one can utilize a signature
corresponding to a hyperplanar subset, which must be computed anyway.
This can be done as follows.
\begin{lemma}
  Let $\setStyle{S}$ be a hyperplanar subset and let $\matrixStyle{B}$
  be the $\rank \times (\rank - 1)$-matrix of non-zero-columns of a
  column-echelon form of~$\subConf{S}$. Then, there is a unique
  cocircuit~$\setStyle{C}^*_0$ containing~$\setStyle{S}$. Moreover,
  one of the two opposite signatures of $\setStyle{C}^*_0$ is given by
  \begin{equation}
    \label{eq:cocircuit-signature}
    \signature^*_{\setStyle{S}} (i)
    =
    \signOf \bigl(\det (\matrixStyle{B}, \col_i)\bigr).
  \end{equation}

  In particular, $\setStyle{S}$ is maximal and, thus, a cocircuit if
  and only if
  $\det (\matrixStyle{B}, \col_i) \neq 0$ for all
  $i \in \indexSet{n} \setminus \setStyle{S}$.
\end{lemma}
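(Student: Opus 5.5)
The argument is elementary linear algebra on the hyperplane $H := \operatorname{span}(\col_j : j \in \setStyle{S})$, which has dimension $\rank - 1$ because $\setStyle{S}$ is hyperplanar.

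\emph{Uniqueness of the cocircuit.} Define $\setStyle{C}^*_0 := \{ i \in \indexSet{n} : \col_i \in H \}$. It contains $\setStyle{S}$ and has rank exactly $\rank - 1$, so it is coplanar. It is inclusion-maximal: adding any $i \notin \setStyle{C}^*_0$ gives $\col_i \notin H$, hence rank $\rank$. Conversely, let $\setStyle{C}'$ be any coplanar superset of $\setStyle{S}$. Then $\rank - 1 \le \rankOf(\subConf{C'}) \le \rank - 1$, so the span of $\subConf{C'}$ contains $H$ and has the same dimension, hence equals $H$. Therefore $\setStyle{C}' \subseteq \setStyle{C}^*_0$, and any maximal coplanar superset must be $\setStyle{C}^*_0$ itself.

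\emph{The signature.} Column operations preserve the column span, so the non-zero columns $\matrixStyle{B}$ of a column-echelon form of $\subConf{S}$ form $\rank - 1$ linearly independent vectors spanning $H$. Consider the linear functional $\vectorStyle{x} \mapsto \det(\matrixStyle{B}, \vectorStyle{x})$ on $\mathbb{R}^{\rank}$. It is non-zero, since it is non-zero on any vector completing $\matrixStyle{B}$ to a basis, and its kernel is exactly $H$. Hence there is a $\vectorStyle{c} \neq \vectorStyle{0}$ (the cofactor vector) with $\det(\matrixStyle{B}, \vectorStyle{x}) = \vectorStyle{c}^{\top}\vectorStyle{x}$. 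Setting $\signature^*_{\setStyle{S}}(i) := \signOf(\vectorStyle{c}^{\top}\col_i)$ gives zero set $\{i : \col_i \in H\} = \setStyle{C}^*_0$. By construction it satisfies the defining condition of a cocircuit signature in Definition~\ref{def:cocircuits}. Since the two opposite signatures are determined by the hyperplanar subset, this is one of them.

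\emph{Maximality criterion.} $\setStyle{S}$ is maximal if and only if $\setStyle{S} = \setStyle{C}^*_0$. This holds if and only if no $i \notin \setStyle{S}$ has $\col_i \in H$, that is, $\det(\matrixStyle{B}, \col_i) \neq 0$ for all $i \in \indexSet{n} \setminus \setStyle{S}$.

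The only point requiring some care is justifying that the determinant is a non-zero linear functional with kernel exactly $H$. This follows from multilinearity of the determinant and the independence of the columns of $\matrixStyle{B}$.
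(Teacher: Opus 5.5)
Your proof is correct and uses the same idea as the paper: the paper's proof is a single sentence noting that $\det(\matrixStyle{B}, \cdot)$ is a linear form vanishing on the columns of $\subConf{S}$. You additionally supply the details the paper leaves implicit, namely the rank argument for uniqueness of the cocircuit and the fact that this linear form is non-zero with kernel exactly the span of $\subConf{S}$, which is what makes its zero set equal to $\setStyle{C}^*_0$ rather than merely containing $\setStyle{S}$.
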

\begin{proof}
  The assertion follows from the fact that the right-hand side is the
  sign of a linear form in $\col_1, \dots, \col_{\rank}$ that vanishes
  on all points in~$\subConf{S}$, which was assumed to be hyperplanar.
\end{proof}

That is, whenever during the enumeration a right-maximal hyperplanar
subset is reached, the first $\rank - 1$ non-zero columns
$\colechMatrixNZ$ of the corresponding matrix in the local data
determine the signature of the unique cocircuit containing it.
\begin{algorithm}[htbp]
  \TitleOfAlgo{\isMaxInD{$\setStyle{S}, \downsetStyle{D},
      \Conf, \colechMatrix$}}

  \KwIn{A subset $\setStyle{S}$ of column indices of~$\Conf$, the
    downset $\downsetStyle{D}$ of coplanar subsets, the
    configuration~$\Conf$, and a column-echelon form~$\colechMatrix$
    of~$\subConf{S}$}

  \KwOut{$\true$ if $\setStyle{S}$ is maximal in~$\downsetStyle{D}$
    and $\false$ otherwise}

  \For(\tcc*[f]{for all elements outside~$\setStyle{S}$}){
    $i \in \indexSet{n} \setminus \setStyle{S}$
  }{
    \If(\tcc*[f]{if the signature of $i$ is zero}){
      $\det(\colechMatrixNZ, \col_i) = 0$
    }{
      \Return $\false$
      \tcc*{$\setStyle{S}$ is not maximal}
    }
  }
  \Return \true \;

  \caption[Maximal Coplanarity Check]{Check whether a subset of columns is
    maximally coplanar}
  \label{alg:isMaxInD_cocircuits}  
\end{algorithm}

\subsection{Analysis}
\label{sec:cocircuits:analysis}

The following can be said about the efficiency of this algorithm.
\begin{theorem}
  \label{thm:cocircuit-alg-efficiency}
  Assume that the critical-element method is used for~$\isLexMin$ in
  Algorithm~\ref{alg:symLSRSMaxwithData}.  Then, the run-time
  complexity of the resulting specialized algorithm is in
  $O\bigl(\no(\abs{\GrG} +
  \no\rank)\abs{\setOrbits{\nonPrunables}{\GrG}}\bigr)$.

  Moreover, the effectivity of rank-pruning depends on the instance.
  More specifically, with symmetries ignored:
  \begin{enumerate}[label=(\roman*)]
  \item\label{itm:cocircuits-UB} There is a point configuration
    $\standardsimplex{\rank - 1}^{\partial\mathrm{c}}$ with
    $\no = 2\rank$ points of rank~$\rank$ with
    $\tfrac{\rank(\rank + 3)}{2}$ cocircuits for which the total
    number of not right-completable subsets is in $\Omega(2^{\rank})$,
    whereas none of the not right-completable subsets is non-prunable.
  \item\label{itm:cocircuits-LB} There is a point configuration
    $\standardsimplex{\rank - 1}^{\mathrm{dup}}$ with $\no = 2\rank$
    points of rank~$\rank$ with $\rank$ cocircuits for which the
    number of not right-completable subsets that are non-prunable is
    in $\Omega(2^{\rank})$.
  \end{enumerate}
\end{theorem}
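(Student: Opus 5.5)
The plan has three parts: bound the cost of one enumeration node, then build the two example configurations.

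\textbf{Running time.} I would start from the complexity bound already stated for $\symLSRSMaxwithData$, namely
$O\bigl((n(\timeComplexity(\isLexMin)+\timeComplexity(\isInD)+\timeComplexity(\semiIsNotRightComp)+\timeComplexity(\isInEachMax))+\timeComplexity(\isMaxInD))\abs{\setOrbits{\nonPrunables}{\GrG}}\bigr)$, and bound each subroutine.
\begin{itemize}
\item By Theorem~\ref{thm:nve-cet:analysis}, the iteration-based critical-element method costs $O(\abs{\GrG})$ per call, in the amortized sense used there.
\item $\isInD$ appends one column to a stored column-echelon form of rank at most $\rank$, which costs $O(\rank^2)\subseteq O(\no\rank)$.
\item $\isInEachMax$ only compares two ranks that are already known.
\item $\semiIsNotRightComp$ makes one pass over at most $\no$ columns. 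Each column is tested against the stored echelon form; I would argue that with the stored pivot positions each test costs $O(\rank)$ per column, giving $O(\no\rank)$ in total. This is where I am least sure the stated bound is met, and it needs the data structures made explicit.
\item $\isMaxInD$ first computes a normal vector $\vectorStyle{c}$ to the $\rank-1$ columns of $\colechMatrixNZ$ in $O(\rank^2)$. Each determinant $\det(\colechMatrixNZ,\col_i)$ is then a dot product $\vectorStyle{c}^\top\col_i$, so the check costs $O(\no\rank)$.
\end{itemize}
Summing gives $O\bigl(\no(\abs{\GrG}+\no\rank)\bigr)$ per node of $\nonPrunables$.

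\textbf{Example~\ref{itm:cocircuits-LB}.} Take $\standardsimplex{\rank-1}^{\mathrm{dup}}$ to be the vertices $v_1,\dots,v_\rank$ of a simplex, each duplicated, ordered as $v_1,\dots,v_\rank,v_1',\dots,v_\rank'$. A subset spans a hyperplane exactly when it uses $\rank-1$ distinct vertices. Hence the cocircuits are the $\rank$ facet sets $\{v_j,v_j' : j\neq i\}$, one for each $i$.

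Now consider subsets $\setStyle{S}\subseteq\{v_1,\dots,v_\rank\}$ with $\abs{\setStyle{S}}\le \rank-1$ and $\max\setStyle{S}=v_k$.
\begin{itemize}
\item Such an $\setStyle{S}$ survives rank-pruning. Every left non-element is a distinct vertex and therefore raises the rank. The right part contains all copies $v_j'$, so the rank bound of Theorem~\ref{thm:semiIsNotRightComp_cocircuits}\ref{itm:cocircuits-not-right-comp:right} holds.
\item A right-completion must contain every element outside $\setStyle{S}$ that lies below $v_k$. So if $\setStyle{S}$ misses at least two of $v_1,\dots,v_{k-1}$, no facet set can lex-contain it, and it is not right-completable.
\end{itemize}
The number of such subsets is $2^\rank$ minus polynomially many, which is in $\Omega(2^\rank)$.

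\textbf{Example~\ref{itm:cocircuits-UB}.} I would take $\standardsimplex{\rank-1}^{\partial\mathrm{c}}$ to be the $\rank$ simplex vertices followed by $\rank$ further points, for instance the facet barycenters, chosen so that the cocircuit count is $\rank+\binom{\rank+1}{2}=\tfrac{\rank(\rank+3)}{2}$. The first step is to verify this count by classifying all spanned hyperplanes.

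Next I would show $\Omega(2^\rank)$ subsets are not right-completable. The family I have in mind is small coplanar subsets of the vertices that skip earlier vertices. Under any ordering of this kind, there are exponentially many such subsets, while there are only polynomially many cocircuits to complete them.

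The main obstacle is the converse: every subset that passes both tests of Theorem~\ref{thm:semiIsNotRightComp_cocircuits} must actually be right-completable. I would prove this by a case analysis on the hyperplane types, the facet hyperplanes and the hyperplanes through barycenters. For each surviving $\setStyle{S}$, I would exhibit a cocircuit whose elements outside $\setStyle{S}$ all lie to the right of $\max\setStyle{S}$. The idea is that the left-rank test forces every earlier skipped point to be off the span of $\setStyle{S}$, which pins down which hyperplane type can contain $\setStyle{S}$. I would try the point ordering first on small $\rank$ and, if necessary, adjust it until this argument goes through.
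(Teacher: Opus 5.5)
Your running-time bound and your example for (ii) follow the paper's proof. The bound is the per-node cost times the number of non-prunable orbits. You take the $O(\rank)$ cost per column reduction and the amortized $O(\abs{\GrG})$ cost of the critical-element method exactly as the paper does; the paper also merely asserts the former, and your normal-vector treatment of the maximality test is more careful than the paper's. For (ii) you use the duplicated simplex with vertex-only subsets skipping at least two vertices below their maximum, which is a superset of the paper's family $\emptyset\neq\setStyle{S}\subseteq\{3,\dots,\rank\}$.

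There is one slip in (ii). By the definition of lex-containment, a right-completion contains \emph{no} element outside $\setStyle{S}$ below $v_k$; you wrote the opposite. That fact, together with every cocircuit omitting exactly one vertex index, is what excludes two gaps.

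The genuine gap is (i). You never fix a configuration, and the one you suggest fails the stated count. Adding the $\rank$ facet barycenters $b_1,\dots,b_\rank$ gives, already for $\rank=4$, $26$ spanned hyperplanes instead of $14$: $4$ facets, $6$ through an edge and the midpoint of the opposite edge, $12$ through a vertex and two barycenters, and $4$ through three barycenters. More importantly, the claim that carries the item is never proved. That claim is that every coplanar subset passing both tests of Theorem~\ref{thm:semiIsNotRightComp_cocircuits} is right-completable, and you defer it to an unspecified case analysis with an ordering to be adjusted until it works.

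Your $\Omega(2^{\rank})$ count can be rescued without any case analysis once stated precisely. The lex-subsets of a set are exactly its initial segments, so $N$ cocircuits right-complete at most $N(\no+1)$ subsets, while there are $2^{\rank}-1$ coplanar vertex subsets. Individual vertex subsets that skip earlier vertices need not be dead, however: for $\rank=4$ in your candidate, $\{v_3,v_4\}$ completes through the hyperplane containing $b_3,b_4$.

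The missing idea is to use $\rank$ \emph{coincident} copies of a single point, as the paper does: the barycenter $\vectorStyle{c}$ of the lex-min facet $\{1,\dots,\rank-1\}$, indexed $\rank+1,\dots,2\rank$. Coincidence is what makes the left test bite, since a skipped copy lies in the span of any present copy. Write a coplanar set as $\setStyle{S}=\setStyle{R}\cup\setStyle{C}$, with $\setStyle{R}$ the vertices and $\setStyle{C}$ the copies. Then:
\begin{itemize}
\item[(a)] If $\setStyle{C}$ has a gap, the left test prunes. Such $\setStyle{S}$ is dead, because every cocircuit through $\vectorStyle{c}$ contains all copies.
\item[(b)] If $\setStyle{C}\neq\emptyset$ has no gap, only copies lie to the right, so the right test prunes unless $\abs{\setStyle{R}}\ge\rank-2$.
\begin{itemize}
\item[] If $\abs{\setStyle{R}}=\rank-1$, coplanarity forces $\setStyle{R}=\{1,\dots,\rank-1\}$, and the remaining copies complete $\setStyle{S}$.
\item[] If $\abs{\setStyle{R}}=\rank-2$ and $\rank\in\setStyle{R}$, then $\setStyle{S}$ completes to $\setStyle{R}\cup\{\rank+1,\dots,2\rank\}$.
\item[] If $\abs{\setStyle{R}}=\rank-2$ and $\rank\notin\setStyle{R}$, the missing vertex of the lex-min facet lies in $\mathrm{span}(\setStyle{R}\cup\{\vectorStyle{c}\})$, and the left test prunes. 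The paper's write-up overlooks this subcase, but its conclusion survives.
\end{itemize}
\item[(c)] If $\setStyle{C}=\emptyset$, the left test always passes, and the right test passes if and only if $\setStyle{R}$ has at most two gaps. With $0$, $1$, $2$ gaps, $\setStyle{R}$ completes respectively to the lex-min facet plus all copies, to the facet opposite the gap, and to $(\{1,\dots,\rank\}\setminus\{\text{gaps}\})\cup\{\rank+1,\dots,2\rank\}$.
\end{itemize}
The gapped subsets of the copies alone give $2^{\rank}-\rank-1$ dead subsets.

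Finally, do not tune the construction to hit $\frac{\rank(\rank+3)}{2}$. This configuration has $\rank+\binom{\rank-1}{2}$ cocircuits: the facets, plus $\mathrm{span}(V\cup\{\vectorStyle{c}\})$ for each $(\rank-2)$-set $V$ of vertices containing vertex $\rank$. The other choices of $V$ reproduce the lex-min facet. For example, $\rank=3$ gives $4$ rather than $9$, so the stated number appears to be a miscount in the paper. What the item needs is only polynomially many cocircuits.
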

The first item shows that, for the enumeration and listing problems,
rank-pruning can lead to a spead-up from exponential to polynomial in
the input and output sizes, whereas the second item shows that, in
general, rank-pruning cannot guarantee a run-time polynomial in the
input and output sizes.
\begin{proof}
  The number of recursive calls equals the number of orbits of
  not-prunable subsets.  For each such subset there are at most $\no$
  traversals of the main loop.  In each main-loop the worst-case
  run-time complexity is dominated by $\semiIsNotRightComp$, which
  computes column-echelon forms for each non-element of the current
  subset.  This amounts to at most $\no$ column-echelon-form
  computations, taking at most $\rank$ operations each.  This proves
  the run-time bound.

  For item~\ref{itm:cocircuits-UB}, consider for $\rank \ge 3$ the
  point configuration
  $\standardsimplex{\rank - 1}^{\partial\mathrm{c}}$ consisting of the
  $(\rank-1)$-dimensional standard simplex
  $\standardsimplex{\rank - 1}$ together with $\rank$ copies of the
  barycenter~$\vectorStyle{c}$ of its lex-min facet
  $\{1, 2, \dots, \rank - 1\}$, forming the elements indexed by
  $\rank + 1, \rank + 2, \dots, 2\rank$.  There are $\rank$ cocircuits
  correponding to the $r$ facets of $\standardsimplex{\rank - 1}$:
  $\rank - 1$ of them not containing any~$\vectorStyle{c}$ and one
  containing all copies of~$\vectorStyle{c}$.  Moreover, there are
  $\tbinom{\rank}{\rank - 2} = \tbinom{\rank}{2} = \frac{\rank(\rank +
    1)}{2}$ cocircuits containing all copies of~$\vectorStyle{c}$ but
  not the lex-min facet.  In any non-empty subset $\setStyle{S}$ call
  an element $i \in \indexSet{n} \setminus \setStyle{S}$ with
  $i < \max \setStyle{S}$ a \emph{gap} in~$\setStyle{S}$.  The empty
  subset, by definition, has no gaps.

  A coplanar subset $\setStyle{S}$ is not right-completable if and
  only if each cocircuit containing it contains a gap
  in~$\setStyle{S}$.  Define for a subset~$\setStyle{S}$ the part
  $\setStyle{R} := \setStyle{S} \cap \{1, 2, \dots, \rank\}$
  corresponding to $\standardsimplex{\rank - 1}$ and the part
  $\setStyle{C} := \setStyle{S} \cap \{ \rank + 1, \rank + 2, \dots,
  2\rank \}$ corresponding to the copies of~$\vectorStyle{c}$.  Call a
  subset~$\setStyle{S} = \setStyle{R} \cup \setStyle{C}$ a
  \emph{deadend} if one of the following cases occurs:
  \begin{itemize}
  \item $\setStyle{C}$ has at least one gap.
  \item $\setStyle{C}$ has no gap, is non-empty, and
    $\abs{\setStyle{R}} \le \rank - 3$.
  \item $\setStyle{C}$ is empty, and $\setStyle{R}$ has at least three
    gaps.
  \end{itemize}
  In the following it is shown that a coplanar $\setStyle{S}$ is not
  right-completable if and only if it is a deadend.  Assume
  $\setStyle{S}$ is a deadend. Then, in the first case $\setStyle{S}$
  violates the first rank bound, in the second and third case
  $\setStyle{S}$ violates the second rank bound in
  Theorem~\ref{thm:semiIsNotRightComp_cocircuits}. In particular,
  $\setStyle{S}$ is prunable.  Hence, it is not right-completable.

  If $\setStyle{S}$ is not a deadend, then, in particular,
  $\setStyle{C}$ has no gaps.  Moreover, if $\setStyle{C}$ is
  non-empty, then $\setStyle{R}$ has $\rank - 1$ or $\rank - 2$
  elements.  If $\setStyle{R}$ has exactly $\rank - 1$ elements, then
  $\setStyle{R}$ is a facet of~$\standardsimplex{\rank - 1}$.  Since
  $\setStyle{S}$ is coplanar and $\setStyle{C}$ is non-empty, this
  facet can only be the lex-min facet $\{ 1, 2, \dots, \rank - 1\}$.
  Then, because $\setStyle{C}$ has no gaps, a right-expansion with all
  remaining copies
  $\{\max \setStyle{C} + 1, \max \setStyle{C} + 2 \dots, 2\rank \}$
  of~$\vectorStyle{c}$ leads to the cocircuit
  $\{ 1, 2, \dots, \rank - 1, \rank + 1, \rank + 2, \dots, 2\rank \}$.
  If $\setStyle{R}$ has exactly $\rank - 2$ elements, then
  $\vectorStyle{c}$ is affinely independent of~$\setStyle{R}$ in any
  case.  Thus, the same right-expansion as above leads to the
  cocircuit
  $\setStyle{R} \cup \{ \rank + 1, \rank + 2, \dots, 2\rank \}$.  If
  $\setStyle{C}$ is empty, then $\setStyle{R} = \setStyle{S}$ has at
  most two gaps.  If $\setStyle{R}$ has exactly one gap, then
  $\setStyle{S}$ is already a facet of~$\standardsimplex{\rank}$.  If
  it is the lex-min facet, a right-expansion with all copies
  $\{ \rank + 1, \rank + 2 , \dots, 2\rank \}$ of~$\vectorStyle{c}$
  leads to a cocircuit, and otherwise $\setStyle{S}$ is a cocircuit
  already.  If $\setStyle{R}$ has exactly two gaps, then, a
  right-expansion with
  $\{ \max \setStyle{R} + 1, \max \setStyle{R} + 2, \dots, \rank \}$
  (leading to a rank-$\rank - 2$-subset) followed by a right-expansion
  with all copies $\{ \rank + 1, \rank + 2 , \dots, 2\rank \}$
  of~$\vectorStyle{c}$ (all affinely independent of the 
  elements in~$R$) leads to a cocircuit.

  Consequently, being a deadend is equivalent to being not
  right-completable.  Since all deadends are prunable, so are all not
  right-completable subsets.  Roughly estimated by counting the
  possible $\setStyle{C}$'s in deadends, the number of not
  right-completable subsets is in $\Omega(2^{\rank})$, and all of them
  can be rank-pruned.

  For item~\ref{itm:cocircuits-LB}, consider for $\rank \ge 3$ the
  point configuration $\standardsimplex{\rank - 1}^{\mathrm{dup}}$
  consisting of the $(\rank-1)$-dimensional standard simplex
  $\standardsimplex{\rank - 1}$ together with a copy of each of its
  elements, forming the elements indexed by
  $\rank + 1, \rank + 2, \dots, 2\rank$.  There are $\rank$ cocircuits
  corresponding to the $\rank$ facets of the standard simplex.  No
  subset $\setStyle{S}$ with
  $\emptyset \neq \setStyle{S} \subseteq \{3, 4, \dots, \rank\}$ is
  right-completable, since any cocircuit containing it must also
  contain one of the points $1$ or~$2$, both gaps in~$\setStyle{S}$.
  Moreover, such a subset $\setStyle{S}$ is non-prunable since, first,
  no non-element to the left is in the span of it (the first $\rank$
  points are independent) and, second, the non-elements to the right
  can sufficiently increase the rank of any such $\setStyle{S}$ (its
  possible right-expansions contain a complete copy of the standard
  $\rank - 1$-simplex).  The number of these subsets is
  $2^{\rank - 2} - 1$, which is in~$\Omega(2^{\rank})$.
\end{proof}

If $\GrG$ is given as an explicit set of permutations, the run-time is
polynomial in input size and the number of all non-prunable subsets up
to symmetry.  However, as the example in
Theorem~\ref{thm:cocircuit-alg-efficiency}\ref{itm:cocircuits-LB}
shows, Algorithm~\ref{alg:isMaxInD_cocircuits} for
counting/enumeration/listing is, in general, not polynomial in the
input and output sizes.

\subsection{Results}
\label{sec:cocircuits:results}

The modified switch-table method in
Algorithm~\ref{alg:isLexMinModifiedSwitches} turned out to be the
fastest variant of $\isLexMin$ for all the larger instances.  (See the
end of Section~\ref{sec:applications-preliminaries} for explanations
concerning the point configurations.) Particularly interesting is the
enumeration of hyperplanes spanned by the vertices of the
$d$-dimensional hypercube $\hypercube{d}$.  This problem has already
been studied a long time ago by Aichholzer and Aurenhammer
\cite{AichholzerAurenhammer_ClassifyingHyperplanesHypercubes_1996}.
Using cleverly a lot of structural properties of hypercubes in
particular, they were able to enumerate all hyperplanes spanned by the
vertices of the $8$-cube, while the $9$-cube's hyperplanes remained
out-of-reach.  In contrast to their efforts, the general-purpose
algorithm of this paper could compute their numbers without using any
specific knowledge about cubes.  And it was able to compute the number
of cocircuits (total and up to symmetry) of the $9$-cube.  For
$\hypercube{9}$ compare the total number of its cocircuits to the
number of all its $\rank - 1$-subsets, which is
$\tbinom{512}{9} = 6{,}208{,}116{,}950{,}265{,}950{,}720$ (four orders
of magnitude larger) -- direct signature computations for all these
subsets, given today's computation power, would have been out of
reach.  Table~\ref{tab:results-cocircuits} shows all the results.

\begin{table}[htbp]
  \centering
  {%
    \sffamily\footnotesize
    \begin{tabular}{lrrrrr}
      \toprule
      $\Conf$
      & \# symmetries
      & \# cocircuits
      & \# cocircuits
      & \# nodes
      & CPU time\\
      && up to symmetry & in total && [hh:mm:ss]\\
      \midrule
      $\hypercube{2}$         &           8 &         2 &                   6 &           9 &  0:00:00\\
      $\hypercube{3}$         &          48 &         3 &                  20 &          31 &  0:00:00\\
      $\hypercube{4}$         &         384 &         6 &                 140 &         126 &  0:00:00\\
      $\hypercube{5}$         &        3840 &        15 &                3254 &         609 &  0:00:00\\
      $\hypercube{6}$         &      46,080 &        63 &             252,434 &        4149 &  0:00:01\\
      $\hypercube{7}$         &     645,120 &       623 &          71,343,208 &      55,540 &  0:00:04\\
      $\hypercube{8}$         &  10,321,920 &    22,432 &      86,246,755,608 &   2,403,058 &  0:03:02\\
      *$\hypercube{9}$        & 185,794,560 & 3,899,720 & 448,691,419,804,586 & 530,623,381 & 13:30:12\\
      *$\hypersimplex{8}{3}$  &      40,320 &        56 &             166,420 &       4,644 &  0:00:00\\
      *$\hypersimplex{8}{4}$  &      80,640 &        83 &           1,105,575 &       9,081 &  0:00:00\\
      *$\hypersimplex{9}{3}$  &     362,880 &       231 &          10,004,154 &      21,034 &  0:00:02\\
      *$\hypersimplex{9}{4}$  &     362,880 &     2,522 &         359,022,180 &     226,077 &  0:00:07\\
      *$\hypersimplex{10}{3}$ &   3,628,800 &     1,337 &         889,205,792 &     113,814 &  0:00:21\\
      *$\hypersimplex{10}{4}$ &   3,628,800 &    87,254 &     178,227,172,388 &   6,889,144 &  0:07:38\\
      *$\hypersimplex{10}{5}$ &   7,257,600 &   194,489 &     939,079,703,204 &  21,423,661 &  1:02:09\\
      \bottomrule
    \end{tabular}
  }
  \caption[Computational results for cocircuits]{Computational results for the enumeration of cocircuits in
    hypercubes and hypersimplices using 16 threads (numbers with a
    ``*'' are new)}
  \label{tab:results-cocircuits}
\end{table}

\section{Application II: Circuits}
\label{sec:application-circuits}

In this section it is shown how all circuits of a point
configuration~$\Conf$ can be enumerated up to symmetry by exploring
the downset $\downsetStyle{D}$ of all independent subsets using
$\symLSRSCominwithData$ (Algorithm~\ref{alg:symLSRSCominwithData}).
Again, the method to enumerate subsets with prescribed symmetry cannot
be applied straight-forwardly to circuits.

\subsection{Specializations}
\label{sec:circuits:specializations}

In order to use $\symLSRSCominwithData$, one has to specify how its
problem-specific subroutines $\isInD$, $\semiIsNotRightExit$, and
$\isCominOfD$ work. Again, in these subroutines global data
(preprocessed prior to the enumeration) and local data (updated in
each enumeration node) can be utilized to speed up the computations
and avoid duplicate work.  This time, the local data will consist of a
matrix in column-echelon form stacked on top of another matrix.  The
columns of the additional matrix yield additional information useful
for circuits.  For a subset~$\setStyle{S}$ the columns of the top
matrix are the columns of a column-echelon form of the sub
configuration~$\subConf{S}$.  The column in the bottom matrix contains
the coefficients of a linear combination of all original columns
weakly to the left that yields the column on top.  If the column on
top is the zero-column, then the corresponding original column can be
combined linearly from original columns strictly to the left, and the
signs of the coefficients in the bottom column yield the corresponding
circuit signature.

Formally, the stacked matrix can be defined as follows:
\begin{definition}
  Let $\subConf{S}$ be a subset of columns of a rank-$\rank$
  configuration~$\Conf$.  For an integer $k \in [1, \rank + 1]$ let
  $\identityMatrix{k}$ denote the $k \times k$-identity matrix.
  
  A $(\rank + \abs{\setStyle{S}}) \times \abs{\setStyle{S}}$-matrix
  $\matrixStyle{R} =
  \tmatrix{\matrixStyle{B}\\\matrixStyle{C}}$
  is a \emph{column-representation matrix of~$\setStyle{S}$} if it is
  a column-echelon form of the matrix
  \begin{equation}
    \label{eq:columnd-representation-matrix}
    \begin{pmatrix}
      \subConf{S}\\
      \identityMatrix{\abs{\setStyle{S}}}
    \end{pmatrix}
  \end{equation}
  Here, $\matrixStyle{B}$ is the \emph{configuration part}, whereas
  $\matrixStyle{C}$ is the \emph{coefficient part}.
\end{definition}

\begin{theorem}\label{thm:circuits:isComin}
  Let $\matrixStyle{R} = \tmatrix{\matrixStyle{B}\\\matrixStyle{C}}$
  be a column-representation matrix of a subset~$\setStyle{S}$ of
  column indices of~$\Conf$.  Then $\setStyle{S}$ is dependent if and
  only if $\matrixStyle{B}_{*, \abs{\setStyle{S}}} =
  \vectorStyle{0}$. Moreover, $\setStyle{S}$ is a circuit if and only
  if $\matrixStyle{B}_{*, \abs{\setStyle{S}}}$ is the first
  zero-column in $\matrixStyle{B}$ and
  $\matrixStyle{C}_{*, \abs{\setStyle{S}}}$ contains no zero entry.  In
  that case, the signs in~$\matrixStyle{C}_{*, \abs{\setStyle{S}}}$
  specify one of the two possible circuits signatures
  of~$\setStyle{S}$ restricted to its support~$\setStyle{S}$.
\end{theorem}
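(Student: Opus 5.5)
The plan is to read off the structure of a column-echelon form obtained by column operations only. Column operations on the stacked matrix $\tmatrix{\subConf{S}\\\identityMatrix{\abs{\setStyle{S}}}}$ amount to right-multiplication by an invertible $\abs{\setStyle{S}}\times\abs{\setStyle{S}}$ matrix $\matrixStyle{U}$. Hence $\matrixStyle{B} = \subConf{S}\matrixStyle{U}$ and $\matrixStyle{C} = \matrixStyle{U}$. In particular, every column $k$ satisfies $\matrixStyle{B}_{*,k} = \sum_{j} \matrixStyle{C}_{j,k}\,\col_{s_j}$, where $s_1<\dots<s_{\abs{\setStyle{S}}}$ are the elements of $\setStyle{S}$. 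I will use the elimination from left to right described earlier, so $\matrixStyle{C}$ is upper triangular with nonzero diagonal. Thus column $k$ of $\matrixStyle{C}$ only involves $s_1,\dots,s_k$, with a nonzero coefficient on $s_k$.

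First, for the dependence claim: since $\matrixStyle{U}$ is invertible, the number of non-zero columns of the echelon form $\matrixStyle{B}$ equals $\rankOf(\subConf{S})$. Non-zero columns of an echelon form come first. So $\setStyle{S}$ is dependent iff $\rankOf(\subConf{S}) < \abs{\setStyle{S}}$, which holds iff the last column $\matrixStyle{B}_{*,\abs{\setStyle{S}}}$ is zero.

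Second, for the circuit claim. Suppose the last column is the first zero column of $\matrixStyle{B}$. Then $\setStyle{S}\setminus\{\max\setStyle{S}\}$ is independent. The last column of $\matrixStyle{C}$ is a nonzero kernel vector of $\subConf{S}$, because $\subConf{S}\matrixStyle{C}_{*,\abs{\setStyle{S}}}=\vectorStyle{0}$ and $\matrixStyle{U}$ is invertible. Since $\rankOf(\subConf{S}) = \abs{\setStyle{S}}-1$, the kernel is one-dimensional, so this vector spans it. Now $\setStyle{S}$ is a circuit iff it is dependent and every proper subset is independent. This is equivalent to the unique (up to scaling) dependence having full support $\setStyle{S}$. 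Indeed, a dependence supported on a proper subset would be a kernel vector with a zero entry, and conversely a zero entry gives a dependent proper subset. Hence $\setStyle{S}$ is a circuit iff $\matrixStyle{C}_{*,\abs{\setStyle{S}}}$ has no zero entry.

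For the converse direction I need the following: if $\setStyle{S}$ is a circuit, then the first zero column of $\matrixStyle{B}$ is the last one. A circuit minus one element is independent, so the rank is $\abs{\setStyle{S}}-1$ and exactly one zero column exists. That column is the last one by the echelon form.

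Finally, splitting the coefficients of the zero combination $\sum_j \matrixStyle{C}_{j,\abs{\setStyle{S}}}\col_{s_j}=\vectorStyle{0}$ by sign gives exactly the relation $\sum_{+}\lambda_i\col_i=\sum_{-}\lambda_i\col_i$ in the definition of a circuit signature. Therefore the signs give one of the two opposite signatures.

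The main obstacle is pinning down the bookkeeping. I must make sure the paper's column-echelon form really keeps the coefficient part equal to the transformation matrix $\matrixStyle{U}$, with no row operations involved. The "first zero column" condition is then what guarantees the one-dimensional kernel.
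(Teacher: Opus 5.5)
Your proposal is correct and follows the paper's proof. Column operations act as right multiplication by an invertible matrix, so the coefficient block is that matrix and its last column is a nonzero kernel vector of the configuration submatrix. The first-zero-column condition gives rank $|S|-1$, hence a one-dimensional kernel, and full support of its spanning vector is equivalent to minimality. You also spell out the converse direction (circuit $\Rightarrow$ the last column is the first zero column), which the paper leaves implicit.

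One correction: drop the assumption that $C$ is upper triangular with nonzero diagonal, and drop the inference that $S\setminus\{\max S\}$ is independent. The statement concerns an arbitrary column-representation matrix, and reaching column-echelon form may force column swaps. For example, if the configuration columns are $e_2, e_1$, no upper-triangular transformation yields an echelon form. Without triangularity the inference is false. Take two parallel vectors followed by an independent one: the last column of $B$ is the first zero column, yet $S\setminus\{\max S\}$ is dependent (and the kernel vector correctly has a zero entry). Neither claim is used later, because everything you need follows from the rank being $|S|-1$.

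As for your bookkeeping worry: the coefficient block is exactly the transformation matrix. Moreover, $B$ is itself in column-echelon form with its zero columns last. This holds because, in an echelon form of the stacked matrix, every column with nonzero top part must precede the columns whose leading entry lies in the identity block.
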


\begin{proof}
  Note that
  \begin{equation}
    \label{eq:circuit-equation-untransformed}
    \begin{pmatrix}
      \subConf{S}\\
      \identityMatrix{\abs{\setStyle{S}}}
    \end{pmatrix}
    \cdot
    \identityMatrix{\abs{\setStyle{S}}}
    =
    \begin{pmatrix}
      \subConf{S}\\
      \identityMatrix{\abs{\setStyle{S}}}
    \end{pmatrix}.
  \end{equation}
  If this is transformed by admissible column operations, represented
  by the multiplication of a matrix $\matrixStyle{C}$ from the right,
  into column-echelon form, one has:
  \begin{equation}
    \label{eq:circuit-equation-transformed}
    \begin{pmatrix}
      \subConf{S}\\
      \identityMatrix{\abs{\setStyle{S}}}
    \end{pmatrix}
    \cdot
    \matrixStyle{C}
    =
    \begin{pmatrix}
      \matrixStyle{B}\\
      \matrixStyle{C}
    \end{pmatrix}.
  \end{equation}
  with a column-representation matrix
  $\tmatrix{\matrixStyle{B}\\\matrixStyle{C}}$ of~$\setStyle{S}$.
  In particular, one has for the last column:
  \begin{equation}
    \label{eq:circuit-equation-lastcol}
    \subConf{S} \cdot \matrixStyle{C}_{*, \abs{\setStyle{S}}} = \vectorStyle{B}_{*, \abs{\setStyle{S}}}.
  \end{equation}
  $\setStyle{S}$ is dependent if and only if the last column is zero,
  by the properties of a column-echelon form.  Moreover, the entries
  of $\matrixStyle{C}_{*, \abs{\setStyle{S}}}$ constitute a linear
  dependence.  Since the first $\rankOf{\matrixStyle{B}}$ columns of
  $\matrixStyle{B}$ are linearly independent and
  $\matrixStyle{B}_{*, \abs{\setStyle{S}}}$ is the first zero-column
  in~$\matrixStyle{B}$, one has that
  $\rankOf(\subConf{S}) = \abs{\setStyle{S}} - 1$.  Thus, the kernel
  of~$\subConf{S}$ is one-dimensional, and a non-zero vector in it is
  unique up to a non-zero scalar multiple.  Therefore, the signs
  of~$\matrixStyle{C}_{*, \abs{\setStyle{S}}}$ are unique up to
  sign-reversal.  Thus, the non-zero components of
  $\matrixStyle{C}_{*, \abs{\setStyle{S}}}$ constitute the inclusion
  minimal support of a linear dependence among the columns
  in~$\subConf{S}$.  Hence, the signs of the components of the
  complete vector~$\matrixStyle{C}_{*, \abs{\setStyle{S}}}$ are a
  signature on the support of a circuit~$\setStyle{S}$ if and only if
  there are no zero-components in it.
\end{proof}

One can derive the information to process a node from the
column-representation matrix of its subset.  For the membership in the
downset of independent sets this is straight-forward.
Algorithm~\ref{alg:isInD_circuits} shows the procedure.
\begin{algorithm}[htbp]
  \TitleOfAlgo{\isInD{$\setStyle{S}', \downsetStyle{D}, \Conf,
      \colrepMatrix, \colrepMatrix'$}}

  \KwIn{A subset $\setStyle{S}'$ of column indices of~$\Conf$, the
    downset $\downsetStyle{D}$ of independent subsets, the
    configuration~$\Conf$, a column-representation matrix
    $\tmatrix{\colrepMatrixConf\\\colrepMatrixCoef}$
    of~$\setStyle{S}'$, and the augmented matrix
    $\hat{\colrepMatrix} =
    \left(
    \begin{smallmatrix}
      \colrepMatrixConf & \subConf{\max S'}\\
      \colrepMatrixCoef & \vectorStyle{0}\\
      \vectorStyle{0}^{\top} & 1
    \end{smallmatrix}
    \right)$
    (not yet in
    column-echelon form, in general)}

  \KwOut{$(\true, \colrepMatrix')$ with
    $\colrepMatrix' =
    \tmatrix{\colrepMatrixConf'\\\colrepMatrixCoef'}$ a
    column-representation matrix of~$\setStyle{S}'$ if
    $\rankOf(\colrepMatrix') = \abs{\setStyle{S}'}$ and
    $(\false, \colrepMatrix')$ otherwise}

  $\colrepMatrix' \gets \colechForm{$\hat{\colrepMatrix}$}$
  \tcc*{compute column-echelon form}
  \If(\tcc*[f]{if right-most conf-column non-zero}){
    $\colrepMatrixConf'_{*,\abs{\setStyle{S}'}} \neq \vectorStyle{0}$
  }{
    \Return $(\true, \colrepMatrix')$
    \tcc*{$\setStyle{S}'$ is independent}
  }
  \Else{
    \Return $(\false, \colrepMatrix')$ 
    \tcc*{$\setStyle{S}'$ is dependent}
  }
  
  \caption[Independence Check]{Check whether a subset of columns is
    independent
  }
  \label{alg:isInD_circuits}  
\end{algorithm}

Given Lemma~\ref{thm:circuits:isComin} it is easy to check a subset
for co-minimality.  The corresponding algorithm is listed in
Algorithm~\ref{alg:isCominOfD_circuits}.
\begin{algorithm}[htbp]
  \TitleOfAlgo{\isCominOfD{$\setStyle{S}', \downsetStyle{D}, \Conf, \colrepMatrix,
      \colrepMatrix'$}}
  
  \KwIn{A subset $\setStyle{S}'$ of column indices of~$\Conf$, the
    downset $\downsetStyle{D}$ of independent subsets, the
    configuration~$\Conf$, and local data given by a
    column-representation
    matrix~$\colrepMatrix' =
    \tmatrix{\colrepMatrixConf'\\\colrepMatrixCoef'}$ of~$\setStyle{S}'$}
  
  \KwOut{$\true$ if $\setStyle{S}'$ is co-minimal
    in~$\downsetStyle{D}$, $\false$ otherwise}

  \If(\tcc*[f]{zero-coefficients?}){
    $\colrepMatrixCoef'_{*,\abs{\setStyle{S}'}}$
    has zero-entries
  }{
    \Return \false
    \tcc*{subset is not co-min}
  }
  \Else{
    \Return \true
    \tcc*{subset is co-min}
  }
  
  \caption[Comin-Check For Dependent Sets]{Check whether a subset of
    columns is contained in a circuit utilizing the local data given
    by a column representation matrix}
  \label{alg:isCominOfD_circuits}  
\end{algorithm}

\subsection{Analysis}
\label{sec:circuits:analysis}

Since each maximal independent subset is touched by the algorithm and
each maximal independent subset is built on a unique path of length at
most the rank in the enumeration tree, the efficiency of the algorithm
solely depends on how many (maximal) independent sets one can find
compared to the number of circuits.
\begin{theorem}
  \label{thm:circuit-alg-efficiency}
  Assume that the critical-element method is used for~$\isLexMin$ in
  Algorithm~\ref{alg:symLSRSCominwithData}.  Then, the run-time
  complexity of the resulting specialized algorithm is in
  $O(\no(\abs{\GrG} + \rank)
  \abs{\setOrbits{\downsetStyle{D}}{\GrG}})$.

  Moreover:
  \begin{enumerate}[label=(\roman*)]
  \item\label{itm:circuits-UB} There is a point configuration
    $\standardsimplex{\rank - 1}^{\mathrm{c}}$ with $\no = 2\rank$
    points of rank~$\rank$ with $\frac{\rank(\rank+3)}{2}$ circuits
    for which the total number of maximal independent subsets is
    $1 + \rank^2$.
  \item\label{itm:circuits-LB} There is a point configuration
    $\standardsimplex{\rank - 1}^{\mathrm{dup}}$ with $\no = 2\rank$
    points of rank~$\rank$ with $\rank$ circuits for which the number
    of maximal independent subsets is $2^{\rank}$.
  \end{enumerate}
\end{theorem}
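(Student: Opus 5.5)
The plan follows the proof of Theorem~\ref{thm:cocircuit-alg-efficiency}: first the run-time bound, then two explicit configurations.

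\textbf{Run-time bound.} The recursion of $\symLSRSCominwithData$ is entered only for canonical independent subsets, since semi-exit pruning is trivial here. So the number of recursive calls is at most $\abs{\setOrbits{\downsetStyle{D}}{\GrG}}$. Each call runs the main loop at most $\no$ times. Each loop pass does three things:
\begin{enumerate}[label=(\alph*)]
\item a lex-min check by the iteration-based critical-element method (Algorithm~\ref{alg:isLexMinCritelemIter}), which is one pass over $\GrG$. The occasional full recomputation of a critical element costs $O(\abs{\setStyle{S}'}) \subseteq O(\rank)$, because independent subsets plus one new element have at most $\rank + 1$ elements. This step is accounted as $O(\abs{\GrG})$, exactly as in the cocircuit case.
\item one incremental column-echelon update of the column-representation matrix by a single new column (Algorithm~\ref{alg:isInD_circuits}), accounted as $O(\rank)$ as in the proof of Theorem~\ref{thm:cocircuit-alg-efficiency}.
\item possibly the co-minimality test (Algorithm~\ref{alg:isCominOfD_circuits}), which by Theorem~\ref{thm:circuits:isComin} only inspects one coefficient column, costing $O(\rank)$.
\end{enumerate}
Summing gives $O(\no(\abs{\GrG}+\rank)\abs{\setOrbits{\downsetStyle{D}}{\GrG}})$. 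Every maximal independent subset is a node of the tree when symmetries are ignored. Hence the number of maximal independent sets relative to the number of circuits is the quantity the two examples must control.

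\textbf{Item~\ref{itm:circuits-LB}.} I would take $\standardsimplex{\rank-1}^{\mathrm{dup}}$, the vertices $1,\dots,\rank$ of the standard simplex together with a copy $i+\rank$ of each vertex $i$.
\begin{itemize}
\item A subset is dependent if and only if it contains some pair $\{i,i+\rank\}$, because distinct vertices of a simplex are independent.
\item Hence the circuits are exactly these $\rank$ pairs.
\item The maximal independent subsets pick one element from each pair, so there are exactly $2^{\rank}$ of them.
\end{itemize}

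\textbf{Item~\ref{itm:circuits-UB}.} I would take the standard simplex on $1,\dots,\rank$ together with $\rank$ copies $\rank+1,\dots,2\rank$ of a point $\vectorStyle{c}$ in general position with respect to the simplex, such as its barycenter. All copies of $\vectorStyle{c}$ are parallel, so an independent set contains at most one copy.
\begin{itemize}
\item Bases without a copy: only the vertex set $\{1,\dots,\rank\}$.
\item Bases with one copy: the copy plus any $\rank-1$ vertices, which is independent since $\vectorStyle{c}$ lies off every facet hyperplane. This gives $\rank \cdot \rank$ bases.
\item Total: $1+\rank^2$ maximal independent sets.
\end{itemize}
For the circuits, the minimal dependent sets are the two-element sets of copies and the sets consisting of one copy together with the vertices of the support of $\vectorStyle{c}$. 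I would list these carefully to reach the stated $\frac{\rank(\rank+3)}{2}$.

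\textbf{Expected obstacle.} The one step I expect to need care is this circuit count. With the full barycenter, a direct count gives $\tbinom{\rank}{2}+\rank=\frac{\rank(\rank+1)}{2}$, which falls short of the stated $\frac{\rank(\rank+3)}{2}$ by $\rank$. So I would first check how the statement counts circuits (for instance, whether parallel pairs are counted with both signatures) and how the placement of $\vectorStyle{c}$, such as the barycenter of a facet as in the cocircuit example, changes the list. I would then adjust the choice of $\vectorStyle{c}$ so that both the circuit count and the $1+\rank^2$ basis count hold simultaneously.
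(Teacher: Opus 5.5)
Your run-time argument and your item~(ii) match the paper's proof. Both use one main loop of at most $\no$ passes per canonical independent subset (the exit semi-check is trivial), $O(\abs{\GrG})$ for the critical-element check under the same amortized accounting of the occasional recomputation, and $O(\rank)$ for the remaining subroutines. Both use the duplicated simplex with its $\rank$ parallel pairs and $2^{\rank}$ bases.

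For item~(i), your configuration (the simplex plus $\rank$ copies of its barycenter) is exactly the paper's, and your counts are correct. The paper's proof lists precisely the circuits you found: the $\rank$ sets formed by the simplex and one copy of $\vectorStyle{c}$, and the $\binom{\rank}{2}$ pairs of copies. It then states that these total $\frac{\rank(\rank+3)}{2}$. That is an arithmetic slip, since $\rank+\binom{\rank}{2}=\frac{\rank(\rank+1)}{2}$; the same slip $\binom{\rank}{2}=\frac{\rank(\rank+1)}{2}$ occurs in the proof of Theorem~\ref{thm:cocircuit-alg-efficiency}. So the discrepancy you flagged is an error in the statement, and neither repair you plan can reach the stated number:
\begin{itemize}
\item Counting signed circuits gives $\rank(\rank+1)$.
\item Moving $\vectorStyle{c}$ never changes the count. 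If $\vectorStyle{c}$ has support of size $s\ge 2$ in the vertex basis, the circuits are the $\binom{\rank}{2}$ parallel pairs and the $\rank$ sets consisting of one copy and the support. If $s=1$, the $\rank+1$ parallel elements give $\binom{\rank+1}{2}$ pairs. Either way there are $\frac{\rank(\rank+1)}{2}$ circuits.
\item Moreover, the number of bases is $1+\rank s$, so $1+\rank^2$ bases forces $s=\rank$ anyway.
\end{itemize}
In fact, for $\rank=2$ no configuration at all has $5$ circuits and $5$ bases. Five bases among four points on a line force multiplicities $(2,1,1)$, and that yields exactly three circuits.

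Rather than leaving item~(i) open, finish it with the corrected count $\frac{\rank(\rank+1)}{2}$. The purpose of the example is unaffected: polynomially many bases versus $\binom{2\rank}{\rank+1}$ candidate $(\rank+1)$-subsets.
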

Because the number of $(\rank + 1)$-subsets in the example
configurations is $\tbinom{2\rank}{\rank + 1}$, the algorithm achieves
a speed-up in both cases compared to the naive algorithm that computes
the signature of each $(\rank + 1)$-subset.  In the former case, the
speed-up is substantial from exponential to polynomial, in the latter
case it ``only'' reduces the exponential runtime of the naive
algorithm by an exponential factor to a shorter exponential runtime.

\begin{proof}
  The number of recursive calls equals the number of orbits of
  independent subsets.  For each such subset there are at most $\no$
  traversals of the main loop.  In that loop, the lex-min check takes
  time in $O(\abs{\GrG})$.  The worst-case run-time complexity of the
  remaining subroutines in the main loop is in $O(\rank)$.  This
  proves the run-time bound.

  For item~\ref{itm:circuits-UB} consider the point configuration
  $\standardsimplex{\rank - 1}^{\mathrm{c}}$ consisting of the
  $(\rank-1)$-dimensional standard simplex
  $\standardsimplex{\rank - 1}$ together with $\rank$ copies of its
  barycenter~$\vectorStyle{c}$, forming the elements indexed by
  $\rank + 1, \rank + 2, \dots, 2\rank$.  There are $\rank$ circuits
  containing the standard simplex with an arbitrary copy of
  ~$\vectorStyle{c}$ and $\tbinom{\rank}{2}$ circuits consisting of
  two copies of~$\vectorStyle{c}$, resulting in
  $\frac{\rank(\rank+3)}{2}$ circuits in total.  The maximal
  independent subsets are the standard simplex and an arbitrary copy
  of~$\vectorStyle{c}$ with an arbitrary facet of the standard
  simplex, leading to a total of $1 + \rank^2$ many maximal
  independent subsets.

  For item~\ref{itm:circuits-LB} consider the point configuration
  $\standardsimplex{\rank - 1}^{\mathrm{dup}}$ consisting of the
  $(\rank-1)$-dimensional standard simplex
  $\standardsimplex{\rank - 1}$ together with a copy of each of its
  elements, forming the elements indexed by
  $\rank + 1, \rank + 2, \dots, 2\rank$.  There are $\rank$ circuits
  corresponding to the $\rank$ pairs of identical points.  In contrast
  to this, any choice of a copy for the $\rank$ many vertices of the
  standard simplex is a maximal independent subset, resulting in
  $2^{\rank}$ many of them.
\end{proof}

If $\GrG$ is given as an explicit set of permutations, the runtime is
polynomial in the input size and the number of all independent sets up
to symmetry.  However, the example in
Theorem~\ref{thm:circuit-alg-efficiency}\ref{itm:circuits-LB} shows
that for counting/enumeration/listing it is, in general, not
polynomial in the input and output sizes.

Note that even for graphic matroids it is NP-hard to decide whether a
given subset of elements is contained in a circuit
\cite[Section~4]{KhachiyanBorosElbassioniGurvichMakino_ComplexitySomeEnumeration_2005}.
Call this NP-hard decision problem the ``Extension-to-a-Circuit
Problem (ECP)''.  There is no order involved in this result.  However,
given an instance to the ECP for a vector matroid given by a matrix,
one can reorder the columns of the matrix to start with the subset in
question.  Then, all potential circuits containing the subset
lex-contain it.  Thus, an answer to the right-exitability problem for
circuit enumeration would answer the ECP instance as well.  Thus, it
cannot be expected that all deadends can be avoided efficiently in our
enumeration algorithm.

Even a useful semi-check for right-exitability of a subset for circuit
enumeration is still unknown.  Thus, for circuit enumeration,
$\semiIsNotRightExit$ is defined to simply return~$\false$ for each
subset (i.e., the subset might be a left segment of a co-minimal
subset of~$\downsetStyle{D}$).

While this seems unsatisfactory at first glance, there are examples
where the difficulty of showing right-non-exitability becomes
plausible.  Extend the standard simplex to the augmented standard
simplex $(\standardsimplex{n}, \vectorStyle{p})$ with $n+1$ points,
where $\vectorStyle{p}$ is an additional point at the barycenter of
some geometric $k$-face of~$\standardsimplex{n}$, $k = 0, \dots, n$,
the \emph{centered face}.  There is exactly one circuit, namely the
new point~$\vectorStyle{p}$ together with the centered face. Whether
or not any subset of the points in $(\standardsimplex{n})$ is
contained in a circuit now depends on lex-containment in the centered
face.  Any useful semi-decision algorithm for right-non-exitability
would have to find out the centered face (without actually knowing in
advance that the centered face is the key object).

\subsection{Results}
\label{sec:circuits:results}

Table~\ref{tab:results-circuits} shows some results that could be
obtained using the resulting algorithm, where again the modified
switch-table method in Algorithm~\ref{alg:isLexMinModifiedSwitches}
turned out to be the fastest variant of $\isLexMin$ for the larger
instances (see the end of Section~\ref{sec:applications-preliminaries}
for explanations concerning the point configurations).

\begin{table}[htbp]
  \centering{%
    \sffamily\footnotesize
    \begin{tabular}{lrrrrr}
      \toprule
      $\Conf$
      & \# symmetries
      & \# circuits
      & \# circuits
      & \# nodes
      & CPU time\\
      && up to symmetry & in total && [hh:mm:ss]\\
      \midrule
      $\hypercube{2}$         &          1 &              1 &                      1 &              11 &   0:00:00\\
      $\hypercube{3}$         &         48 &              3 &                     20 &              40 &   0:00:00\\
      $\hypercube{4}$         &        384 &             15 &                   1348 &             219 &   0:00:00\\
      $\hypercube{5}$         &       3840 &            186 &                353,616 &            2616 &   0:00:00\\
      *$\hypercube{6}$        &     46,080 &         12,628 &            446,148,992 &         119,638 &   0:00:01\\
      *$\hypercube{7}$        &    645,120 &      3,591,868 &      2,118,502,178,496 &      25,274,904 &   0:04:49\\
      *$\hypercube{8}$        & 10,321,920 &  3,858,105,362 & 38,636,185,528,212,416 &  21,028,416,821 & 163:37:00\\
      *$\hypersimplex{8}{3}$  &     40,320 &          7,240 &            251,651,820 &         153,429 &   0:00:01\\
      *$\hypersimplex{8}{4}$  &     80,640 &         41,875 &         3,134,451,775  &         670,792 &   0:00:04\\
      *$\hypersimplex{9}{3}$  &    362,880 &        228,432 &         75,267,509,940 &       4,298,974 &   0:00:28\\
      *$\hypersimplex{9}{4}$  &    362,880 &     31,671,609 &     11,259,090,122,490 &     346,869,278 &   1:05:40\\
      *$\hypersimplex{10}{3}$ &  3,628,800 &      7,494,056 &     25,290,095,161,170 &     140,451,528 &   0:20:42\\
      *$\hypersimplex{10}{4}$ &  3,628,800 & 12,609,824,635 & 45,270,853,845,998,550 & 110,076,768,816 & 523:25:52\\
      \bottomrule
    \end{tabular}
  }
  \caption[Computational results for circuits]{Computational results for the enumeration of circuits in
    hypercubes and hypersimplices using 16 threads (numbers with a
    ``*'' are new)
  }
  \label{tab:results-circuits}
\end{table}

\section{Application III: Triangulations}
\label{sec:application-triangulations}

In this section it is shown how all triangulations of a point
configuration~$\Conf$ can be enumerated up to symmetry by exploring
the downset $\downsetStyle{D}$ of all subsets of mutually properly
intersecting simplices using $\symLSRSFeaswithData$
(Algorithm~\ref{alg:symLSRSFeaswithPruning}).  In contrast to the
first two applications, the downset of all subsets of mutually
properly intersecting simplices is naturally the set of cliques in a
compatibility graph: to simplices are compatible if and only if they
are properly intersecting.  How symmetries for enumerated
triangulations can be prescribed is explained in more detail later.

Most general-purpose enumeration algorithms for triangulations rely on
an algorithm based on the flip graph of triangulations
\cite{Rambau_TOPCOMTriangulationsPoint_2002,DeLoeraRambauSantos_TriangulationsStructuresApplications_2010,JordanJoswigKastner_Parallelenumerationtriangulations_2018}.
Because Santos has found a triangulation without flips
\cite{Santos_pointsetwhose_2000} it is known that one might not find
all triangulations this way.  There have always been hints in the
literature on how to enumerate all triangulations of a point
configuration by enumerating maximal cliques in the
proper-intersection graph of all simplices.  However, to date no
implementation of this idea could ever compete with flip-based
algorithms.

Here, an all new attempt is presented based on symmetric lexicographic
subset reverse search for feasible subsets with pruning
(Algorithm~\ref{alg:symLSRSFeaswithPruning}).  In the following, a
subset of pairwise properly intersecting simplices is called a
\emph{partial triangulation}.  Note that, according to this
definition, all triangulations are partial triangulations as well.

\subsection{Specializations}
\label{sec:triangs:specializations}

As a first step towards the application of $\symLSRSFeaswithData$
(Algorithm~\ref{alg:symLSRSFeaswithPruning}), one has to specify how
triangulations are represented as subsets of some finite set.  To this
end, let $\Conf$ be a rank-$\rank$ configuration with $n$ elements.
Let $\simpSet$ be the set of feasible simplices of~$\Conf$ (where
``feasible'' may depend on the exact enumeration task, see
Section~\ref{sec:triangs:enhancements} for some examples), and let
$\noOfSimplices$ be its cardinality.  Similarly, let $\facetSet$ be
the set of all simplex-facets not contained in any facet of~$\Conf$
(to be distinguished from the facets of~$\Conf$) and $\noOfFacets$ be
its cardinality.

By fixing an arbitrary bijection
$\simpSet \to \indexSet{\noOfSimplices}$ to encode simplices, any
triangulation $\setsysStyle{T}$ given by its set of maximal simplices
can be represented as a subset $\setStyle{T}$
of~$\indexSet{\noOfSimplices}$.  While any bijection will allow to use
Algorithm~\ref{alg:symLSRSFeaswithPruning}, there is a special
bijection that helps to accelerate the semi-check for the
non-right-extendability of a subset.  For the rest of this section,
let
$\lexSimpIndex\colon (\simpSet, \lexsmaller) \to
(\indexSet{\noOfSimplices}, <)$ and
$\lexFacetIndex\colon (\facetSet, \lexsmaller) \to
(\indexSet{\noOfFacets}, <)$ be the respective order-preserving
bijections.  For convenience, denote the inverse functions by
$\lexIndexSimp = \lexSimpIndex^{-1}$
and~$\lexIndexFacet = \lexFacetIndex^{-1}$, respectively.  With this
notation, the subset $\setStyle{T}$ corresponding to a triangulation
$\setsysStyle{T}$ is given by
$\setStyle{T} = \{ s \in \indexSet{\noOfSimplices} : \lexIndexSimp(s)
\in \setsysStyle{T} \}$, and the triangulation corresponding to a
subset~$\setStyle{T}$ is given by
$\setsysStyle{T} = \{ \setStyle{S} \in \simpSet :
\lexSimpIndex(\setStyle{S}) \in \setStyle{T} \}$.  This motivates to
say that \emph{$\setStyle{T}$ indexes~$\setsysStyle{T}$}.

A brief inspection of Algorithm~\ref{alg:symLSRSFeaswithPruning} shows
that this results in the following: not only are the triangulations
enumerated in the lexicographic order
of~$\powerSet{\indexSet{\noOfSimplices}}$, but also each triangulation
itself is built by adding simplices one-by-one in lexicographic order.

The downset $\downsetStyle{D}$ used in the method is the set of
subsets indexing simplex subsets that are pairwise properly
intersecting.  The set $\setsysStyle{F}$ of feasible subsets is the
set of all subsets indexing a triangulation.  Since no triangulation
strictly contains any other triangulation, one can apply
Algorithm~\ref{alg:symLSRSFeaswithPruning} by specializing its
subroutines $\isFeasible$, $\expand$, and $\semiIsNotRightExt$.

Next, some notions are introduced that help to set up appropriate
global and local auxiliary data, which will speed up the
implementation.  The global auxiliary data is defined first. It is
desirable to access quickly the incidence information of spanning
simplices and their facets.  Given the characterization of a
triangulation in Definition~\ref{def:triangulations}, the information
about which simplices contain which interior simplex facets is
important.

\begin{definition}
  \label{def:incidences-table}
  For a rank-$\rank$-configuration $\Conf$ with $\no$ points,
  $\noOfSimplices$ many simplices, and $\noOfFacets$ many
  simplex-facets, define the \emph{interior-facets table} as
  \begin{equation}
    \intfacets\colon
    \left\{
      \begin{array}{rcl} 
        \indexSet{\noOfSimplices} & \to     & \powerSet{\indexSet{\noOfFacets}};\\
        s                         & \mapsto & \bigl\{ f \in \indexSet{\noOfFacets} :
                                              \text{$\lexIndexFacet(f)$
                                              is an interior facet
                                              of~$\lexIndexSimp(s)$} \bigr\},
      \end{array}
    \right.
  \end{equation}
  and the reverse \emph{convering-simplices table} as 
  \begin{equation}
    \covsimps\colon
    \left\{
      \begin{array}{rcl} 
        \indexSet{\noOfFacets} & \to     & \powerSet{\indexSet{\noOfSimplices}};\\
        f                      & \mapsto & \bigl\{ s \in \indexSet{\noOfSimplices} :
                                              \text{$\lexIndexFacet(f)$
                                              is an interior facet
                                              of~$\lexIndexSimp(s)$} \bigr\}.
      \end{array}
    \right.
  \end{equation}
\end{definition}

In Algorithm~\ref{alg:symLSRSFeaswithPruning} an expansion sequence is
used.  In the current application such a sequence corresponds to a
sequence of lexicographically greater simplices that can be added to a
partial triangulation without violating proper intersection.  Such a
sequence can be updated more easily when for all simplices 
the set of all simplices that have a proper intersection
with it can be accessed quickly.

\begin{definition}
  \label{def:admissibles-table}
  For a rank-$\rank$-configuration $\Conf$ with $\no$ points and
  $\noOfSimplices$ many simplices, define the \emph{admissibles table}
  as
  \begin{equation}
    \admsimps\colon
    \left\{
      \begin{array}{rcl} 
        \indexSet{\noOfSimplices} & \to     & \powerSet{\indexSet{\noOfSimplices}};\\
        s                         & \mapsto & \bigl\{ s' \in \indexSet{\noOfSimplices} :
                                              \text{$\lexIndexSimp(s')$
                                              intersects properly
                                              with~$\lexIndexSimp(s)$}
                                              \bigr\}.
      \end{array}
    \right.
  \end{equation}
\end{definition}

Next, some local auxiliary data is defined: with each subset indexing
a partial triangulation store
\begin{itemize}
\item the index set of all lex-greater simplices intersecting properly with it;
\item the index set of all uncovered interior facets.
\end{itemize}
To this end, define:
\begin{definition}
  Let $\setStyle{T}$ index a partial triangulation.  Then the
  \emph{admissibles of~$\setStyle{T}$} are defined as
  \begin{equation}
    \admissibles(\setStyle{T}) :=
    \bigl\{
    s \in \indexSet{\noOfSimplices}
    :
    \text{$s > s'$, $s \in \admsimps(s')$ for all $s' \in \setStyle{T}$}
    \bigr\}.
  \end{equation}
  Moreover, define the \emph{free interior facets of~$\setStyle{T}$
    as}
  \begin{equation}
    \freefacets(\setStyle{T}) :=
    \bigl\{
    f \in \indexSet{\noOfFacets}
    :
    \text{$f \in \intfacets(s)$ for exactly one $s \in \setStyle{T}$}
    \bigr\}.
  \end{equation}
\end{definition}
From these data, some useful information can directly be derived.
\begin{lemma}
  \label{thm:triangulations:facts}
  Let $\setStyle{T}$ index a non-empty partial triangulation.
  Then:
  \begin{enumerate}[label=(\roman*)]
  \item $\setStyle{T}$ indexes a triangulation if and only if
    $\freefacets(\setStyle{T}) = \emptyset$.
  \item The admissibles of~$\setStyle{T}$ constitute the expansion
    sequence of~$\setStyle{T}$.
  \item If $\admissibles(\setStyle{T}) = \emptyset$ and
    $\freefacets(\setStyle{T}) \neq \emptyset$, then $\setStyle{T}$ is
    not right-extendable.
    \newcounter{intermediateenumi}
    \setcounter{intermediateenumi}{\value{enumi} + 1}
  \end{enumerate}
  Moreover, for $\setStyle{T}' = \setStyle{T} \cup \{s\}$ with
  $s \in \admissibles(\setStyle{T})$:
  \begin{enumerate}[label=(\roman*), start=\value{intermediateenumi}]
  \item The admissibles of~$\setStyle{T}'$ can be computed as the
    intersection
    \begin{equation}
      \admissibles(\setStyle{T}') =
      \admissibles(\setStyle{T}) \cap \admsimps(s).
    \end{equation}
  \item The free interior facets of~$\setStyle{T}'$ can be computed as
    the symmetric difference
    \begin{equation}
      \freefacets(\setStyle{T}') = \freefacets(\setStyle{T})
      \symdiff \intfacets(s).
    \end{equation}
  \end{enumerate}
\end{lemma}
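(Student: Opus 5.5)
I will prove the five items directly from Definition~\ref{def:triangulations} and the definitions of $\intfacets$, $\admsimps$, $\admissibles$ and $\freefacets$. Items (ii), (iv) and (v) are bookkeeping; (i) and (iii) are the conceptual ones. Throughout, the non-empty set $\setStyle{T}$ indexes a partial triangulation $\setsysStyle{T}$, i.e., its simplices are pairwise properly intersecting.

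For (i), I unpack the covering condition. Since $\setsysStyle{T}$ is already a non-empty set of pairwise properly intersecting simplices, it is a triangulation if and only if it is covering. That means every interior facet $f \in \intfacets(s)$ with $s \in \setStyle{T}$ lies in $\intfacets(s')$ for some other $s' \in \setStyle{T}$. Equivalently, no $f$ occurs in $\intfacets(s)$ for exactly one $s \in \setStyle{T}$, which says $\freefacets(\setStyle{T}) = \emptyset$. Here I use that "interior facet of $\lexIndexSimp(s)$" in Definition~\ref{def:incidences-table} is the same notion as in Definition~\ref{def:triangulations}. I also use that a facet contained in two simplices of the partial triangulation is interior for both, since interiority depends only on the facet and not on the simplex.

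For (ii), I compare $\admissibles(\setStyle{T})$ with Definition~\ref{def:expansion-sequence}. An index $s$ is a right-expansion exactly when $s > \max\setStyle{T}$ and $\setStyle{T}\cup\{s\}$ is pairwise properly intersecting, i.e., $s \in \admsimps(s')$ for all $s' \in \setStyle{T}$. The condition "$s > s'$ for all $s'$" is the same as $s > \max \setStyle{T}$. I also need that $s \notin \admsimps(s)$, or more simply that $s$ is never in $\setStyle{T}$, which already follows from $s > \max\setStyle{T}$. This is the observation~\eqref{eq:expansion-via-neighborhood} specialised to the proper-intersection compatibility graph.

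For (iii), I argue by contraposition. Suppose $\setStyle{T}$ is right-extendable to a triangulation $\setStyle{T}' \lexsupseteq \setStyle{T}$. If $\setStyle{T}' = \setStyle{T}$, then (i) gives $\freefacets(\setStyle{T}) = \emptyset$. Otherwise the smallest element $s$ of $\setStyle{T}' \setminus \setStyle{T}$ exceeds $\max\setStyle{T}$, and $\setStyle{T}\cup\{s\} \subseteq \setStyle{T}'$ is pairwise properly intersecting. So $s \in \admissibles(\setStyle{T})$ by (ii), contradicting $\admissibles(\setStyle{T}) = \emptyset$.

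For (iv), the new conditions for $\setStyle{T}'$ are the old ones plus $s'' > s$ and $s'' \in \admsimps(s)$. Because $s > \max\setStyle{T}$, the requirement $s'' > s$ implies $s'' > s'$ for all $s' \in \setStyle{T}$. The one point to handle is the strictness $s'' > s$. I will settle it by noting that $s \notin \admsimps(s)$: a simplex does not intersect itself properly, since a circuit with both parts inside the same simplex is impossible for a basis, so I will instead check the convention, or simply intersect additionally with $\{s+1,\dots\}$. Given that, the stated intersection formula follows.

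For (v), I count incidences modulo two. Adding $s$ increases by one the multiplicity of each $f \in \intfacets(s)$ among the simplices of $\setStyle{T}'$ and leaves all other facets unchanged. The key fact here is that in a partial triangulation every facet has multiplicity at most two: two properly intersecting simplices sharing a facet lie on opposite sides of it, so no third simplex can contain it. Hence "exactly one" is equivalent to "odd", and the symmetric difference toggles parity correctly. I expect this multiplicity-at-most-two argument, phrased via circuits and cocircuits, to be the main obstacle. I would prove it by noting that three simplices containing the same interior facet would put two of them on the same side of the hyperplane spanned by that facet. Those two then overlap in their interiors, giving a circuit with positive part in one and negative part in the other, which contradicts proper intersection.
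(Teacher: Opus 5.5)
Your approach matches the paper's, whose proof is only the remark that everything follows from straightforward checks of the definitions. Items (i), (ii), (iii) and (v) are correct and more careful than the paper's one-liner. In (v) you rightly isolate the one fact that is not pure bookkeeping: no facet lies in three pairwise properly intersecting simplices. The cleanest proof is the one you sketch. If $F\cup\{p\}$ and $F\cup\{q\}$ have $p,q$ on the same side of the hyperplane spanned by $F$, then the unique circuit in $F\cup\{p,q\}$ gives $p$ and $q$ opposite signs. That circuit has its positive part in one simplex and its negative part in the other.

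The gap is in (iv). You correctly reduce to $\admissibles(\setStyle{T}') = \{ s'' \in \admissibles(\setStyle{T}) \cap \admsimps(s) : s'' > s \}$, but your way of dropping the condition $s''>s$ fails for two reasons.

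\emph{The claim is false.} In fact $s\in\admsimps(s)$. A basis contains no circuit, so there is no signed circuit with $C_+\subseteq S$ and $C_-\subseteq S$. By Definition~\ref{def:triangulations} this means $S$ \emph{does} intersect itself properly; your own parenthetical reason proves the opposite of your claim.

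\emph{Removing $s$ would not suffice anyway.} The set $\admissibles(\setStyle{T})\cap\admsimps(s)$ can contain indices strictly between $\max\setStyle{T}$ and $s$. Take a convex pentagon with vertices $1,\dots,5$ in cyclic order, let $\setStyle{T}$ index $\{123\}$, and let $s$ index $145$.
\begin{itemize}
\item $\admissibles(\setStyle{T})$ indexes $134,135,145,345$.
\item Its intersection with $\admsimps(s)$ indexes $134$ and $145$.
\item But $\admissibles(\setStyle{T}\cup\{s\})=\emptyset$.
\end{itemize}

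So the identity in (iv) is not literally true, and the paper's one-line proof does not address this either. What the definitions give is the intersection followed by discarding all indices $\le s$. That is also how the formula must be used when iterating over the expansion sequence. Your fallback of intersecting additionally with $\{s+1,\dots,\noOfSimplices\}$ is the right fix: commit to it and state (iv) in that corrected form, rather than claiming the printed formula follows. The same false claim $s\notin\admsimps(s)$ appears in your (ii). There it is harmless, because $s>\max\setStyle{T}$ already rules out $s\in\setStyle{T}$.
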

\begin{proof}
  The assertions follow from straight-forward checks of definitions.
\end{proof}
This is essentially what was used to-date in any attempt to enumerate
all triangulations of a configuration,
compare~\cite{Rambau_TOPCOMTriangulationsPoint_2002,DeLoeraRambauSantos_TriangulationsStructuresApplications_2010}.
These methods did not scale well because without any additional
pruning they would process a very large number of nodes.  For later
reference, call this the \emph{no-pruning} method.

\begin{algorithm}[t]
  \TitleOfAlgo{\isFeasible{$\setStyle{T}, \setsysStyle{F}, \fifSet$}}
  
  \KwIn{A subset $\setStyle{T}$ of simplex indices
    in~$\indexSet{\noOfSimplices}$, a set of feasible subsets
    $\setsysStyle{F}$ (implicit), the set~$\fifSet$ of free interior
    facets of~$\setStyle{T}$}
  
  \KwOut{$\true$ if $\setStyle{T}$ is a triangulation, $\false$ otherwise}
  
  \If(\tcc*[f]{no free interior facets?}){
    $\fifSet = \emptyset$
  }{
    \Return \true
    \tcc*{partial triangulation is covering}
  }
  \Else{
    \Return \false
    \tcc*{partial triangulation is not covering}
  }
  
  \caption[Cover-Check for Partial Triangulations]{Check whether a
    partial triangulation is a triangulation by checking whether all
    free interior facets are covered}
  \label{alg:isFeasible_triangulations}  
\end{algorithm}

One significant improvement over this are certain necessary conditions
for expansions being extensions.  The discussion starts with a
condition that, if false, allows to break out of the loop over all
expansions immediately.
\begin{theorem}
  \label{thm:triangulations:lexBreakExpansion}
  Let $\setStyle{T}$ index a non-empty partial triangulation with
  $\freefacets(\setStyle{T})$ and
  $\admissibles(\setStyle{T})$ both non-empty.

  Then, if $\setStyle{T} \cup \{s'\}$ is right-extendable for some
  $s' \ge s$ with $s, s' \in \admissibles(\setStyle{T})$, then
  \begin{equation}
    \min\bigl(\freefacets(\setStyle{T})\bigr)
    \ge
    \min\bigl(\intfacets(s)\bigr).
  \end{equation}
\end{theorem}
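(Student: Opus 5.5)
Let $f := \min\bigl(\freefacets(\setStyle{T})\bigr)$. I will prove the contrapositive: if $f < \min\bigl(\intfacets(s)\bigr)$, then no $\setStyle{T} \cup \{s'\}$ with $s' \ge s$, $s' \in \admissibles(\setStyle{T})$, is right-extendable. So assume some such $\setStyle{T}\cup\{s'\}$ has a feasible right-completion $\setStyle{T}'$. By Definition~\ref{def:triangulations}, $\setStyle{T}'$ indexes a triangulation. The goal is to force a contradiction using the way the lexicographic orders on simplices and on facets interact.

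The first step uses the covering property. The facet $\lexIndexFacet(f)$ is an interior facet of exactly one simplex $\lexIndexSimp(t)$ with $t \in \setStyle{T}$. Since $\setStyle{T}'$ is covering, there is another simplex $t'' \in \setStyle{T}'$, $t''\neq t$, with $f \in \intfacets(t'')$. I claim $t'' \notin \setStyle{T}$. If $t''$ were in $\setStyle{T}$, then $f$ would lie in $\intfacets$ of two members of $\setStyle{T}$; a third covering simplex is impossible by proper intersection (an interior facet is covered from at most two sides), so the parity definition of $\freefacets$ would remove $f$, contradicting $f\in\freefacets(\setStyle{T})$. Since $\setStyle{T}'$ lex-contains $\setStyle{T}$, we get $t'' \in \setStyle{T}'\setminus\setStyle{T}$. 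Every such element is at least $s'$, and so $t''\ge s' \ge s$.

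The second step is the key monotonicity property. For simplex indices $s \le t''$, I need $\min\bigl(\intfacets(s)\bigr) \le \min\bigl(\intfacets(t'')\bigr)$, or at least that $t''$ cannot contain an interior facet lex-smaller than all interior facets of $s$. Once this holds, $f \in \intfacets(t'')$ gives $f \ge \min\bigl(\intfacets(t'')\bigr) \ge \min\bigl(\intfacets(s)\bigr)$, which is the desired contradiction.

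Proving this property is the main obstacle, because minimal \emph{interior} facets need not behave monotonically: the lex-smallest facet of a simplex, namely the simplex with its maximum removed, may lie on the boundary. My first attempt will be a direct argument. Suppose $\lexIndexSimp(t'')$ has an interior facet $F$ that is lex-smaller than every interior facet of $\lexIndexSimp(s)$. Using that lex order on simplices is determined by the sorted vertex sequences, compare $F$ with the facets of $\lexIndexSimp(s)$ obtained by deleting one vertex, and try to show that $\lexIndexSimp(s)\lexsmaller\lexIndexSimp(t'')$ forces $\lexIndexSimp(s)$ to have an equally small interior facet. If this fails in general, I will fall back on exploiting that $s$ and $t''$ are both admissible for $\setStyle{T}$ and properly intersect the covering simplex. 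Failing that, I will replace $\min\bigl(\intfacets(s)\bigr)$ by a quantity that is monotone by construction, such as $\min_{u\ge s}\min\bigl(\intfacets(u)\bigr)$, and note that the paper's definition of $\lexFacetIndex$ is presumably chosen so that these two quantities coincide.
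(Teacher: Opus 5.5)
Your Step~1 is sound, and more explicit than the paper: the second simplex $t''$ covering the minimal free facet lies outside $\setStyle{T}$, hence $t''\ge s'\ge s$. The gap is Step~2. You never prove it, and it cannot be proved, because the monotonicity ``$s\le t''$ implies $\min\bigl(\intfacets(s)\bigr)\le\min\bigl(\intfacets(t'')\bigr)$'' is false. Take the unit square in the paper's order $1=(0,0)$, $2=(1,0)$, $3=(0,1)$, $4=(1,1)$. The simplex $\{1,2,3\}$ has the single interior facet $\{2,3\}$, while the lex-larger simplex $\{1,2,4\}$ has the interior facet $\{1,4\}\lexsmaller\{2,3\}$. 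The paper's one-line proof asserts exactly this monotonicity (``$\min(\intfacets(s'))\ge\min(\intfacets(s))$ for each $s'>s$''), so it has the same hole. None of your fallbacks closes it:
\begin{itemize}
\item the lex-minimal facet of $\lexIndexSimp(s)$ is the one without $\max\lexIndexSimp(s)$, but it can be a boundary facet and then does not occur in $\intfacets(s)$;
\item admissibility of $s$ and $t''$ does not help (see below);
\item $\lexFacetIndex$ is plain lex order on $\facetSet$, with no extra structure;
\item $\min_{u\ge s}\min\bigl(\intfacets(u)\bigr)$ only gives a weaker inequality, not the stated one.
\end{itemize}

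In fact the statement itself is false. Take six points in convex position whose labels, read around the boundary, are $1,3,5,6,4,2$. Let $\setStyle{T}=\{\lexSimpIndex(\{1,2,4\})\}$; then $\freefacets(\setStyle{T})$ is just the index of $\{1,4\}$. The simplex $\{1,3,5\}$ meets $\{1,2,4\}$ only in vertex~$1$, so it is admissible. Moreover, $\setStyle{T}\cup\{\lexSimpIndex(\{1,3,5\})\}$ right-extends to the fan from vertex~$1$, whose simplices in lex order are $\{1,2,4\},\{1,3,5\},\{1,4,6\},\{1,5,6\}$. Now set $s=s'=\lexSimpIndex(\{1,3,5\})$. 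The edges $\{1,3\}$ and $\{3,5\}$ are boundary edges, so $\intfacets(s)$ is the index of $\{1,5\}$ alone. Since $\{1,4\}\lexsmaller\{1,5\}$, the claimed inequality fails. Here the covering simplex is $\lexIndexSimp(t'')=\{1,4,6\}$, which is lex-larger than $\{1,3,5\}$ yet has the smaller interior facet $\{1,4\}$. Consequently, the lex-breaking test in Algorithm~\ref{alg:expand_triangulations} would leave the loop at $\{1,3,5\}$ and lose this triangulation.

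What your Step~1 does prove, combined with two elementary facts, is a set-level inequality. The facts are: the lex-minimal facet of a set $U$ is $U\setminus\{\max U\}$, and $U\lexgreaterequal S$ implies $U\setminus\{\max U\}\lexgreaterequal S\setminus\{\max S\}$ for sets of equal size. Together they give $\lexIndexFacet\bigl(\min\freefacets(\setStyle{T})\bigr)\lexgreaterequal\lexIndexSimp(s)\setminus\{\max\lexIndexSimp(s)\}$, where the right-hand side may be a boundary facet. That is a valid break criterion, but it is not the theorem as stated.
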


\begin{algorithm}[t]
  \TitleOfAlgo{\expand{$\setStyle{T}, s, \globalData, \localData$}}
  
  \KwIn{A subset $\setStyle{T}$ of simplex indices
    in~$\indexSet{\noOfSimplices}$, a new simplex
    index~$s \in \admSet$, global data $\globalData$ encompassing the
    interior-facets table $\iftTable$ and the admissibles table
    $\admTable$ of~$\Conf$, local data $\localData$ of~$\setStyle{T}$
    encompassing the free interior facets $\fifSet$ and the
    admissibles $\admSet$ of~$\setStyle{T}$}
    
  \KwOut{$(\mathrm{break}, \setStyle{T}', \localData')$, where
    $\mathrm{break}$ is true if this and all future expansions cannot
    be extensions, $(\setStyle{T}', \localData')$ is the new node with
    $\setStyle{T}' = \setStyle{T} \cup \{s\}$, and $\localData'$ is
    the correct local Data for~$\setStyle{T}'$}

  \If(\tcc*[f]{can $s$ cover minimal free facet?}){
    $\minElem{\fifSet} < \min\bigl(\iftTable(s)\bigr)$
  }{
    \Return $(\true, -, -)$
    \tcc*{min.~free facet not coverable by~$s' \ge s$}
  }  
  $\setStyle{T}' \gets \setStyle{T} \cup \{s\}$
  \tcc*{add $s$ to the subset}
  $\fifSet' \gets \fifSet \symdiff \iftTable[s]$
  \tcc*{free interior facets by symmetric difference}
  $\admSet' \gets \admSet \cap \admTable[s]$
  \tcc*{admissibles by intersection}
  $\localData' \gets (\fifSet', \admSet')$
  \tcc*{put together local data}  
  \Return $(\false, \setStyle{T}', \localData')$
  \tcc*{return the node}
    
  \caption[Expansion of a Partial Triangulation]{Expand a subset
    indexing a partial triangulation by a new simplex index
  }
  \label{alg:expand_triangulations}  
\end{algorithm}

\begin{proof}
  If the minimal interior facet index of an admissible $s$ is too
  large to cover the minimal free facet of~$\setStyle{T}$, then the
  same holds for each admissible $s' > s$ as well, since
  \begin{equation}
    \min\bigl(\intfacets(s')\bigr) \ge \min\bigl(\intfacets(s)\bigr)\text{ for each $s' > s$}.
  \end{equation}
   The theorem is a formal version of this
  observation.
\end{proof}
Since all partial triangulations are built in lexicographic order,
Theorem~\ref{thm:triangulations:lexBreakExpansion} means the
following: the loop over the expansion sequence in
Algorithm~\ref{alg:symLSRSFeaswithPruning} can be left as soon as the
minimal interior facet index of the new simplex index is larger than
the minimal free interior facet index of~$\setStyle{T}$, in which case
$\setStyle{T} \cup \{s\}$ cannot be right-extended.  The lexicographic
building order guarantees that the minimal free interior facet cannot
be covered by any simplex added in the future either.  The application
of this necessary condition is done inside the $\expand$ subroutine
and is called \emph{lex-breaking}.

In the sequel, some necessary conditions for right-extendability are
discussed.  Before going into the details, note that it is unlikely to
find a complete and efficient exact characterization of
right-extendability.  The reason is the following. The decision
problem of whether or not a partial triangulation can be extended to a
triangulation is NP-hard in general.  This can be derived from the
NP-hardness of triangulating non-convex
$3$-polytopes~\cite{RuppertSeidel_difficultytriangulatingthree_1992},
because for a general partial triangulation the uncovered part yet to
be triangulated is, in general, non-convex.  Here, only extensions to
the right are interesting w.r.t.\ a certain ordering of simplices.
However, since for the purpose of this decision problem one can
reorder the simplices in such a way that the simplices of the given
partial triangulation come first, the right-extendability problem is
NP-hard as well. Thus, it is justified to resort to necessary
conditions, resulting in a semi-check for pruning.  First, the
strongest necessary condition for right-extendability is established.
\begin{theorem}
  \label{thm:triangulations:semiIsNotRightExtFull}
  Let $\setStyle{T}$ index a non-empty partial triangulation.  Then:
  If $\setStyle{T}$ is right-extendable, then there is a covering set
  of simplex indices
  $\setStyle{C} \subseteq \admissibles(\setStyle{T})$ such that
  \begin{enumerate}[label=(\roman*)]
  \item the covering set is pairwise properly intersecting: For each
    $s \in \setStyle{C}$ one has for all
    $s' \in \setStyle{C} \setminus \{s\}$ that $s \in \admsimps(s')$,
  \item the covering set covers all free interior facets: For each
    $f \in \freefacets(\setStyle{T})$ there is an $s \in \setStyle{C}$
    with $f \in \intfacets(s)$.
  \end{enumerate}
\end{theorem}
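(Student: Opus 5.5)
The natural witness is the set of simplices added in a right-completion. Assume $\setStyle{T}$ is right-extendable. By Definition~\ref{def:extension} there is a feasible right-completion $\setStyle{T}^*$, i.e., $\setStyle{T}^*$ indexes a triangulation and lex-contains~$\setStyle{T}$. I would take
\begin{equation*}
  \setStyle{C} := \setStyle{T}^* \setminus \setStyle{T}
\end{equation*}
and verify the containment $\setStyle{C} \subseteq \admissibles(\setStyle{T})$ together with the two listed properties.

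For the containment, let $s \in \setStyle{C}$. Lex-containment gives $s > s'$ for all $s' \in \setStyle{T}$. Since $\setStyle{T}^*$ is a triangulation, its simplices pairwise intersect properly, so $s \in \admsimps(s')$ for every $s' \in \setStyle{T}$. Hence $s \in \admissibles(\setStyle{T})$. Property~(i) follows from the same pairwise proper intersection inside $\setStyle{T}^*$, restricted to $\setStyle{C}$; symmetry of proper intersection in Definition~\ref{def:triangulations} is immediate, since the circuits come in opposite pairs.

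For property~(ii), let $f \in \freefacets(\setStyle{T})$. Then $f \in \intfacets(s_0)$ for exactly one $s_0 \in \setStyle{T}$, so $\lexIndexFacet(f)$ is an interior facet of the simplex $\lexIndexSimp(s_0)$, which belongs to $\setStyle{T}^*$. By the covering property of the triangulation (Definition~\ref{def:triangulations}), some other simplex $s \in \setStyle{T}^*$, $s \neq s_0$, contains this facet. Being an interior facet depends only on the facet itself (it lies in a cocircuit with both sides non-empty), so $f \in \intfacets(s)$. By the uniqueness of $s_0$ within $\setStyle{T}$, $s \notin \setStyle{T}$, hence $s \in \setStyle{C}$.

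The only point that needs care is this last one: $\intfacets(s)$ contains $f$ exactly when $\lexIndexFacet(f)$ is an interior facet contained in $\lexIndexSimp(s)$. This should be a direct reading of Definition~\ref{def:incidences-table}, so I do not expect a real obstacle; the rest is routine checking of definitions.
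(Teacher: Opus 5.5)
Your proof is correct and follows the paper's argument. The paper also takes the set of new simplices of a feasible right-completion as the witness $\setStyle{C}$, and notes that these simplices are admissible, pairwise properly intersecting, and cover every free interior facet. You spell out these three checks in more detail than the paper does, including why the covering simplex cannot already lie in $\setStyle{T}$.
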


\begin{proof}
  Note that the set of new simplices in any feasible right-completion
  of $\setStyle{T}$
  \begin{itemize}
  \item stems from the current admissibles of~$\setStyle{T}$,
  \item is itself properly intersecting,
  \item contains for each free interior facet a simplex containing that
    facet.
  \end{itemize}
  Thus, any feasible right-completion is a special case of a covering
  set $\setStyle{C}$ as in the theorem.
\end{proof}
Call the application of
Theorem~\ref{thm:triangulations:semiIsNotRightExtFull}
\emph{full-pruning}.  Note that the existence of a covering set of
simplices as in Theorem~\ref{thm:triangulations:semiIsNotRightExtFull}
does not guarantee the right-extendability, since its elements may
lead to new free interior facets that have to be covered by even more
simplices, which may fail at some point.  It is not straight-forward
how to implement full-pruning without branching-out the potential
covering sets of simplices -- after all, the idea of pruning is
essentially to avoid branching in the first place.

Thus, the following weaker test was developped.  It does not demand a
covering set of simplices which is pairwise intersecting properly.
Instead, for each free interior facet it is checked whether there is a
covering simplex that is intersecting properly with at least one such
covering simplex for each other free interior facet.
\begin{theorem}
  \label{thm:triangulations:semiIsNotRightExtStrong}
  Let $\setStyle{T}$ index a non-empty partial triangulation.  Then:
  If $\setStyle{T}$ is right-extendable, then for
  each~$f \in \freefacets(\setStyle{T})$ there is a multi-covering set
  of simplex indices
  $\setStyle{C}(f) \subseteq \admissibles(\setStyle{T})$, called the
  \emph{$f$-covering simplices}, such that
  \begin{enumerate}[label=(\roman*)]
  \item there is an $s_f \in \setStyle{C}(f)$ that is identical to or
    intersects properly with at least one
    $s_{f'} \in \setStyle{C}(f')$ for all
    $f' \in \freefacets(\setStyle{T})$,
  \item for all $f \in \freefacets(\setStyle{T})$ and all
    $s_f \in \setStyle{C}(f)$ one has $f \in \intfacets(s_f)$.
  \end{enumerate}
\end{theorem}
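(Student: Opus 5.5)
The plan is to derive this statement from Theorem~\ref{thm:triangulations:semiIsNotRightExtFull}. Strong pruning should simply be a relaxation of full pruning: we fix one witness covering set and read off the multi-covering sets from it. So I would assume that $\setStyle{T}$ is right-extendable. Then I would take a feasible right-completion $\setStyle{T}'$, or equivalently the covering set $\setStyle{C} \subseteq \admissibles(\setStyle{T})$ provided by Theorem~\ref{thm:triangulations:semiIsNotRightExtFull}. This set is pairwise properly intersecting and covers every free interior facet.

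Next, for each $f \in \freefacets(\setStyle{T})$ I would define
$\setStyle{C}(f) := \{ s \in \setStyle{C} : f \in \intfacets(s) \}$. Property~(ii) then holds by construction, and $\setStyle{C}(f) \subseteq \setStyle{C} \subseteq \admissibles(\setStyle{T})$. Item~(ii) of Theorem~\ref{thm:triangulations:semiIsNotRightExtFull} guarantees that $\setStyle{C}(f)$ is non-empty for every free facet $f$.

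For property~(i), I would fix $f$ and pick any $s_f \in \setStyle{C}(f)$. For an arbitrary $f' \in \freefacets(\setStyle{T})$, I would likewise pick any $s_{f'} \in \setStyle{C}(f')$, which exists by non-emptiness. There are two cases. If $s_{f'} = s_f$, the "identical to" clause applies. Otherwise both simplices lie in $\setStyle{C}$, and item~(i) of Theorem~\ref{thm:triangulations:semiIsNotRightExtFull} gives $s_f \in \admsimps(s_{f'})$, so they intersect properly. This settles the claim for all $f'$ simultaneously, with the same $s_f$.

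I do not expect a real obstacle here. The only point needing care is non-emptiness of every $\setStyle{C}(f)$, which is needed both for choosing $s_f$ and for choosing $s_{f'}$. It comes from the covering condition in the full-pruning theorem, which in turn rests on Definition~\ref{def:triangulations}: every interior facet of a simplex in a triangulation is covered by another simplex. That other simplex cannot lie in $\setStyle{T}$ itself, because $f$ is free in $\setStyle{T}$. Hence it must be a newly added, lex-larger admissible simplex.
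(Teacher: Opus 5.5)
Your proof is correct and follows the paper's route. Both arguments derive the statement from Theorem~\ref{thm:triangulations:semiIsNotRightExtFull} by reading off $\setStyle{C}(f)$ from a pairwise properly intersecting covering set $\setStyle{C}$; the only difference is that the paper takes $\setStyle{C}(f)$ to be the single simplex of $\setStyle{C}$ containing $f$ (implicitly using uniqueness), whereas you take all members of $\setStyle{C}$ having $f$ as an interior facet, which makes that uniqueness claim unnecessary.
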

\begin{proof}
  Given a covering set of simplex indices as
  in~Theorem~\ref{thm:triangulations:semiIsNotRightExtFull},
  $\setStyle{C}(f)$ can be set to the unique simplex index
  in~$\setStyle{C}$ containing~$f$.
\end{proof}
Call the application of the following Theorem
\ref{thm:triangulations:semiIsNotRightExtStrong} \emph{strong
  pruning}.  It was this necessary condition that, for the first time
ever, allowed the enumeration of all symmetry classes of
triangulations for instances like the $4$-cube (a standard benchmark
that has 247,451 symmetry classes of triangulations) in a CPU time
comparable to the CPU times of flip-based algorithms.

\begin{algorithm}[t!]
  \TitleOfAlgo{\semiIsNotRightExtStrong{$\setStyle{T}', \downsetStyle{D},
      \setsysStyle{F}, \globalData, \localData, \localData'$}}
  
  \KwIn{A subset $\setStyle{T}'$ of simplex indices
    in~$\indexSet{\noOfSimplices}$, the downset $\downsetStyle{D}$ of
    subsets indexing partial triangulations, the set $\setsysStyle{F}$
    of triangulations (given implicitly by $\isFeasible$), global data
    $\globalData$ encompassing the covering-simplices table
    $\cstTable$ and the admissibles table $\admTable$ of~$\Conf$,
    local data $\localData$ of~$\setStyle{T}$ encompassing the free
    interior facets $\fifSet$ and the admissibles $\admSet$
    of~$\setStyle{T}'$}
    
  \KwOut{$\true$ if $\setStyle{T}'$ is not right-extendable, $\false$
    if $\setStyle{T}'$ may be right-extendable}

  \For(\tcc*[f]{for all free interior facets}){
      $f \in \fifSet$
  }{
    $\covSet[f] \gets \cstTable[f] \cap \admSet[\setStyle{T}']$
    \tcc*{update the $f$-covering simplices}
    \If(\tcc*[f]{no admissible $f$-covering simplex?}){
      $\covSet[f] = \emptyset$
    }{      
      \Return $\true$
      \tcc*{$f$ not coverable by admissibles}
    }
    $\covSetNew[f] \gets \true$
    \tcc*{$f$-covering simplices are new}
  }
  $\covAnyNew \gets \true$
  \tcc*{something is new}
  \While(\tcc*[f]{while something is new}){
    $\covAnyNew$
  }{
    \For(\tcc*[f]{for all free interior facets}){
      $f \in \fifSet$
    }{
      \If(\tcc*[f]{$f$-covering simplices not new?}){
        $\covSetNew[f] = \false$
      }{
        continue
        \tcc*{next loop element}        
      }
      \tcc{collect $f$-admissible simplices:}
      $\covAdm[f] \gets \bigcup_{s \in \covSet[f]} \admTable[s]$\;
    }
    $\covAnyNew \gets \false$
    \tcc*{nothing new so far}
    \For(\tcc*[f]{for all free interior facets}){
        $f \in \fifSet$
    }{
      \tcc{collect $f'$-admissibles among the $f$-covering simp's:}
      $\covSet' \gets \bigcap_{f' \in \fifSet \setminus \{f\}} \bigl(\covSet[f'] \cup \covAdm[f']\bigr)$\;
      \If(\tcc*[f]{no $f$-covering simplex admissible?}){
        $\covSet' = \emptyset$
      }{      
        \Return \true
        \tcc*{$f$ not coverable by admissibles}
      }
      \If(\tcc*[f]{$\covSet[f]$ need restriction?}){
        $\covSet' \not\supseteq \covSet[f]$
      }{
        \tcc{restrict to $f'$-admissible simplices:}
        $\covSet[f] \gets \covSet[f] \cap \covSet'$\;
        $\covSetNew[f] \gets \true$
        \tcc*{$f$-covering simplices are new}
        $\covAnyNew \gets \true$
        \tcc*{something has changed}
      }
    }
  }
  \Return \false
  \tcc*{$\covSet[f]$ is now a multi-covering set}
  
  \caption[Strong-Pruning]{The strong-pruning semi-check whether a
    partial triangulation can certainly not be right-extended to a
    triangulation based on a direct application of
    Theorem~\ref{thm:triangulations:semiIsNotRightExtStrong}}
  \label{alg:semiIsNotRightExt_triangulations_strong}  
\end{algorithm}

\begin{table}[t]
  \centering{%
    \sffamily\footnotesize
    \begin{tabular}{l*{7}{r}}
      \toprule
      $\Conf$
      & \multicolumn{1}{c}{\# triangulations}
      & \multicolumn{3}{c}{\# nodes}
      & \multicolumn{3}{c}{CPU time [hh:mm:ss]}\\
      & \multicolumn{1}{c}{in total}
      & \multicolumn{3}{c}{by pruning method}
      & \multicolumn{3}{c}{by pruning method}\\
      && no & strong & lex & no & strong & lex\\
      \midrule
      $\hypercube{3}$         &     74 &             2915 &       486 &       497 &           0.02 &  0.01 & 0.01 \\
      $\simplexproduct{3}{2}$ &   4488 &        2,385,961 &    29,423 &    29,577 &           1.31 &  0.11 & 0.06 \\
      $\cyclic{9}{4}$         &    357 &        8,627,257 &      4861 &      4926 &          12.87 &  0.08 & 0.02 \\
      $\cyclic{10}{4}$        &   4824 & $>$5,180,000,000 &    73,085 &    73,259 & $>$29:54:40.79 &  0.83 & 0.11 \\
      $\cyclic{11}{4}$        & 96,426 &               -- & 1,597,366 & 1,597,784 &             -- & 20.16 & 1.72 \\
      \bottomrule
    \end{tabular}
  }
  \caption[Comparison of pruning methods for triangulations]{Comparison of the three pruning methods no-pruning,
    strong-pruning, and lex-pruning for a $3$-cube, a product of a
    tetrahedron and a triangle, and some cyclic polytopes (tiny to
    small instances, single-thread with symmetries ignored)
  }
  \label{tab:triangulations-results-nodesavings-pruning}
\end{table}

Given the lex-orders of simplices and facets, one can prove another
necessary condition for right-extendability that can be evaluated much
faster and has -- surprisingly -- been almost as effective for the
experiments in this paper.  It is based on the following theorem that
is very similar to Theorem~\ref{thm:triangulations:lexBreakExpansion}.
\begin{theorem}
  \label{thm:triangulations:semiIsNotRightExt}
  Let $\setStyle{T}$ index a non-empty partial triangulation with
  $\freefacets(\setStyle{T})$ and
  $\admissibles(\setStyle{T})$ both non-empty.

  Then, if $\setStyle{T}$ is right-extendable, then the following holds:
  \begin{equation}
    \min\bigl(\freefacets(\setStyle{T})\bigr)
    \ge
    \min\Bigl(\intfacets\bigl(\min(\admissibles(\setStyle{T}))\bigr)\Bigr).
  \end{equation}
\end{theorem}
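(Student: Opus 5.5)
Let $f_0 := \min\bigl(\freefacets(\setStyle{T})\bigr)$ and $s_0 := \min\bigl(\admissibles(\setStyle{T})\bigr)$. The plan is to look at a feasible right-completion $\setStyle{T}'$ of $\setStyle{T}$ and use the free facet $f_0$ to produce a simplex in $\admissibles(\setStyle{T})$ that contains $f_0$ as an interior facet. A monotonicity property of minimal interior facet indices along the lex order of simplices will then give the claimed inequality. This is the same mechanism as in Theorem~\ref{thm:triangulations:lexBreakExpansion}, specialized to $s = s_0$.

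First, assume $\setStyle{T}$ is right-extendable and fix a feasible right-completion $\setStyle{T}'$. Then $\setStyle{T}'$ indexes a triangulation, so by Lemma~\ref{thm:triangulations:facts}(i) it has no free interior facets. Since $f_0 \in \freefacets(\setStyle{T})$, exactly one simplex of $\setStyle{T}$ has $f_0$ as an interior facet. Hence some $s \in \setStyle{T}' \setminus \setStyle{T}$ satisfies $f_0 \in \intfacets(s)$. Because $\setStyle{T} \lexsubset \setStyle{T}'$, this $s$ exceeds every element of $\setStyle{T}$. Because $\setStyle{T}'$ is pairwise properly intersecting, $s \in \admsimps(s')$ for all $s' \in \setStyle{T}$. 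Together these give $s \in \admissibles(\setStyle{T})$, so $s \ge s_0$. This step also yields $\min\bigl(\intfacets(s)\bigr) \le f_0$.

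Second, I would invoke the monotonicity fact used in the proof of Theorem~\ref{thm:triangulations:lexBreakExpansion}:
\begin{equation*}
  \min\bigl(\intfacets(s')\bigr) \ge \min\bigl(\intfacets(s'')\bigr)
  \quad\text{for all } s' \ge s'' .
\end{equation*}
Applying it with $s' = s$ and $s'' = s_0$ gives
\begin{equation*}
  f_0 \ge \min\bigl(\intfacets(s)\bigr) \ge \min\bigl(\intfacets(s_0)\bigr),
\end{equation*}
which is the claim. Equivalently, the statement follows from Theorem~\ref{thm:triangulations:lexBreakExpansion} with $s = s_0$ and $s' = s$, provided one extends that theorem from ``$\setStyle{T}\cup\{s'\}$ right-extendable'' to ``$\setStyle{T}$ right-extendable via some $s'$''.

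The main obstacle is the monotonicity fact, which the paper only asserts. It is not literally true for arbitrary simplices: $\intfacets$ omits boundary facets, and a lex-larger simplex may have all its small facets on the boundary. I would justify it in the form actually needed, $\min\bigl(\intfacets(s)\bigr) \ge \min\bigl(\intfacets(s_0)\bigr)$ for $s \ge s_0$, and I expect to use a weaker argument instead. Since the facet order is the lex order on $(\rank-1)$-subsets, the minimal facet of a simplex is obtained by deleting its maximal element. I would first try comparing only facets containing $f_0$, namely $\min\bigl(\intfacets(s_0)\bigr) \le f_0$ whenever some admissible covers $f_0$.

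If that approach fails, I would fall back on reading $\min\bigl(\intfacets(\cdot)\bigr)$ as the quantity the implementation orders by. Under that reading the simplex indexing is arranged so that this monotonicity holds by construction, and that arrangement is what lex-breaking relies on.
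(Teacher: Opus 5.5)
Your first step is sound: a feasible right-completion contains some $s \in \admissibles(\setStyle{T})$ with $f_0 \in \intfacets(s)$, so $s \ge s_0$ and $\min\bigl(\intfacets(s)\bigr) \le f_0$. The gap is the step you flagged, and it cannot be closed. The inequality $\min\bigl(\intfacets(s)\bigr) \ge \min\bigl(\intfacets(s_0)\bigr)$ for $s \ge s_0$ is false even in the restricted form you need, and the stated inequality fails with it. The paper's proof rests on the same assertion (``the lex-minimal facet that can be covered by some admissible simplex is the lex-minimal facet of the lex-minimal admissible simplex''), so the defect is in the statement itself, not only in your write-up.

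Here is a counterexample. Take six points in convex position whose labels, read cyclically around the hexagon, are $3,2,5,1,4,6$. Let $\setStyle{T}$ index $\{1,3,4\}$ and $\{1,3,5\}$, identifying indices with point sets. The edge $\{1,3\}$ is covered twice, and $\{1,4\}$, $\{1,5\}$ are boundary edges. So $\freefacets(\setStyle{T})$ consists of $\{3,4\}$ and $\{3,5\}$, and $f_0 = \{3,4\}$. The uncovered part of the hexagon is the union of the triangles $\{2,3,5\}$ and $\{3,4,6\}$. These are the only admissibles, and adding them gives the fan triangulation from point~$3$, which lex-contains $\setStyle{T}$. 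Hence $\setStyle{T}$ is right-extendable.

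However, $s_0 = \{2,3,5\}$ has its two lex-smallest edges $\{2,3\}$ and $\{2,5\}$ on the boundary. Therefore $\min\bigl(\intfacets(s_0)\bigr) = \{3,5\}$, which is lex-larger than $f_0$. The facet $f_0$ is covered only by the lex-larger admissible $\{3,4,6\}$. The same example with $s = s' = \{2,3,5\}$ refutes Theorem~\ref{thm:triangulations:lexBreakExpansion}, so reducing to that theorem does not help. Your ``first try'' merely restates the claim. Reading $\min\bigl(\intfacets(\cdot)\bigr)$ as ``whatever the implementation orders by'' changes the statement instead of proving it.

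What your argument does prove is a corrected version: replace the interior-facet minimum of $s_0$ by the lex-minimal $(\rank-1)$-subset of $\lexIndexSimp(s_0)$, whether or not that subset is an interior facet. The map $\setStyle{S} \mapsto \lexksubset{\setStyle{S}}{\rank-1}$ (delete the largest element) is weakly monotone for the lexicographic order on $\rank$-subsets. Every facet of $\setStyle{S}$ is lexicographically at least $\lexksubset{\setStyle{S}}{\rank-1}$. Together these give $\lexIndexFacet(f_0) \lexgreaterequal \lexksubset{\lexIndexSimp(s)}{\rank-1} \lexgreaterequal \lexksubset{\lexIndexSimp(s_0)}{\rank-1}$.

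This is exactly the mechanism of Lemma~\ref{thm:lex-flag-pruning}. The price is that the test must compare point sets lexicographically, not indices in the numbering of interior facets. In the example, $\lexksubset{\{2,3,5\}}{2} = \{2,3\}$ is lex-smaller than $\{3,4\}$, so the corrected test keeps $\setStyle{T}$. The test as stated would prune this right-extendable node.
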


\begin{proof}
  If $\setStyle{T}$ is right-extendable, then for each free facet
  indexed in $\freefacets(\setStyle{T})$ there must be an admissible
  simplex indexed in the admissibles of~$\setStyle{T}$ covering it,
  since
  $\admissibles(\setStyle{T}') \subseteq \admissibles (\setStyle{T})$
  for all $\setStyle{T}' \supseteq \setStyle{T}$.  In particular, for
  the lex-minimal free facet there must be such an admissible simplex.
  The lex-minimal facet that can be covered by some admissible simplex
  is the lex-minimal facet of the lex-minimal admissible simplex.
  Thus, if the lex-minimal free facet is lex-smaller than this, then
  it cannot be covered by \emph{any} admissible simplex of any
  superset $\setStyle{T}' \supseteq \setStyle{T}$, and $\setStyle{T}$
  cannot be right-extendable. The assertion is just a translation of
  this into a formula.
\end{proof}

Call the application of
Theorem~\ref{thm:triangulations:semiIsNotRightExt} \emph{lex-pruning}.
In Table~\ref{tab:triangulations-results-nodesavings-pruning} a
comparison of the node counts is presented for no-pruning and no
lex-breaking, strong-pruning with lex-breaking, and lex-pruning with
lex-breaking for tiny to small examples.  No-pruning is not
competitive, which explains the limited success of all implementations
previously available. Moreover, the examples (and others not in the
table) exhibit that strong-pruning is slightly more effective than
lex-pruning.  However, in all examples computed so far the nodes
pruned extra do not justify the substantially larger effort (see
Section~\ref{sec:triangs:analysis} for an analysis of worst-case
run-times).

\begin{algorithm}[t]
  \TitleOfAlgo{\semiIsNotRightExtLex{$\setStyle{T}', \downsetStyle{D},
      \setsysStyle{F}, \globalData, \localData, \localData'$}}
  
  \KwIn{A subset $\setStyle{T}'$ of simplex indices
    in~$\indexSet{\noOfSimplices}$, the downset $\downsetStyle{D}$ of
    subsets indexing partial triangulations, the set $\setsysStyle{F}$
    of triangulations (given implicitly by $\isFeasible$), global data
    $\globalData$ encompassing the interior-facets table $\iftTable$
    and the admissibles table $\admTable$ of~$\Conf$, local data
    $\localData$ of~$\setStyle{T}$ encompassing the free interior
    facets $\fifSet$ and the admissibles $\admSet$ of~$\setStyle{T}'$}
    
  \KwOut{$\true$ if $\setStyle{T}'$ is not right-extendable, $\false$
    if $\setStyle{T}'$ may be right-extendable}

  \If(\tcc*[f]{min.~free facet too small?}){
    $\minElem{\fifSet} < \min\Bigl(\iftTable\bigl[\min(\admSet)\bigr]\Bigr)$
  }{
    
    \Return $\true$
    \tcc*{min.~free facet not coverable by admissibles}
  }
  \Else{
    \Return $\false$
    \tcc*{no contradiction to right-extendability}
  }
  
  \caption[Lex-Pruning]{The lex-pruning semi-check whether a partial
    triangulation can certainly not be right-extended to a
    triangulation based on a direct application of
    Theorem~\ref{thm:triangulations:semiIsNotRightExt}}
  \label{alg:semiIsNotRightExt_triangulations_lex}  
\end{algorithm}

Now, the specialized subroutines can be presented for application of
Algorithm~\ref{alg:symLSRSFeaswithPruning} to the enumeration of
\emph{all} triangulations.  For the instances considered, the fastest
variant of $\isLexMin$ was $\isLexMinIter$ from
Algorithm~\ref{alg:isLexMinCritelemIter}.  This comes as no surprise
(after the analysis in Section~\ref{sec:new-checks-lexic}), since the
degree $\noOfSimplices$ is usually large compared to the order~$k$ of
the symmetry group.  This time, the global and local auxiliary data
are more voluminous.  The global data $\globalData$ consists of the
configuration~$\Conf$ with hash maps $\admTable$ for its admissibles
and $\iftTable$ for its interior facets.  The local data $\localData$
stored with each subset $\setStyle{T}$ of simplex indices consists,
besides the local data necessary for the lex-min check (in this case a
critical-element table $\critelemTable$ of~$\setStyle{T}$), a set
$\admSet$ representing the admissible simplices of~$\setStyle{T}$ and
a set $\fifSet$ representing the free interior facets
of~$\setStyle{T}$.  The feasibility check just needs part of the local
data and is straight-forward (see
Algorithm~\ref{alg:isFeasible_triangulations}).  When a node is
processed, the corresponding partial triangulation is expanded by a
simplex admissible for it.  Thus, the set $\admSet$ in the local data
of a node and ordered lexicographically yields an expansion sequence.
When a new simplex is added, one can generate the new local data from
the local data of $\setStyle{T}$, as is described in
Algorithm~\ref{alg:expand_triangulations}.  Note that lex-breaking
according to Theorem~\ref{thm:triangulations:lexBreakExpansion} might
be possible.

Finally, Algorithms \ref{alg:semiIsNotRightExt_triangulations_strong}
and~\ref{alg:semiIsNotRightExt_triangulations_lex} list
implementations of strong-pruning and lex-pruning, respectively, that
reveal in many cases when a partial triangulation can certainly not be
right-extended to a triangulation.
Algorithm~\ref{alg:semiIsNotRightExt_triangulations_strong} requires
some explanation.  In that algorithm, for some partial
triangulation~$\setStyle{T}'$, first, for each free interior facet
$f \in \fifSet$ construct a set $\covSet[f]$ containing the
$f$-covering simplices that
\begin{enumerate}[label=(\alph*)]
\item contain the free interior facet~$f$ and
\item intersect properly with~$\setStyle{T}'$.
\end{enumerate}
Second, for each free interior facet $f \in \fifSet$ construct the set
$\covAdm[f]$ of all so-called \emph{$f$-admissible simplices} that are
identical to or intersect properly with at least one of the
$f$-covering simplices.  The goal is to eliminate in rounds over all
$f \in \fifSet$ all those simplices from the sets of $f$-covering
simplices that are not $f'$-admissible for at least one
other~$f' \in \fifSet$.  As soon as no further $f$-covering simplex is
eliminated in a complete round over all $f \in \fifSet$, the
$f$-covering simplices contain a multi-covering set as in
Theorem~\ref{thm:triangulations:semiIsNotRightExtStrong}.  Since in
every while-loop at least one $f$-covering simplex is removed, the
number of while-loop traversals and, hence, the algorithm is finite.

Finally, we specialize the method to enumerate only subsets with
prescribed symmetries (symmetric lexicographic symmetric-subset
reverse search) for triangulations.  Note that a triangulation can
have symmetries that are not in the automorphism group of the point
configuration, since, a-priori, a triangulation need not use all the
points.  All symmetries are in the automorphism group
$\autGroup\bigl(\vertexSet{\Conf}\bigr) \le \symGroup{n}$ of the
vertices $\vertexSet{\Conf}$ of~$\Conf$, though.  Since each
triangulation is a maximal subset of pairwise properly intersecting
simplices (though not vice versa), it is possible to utilize symmetric
lexicographic symmetric-subset reverse search from
Section~\ref{sec:vari-symm-lexic}, in particular,
Definition~\ref{def:symLsymSRS-notions} and
Theorem~\ref{thm:symLsymSRS-main}.  A specialization of
Definition~\ref{def:symLsymSRS-notions} to triangulations reads as
follows:
\begin{definition}
  Let $\Conf$ be a point configuration with $n$ points and
  $\groupStyle{H}$ be a subgroup
  of~$\autGroup\bigl(\vertexSet{\Conf}\bigr)$.  An element $s$ indexes
  an \emph{$\groupStyle{H}$-feasible simplex} if any two elements from
  the $\groupStyle{H}$-orbit of $s$ index identical or properly
  intersecting simplices.  Two elements $s$ and~$s'$ \emph{intersect
    $\groupStyle{H}$-properly} if any two elements from their
  respective $\groupStyle{H}$-orbits index identical or properly
  intersecting simplices.  A triangulation of~$\Conf$ indexed by
  $\setStyle{T}$ is \emph{$\groupStyle{H}$-invariant}, if
  $\sigma(T) = T$ for all $\sigma \in \groupStyle{H}$.  A symmetry
  $\grg$ of~$\Conf$ is \emph{$\groupStyle{H}$-feasible} whenever for
  all $s \in \indexSet{n}$ one has that $s$ indexes an
  $\groupStyle{H}$-feasible simplex if and only if $\grg(s)$ indexes
  an $\groupStyle{H}$-feasible simplex.
\end{definition}
An application of Theorem~\ref{thm:symLsymSRS-main} readily yields the
following specialization to triangulations.
\begin{theorem}
  \label{thm:symLsymSRS-main-triang}
  Let $\Conf$ be a point configuration with $n$ points, and let
  $\groupStyle{H}$ be a subgroup
  of $\autGroup\bigl(\vertexSet{\Conf}\bigr)$.  Let $\setStyle{T}$ be
  a non-empty subset of pairwise $\groupStyle{H}$-properly
  intersecting $\groupStyle{H}$-feasible simplices whose set of free
  interior facets is empty.  Then $\setStyle{T}$ indexes an
  $\groupStyle{H}$-invariant triangulation of~$\Conf$.
\end{theorem}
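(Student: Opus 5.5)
We plan to derive Theorem~\ref{thm:symLsymSRS-main-triang} as a direct instance of Theorem~\ref{thm:symLsymSRS-main}. The compatibility graph will be $\GG$ on the simplex indices $\indexSet{\noOfSimplices}$, with $\{s,s'\}\in\EE$ exactly when $\lexIndexSimp(s)$ and $\lexIndexSimp(s')$ intersect properly. The induced downset $\downsetStyle{D}$ is the set of partial triangulations, and the feasible subsets are the triangulations. The group $\groupStyle{H}\le\autGroup(\vertexSet{\Conf})$ acts on simplex indices by acting on vertex labels. Proper intersection is defined through signed circuits of~$\Conf$, and we use that these are preserved by the symmetries considered (as in Lemma~\ref{thm:triangulation-symmetries}), so $\groupStyle{H}$ acts by automorphisms of~$\GG$. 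With this dictionary, the notions of $\groupStyle{H}$-feasible simplex and $\groupStyle{H}$-proper intersection are literally the $\groupStyle{H}$-feasible elements and $\groupStyle{H}$-compatible pairs of Definition~\ref{def:symLsymSRS-notions}.

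First, $\setStyle{T}$ consists of pairwise properly intersecting simplices, since $\groupStyle{H}$-proper intersection implies proper intersection (take the identity in both orbits). Thus $\setStyle{T}\in\downsetStyle{D}$, and $\setStyle{T}$ is non-empty with empty set of free interior facets. By Lemma~\ref{thm:triangulations:facts}(i), $\setStyle{T}$ indexes a triangulation, so it is a feasible subset.

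Second, we check the hypothesis of Theorem~\ref{thm:symLsymSRS-main} that every feasible subset is a maximal clique in~$\GG$. A triangulation cannot be extended by a further simplex that intersects all its simplices properly. The reason is that the triangulation covers the whole configuration, so any new full-dimensional simplex overlaps some simplex of the triangulation in its interior and therefore does not intersect it properly. We will justify this by citing the geometric meaning of Definition~\ref{def:triangulations} from \cite{DeLoeraRambauSantos_TriangulationsStructuresApplications_2010}, where no triangulation strictly contains another proper-intersection clique. Having done this, Theorem~\ref{thm:symLsymSRS-main}, applied to the feasible set $\setStyle{T}$ of pairwise $\groupStyle{H}$-compatible elements, yields $\grh(\setStyle{T})=\setStyle{T}$ for all $\grh\in\groupStyle{H}$, i.e.\ $\setStyle{T}$ is $\groupStyle{H}$-invariant.

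The main obstacle is this maximality step. The paper uses it implicitly ("each triangulation is a maximal subset of pairwise properly intersecting simplices"), but the combinatorial definition via covering of interior facets does not make it immediate. We would argue as follows. Take a new simplex $s$ and a point in its relative interior in general position. This point lies in the interior of some simplex $t\in\setStyle{T}$, because the triangulation covers $\conv\Conf$. Then the relative interiors of $s$ and $t$ intersect although $s\neq t$, and this yields a circuit witnessing improper intersection. Alternatively, we can simply invoke the cited equivalence with geometric triangulations.
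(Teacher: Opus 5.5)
Your proposal is correct and follows the paper's proof: $\groupStyle{H}$-proper intersection gives a partial triangulation, the empty set of free interior facets makes it a triangulation, and Theorem~\ref{thm:symLsymSRS-main} then gives $\groupStyle{H}$-invariance. Your interior-point argument fills in the maximality hypothesis, which the paper only asserts in passing; it works because the linear dependence between the two positive representations of that point has positive part in the new simplex and negative part in $t$, so a conformal circuit witnesses the improper intersection. You do not need the claim that $\groupStyle{H}$ acts by automorphisms of the proper-intersection graph, and $\groupStyle{H}\le\autGroup(\vertexSet{\Conf})$ does not obviously give it. Theorem~\ref{thm:symLsymSRS-main} allows any $\groupStyle{H}\le\symGroup{n}$, and the triangulation versions of $\groupStyle{H}$-feasibility and $\groupStyle{H}$-proper intersection already quantify over all pairs in the orbits.
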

\begin{proof}
  Let $\setStyle{T}$ be as in the assumption.  Since proper
  $\groupStyle{H}$-intersection implies in particular proper
  intersection, $\setStyle{T}$ indexes a partial triangulation.  Since
  it has no interior free interior facets, it indexes a triangulation.
  The $\groupStyle{H}$-invariance now follows from
  Theorem~\ref{thm:symLsymSRS-main}.
\end{proof}
That means: Once the bijection indexing simplices has been restricted
to $\groupStyle{H}$-feasible elements, the symmetries have been
restricted to $\groupStyle{H}$-feasible symmetries, and the
admissibles table of the point configuration has been modified to
represent $\groupStyle{H}$-proper intersection, then
$\symLSRSFeaswithData$ will automatically enumerate only
$\groupStyle{H}$-invariant triangulations up to
$\groupStyle{H}$-feasible symmetries.

\subsection{Analysis}
\label{sec:triangs:analysis}

Let $\nonPrunables \subseteq \downsetStyle{D}$ be the set of subsets
satisfying the necessary conditions of
Theorem~\ref{thm:triangulations:lexBreakExpansion} and
Theorem~\ref{thm:triangulations:semiIsNotRightExtFull}
or~\ref{thm:triangulations:semiIsNotRightExt}, depending on which
pruning method is chosen.  Then, the following can be said about the
run-time complexity of the resulting algorithm.
\begin{theorem}
  \label{thm:triang-alg-efficiency}
  Assume that $\isLexMin$ is implemented via the critical-element
  method.  Let $\noOfSimplices$ be the number of simplices in~$\Conf$.
  Moreover, let $m$ be the maximal number of simplices in a
  triangulation of~$\Conf$.  Then, the run-time complexity of
  $\symLSRSFeaswithData$ with strong-pruning is in
  $O\bigl(\noOfSimplices (\abs{\GrG} + (\rank^3 m^3 \corank + \rank^2
  m^2 \corank^2)
  \noOfSimplices))\abs{\setOrbits{\nonPrunables}{\GrG}}\bigr)$; with
  lex-pruning the run-time is in
  $O\bigl(\noOfSimplices (\abs{\GrG} + \noOfSimplices + \rank
  m)\abs{\setOrbits{\nonPrunables}{\GrG}}\bigr)$.
  
  Moreover:
  \begin{enumerate}[label=(\roman*)]
  \item\label{itm:triangs-effdoublesimp} For any
    $\rank \in \mathbb{N}$, there is a point
    configuration~$\standardsimplex{\rank - 1}^{\mathrm{dup}}$ with
    $2\rank$ points of rank~$\rank$ whose number of triangulations up
    to symmetry is one and whose number of simplices $\noOfSimplices$
    is~$2^{\rank}$, for which $\symLSRSFeaswithData$ indeed traverses
    its main loop $2^{\rank}$ times.
  \item\label{itm:triangs-effline} For any $\no \in \mathbb{N}$, there
    is a point configuration~$\matrixStyle{L}_{\no}$ with $\no + 2$
    points of rank~$2$ whose number of triangulations is
    in~$\Theta\bigl(2^{\no}\bigr)$, whose number of right-extendable
    partial triangulations is in~$\Theta\bigl(2^{\no}\bigr)$, and
    whose number of all partial triangulations is in
    $\Theta\bigl((\tfrac{3 + \sqrt{5}}{2})^{\no}\bigr)$.  In
    particular, asymptotically the number of all partial
    triangulations is
    $\frac{1}{2}\bigl(\tfrac{3 + \sqrt{5}}{4}\bigr)^{\no}$ times as
    large as the number of right-extendable partial triangulations.
    Moreover, all expandable but not right-extendable partial
    triangulations can be pruned by lex-pruning or strong-pruning.
  \item\label{itm:triangs-effcross} For any
    $\dimension \in \mathbb{N}$, there is a point
    Configuration~$\hypercross{\dimension}$ with $\no = 2\dimension$
    points of rank~$\dimension + 1$ whose number of triangulations
    is~$\dimension$, whose number of right-extendable partial
    triangulations is $1 + \dimension 2^{\dimension - 1}$, and whose
    number of all partial triangulations is
    $1 + \dimension \bigl(2^{(2^{\dimension - 1})} - 1\bigr)$.  In
    particular, the number of all partial triangulations is
    asymptotically $2^{(2^{\dimension - 1} - \dimension)}$ times as
    large as the number of right-extendable partial triangulations.
    Moreover, all expandable but not right-extendable partial
    triangulations can be pruned by strong-pruning, but not
    necessarily by lex-pruning.
  \item\label{itm:triangs-UB} For any $k \in \mathbb{N}$, there is a
    point configuration~$\matrixStyle{P}_k$ with $\no = 4(k+3)$ points
    of rank~$4$ for which, independent of the order of points, there
    are at least $2^{4k}$ not right-extendable partial triangulations
    all of which can be pruned by strong-pruning.
  \item\label{itm:triangs-LB} For any $k \in \mathbb{N}$, there is a
    point configuration~$\matrixStyle{P}_k$ with $\no = 5(k+3)$ points
    of rank~$4$ for which, independent of the order of points, there
    are at least $2^{5k}$ not right-extendable partial triangulations
    none of which can be pruned by full-pruning, strong-pruning or
    lex-pruning.
  \end{enumerate}
\end{theorem}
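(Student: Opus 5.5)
The proof has two parts. First comes a per-node cost analysis of the run-time bounds, following the pattern of Theorems~\ref{thm:cocircuit-alg-efficiency} and~\ref{thm:circuit-alg-efficiency}. Then each item~\ref{itm:triangs-effdoublesimp}--\ref{itm:triangs-LB} gets an explicit construction with a counting argument.

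\emph{Run-time bounds.} Recursive calls correspond to orbits of non-prunable subsets. Each call runs through at most $\noOfSimplices$ expansions. Per expansion:
\begin{itemize}
\item $\isLexMinIter$ costs $O(\abs{\GrG})$ amortized, with the critical-element table updated inside it.
\item $\expand$ costs one symmetric difference of facet sets and one intersection of admissible sets. The intersection is $O(\noOfSimplices)$; the symmetric difference is $O(\rank m)$, since a simplex has $\rank$ facets and $\freefacets$ contains at most $\rank m$ facets.
\item Lex-pruning is a constant-time comparison.
\end{itemize}
This gives the second bound. For strong-pruning, $\abs{\freefacets}\le \rank m$. Each while-round recomputes unions and intersections over all pairs of free facets on sets of size $\le\noOfSimplices$. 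The number of rounds is bounded by the total number of removable covering simplices, and each facet is covered by at most $O(\corank)$ candidate simplices. Multiplying these counts should reproduce the stated $(\rank^3m^3\corank+\rank^2m^2\corank^2)\noOfSimplices$. The bookkeeping of which factor bounds which loop needs care, but it is routine.

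\emph{Constructions.}

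Item~\ref{itm:triangs-effdoublesimp}: take the standard simplex with every vertex doubled. The simplices are the $2^\rank$ choices of one copy per vertex. Any two of them intersect improperly unless they are equal, since they share a convex hull. So every simplex is itself a triangulation, and all are equivalent under swapping copies. The main loop at the root runs over all $2^\rank$ simplices.

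Item~\ref{itm:triangs-effline}: take $\no+2$ collinear points. Triangulations correspond to subsets of the interior points, which gives $2^\no$ of them. Partial triangulations are sets of pairwise non-overlapping segments. Counting these via a transfer-matrix recursion gives a Fibonacci-type growth rate $(3+\sqrt5)/2$. Right-extendable partial triangulations are the lex-initial chains, and there are $\Theta(2^\no)$ of them. For pruning, a non-extendable expandable set has a gap to the left of its last segment, and that gap is the minimal free facet. Lex-pruning detects this, because the minimal admissible simplex lies to the right.

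Item~\ref{itm:triangs-effcross}: take the cross-polytope. Its $\dimension$ triangulations are the cones around the $\dimension$ diagonals. Partial triangulations are subsets of the $2^{\dimension-1}$-simplex stars around a single diagonal, which gives the stated count. Strong pruning catches every non-extendable set, because covering simplices for distinct diagonals intersect improperly. For lex-pruning it suffices to exhibit one example in which the minimal free facet is still coverable by the minimal admissible simplex.

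Items~\ref{itm:triangs-UB} and~\ref{itm:triangs-LB}: glue $k$ copies of a small gadget into a rank-$4$ configuration, using $4$ points per gadget in~\ref{itm:triangs-UB} and $5$ in~\ref{itm:triangs-LB}. Each gadget locally admits independent choices, which yields the factors $2^{4k}$ and $2^{5k}$. In~\ref{itm:triangs-UB} the choices conflict detectably through pairwise facet coverage. In~\ref{itm:triangs-LB} the gadget is a Schönhardt-like non-triangulable region: every facet is coverable, and covering simplices are even pairwise compatible, yet no global completion exists. Full-pruning then fails too.

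The main obstacle is designing the gadget for~\ref{itm:triangs-LB} so that the conclusion holds for \emph{every} order of the points. Because the obstruction in a Schönhardt-type region is geometric and global, no facet-local test can detect it in any order, and that is the property I would try to exploit first.
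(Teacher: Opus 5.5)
The run-time accounting and items (i)--(ii) follow the paper. There are gaps in (iii) and, more seriously, in (iv)--(v).

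\textbf{Items (iv) and (v) are not proved.} No configuration is specified, and the sketched architecture does not deliver the required behavior. In (iv), \emph{every one} of the $2^{4k}$ partial triangulations must be non-extendable. If the obstruction came from choices ``conflicting'' with one another, it is unclear why every combination would be obstructed.

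The missing idea is to separate the two roles.
\begin{itemize}
\item \emph{One non-triangulable core.} Take the prism over a regular $\ell$-gon, with top vertices $v_i$ and bottom vertices $w_i$. Use the cyclic diagonals $v_iw_{i+1}$ in its quadrilaterals $F_i$. The induced triangulations $T_i$ of the $F_i$ cannot all occur in one triangulation of the prism (generalized Sch\"onhardt polyhedron).
\item \emph{Independent choices that merely force this core.} Beyond each $F_i$, put $k+1$ points $U_i\cup\{u_i\}$ on the normal line through its barycenter. This gives $2\ell+\ell(k+1)=\ell(k+3)$ points of rank~$4$. For any $V_i\subseteq U_i$, take the placing triangulation of the pyramid $F_i*u_i$ that starts from $T_i$ and places $V_i$, then $u_i$. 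Distinct $V_i$ give distinct triangulations, since a point of $U_i$ is used iff it lies in $V_i$.
\end{itemize}
This yields $2^{\ell k}$ partial triangulations. None is extendable at all, so none is right-extendable under any order; this is how the order-independence you single out is handled. All of them have the same free facets $\bigcup_i T_i$.

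The dichotomy between (iv) and (v) then lives in the core. For $\ell=4$ the free triangles admit no compatible multi-covering set, so strong pruning removes every one. For $\ell=5$ the tetrahedra $v_iw_{i+1}v_{i+1}w_{i+2}$ are pairwise properly intersecting and cover all free triangles, so even full pruning fails. Your heuristic that a Sch\"onhardt-type obstruction is ``global, so no facet-local test detects it'' cannot replace this step. The $\ell=4$ twisted prism is of the same type, and strong pruning does detect it. Item (v) genuinely needs an explicit compatible covering set.

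\textbf{Item (iii): your reason for strong pruning is not the operative one.} Every simplex contains exactly one antipodal diagonal, and simplices with different diagonals cross at the origin. Hence a partial triangulation lies inside a single $T(D)$. All its free facets contain $D$, and all their possible covers lie in $T(D)$ and are pairwise compatible, so distinct diagonals never enter.

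What matters is that each free facet $f$ has exactly one candidate cover: its neighbor across $f$ in $T(D)$. So strong pruning keeps $T$ iff each such neighbor is lex-larger than $\max T$. Identify $T(D)$ with $\{0,1\}^{d-1}$, where neighbors differ in one coordinate and the lex order of simplices becomes a lex order on $0/1$-vectors. A minimal-counterexample argument then forces $T$ to be a lex-initial segment of $T(D)$. If $y<\max T$ is lex-minimal outside $T$, clear a $1$-coordinate of $y$ (or, if $y=0$, of $\min T$). This produces adjacent simplices, one in $T$ and one not, whose shared facet is free with cover below $\max T$.

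For lex pruning you still owe an example. In the octahedron with pairs $\{1,2\},\{3,4\},\{5,6\}$, take $T=\{\{1,2,4,5\}\}$. Its minimal free facet $\{1,2,4\}$ equals the minimal interior facet of its only admissible simplex $\{1,2,4,6\}$, so lex pruning keeps it. Yet $\{1,2,3,5\}$ is missing, so $T$ is not right-extendable.

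A minor point on (ii): the uncovered interval is not literally the minimal free facet. What you need is that a segment $\{i,j\}$ with $[i-1,i]$ uncovered makes $i$ a free facet strictly left of the right end $b$ of $T$. Every admissible segment starts at a point $\ge b$, so lex pruning fires.
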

The first item shows that the number of simplices, and, therefore, the
number of traversals of the main loop in $\symLSRSFeaswithData$, can
be exponential in the number of triangulation \emph{orbits}.  The
second and third items show to what extent (symmetries ignored) the
number $\abs{\downsetStyle{D}}$ of all partial triangulations can grow
compared to the number of right-extendable partial triangulations and
to what extent $\nonPrunables$ can be smaller than~$\downsetStyle{D}$.
The fourth item shows that, symmetries ignored and independent of the
order of points, strong-pruning can lead to a pruning of an
exponential number of partial triangulations, whereas the fifth item
shows that, even with full-pruning, an exponential number of
deadends in the enumeration tree is possible.
\begin{proof}
  Preparations: First note that unions and symmetric differences of
  subsets can be implemented in (amortized) time linear in the sum of
  the elements of all the operands, whereas intersections can be
  implemented in (amortized) time linear in the number of elements of
  the smaller set whenever it is known which set this is.  The
  cardinality of the set of free interior facets~$\fifSet$
  in~$\setStyle{T}'$ is at most $\rank m$.  The best straight-forward
  worst-case estimate for the number of simplices in~$\admTable[s]$
  admissible to a given simplex~$s$ is the number of all
  simplices~$\noOfSimplices$.  The best straight-forward worst-case
  estimate for the number of $f$-covering simplices $f \cup \{i\}$
  in~$\covSet[f]$ for a given~$f$ is at most the
  number~$\corank + 1 = \no - (\rank - 1) $ of points not in~$f$,
  which is in~$O(\corank)$.
  
  The number of elements in an expansion sequence of a non-empty
  subset is no larger than the number of admissibles of one of its
  members, which, by the preparations, is no more than the
  number~$\noOfSimplices$ of simplices.  This is the maximal number of
  traversals of the main loop in $\symLSRSFeaswithData$.  The
  dominating remaining subroutine in the main loop besides $\isLexMin$
  and $\semiIsNotRightExt$ is $\expand$.  By the preparations, the
  updates of $\fifSet$ and $\admSet$ take no more than
  $O(\rank m + \noOfSimplices)$ operations. The remaining effort
  inside the main loop differs according to the pruning method.

  For strong-pruning: Since in each while-loop traversal of
  strong-pruning at least one $f$-covering simplex is removed, the
  while-loop traversals are bounded by the number of simplices
  in~$\covSet[f]$ summed over all~$f \in \fifSet$.  Thus, the number
  of while-loop traversals is in $O(\rank m \corank)$.  The number of
  for-loop traversals is in $O(\rank m)$ for each of the three
  for-loops.
  
  The effort inside of the loops consists of
  \begin{enumerate}[label=(\alph*)]
  \item \label{itm:strong-pruning:covset} the computation for
    $\covSet[f]$ for all $f \in \fifSet$ prior to the while-loop;
  \item \label{itm:strong-pruning:covadm} the computation of
    $\covAdm[f]$ for all $f \in \fifSet$ inside the while-loop;
  \item \label{itm:strong-pruning:update} the computation of
    $\covSet'$ to update $\covSet[f]$ for all $f \in \fifSet$ inside
    the while-loop.
  \end{enumerate}   
  Concerning item~\ref{itm:strong-pruning:covset}, note that the
  asymptotically smaller subset in the set-intersection computation of
  $\covSet[f]$ is $\cstTable[f]$, whose number of elements is in
  $O(\corank)$.  Thus, the time to compute $\covSet[f]$ for all
  $f \in \fifSet$ is in~$O(\rank m \corank)$.

  Concerning item~\ref{itm:strong-pruning:covadm}, note that the
  number of elements involved in the set-union computation of
  $\covAdm[f]$ is in $O(\corank \noOfSimplices)$.  Thus, the
  computation of $\covAdm[f]$ for all $f \in \fifSet$ takes time
  in~$O(\rank m \corank \noOfSimplices)$.  Since this happens inside
  the while-loop, the total effort for this is in
  $O\bigl((\rank m \corank) (\rank m \corank \noOfSimplices)\bigr) =
  O(\rank^2 m^2 \corank^2 \noOfSimplices)$.

  Finally, concerning~\ref{itm:strong-pruning:update}, note that the
  total number of elements involved in the mixed set-union and
  set-intersection computation of $\covSet'$ is in
  $O(\rank m \noOfSimplices)$ (for the set-intersections the
  respective smaller sets are unclear, whence one cannot take
  advantage of this), which dominates the third for-loop.  Hence, the
  potential restriction of $\covSet[f]$ for all $f \in \fifSet$ takes
  time in $O(\rank^2 m^2 \noOfSimplices)$.  Since this happens inside
  the while-loop, the total effort for this is in
  $O\bigl((\rank m \corank) (\rank^2 m^2 \noOfSimplices)\bigr) =
  O(\rank^3 m^3 \corank \noOfSimplices)$.

  Putting all this together, strong-pruning takes time in
  $O\bigl((\rank^3 m^3 \corank + \rank^2 m^2 \corank^2)
  \noOfSimplices)\bigr)$, which dominates the effort for $\expand$.
  Consequently, the run-time complexity of \symLSRSFeaswithData is in
  $O\bigl(\noOfSimplices (\abs{\GrG} + (\rank^3 m^3 \corank + \rank^2
  m^2 \corank^2)
  \noOfSimplices))\abs{\setOrbits{\nonPrunables}{\GrG}}\bigr)$ for
  triangulations with strong-pruning.
  
  For lex-pruning: Whenever for all sets sorted structures are used or
  their minima are cached during each modification, the run-time
  complexity of lex-pruning is constant, which is dominated by the
  effort for $\expand$. Summarized, for lex-pruning the run-time of
  the resulting algorithm $\symLSRSFeaswithData$ is in
  $O\bigl(\noOfSimplices(\abs{\GrG} + \rank m +
  \noOfSimplices)\abs{\setOrbits{\nonPrunables}{\GrG}}\bigr)$ -- an
  improvement so significant compared to strong-pruning that now
  $\semiIsNotRightExt$ is not even the dominating subroutine anymore!

  For item~\ref{itm:triangs-effdoublesimp}, define
  $\standardsimplex{\rank - 1}^{\mathrm{dup}}$ to be the
  $\rank - 1$-dimensional standard simplex
  $\standardsimplex{\rank - 1}$ with all points duplicated.  Up to
  symmetry, there is only one triangulation.  The number of simplices,
  however, is $2^{\rank}$.  In $\symLSRSFeaswithData$, each of these
  simplices will first be added to the empty partial triangulation
  before the check for lexicographic minimality can dismiss these
  redundant branches.
  
  For item~\ref{itm:triangs-effline}, consider the point
  configuration~$\matrixStyle{L}_{\no}$ of rank~$2$ consisting of
  $\no + 2$ distinct points on a line labelled consecutively from $0$
  through~$\no + 1$.  It has $2^{\no}$ triangulations, each
  corresponding to the set of used interior points.  Any partial
  triangulation indexed by~$\setStyle{T}$ induces a \emph{segment
    pattern}
  $\mathcal{P}(\setStyle{T}) :=
  \epsilon_0\epsilon_1\dots\epsilon_{\no} \in \{-1, 0, 1\}^{\no + 1}$
  with the following meaning: $\epsilon_i = -1$ if $[i, i+1]$ is not
  covered by a segment in~$\setStyle{T}$, $\epsilon_i = 1$ if
  $[i, i+1]$ is covered by a segment in~$\setStyle{T}$ starting
  at~$i$, and $\epsilon_i = 0$ if $[i, i+1]$ is covered by a segment
  in~$\setStyle{T}$ starting at some~$k < i$, where
  $i = 0, 1, \dots, \no$.  Call a pattern in $\{-1, 0, 1\}^{\no + 1}$
  \emph{consistent} if it does not start with a ``$0$'' and if no
  ``$0$'' ever follows a ``$-1$''.  Then, $\mathcal{P}(\setStyle{T})$
  is consistent for all partial triangulations~$\setStyle{T}$.  Given
  any consistent pattern~$\mathcal{P} \in \{-1, 0, 1\}^{\no + 1}$, one
  can construct (from left to right) a partial triangulation
  $\setStyle{T}$ for which $\mathcal{P} = \mathcal{P}(\setStyle{T})$:
  A ``$1$'' starts a new segment, a ``$0$'' continues the same
  segment, and a ``$-1$'' starts an uncovered interval.  Therefore,
  $\mathcal{P}$ is a bijection between consistent patterns in
  $\{-1, 0, 1\}^{\no + 1}$ and partial triangulations.  The count
  $C(j)$ of all consistent patterns of length~$j$ is surprisingly
  interesting: Call $P(j)$ the number of patterns of length~$j$ not
  ending with a ``$-1$'' (\emph{positive patterns}), and call $N(j)$
  the number of patterns of length~$j$ ending with a ``$-1$''
  (\emph{negative patterns}).  Then the consistency implies that
  $P(1) = 1$, $N(1) = 1$.  In order to obtain a positive pattern,
  negative patterns can only be extended consistently by a ``$1$'',
  whereas positive patterns can be extended by a ``$0$'' or a ``$1$''.
  In order to obtain a negative pattern, any pattern can and must be
  extended by a ``$-1$''. Therefore:
  \begin{equation}
    \label{eq:consistent_pattern_recursion}
    P(j + 1) = 2 P(j) + N(j), \quad N(j + 1) = P(j) + N(j).
  \end{equation}
  Let now $F_1, F_2, F_3, F_4, \dots$ be the Fibonacci series with
  $F_1 = F_2 = 1$ and $F_j = F_{j-1} + F_{j-2}$ for $j > 2$.  Define
  $P'(j) := F_{2j}$ and $N'(j) := F_{2j-1}$ with $P'(1) = F_2 = 1$ and
  $N'(1) = F_1 = 1$. Moreover:
  \begin{align}
    \label{eq:consistent_pattern_fibonacci}
    P'(j + 1) &= F_{2j + 2}
    = F_{2j + 1} + F_{2j}
    = F_{2j} + F_{2j - 1} + F_{2j}
    = 2P'(j) + N'(j), \\
    N'(j + 1) &= F_{2j + 1}
    = F_{2j} + F_{2j - 1}
    = P'(j) + N'(j).
  \end{align}
  For all $j = 1, 2, \dots$ this proves that $P(j) = P'(j) = F_{2j}$
  and $N(j) = N'(j) = F_{2j - 1}$. Consequently,
  $C(j) = P(j) + N(j) = N(j + 1) = F_{2j + 1}$.  By a straight-forward
  application of the Eigenvalue-method to the linear system of
  difference equations
  \begin{equation}
    \binom{P(j+1)}{N(j+1)} = 
    \begin{pmatrix}
      2 & 1\\
      1 & 1
    \end{pmatrix}
    \binom{P(j)}{N(j)}
  \end{equation}
  for $P(j)$ and~$N(j)$, the asymptotic growth of
  $C(\no) = P(\no) + N(\no) = N(\no + 1)$ is exactly
  $(\tfrac{3 + \sqrt{5}}{2})^{\no}$.

  A non-empty partial triangulation $\setStyle{T}$ is right-extendable
  if and only if its segment pattern $\mathcal{P}(\setStyle{T})$ is
  consistent and has no subpattern ``$(-1)1$''.  That is,
  right-extendable partial triangulations cover the line completely up
  to some right-most covered point.  Hence, each subset of the points
  $1, 2, \dots, \no + 1$ gives rise to a right-extendable partial
  triangulation by using the maximal point as the right-most point
  covered by the partial triangulation ($0$ for the empty subset) and
  the remaining points to specify a complete triangulation of the line
  to the left of it.  This induces a bijection between the set of
  right-extendable partial triangulations and the set of subsets of
  $\{1, 2, \dots, \no + 1\}$, which has cardinality~$2^{\no + 1}$,
  which is in $\Theta\bigl(2^n\bigr)$.  Hence, the number of partial
  triangulations is, asymptotically, by an exponential factor
  $\bigl(\frac{3 + \sqrt{5}}{2}\bigr)^{\no}/2^{\no + 1} =
  \frac{1}{2}\bigl(\tfrac{3 + \sqrt{5}}{4}\bigr)^{\no} >
  \frac{1}{2}\bigl(\frac{5}{4}\bigr)^{\no}$ larger than the number of
  right-extendable partial triangulations of~$\matrixStyle{L}_{\no}$.

  Because of the given order of points from left to right, each
  ``$(-1)1$'' subpattern is detected immediately by lex-pruning:
  assume that the segment-pattern of an expandable but
  not-right-extendable partial triangulation $\setStyle{T}$ has the
  ``$-1$'' is in position $i-1$ and the ``$1$'' in position $i$ for
  some $i \in \{1, \dots, \no\}$.  This means, the interval $[i-1, i]$
  is uncovered, and the interval $[i, i+1]$ is covered by some simplex
  in~$\setStyle{T}$. Hence, $\setStyle{T}$ indexes a triangulation
  $\setsysStyle{T}$ that has $\{i\}$ as a free interior facet of one
  of its simplices $\{i, j\}$, $j > i$.  The admissibles of
  $\setStyle{T}$ index, by definition, simplices that are all
  lex-larger than~$\{i, j\}$ and intersecting properly with
  $\{i, j\}$.  The lex-minimal admissible simplex
  for~$\setsysStyle{T}$ is therefore lex-at-least $\{j, j+1\}$.  The
  minimal facet of the minimal admissible simplex
  for~$\setsysStyle{T}$ is lex-at-least~$\{j\}$, which is lex-larger
  than the free interior facet~$\{i\}$ of~$\setsysStyle{T}$, which is
  equal to or lex-larger than the minimal free interior facet
  of~$\setsysStyle{T}$.  Consequently, $\setStyle{T}$ will be
  lex-pruned.  Therefore, all the expandable but not right-extendable
  partial triangulations will be pruned by lex-pruning and, therefore,
  also by strong-pruning.
  
  For item~\ref{itm:triangs-effcross}, consider the
  $\dimension$-dimensional regular cross
  polytope~$\hypercross{\dimension + 1} = \bigl(\begin{smallmatrix}
    \hypercross{\dimension} & \vectorStyle{O}_{\dimension} & \vectorStyle{O}_{\dimension}\\
    0 & 1 & -1
  \end{smallmatrix}\bigr)$ with $\hypercross{1} =
  (1, -1)$ and $\vectorStyle{O}_{\dimension}$ the origin in
  dimension~$\dimension$ (coordinates may be homogenized by adding a
  row of ones throughout).  From the fact that $\hypercross{1}$ is a
  segment and $\hypercross{\dimension + 1}$ is the one-point
  suspension of~$\hypercross{\dimension}$ at the origin for
  $\dimension > 0$ (see
  \cite[Chp.~4]{DeLoeraRambauSantos_TriangulationsStructuresApplications_2010})
  the following can be easily derived by induction:
  $\hypercross{\dimension}$ has $2\dimension$ points in
  rank~$\dimension + 1$ and $2^{\dimension}$ facets.  For each
  \emph{antipodal diagonal} $\setStyle{D}$ between antipodal points
  $2k - 1$ and $2k$, $k = 1, 2, \dots, \dimension$, there is exactly
  one triangulation $\setStyle{T}(\setStyle{D})$ using it.  Each
  triangulation consists of~$2^{\dimension - 1}$ many simplices.  Each
  simplex in $\setStyle{T}(\setStyle{D})$ has exactly two boundary
  facets (those not containing~$\setStyle{D}$) and $\dimension - 1$
  interior facets (those containing~$\setStyle{D}$).  The crucial
  property of any triangulation of a regular cross polytope is that
  each triangulation $\setStyle{T}(\setStyle{D})$ is uniquely
  determined by any of its simplices, since all simplices in
  $\setStyle{T}(\setStyle{D})$ contain~$\setStyle{D}$.  Furthermore,
  only the simplices inside $\setStyle{T}(\setStyle{D})$ intersect
  properly with each other, since all distinct antipodal diagonals
  intersect improperly by themselves (the origin is in the interior of
  each antipodal diagonal).  Therefore, only the empty set (which is a
  partial triangulation right-extendable to any triangulation) and the
  $\dimension 2^{\dimension - 1}$ non-empty lex-subsets of one of its
  $\dimension$ many triangulations~$\setStyle{T}$ can be
  right-extended, namely to~$\setStyle{T}$.  In contrast to this, in
  total there are
  $\dimension \bigl(2^{(2^{\dimension - 1})} - 1\bigr)$ non-empty
  partial triangulations so that the total number of partial
  triangulations including the empty set is as claimed.  Since any
  free interior facet~$f$ of a partial triangulation~$\setStyle{T}$
  contains some antipodal diagonal~$\setStyle{D}$, only simplices
  from~$\setStyle{T}(\setStyle{D})$ contain~$f$.  Because in
  $\setStyle{T}(\setStyle{D})$ the interior facet~$f$ is contained in
  a unique other simplex~$s$ not in~$\setStyle{T}$, any multi-covering
  set of simplices $\setStyle{C}(f)$ like in
  Theorem~\ref{thm:triangulations:semiIsNotRightExtStrong} must
  contain~$s$.  Therefore, such a multi-covering set exists if and
  only if $\setStyle{T}$ is a lex-subset
  of~$\setStyle{T}(\setStyle{D})$, i.e., if and only if $\setStyle{T}$
  is right-extendable to~$\setStyle{T}(\setStyle{D})$.  In other
  words, all not right-extendable partial triangulations can be pruned
  by strong-pruning.  Note that there may be partial triangulations
  where the lex-min free interior facet is coverable by an admissible
  simplex, but not all free interior facets are.  Thus, lex-pruning
  may miss some not right-extendable partial triangulations.  (In
  experiments it can be seen that the number of misses is very small,
  though.)

  For the constructions of the~$\matrixStyle{P}_{k}$ in items
  \ref{itm:triangs-UB} and~\ref{itm:triangs-LB} proceed as follows.
  The order of points will not be relevant here, since in this
  particular case it can be proved that the partial triangulations
  involved cannot be extended at all, even if all so-far unused
  simplices could be used as expansions. For both items consider for a
  fixed $\ell \ge 3$ the point
  configuration~$\matrixStyle{P} := \matrixStyle{P}(\ell)$ forming a
  prism over a regular $\ell$-gon with $2\ell$ points.  For
  $i = 0, 1, \dots, \ell - 1$ (all $i$-indices below are considered
  modulo~$\ell$), call the top points $\vectorStyle{v}_i$ (labelled by
  $v_i$), and call the bottom points $\vectorStyle{w}_i$ (labeled by
  $w_i$).  Consider the cyclic set of diagonals $v_iw_{i+1}$ inside
  the $\ell$ quadrilateral facets of~$\matrixStyle{P}$ labelled
  by~$F_i$.  These diagonals induce unique triangulations
  $T_i := \{v_iw_iw_{i+1}, v_iw_{i+1}v_{i+1}\}$ of all the~$F_i$.
  From \cite{Rambau_generalizationSchonhardtspolyhedron_2005} it is
  known that~$\matrixStyle{P}$ has no triangulation that
  uses~$\bigcup_{i=0}^{\ell-1} T_i$.

  Beyond each of the $\ell$ facets~$F_i$, add $k + 1$ distinct points
  labeled by $U_i := \{u_{i1}, \dots, u_{ik}\}$ and $u_i$ in the order
  of increasing distance to~$F_i$ on a line through the barycenter
  of~$F_i$ perpendicular to~$\aff(F_i)$.  This results in the point
  configuration~$\matrixStyle{P}_k$ with $\ell(k + 3)$ points in
  rank~$4$.  Its convex hull consists of the prism over the regular
  $\ell$-gon and $\ell$ pyramids over the $\ell$ quadrilateral facets
  with apices~$u_i$.
  
  Construct now partial triangulations triangulating the
  $\ell$~pyramids.  To triangulate pyramid $F_i * u_{i}$, choose an
  arbitrary subset $V_i \subseteq U_i$ and place the points in order
  of increasing distance to~$F_i$ to generate a \emph{placing
    triangulation} (see
  \cite[Chp.~4]{DeLoeraRambauSantos_TriangulationsStructuresApplications_2010}).
  This results in the triangulation, where the first point in~$V_i$ or
  $u_i$ (if $V_i$ is empty) is joined to~$T_i$, and all the remaining
  points (including~$u_i$) are joined to the visible boundary facets
  of the previous partial triangulation.  For each pyramid, no two
  distinct subsets $V_i$ lead to identical triangulations, since a
  point is a vertex of some simplex in the placing triangulation if
  and only if it is in~$V_i$.  Moreover, the various triangulations of
  the pyramids can be combined arbitrarily to a partial triangulation
  of all the
  pyramids~$\bigcup_{i=1}^{\ell} \bigl( T_i * u_{i} \bigr)$.  This
  results in $2^{\ell k}$ distinct partial triangulations.  All these
  partial triangulations use the cyclic set of diagonals of the prism,
  i.e., none of them is extendable to a triangulation
  of~$\matrixStyle{P}_k$.  In particular, none of them is
  right-extendable.

  Furthermore, all of them have $\bigcup_{i=0}^{\ell-1} T_i$ as their
  free interior facets.  With the help of the Cayley-trick (see
  \cite{HuberRambauSantos_Cayleytricklifting_2000} for the general
  theory and \cite{Rambau_generalizationSchonhardtspolyhedron_2005}
  for the application to prims over $\ell$-gons) the following can be
  eye-balled: for $\ell \le 4$ not all free interior facets~$f$ can be
  covered by a covering set of simplices $\setStyle{C}(f)$ as in
  Theorem~\ref{thm:triangulations:semiIsNotRightExtStrong}, and for
  $\ell \ge 5$ the set of simplices
  $\bigcup_{i=0}^{k-1} \{v_iw_{i+1}v_{i+1}w_{i+2}\}$ is pairwise
  intersecting properly and covers all free interior facets.  Thus,
  for $\ell = 4$ we obtain item~\ref{itm:triangs-UB}, and for
  $\ell = 5$ we obtain item~\ref{itm:triangs-LB}.
\end{proof}
If $\GrG$ is given as an explicit set of permutations,
Theorem~\ref{thm:triang-alg-efficiency} shows that enumeration of
triangulations with $\symLSRSFeaswithData$ is polynomial in input size
and the number of orbits of non-prunable subsets.  This is because, in
this case, the number of simplices is at most the number of
triangulations, which is at most the number of triangulation orbits
times the group order, which is at most the number of non-prunable
subsets times the input size.  The first example shows that this may
be exponential in the input and output sizes in case $\GrG$ is given
by a set of generators, even if $\isLexMin$ could be implemented in
polynomial time. The second and third examples show that
$\symLSRSFeaswithData$ without pruning is for
counting/enumeration/listing, in general, not polynomial in the input
and output sizes. The third example, moreover, shows that
$\symLSRSFeaswithData$ with lex-breaking and strong- or lex-pruning
for counting/enumeration cannot be polynomial in the input and output
sizes because the exponentially many elements per found object all
have to be touched by the algorithm.  Whether $\symLSRSFeaswithData$
with lex-breaking and strong- or lex-pruning is polynomial in the
input and output sizes for listing when $\GrG$ is given as an explicit
set of permutations remains an open problem.

\subsection{Results}
\label{sec:triangs:results}

\begin{table}[htbp]
  \centering{%
    \sffamily\footnotesize
    \begin{tabular}{l@{}*{5}{r@{\ \ }}}
      \toprule
      $\Conf$
      & \# sym's
      & \# triangulations
      & \# triangulations
      & \# nodes
      & CPU time\\
      && up to symmetry & in total && [hh:mm:ss]\\
      \midrule
      $\hypercube{4}$                    &        384 &           247,451 &          92,487,256 &          3,446,659 &    0:00:01\\
      $\simplexproduct{6}{2}$            &     30,240 &           533,242 &      16,119,956,160 &          6,325,472 &    0:01:46\\
      $\simplexproduct{4}{3}$            &       2880 &         7,402,421 &      21,316,106,880 &        116,083,390 &    0:02:31\\
      *$\simplexproduct{5}{3}$           &      17280 &    25,606,173,722 & 442,472,050,753,920 &    429,725,338,124 & 1332:19:55\\
      *$\hypersimplex{7}{2}$             &       5040 &        37,676,752 &     189,355,661,460 &      1,397,621,560 &    0:47:15\\
      *$\hypersimplex{6}{3}$             &       1440 &        59,708,427 &      85,793,497,200 &      2,555,523,948 &    0:27:08\\
      $3\standardsimplex{3}$             &         24 &       925,148,763 &      22,201,684,367 &      7,154,329,212 &    0:10:56\\
      $\cyclic{14}{8}$                   &         28 &         2,429,751 &          68,007,706 &        187,209,582 &    0:00:23\\
      $\cyclic{19}{14}$                  &         38 &         6,515,385 &         247,567,074 &      1,265,333,660 &    0:04:09\\
      $\cyclic{16}{3}$                   &          2 &    58,492,955,941 &      116,985,744,91 &    887,659,233,813 &   17:16:57\\
      *$\cyclic{14}{4}$                  &         28 &       244,771,183 &       6,853,476,616 &      8,343,040,544 &    0:12:59\\
      *$\cyclic{15}{4}$                  &         30 &    18,845,509,142 &     565,365,033,880 &    650,520,069,380 &   18:59:44\\
      *$\cyclic{14}{5}$                  &          2 &   328,152,636,588 &     656,305,030,644 & 11,575,302,270,565 &  320:42:24\\
      *$\cyclic{14}{6}$                  &         28 &     8,314,337,199 &     232,797,963,456 &    535,970,897,965 &   18:10:42\\
      *$\cyclic{14}{7}$                  &          2 &    15,813,939,113 &      31,627,843,174 &    937,148,113,630 &   23:36:37\\
      *$\cyclic{15}{9}$                  &          2 &     1,397,895,884 &       2,795,741,709 &    117,124,014,923 &    3:21:53\\
      *$\cyclic{16}{10}$                 &         32 &     1,902,605,255 &      60,881,310,552 &    213,053,994,784 &    9:52:13\\
      *icosahedron                       &        120 &                95 &                8598 &               3813 &    0:00:00\\
      *pseudoicosahedron                 &         24 &              7701 &             182,670 &            147,775 &    0:00:00\\
      *dodecahedron                      &        120 &    12,775,757,027 &   1,533,079,037,570 &    125,333,463,449 &    4:58:15\\
      *pyritohedron                      &         24 & 1,363,918,758,719 &  32,734,029,351,118 & 13,786,801,148,394 &  381:32:27\\
      *$\santospointconfseventeen{0}$ \rlap{[HPC$^{192}$]} &        128 & 2,239,192,475,127 & 282,379,150,400,112 &156,459,603,003,248 &  385:32:29\\
      \bottomrule
    \end{tabular}
  }
  \caption[Computational results for triangulations]{Computational results for the enumeration of triangulations
    using 16 threads (numbers with a ``*'' are new)
  }
  \label{tab:results-triangulations}
\end{table}
\begin{table}[htbp]
  \centering{%
    \sffamily\footnotesize
    \begin{tabular}{lrrrr}
      \toprule
      $\Conf$
      & \multicolumn{4}{c}{CPU times [hh:mm:ss]}\\
      & \multicolumn{2}{c}{\texttt{mptopcom 1.4}}
      & \multicolumn{2}{c}{\texttt{TOPCOM 1.2.0b}}\\
      & subregular   & regular & all & regular\\
      \midrule
      $\hypercube{4}$         & 0:01:46  & 0:02:41 & 0:00:01 & 0:00:54\\
      $\simplexproduct{6}{2}$ & 0:09:25  & 0:13:10 & 0:01:46 & 0:05:36\\
      $\cyclic{12}{4}$        & 0:00:21  &       - & 0:00:01 &       -\\
      \bottomrule
    \end{tabular}
  }
  \caption[Comparison with \texttt{mptopcom} for triangulations]{Comparison of CPU times with the so far fasted software
    \texttt{mptopcom 1.4}
    in~\cite{JordanJoswigKastner_Parallelenumerationtriangulations_2018}
    on small examples using 16 threads (the triangulations output was
    activated in both and then piped to \texttt{/dev/null} for a fair
    comparison); for these examples, all triangulations are
    subregular; since \texttt{mptopcom 1.4} does not support
    regularity checks for cyclic polytopes, the regular-triangulations
    timing was skipped for $\cyclic{12}{4}$ }
  \label{tab:comparison-triangulations}
\end{table}
\begin{table}[htbp]
  \centering{%
    \sffamily\footnotesize
    \begin{tabular}{l@{}rrrrr}
      \toprule
      $\Conf$
      & \# sym's
      & \# reg.~triang's
      & \# reg.~triang's
      & CPU time
      & CPU time\\
      && up to symmetry
      & in total
      & all
      & regular\\
      \midrule
      $\hypercube{4}$                   &        384 &    235,277 &          87,959,448 &    0:00:01 &  0:00:54\\
      $\hypersimplex{7}{2}$             &       5040 & 30,485,496 &     153,209,697,210 &    0:47:15 &  8:42:33\\
      $\hypersimplex{6}{3}$             &       1440 & 42,489,025 &      61,035,863,100 &    0:27:08 & 18:10:37\\
      \bottomrule
    \end{tabular}
  }
  \caption[Comparison for triangulations with regularity
  checks]{Comparison of CPU times for the enumeration of all versus
    regular triangulations using 16 threads for selected examples}
  \label{tab:results-regular-triangulations}
\end{table}

Table~\ref{tab:results-triangulations} presents the results obtained
by applying $\symLSRSFeaswithData$ with lex-breaking and lex-pruning
to the enumeration of all triangulations without prescribed symmetries
of point configurations (see the end of
Section~\ref{sec:applications-preliminaries} for explanations
concerning the point configurations).
Table~\ref{tab:comparison-triangulations} shows for three selected
examples the CPU times compared to \texttt{mptopcom 1.4} (see
\cite{JordanJoswigKastner_Parallelenumerationtriangulations_2018,JoswigKastner_NewCountsNumber_2018}).
Note that, by design, in general \texttt{mptopcom} and \texttt{TOPCOM}
do \emph{not} compute the same thing: \texttt{mptopcom} computes
\emph{all subregular} triangulations, whereas \texttt{TOPCOM} computes
\emph{all} triangulations, for which there was \emph{no} competitive
software \emph{at all} so far.  In all three listed examples, though,
the numbers are known to coincide.  The hypercube $\hypercube{4}$ was
selected as an example with symmetry group of moderate order, the
product of simplices $\simplexproduct{6}{2}$ was chosen as an example
with a rather large symmetry group, and $\cyclic{12}{4}$ represents
cyclic polytopes, for which, first, the symmetry group is tiny,
second, triangulations have many simplices (which is a stress-test for
extension-based enumeration), and, finally, for which \texttt{mptopcom
  1.4} uses a specialized (faster) implementation.

It can be seen that \texttt{TOPCOM} is particularly fast for the
computation of all triangulations of $\hypercube{4}$, where exploring
the enumeration tree is the dominant operation.  For
$\simplexproduct{6}{2}$ the symmetry handling is dominant, where
\texttt{TOPCOM} is still several times faster, but not by the same
margin.  For the regularity checks, \texttt{cddlib} was used as an LP
solver \cite{Fukuda_Cddlibreferencemanual_2003}.  Since these
regularity checks (besides being time-consuming) are advantageous for
\texttt{mptopcom}'s flip-based enumeration and symmetry handling, the
speed-up is less pronounced for the enumeration of regular
triangulations.

Some numbers could be computed for the first time, to the best of my
knowledge.  The largest new examples are the number of triangulations
of the pyritohedron (largest number up to symmetry) and the number of
triangulations of~$\simplexproduct{5}{3}$ (largest total number).  On
high-performance computing resources (HPC$^{192}$), all triangulations
of the Santos-dim6-n17 configuration $\santospointconfseventeen{0}$
with disconnected flip-graph could be computed in around 16 days (a
substantial amount of time invested for checkpointing and restarting).
This configuration has both the largest number of triangulations up to
symmetry and in total compared to what has been enumerated up to now.
It is the first example with a non-connected flip-graph whose
triangulations could be enumerated.

There are also some new counts for cyclic polytopes.  However,
meanwhile the numbers for three-dimensional cyclic polytopes can be
computed much faster via the enumeration of persistent graphs
\cite{FroeseRenken_PersistentGraphsCyclic_2021}.

Table~\ref{tab:results-regular-triangulations} one can see how much
time the regularity check takes compared to the mere enumeration of
triangulations.  For these results, again \texttt{cddlib} was used as
an LP solver inside the regularity checks
\cite{Fukuda_Cddlibreferencemanual_2003}.  The numbers for the
hypersimplices confirm the recently computed numbers from
\cite{CasabellaJoswigKastner_SubdivisionsHypersimplicesView_2024}.

\begin{remark}
  For all examples (with the possible exception of
  $\santospointconfseventeen{0}$) computed in this paper, it is not necessary
  to use a high-performance computing device; all numbers could be
  computed even faster by the eight performance cores of an M1Max
  laptop (see Section \ref{sec:applications-environment} for details
  on the computational environment).
\end{remark}

\begin{table}[htbp]
  \centering{%
    \sffamily\footnotesize
    \begin{tabular}{llrrrrr}
      \toprule
      $\Conf$
      & restriction
      & \llap{\# feas.}
      & \# triang's
      & \# triang's
      & \# nodes\\
      && \llap{symm's} & up to symm. & in total &\\
      \toprule
      $\hypercube{4}$         & none                      &    384 &           247,451 &          92,487,256 &          3,446,659 \\
      *$\hypercube{4}$        & $\mathbb{Z}_2$-inv.       &    384 &               181 &              22,280 &             12,884 \\
      \midrule 
      *pyrit.                 & none                      &     24 & 1,363,918,758,719 &  32,734,029,351,118 & 13,786,801,148,394 \\
      *pyrit.                 & $\mathbb{Z}_2$-inv.       &     24 &         1,313,581 &          15,761,630 &         80,245,911 \\
      *pyrit.                 & $\mathbb{Z}_3$-inv.       &      6 &              7968 &              15,889 &            546,939 \\
      \midrule 
      $3\standardsimplex{3}$  & none                      &     24 &       925,148,763 &      22,201,684,367 &      7,154,329,211 \\
      *$3\standardsimplex{3}$ & $\mathbb{Z}_4$-inv.       &      8 &                98 &                 181 &               4795 \\
      *$3\standardsimplex{3}$ & $\symGroup{4}$-inv.       &     24 &                 3 &                   3 &                100 \\
      \midrule
      $4\standardsimplex{3}$  & none                      &     24 &                 ? &                   ? &                  ? \\
      *$4\standardsimplex{3}$ & $\mathbb{Z}_4$-inv.       &      8 &           836,982 &           1,670,895 &         73,894,796 \\
      *$4\standardsimplex{3}$ & $\symGroup{4}$-inv.       &     24 &                12 &                  12 &               1182 \\
      \midrule
      $3\standardsimplex{4}$  & none                      &    120 &                 ? &                   ? &                  ? \\
      *$3\standardsimplex{4}$ & $\mathbb{Z}_5$-inv.       &     20 &            43,882 &             175,441 &          6,071,031 \\
      *$3\standardsimplex{4}$ & $\symGroup{5}$-inv.       &    120 &                 1 &                   1 &                 56 \\
      \midrule 
      $\simplexproduct{4}{4}$ & none                      & 28,800 & ? & ? & ? \\
      *$\simplexproduct{4}{4}$& $\mathbb{Z}_5$-inv.       &    200 &               317 &                9630 &             61,039 \\
      *$\simplexproduct{4}{4}$& $\mathbb{Z}_2$-inv.       &    240 &        30,327,170 &       3,638,732,520 &      2,016,686,741 \\
      \midrule
      *$\santospointconfseventeen{0}$  & none                            &    128 & 2,239,192,475,127 & 282,379,150,400,112 &156,459,603,003,247 \\
      *$\santospointconfseventeen{0}$  & $\santossymgroupseventeen$-inv. &     64 &                 1 &                   2 &                761 \\
      \midrule
      *$\santospointconftwentysix$     & none                            &     48 &                 ? &                   ? &                  ? \\
      *$\santospointconftwentysix$     & $\santossymgrouptwentysix$-inv. &     48 &               102 &                 204 &             88,817 \\
      \midrule
      *$\santospointconftwentysixmod$  & none                            &     48 &                 ? &                   ? &                  ? \\
      *$\santospointconftwentysixmod$  & $\santossymgrouptwentysix$-inv. &     48 &                90 &                 180 &             89,808 \\
      \bottomrule
    \end{tabular}
  }
  \caption[Computational results for triangulations with prescribed
  symmetries]{Computational results for the enumeration of
    triangulations using 16 threads with and without prescribed
    symmetries (numbers with a ``*'' are new)}
  \label{tab:results-symmetric-triangulations}
\end{table}

Table~\ref{tab:results-symmetric-triangulations} compares the numbers
of nodes and triangulations for various point configurations with and
without prescribed symmetries.  Consider, for example, the dilated
tetrahedron $3\standardsimplex{3}$ with $\symGroup{4}$-symmetry when
the enumeration is restricted to $\symGroup{4}$-invariant
triangulations.  Prior to the new method in this paper, the only way
to deal with prescribed symmetries was to inspect all triangulations
and filter by symmetry in post-processing.  In other words, in order
to count all $\symGroup{4}$-invariant triangulations of
$3\standardsimplex{3}$ up to $\symGroup{4}$-feasible symmetries, it
was necessary to generate all triangulations up to symmetry at some
point in the process.  According to
Table~\ref{tab:results-triangulations}, for the enumeration method in
this paper (which is the fastest known so far) this takes
$7{,}154{,}329{,}212$ enumeration nodes.  Now, with the new method
based on $\symGroup{4}$-feasible simplices that intersect
$\symGroup{4}$-properly, computing the three $\symGroup{4}$-invariant
triangulations up to $\symGroup{4}$-feasible symmetries (in this case,
all $24$ elements of $\symGroup{4}$ are $\symGroup{4}$-feasible) takes
only $100$ enumeration nodes.  The enumeration effort is, therefore,
completely dominated by preprocessing.  While for the case of
$3\standardsimplex{3}$ the post-processing approach would still be
viable (though inefficient), things are different, e.g., for
$4\standardsimplex{3}$: an enumeration of all triangulations seems
currently out of reach.  Still, the new method can compute in seconds
the triangulations with full symmetry.  As an example for a less
restricting prescribed symmetry group, the $\mathbb{Z}_4$-invariant
triangulations (induced by cyclic permutations of coordinates) have
been computed as well in essentially a minute.

The smallest instance for products of simplices with unknown number of
all triangulations is $\simplexproduct{4}{4}$.  A several-months-long
partial enumeration with several interruptions with checkpointing on
up to 192 threads on HPC$^{192}$ (this could not finish because
checkpoints could not be read anymore due to out-of-memory failure)
showed that there are more than $440{,}841{,}041{,}303$ symmetry
classes of triangulations ($12{,}695{,}974{,}068{,}314{,}880$ in total
using $8{,}942{,}032{,}045{,}505$ enumeration nodes).  In contrast to
this, it can now be computed for the first time in less than a minute
that there are, e.g., $317$ triangulations with
$\mathbb{Z}_5$-symmetry (i.e., cyclic symmetry of order~$5$) up to the
$\mathbb{Z}_5$-feasible symmetries of order~$200$.  For the vertices
$v_i, v_j$ of $\standardsimplex{4}$, these symmetries are generated by
the permutation $(v_i, v_j) \mapsto (v_{i+1}, v_{j+1})$ for
$i, j = 0, 1, \dots, 4$ considered modulo~$5$.  This took no more than
$61{,}039$ enumeration nodes.  The total number of such
$\mathbb{Z}_5$-invariant triangulations is $9630$.  The results for
$\mathbb{Z}_2$-symmetries induced by
$(v_i, v_j) \mapsto (v_{j}, v_{i})$ in $\simplexproduct{4}{4}$: There
are $30{,}327{,}170$ $\mathbb{Z}_2$-invariant triangulations up to the
$\mathbb{Z}_2$-feasible symmetries of order~$240$.  The total number
of these is $3{,}638{,}732{,}520$.  This took $2{,}016{,}686{,}741$
enumeration nodes (which is perfectly tractable).

The smallest example of a point configuration (in terms of
cardinality) with a disconnected flip-graph is
$\santospointconfseventeen{0}$ with $17$ points
\cite{Santos_Geometricbistellarflips_2006}.  Santos constructed a
triangulation of it that is invariant with respect to the
group~$\santossymgroupseventeen$ (see
Section~\ref{sec:applications-preliminaries}).  With the methods from
this paper, all its triangulations could be enumerated in
$156{,}459{,}603{,}003{,}247$ enumeration nodes on HPC$^{192}$.  Since
its flip-graph is disconnected, a flip-based enumeration would not be
possible without knowing a triangulation in each component a-priori.
Moreover, it can now be shown computationally that Santos's
triangulation is the only $\santossymgroupseventeen$-invariant
triangulation up to $\santossymgroupseventeen$-feasible symmetries.
This can be computed now in less than a second in only $761$
enumeration nodes.  Another point configuration in dimension five was
presented by Santos in~\cite{Santos_Nonconnectedtoric_2005}.  Santos
constructed a triangulation of it that is invariant with respect to
the group~$\santossymgrouptwentysix$ (see
Section~\ref{sec:applications-preliminaries}); it was claimed that it
could not be flipped to a regular triangulation.  All its
triangulations could so far not be computed.  However, for the first
time, it could be computed in only $88{,}817$ enumeration nodes that
there are exactly $102$ $\santossymgrouptwentysix$-invariant
triangulations of $\santospointconftwentysix$ up to
$\santossymgrouptwentysix$-feasible symmetries.  For the slightly
modified version $\santospointconftwentysixmod$ of
$\santospointconftwentysix$ (see again
Section~\ref{sec:applications-preliminaries}) the same count yields,
with similar effort, only $90$ such triangulations.

\subsection{Enhancements}
\label{sec:triangs:enhancements}

For the enumeration of triangulations, extension-based algorithms can
be equipped with some addititional functionality that is not easily
incorporated in flip-based algorithms.  A large part of this is based
on the following observation.
\begin{observation}
  For any restriction on the triangulations to be enumerated that is a
  restriction on all simplices in the triangulation,
  $\symLSRSFeaswithData$ can be run without modifications except that
  \begin{itemize}
  \item the set of considered simplex indices has to be restricted to
    the indices of the feasible simplices;
  \item the set of considered symmetries has to be restricted to the set-wise
    stabilizer subgroup of the set of feasible simplices.\qed
  \end{itemize}
\end{observation}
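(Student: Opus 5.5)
The plan is to reduce the Observation to the correctness of $\symLSRSFeaswithData$ for a smaller ground set. That correctness rests on three things: $\downsetStyle{D}$ is a downset, the feasible subsets $\setsysStyle{F}$ form an antichain in it, and $\GrG$ lies in the automorphism group of $\downsetStyle{D}$ and preserves $\setsysStyle{F}$. Let $\simpSet'\subseteq\simpSet$ be the feasible simplices, meaning those allowed by the restriction. Index $\simpSet'$ by the order-preserving bijection $(\simpSet',\lexsmaller)\to(\indexSet{\abs{\simpSet'}},<)$, exactly as $\lexSimpIndex$ was defined for $\simpSet$. Since the restriction is a condition on every simplex, a triangulation satisfies it if and only if all of its simplices lie in $\simpSet'$. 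So the objects to be enumerated are exactly the triangulations indexed by subsets of $\simpSet'$.

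First I would check the combinatorial setup. Define $\downsetStyle{D}'$ as the set of pairwise properly intersecting subsets of $\simpSet'$. This is the clique downset of the compatibility graph induced on $\simpSet'$, so it is a downset. Define $\setsysStyle{F}'$ as the triangulations contained in $\simpSet'$. It is an antichain, because no triangulation contains another. Interior facets and the covering condition of Definition~\ref{def:triangulations} refer only to the chosen set of simplices. Hence $\isFeasible$ (through $\freefacets$) and the updates in Lemma~\ref{thm:triangulations:facts} remain correct unchanged, provided $\admsimps$ and $\covsimps$ are intersected with $\simpSet'$.

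Second, I would treat the symmetries. Let $\GrG'$ be the set-wise stabilizer of $\simpSet'$ in $\GrG$. By Lemma~\ref{thm:triangulation-symmetries}, combinatorial symmetries preserve signed circuits and therefore proper intersection. Elements of $\GrG'$ also map $\simpSet'$ onto itself. So $\GrG'$ acts on the relabelled index set, is a subgroup of $\autGroup(\downsetStyle{D}')$, and maps $\setsysStyle{F}'$ to itself. Lemma~\eqref{thm:lexmin-lemma} holds for every subgroup of $\symGroup{n}$, so canonical sets again have canonical lex-predecessors. The lex-min checks of Section~\ref{sec:new-checks-lexic} are generic in the group, so they apply to $\GrG'$ without change.

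Third, and this is the step I expect to need the most care, the pruning rules must stay sound after re-indexing. Lex-breaking (Theorem~\ref{thm:triangulations:lexBreakExpansion}) uses only the monotonicity $\min\intfacets(s')\ge\min\intfacets(s)$ for $s'>s$. This monotonicity is inherited by any sub-order of the lex order on simplices, because the facet index order is untouched. Lex-pruning and strong-pruning (Theorems~\ref{thm:triangulations:semiIsNotRightExt} and~\ref{thm:triangulations:semiIsNotRightExtStrong}) use only one fact: every simplex of a right-completion lies in the current admissibles. Here the admissibles are taken inside $\simpSet'$, and completions avoid $\simpSet\setminus\simpSet'$, so that fact still holds. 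With these checks in place, the enumeration output is exactly the $\GrG'$-orbits of triangulations satisfying the restriction.
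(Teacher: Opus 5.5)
Your first two steps are sound, and they are essentially the unwritten reasoning behind the paper's bare \qed.

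\begin{itemize}
\item Restricting to $\simpSet'$ yields the clique downset of the induced compatibility graph.
\item The triangulations inside $\simpSet'$ form an antichain.
\item Interior facets are defined by the cocircuits of $\Conf$, not by which simplices are allowed.
\item The set-wise stabilizer $\GrG'$ preserves proper intersection and interior facets, and it maps $\simpSet'$ onto itself. So the lex-min machinery applies to $\GrG'$ unchanged. Cite the definition of a combinatorial symmetry via signed (co)circuits for this; Lemma~\ref{thm:triangulation-symmetries} is about whole triangulations.
\item Strong pruning is also fine. Its proof only uses that a right-completion draws pairwise properly intersecting covering simplices from $\admissibles(\setStyle{T})$.
\end{itemize}

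The gap is in your third step, for the two lex rules. The monotonicity $\min\intfacets(s')\ge\min\intfacets(s)$ for $s'>s$ that you invoke is false. It is taken from the proof of Theorem~\ref{thm:triangulations:lexBreakExpansion}, but it does not hold. Take a convex hexagon whose vertices carry the labels $1,5,2,3,6,4$ in cyclic order, and write simplices and facets by their point labels. Then $235\lexsmaller 346$, yet $\intfacets(235)=\{35\}$ and $\intfacets(346)=\{34\}$ with $34\lexsmaller 35$. Saying the property is ``inherited by sub-orders'' does not help when it already fails on $\simpSet$.

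Lex-pruning depends on the same monotonicity, not only on completions drawing from the admissibles. The proof of Theorem~\ref{thm:triangulations:semiIsNotRightExt} identifies the lex-minimal facet coverable by an admissible simplex with a facet of the lex-minimal admissible simplex. The failure has real consequences. In the same hexagon, with no restriction and symmetries ignored, take $\setStyle{T}=\{134,135\}$.
\begin{itemize}
\item Then $\freefacets(\setStyle{T})=\{34,35\}$ and $\admissibles(\setStyle{T})=\{235,346\}$.
\item The rule prunes, because $34\lexsmaller 35$.
\item But $\setStyle{T}\cup\{235,346\}$ is a triangulation right-extending $\setStyle{T}$.
\end{itemize}
So your closing claim, that the output is exactly the set of $\GrG'$-orbits, does not follow from the argument as written.

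What is monotone is the lex-minimal facet of a simplex, whether boundary or interior. For $\rank$-subsets, $\setStyle{S}\lexsmaller\setStyle{S}'$ implies $\lexksubset{\setStyle{S}'}{\rank-1}\lexgreaterequal\lexksubset{\setStyle{S}}{\rank-1}$. Moreover, every simplex $\setStyle{S}\supseteq f$ satisfies $f\lexgreaterequal\lexksubset{\setStyle{S}}{\rank-1}$.

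This gives sound versions of both rules. Compare the minimal free interior facet, as a point set, with $\lexksubset{\cdot}{\rank-1}$ of the current expansion for lex-breaking, or of the lex-minimal admissible simplex for lex-pruning. This is exactly the form the paper uses in its lex-flag criterion.

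Both facts are statements about the lex order on all $\rank$-subsets. They therefore hold verbatim for $\simpSet'$ with the induced order, and that is the argument your third step needs.
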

This can be applied to full triangulations, i.e., triangulations that
use all the points (exclude any simplex with points beyond its vertex
set in its convex hull), unimodular triangulations (exclude any
simplex with non-minimal volume), triangulations requiring a given
face in each simplex (exclude any simplex not containing the face),
triangulations avoiding some face alltogether (exclude any simplex
containing the face), etc.  In particular, this allows to compute all
triangulations of the boundary of a point configuration, allthough
such triangulations (topological spheres) do not even belong to the
class of triangulations of point configurations (topological balls).
A triangulation where all simplices contain a given point is called a
\emph{conical triangulation} with the given point as its apex.  A
conical triangulation where the apex is a relative interior point and
all other used points are in the boundary is called
\emph{central}. The link of a central triangulation at its apex is
then a boundary triangulation.  Coning any boundary triangulation to a
relative interior point yields a central triangulation.  Thus, there
is a bijection between central triangulations and boundary
triangulations.  In particular, central triangulations do not depend
on the relative interior point that was used to construct them.
\begin{observation}
  \label{thm:triangs:boundary-triang}
  Let $\Conf$ be a point configuration with symmetry group~$\GrG$.
  Then, all boundary triangulations of $\Conf$ can be computed up to
  symmetry as follows:
  \begin{enumerate}[label=(\arabic*)]
  \item Remove all points in the relative interior of~$\Conf$ to
    obtain $\Conf'$ and let $\GrG'$ be the restriction of $\GrG$ to
    the remaining elements.
  \item Let
    $\vectorStyle{b} := \frac{1}{\abs{\Conf'}} \sum_{\vectorStyle{p}
      \in \Conf'} \vectorStyle{p}$ be the barycenter of~$\Conf'$ (or
    any other point in its relative interior) and add it with new
    label~$0$ to $\Conf'$ to obtain $\Conf^{\mathrm{central}}$.
  \item Let $\specialSimpSet{central}$ be the set of all simplices in
    $\Conf^{\mathrm{central}}$ that contain point~$0$, and let
    $\GrG^{\mathrm{central}}$ be the set-wise stabilizer of
    $\specialSimpSet{central}$ in~$\GrG'$.
  \item Apply $\symLSRSFeaswithData$ to $\Conf^{\mathrm{central}}$
    with simplex set~$\specialSimpSet{central}$ and
    symmetries~$\GrG^{\mathrm{central}}$ in order to compute all
    central triangulations of~$\Conf^{\mathrm{central}}$ up to
    symmetry.
  \item For each central triangulation of~$\Conf^{\mathrm{central}}$
    compute the link of~$0$ by deleting the point~$0$ in all its
    simplices to obtain the corresponding boundary triangulation.\qed
  \end{enumerate}
\end{observation}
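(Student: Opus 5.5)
The plan is to show that the five-step procedure is correct. This amounts to two claims. First, there is a symmetry-compatible bijection between boundary triangulations of $\Conf$ and central triangulations of $\Conf^{\mathrm{central}}$ whose simplices all lie in $\specialSimpSet{central}$. Second, running $\symLSRSFeaswithData$ with the restricted simplex set and the set-wise stabilizer enumerates exactly these central triangulations up to $\GrG^{\mathrm{central}}$.

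For the first claim, I will use the bijection between central and boundary triangulations recalled just before the observation. Coning a boundary triangulation of $\Conf'$ from the relative interior point $\vectorStyle{b}$ gives a triangulation of $\Conf^{\mathrm{central}}$ in which every simplex contains $0$. Conversely, taking the link of $0$ in such a triangulation recovers the boundary triangulation. Step (1) is harmless: a boundary triangulation uses no relative interior points, so deleting them changes nothing, and $\GrG$ preserves the relative interior, so it restricts to $\GrG'$. The one point needing care is that a triangulation of $\Conf^{\mathrm{central}}$ using only simplices through $0$ covers $\conv\Conf'$ by pyramids with apex $0$. Its link therefore covers the whole boundary, and proper intersection of the pyramids transfers to proper intersection of their bases. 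I would verify this directly via Definition~\ref{def:triangulations} or simply cite the cone/link correspondence.

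For the second claim, I apply the first observation of this subsection. Requiring $0$ in every simplex is a restriction on all simplices, so restricting the simplex indices to $\specialSimpSet{central}$ and the symmetries to the set-wise stabilizer $\GrG^{\mathrm{central}}$ makes $\symLSRSFeaswithData$ enumerate all central triangulations up to $\GrG^{\mathrm{central}}$. I then check that $\GrG^{\mathrm{central}}$ acts on boundary triangulations exactly as $\GrG'$ does, so that orbits correspond under the link map. Every $g\in\GrG'$ fixes the barycenter, hence can be extended by fixing label $0$. It therefore maps simplices through $0$ to simplices through $0$, so $\GrG^{\mathrm{central}}=\GrG'$ on the original labels. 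Cone and link both commute with such $g$, which gives the orbit correspondence.

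I expect the main obstacle to be the geometric step: checking that the combinatorial definition of a triangulation (covering plus proper intersection) holds for the cone exactly when it holds for the boundary triangulation, given that a boundary triangulation is not a triangulation of a point configuration. I would handle it by observing that interior facets of a cone simplex are precisely the cones over codimension-one faces of boundary simplices. The covering condition on the cone is then equivalent to each such ridge being shared within the boundary complex, which is the sphere condition.
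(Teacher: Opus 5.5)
Your skeleton matches the paper's: the cone/link bijection between central and boundary triangulations, combined with the preceding observation that a restriction on all simplices is handled by restricting the simplex set and passing to a set-wise stabilizer. The problem is the step ``every $g\in\GrG'$ fixes the barycenter, \dots, so $\GrG^{\mathrm{central}}=\GrG'$ on the original labels''. Both your orbit correspondence and your reason why $\GrG^{\mathrm{central}}$ may be used on $\Conf^{\mathrm{central}}$ rest on it, and it does not hold in the generality of the observation.

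There are two reasons. First, the statement explicitly allows $\vectorStyle{b}$ to be any point in the relative interior, and such a point is not fixed even by affine symmetries. Second, $\GrG$ here is the combinatorial symmetry group. Its elements are label permutations preserving signed (co)circuits and are not induced by any map of the ambient space, so ``fixes the barycenter'' has no content. Whether $\{0\}\cup F$ is a simplex of $\Conf^{\mathrm{central}}$ depends on whether $\vectorStyle{b}\in\aff(F)$, which the oriented matroid of $\Conf'$ does not determine. This is exactly why the paper takes a set-wise stabilizer instead of asserting equality.

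A concrete counterexample: let $\Conf$ be the vertex set of $[0,1]^3$ with its $48$ symmetries, and take $\vectorStyle{b}=(2/5,2/5,3/10)$. Of the $20$ planes spanned by cube vertices, only $x=y$ contains $\vectorStyle{b}$. So the non-simplices through $0$ are the sets $\{0\}\cup F$ with $F$ a $3$-subset of $\{(0,0,0),(1,1,0),(0,0,1),(1,1,1)\}$, and $\GrG^{\mathrm{central}}$ is the stabilizer of this rectangle, of order $8$. The $64$ boundary triangulations (one diagonal per square) form $7$ orbits under the full group, by a Burnside count. Under $\GrG^{\mathrm{central}}$ they form at least $64/8=8$ orbits. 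So the procedure returns more representatives than there are $\GrG'$-orbits.

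The correct conclusion is that the procedure lists boundary triangulations up to $\GrG^{\mathrm{central}}$. You must also justify that $\GrG^{\mathrm{central}}$ is admissible without using $g(\vectorStyle{b})=\vectorStyle{b}$. Do this through the link:
\begin{itemize}
\item For $g\in\GrG^{\mathrm{central}}$ you have $g(\{0\}\cup F)=\{0\}\cup g(F)$.
\item Combinatorial symmetries of $\Conf$ preserve the face lattice of $\conv\Conf$ and the oriented matroid of every face subconfiguration. Triangulations depend only on that data, so $g$ maps boundary triangulations to boundary triangulations.
\item Hence $g$ maps central triangulations to central triangulations. The stabilizer condition is needed only so that $g$ acts on the index set $\specialSimpSet{central}$.
\end{itemize}
The link map is then a $\GrG^{\mathrm{central}}$-equivariant bijection, which is what the observation asserts.

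In your geometric step, state explicitly that each base $F$ of a simplex $\{0\}\cup F$ in a central triangulation lies in a facet of $\conv\Conf'$. This holds because no other simplex through $0$ contains $F$, so $F$ cannot be an interior facet. Your identification of interior facets with cones over ridges presupposes it.
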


\begin{table}[htbp]
  \centering{%
    \sffamily\footnotesize
    \begin{tabular}{llrrrrr}
      \toprule
      $\Conf$
      & restriction
      & \llap{\# feas.}
      & \# triang's
      & \# triang's
      & \# nodes\\
      && \llap{symm's} & up to symm. & in total &\\
      \toprule
      $\hypercube{4}$         & none                      &    384 &           247,451 &          92,487,256 &          3,446,659 \\
      $\hypercube{4}$         & unimodular                &    384 &           159,037 &          59,546,240 &          2,375,773 \\
      *$\hypercube{4}$        & $\mathbb{Z}_2$-inv.       &    384 &               181 &              22,280 &             12,884 \\
      *$\hypercube{4}$        & $\mathbb{Z}_2$-inv./unim. &    384 &               154 &              19,520 &             11,414 \\
      \midrule 
      $3\standardsimplex{3}$  & none                      &     24 &       925,148,763 &      22,201,684,367 &      7,154,329,211 \\
      $3\standardsimplex{3}$  & full                      &     24 &        21,302,400 &         511,052,427 &        316,591,002 \\
      $3\standardsimplex{3}$  & unimodular                &     24 &        14,459,488 &         346,903,379 &        207,932,285 \\
      *$3\standardsimplex{3}$ & $\mathbb{Z}_4$-inv.       &      8 &                98 &                 181 &               4795 \\
      *$3\standardsimplex{3}$ & $\mathbb{Z}_4$-inv./full  &      8 &                36 &                  65 &               2365 \\
      *$3\standardsimplex{3}$ & $\mathbb{Z}_4$-inv./unim. &      8 &                18 &                  33 &               1762 \\
      *$3\standardsimplex{3}$ & $\symGroup{4}$-inv.       &     24 &                 3 &                   3 &                100 \\
      *$3\standardsimplex{3}$ & $\symGroup{4}$-inv./full  &     24 &                 1 &                   1 &                 74 \\
      *$3\standardsimplex{3}$ & $\symGroup{4}$-inv./unim. &     24 &                 0 &                   0 &                 29 \\
      \midrule
      $4\standardsimplex{3}$  & none                      &     24 &                 ? &                   ? &                  ? \\
      $4\standardsimplex{3}$  & full                      &     24 &                 ? &                   ? &                  ? \\
      $4\standardsimplex{3}$  & unimodular                &     24 &                 ? &                   ? &                  ? \\
      *$4\standardsimplex{3}$ & $\mathbb{Z}_4$-inv.       &      8 &           836,982 &           1,670,895 &         73,894,796 \\
      *$4\standardsimplex{3}$ & $\mathbb{Z}_4$-inv./full  &      8 &           108,103 &             215,479 &         11,807,847 \\
      *$4\standardsimplex{3}$ & $\mathbb{Z}_4$-inv./unim. &      8 &            79,147 &             157,724 &          8,862,008 \\
      *$4\standardsimplex{3}$ & $\symGroup{4}$-inv.       &     24 &                12 &                  12 &               1182 \\
      *$4\standardsimplex{3}$ & $\symGroup{4}$-inv./full  &     24 &                 5 &                   5 &                785 \\
      *$4\standardsimplex{3}$ & $\symGroup{4}$-inv./unim. &     24 &                 3 &                   3 &                543 \\
      \midrule
      $3\standardsimplex{4}$  & none                      &    120 &                 ? &                   ? &                  ? \\
      $3\standardsimplex{4}$  & full                      &    120 &                 ? &                   ? &                  ? \\
      $3\standardsimplex{4}$  & unimodular                &    120 &                 ? &                   ? &                  ? \\
      *$3\standardsimplex{4}$ & $\mathbb{Z}_5$-inv.       &     20 &            43,882 &             175,441 &          6,071,031 \\
      *$3\standardsimplex{4}$ & $\mathbb{Z}_5$-inv./full  &     20 &            15,841 &              63,306 &          2,436,943 \\
      *$3\standardsimplex{4}$ & $\mathbb{Z}_5$-inv./unim. &     20 &              7720 &              30,832 &          1,345,947 \\
      *$3\standardsimplex{4}$ & $\symGroup{5}$-inv.       &    120 &                 1 &                   1 &                 56 \\
      *$3\standardsimplex{4}$ & $\symGroup{5}$-inv./full  &    120 &                 0 &                   0 &                 45 \\
      *$3\standardsimplex{4}$ & $\symGroup{5}$-inv./unim. &    120 &                 0 &                   0 &                 45 \\
      \bottomrule
    \end{tabular}
  }
  \caption[Computational results for triangulations with
  restrictions]{Computational results for the enumeration of
    triangulations using 16 threads with and without further
    restrictions (numbers with a ``*'' are new)}
  \label{tab:results-restricted-triangulations}
\end{table}

Table~\ref{tab:results-restricted-triangulations} compares the numbers
of nodes and triangulations for various point configurations with and
without certain restrictions.  It can be seen that the restrictions
``full'' and ``unimodular'' further reduce the effort, though not as
much as prescribed symmetries.  With $79$ nodes the unique
$\symGroup{4}$-invariant triangulation using all the lattice points in
$3\standardsimplex{3}$ is found.  Finally, it takes only $29$
enumeration nodes to prove computationally that there is no unimodular
$\symGroup{4}$-invariant triangulation of~$3\standardsimplex{3}$.  For
$4\standardsimplex{3}$, the new method quickly finds the unimodular
$\symGroup{4}$-invariant triangulations.

\begin{table}[htbp]
  \centering{%
    \sffamily\footnotesize
    \begin{tabular}{llrrrrr}
      \toprule
      $\Conf$
      & restriction
      & \# feas.
      & \# triang's
      & \# triang's
      & \# nodes\\
      &
      & symm's
      & up to symm.
      & in total &\\
      \toprule
      $\rootpoly{2}$  & none                        &   12 &                 8 &                  32 &                100 \\
      $\rootpoly{2}$  & central                     &   12 &                 1 &                   1 &                 29  \\
      $\rootpoly{2}$  & central/$\mathbb{Z}_2$-inv. &   12 &                 1 &                   1 &                 26 \\
      \midrule
      $\rootpoly{3}$  & none                        &   48 &              1843 &              79,884 &             51,039 \\
      $\rootpoly{3}$  & central                     &   48 &                 7 &                  64 &                557 \\
      $\rootpoly{3}$  & central/$\mathbb{Z}_2$-inv. &   48 &                 2 &                   8 &                258 \\
      \midrule
      $\rootpoly{4}$  & none                        &  240 &    32,483,441,808 &   7,795,598,797,008 &  1,438,773,642,274 \\
      $\rootpoly{4}$  & central                     &  240 &            15,264 &           3,523,506 &          2,776,349 \\
      $\rootpoly{4}$  & central/$\mathbb{Z}_2$-inv. &  240 &                20 &                1782 &             12,950 \\
      \midrule
      $\rootpoly{5}$  & none                        & 1440 &                 ? &                   ? &                  ? \\
      $\rootpoly{5}$  & central                     & 1440 &                 ? &                   ? &                  ? \\
      *$\rootpoly{5}$ & central/$\mathbb{Z}_2$-inv. & 1440 &           112,234 &          79,216,008 &        183,816,883 \\
      \bottomrule
    \end{tabular}
  }
  \caption[Computational results for full centered root polytopes]{Computational results for the enumeration of
    all/central/$\mathbb{Z}_2$-invariant triangulations of full root
    polytopes using 16 threads (the number with a ``*'' is new, the
    numbers of $\mathbb{Z}_2$-invariant central triangulations up to
    $\rootpoly{4}$ have been computed in
    \cite{DelucchiKuehneMuehlherr_Combinatorialinvariantsfinite_2024}
    by a completely different method before and have been confirmed by
    this computation)}
  \label{tab:results-restricted-triangulations-rootpolytopes}
\end{table}

My original motivation to compute triangulations with restrictions
came from the $n$-dimensional full root polytopes~$\rootpoly{n}$ in
ambient $n+1$-space, where the centrally symmetric central
triangulations have a special meaning
\cite{DelucchiKuehneMuehlherr_Combinatorialinvariantsfinite_2024}.  Up
to now, prior to the result in this paper no numbers for
$\rootpoly{5}$ have been published, restricted or not.  \emph{Nota
  bene:} In
\cite{DelucchiKuehneMuehlherr_Combinatorialinvariantsfinite_2024} it
was already reported that $\rootpoly{5}$ has $25{,}224$ regular
central and centrally symmetric triangulations.  This was based on
preliminary results from this paper, though.

Table~\ref{tab:results-restricted-triangulations-rootpolytopes} shows
the all the results up to $n = 5$. Using the new method in this paper
the number $112{,}234$ of central and centrally symmetric
($\mathbb{Z}_2$-invariant) triangulations of~$\rootpoly{5}$ could be
computed up to $\mathbb{Z}_2$-feasible symmetries for the first time
(in about one hour).  Again, all symmetries of~$\rootpoly{n}$ are
$\mathbb{Z}_2$-feasible. Here, the post-processing approach would have
been way more time-consuming: A several-weeks-long partial enumeration
of the central triangulations of $\rootpoly{5}$ showed that there are
more than $1{,}799{,}917{,}616$ symmetry classes of central
triangulations ($2{,}591{,}706{,}000{,}744$ in total using
$1{,}130{,}442{,}682{,}392$ enumeration nodes).

Meanwhile, all $422{,}664{,}577{,}207$ reflection-symmetric unimodular
triangulations of the dilated standard two-simplex
$9\standardsimplex{2}$ with $55$ points in rank~$3$ have been
enumerated using the method from this paper -- a computation that was
way out of reach for any general-purpose enumeration method
before. This enumeration was motivated by a method for
``patchworking'' algebraic curves and was used to verify new
asymptotic bounds on symmetric unimodular triangulations
of~$k\standardsimplex{2}$ (see
\cite{FerryJoswigRambau_Countingsymmetricunimodular_2026} for
details).

Another enhancement stems from the combination of extension-based
enumeration and flip-graph exploration in one go.  For the following,
recall that a triangulation is \emph{subregular}
\cite{JordanJoswigKastner_Parallelenumerationtriangulations_2018}, if
it can be flipped to a regular triangulation by a some sequence of
GKZ-(lex-)increasing flips (\emph{upflips}).  Call it
\emph{non-subregular} otherwise.  Since deciding the existence of some
upflip-path to some regular triangulation is computationally
difficult, the subregularity-notion is refined as follows in order to
develop more tractable decision routines: A triangulation is
\emph{strongly subregular} if \emph{each} sequence of upflips reaches
the regular triangulations.  A triangulation is a
\emph{greedy-subregular triangulation (gsr-triangulation)} if the
sequence of unique upflips to the respective GKZ-lex-max adjacent
triangulations reaches the regular triangulations.  A triangulation is
a \emph{non-greedy-subregular triangulation (ngsr-triangulation)} if
this flip-sequence ends in a non-regular triangulation without
upflips.  Note that even in a regular triangulation there may exist
upflips to a non-subregular triangulation.

This paper restricts computational subregularity-tests to the
(n)gsr-property.  If a triangulation is a gsr-triangulation, then it
is subregular.  If a triangulation is an nsgr-triangulation then it is
not strongly subregular.  If a configuration has an ngsr-triangulation
then it has at least one non-subregular triangulation, because the
final triangulation in the greedy-upflip path starting at an
ngsr-triangulation is non-regular and has no upflip at all, whence it
is non-subregular.  Therefore, a configuration has non-subregular
triangulations if and only if it has ngsr-triangulations.  The same
conclusions can be derived from any upflip-path, e.g., the respective
first upflip found (which is faster, of course).  The extra cpu-time
for the greedy-GKZ-lex-path was invested in order to make the exact
counts in the computational results unique and reproducible.

Call a triangulation a \emph{regular-component triangulation
  (rc-triangulation)} if it can be flipped to a regular triangulation
by a sequence of arbitrary flips.  Call it a
\emph{non-regular-component triangulation (nrc-triangulation)}
otherwise.

For the following, to avoid duplicate work, some kind of clever
caching is mandatory, but not discussed here.  In order to count all
ngsr-triangulations, proceed as follows:
\begin{itemize}
\item Greedily upflip the (regular) placing triangulation to the
  (possibly locally) GKZ-maximal triangulation, the \emph{target
    triangulation}.
\item Enumerate all triangulations by extension.
\item If the target triangulation is regular\footnote{If all
    triangulations during the upflip-process are regular, then the
    target triangulation must be the \emph{globally} GKZ-maximal
    triangulation (which is regular), however, a GKZ-maximal neighbor
    may a-priori be non-regular, and regularity checks should be
    avoided during upflipping for performance reasons.}, then,
  greedily upflip each found non-regular \emph{seed triangulation}
  dropping all restrictions and ignoring symmetry until no upflip
  exists anymore.  If the resulting triangulation equals the target
  triangulation, then the non-regular seed triangulation is
  sub-regular.
\item For each seed triangulation that was not monotonically flipped
  to the target triangulation (because the target triangulation was
  non-regular or because the locally maximal triangulation reached was
  not the target triangulation), greedily upflip it one more time
  dropping all restrictions and ignoring symmetry until some regular
  triangulation is found.
\item Count all seed triangulations that can neither be upflipped
  greedily to the target triangulation nor to a regular triangulation
  this way.
\end{itemize}
If a triangulation is nrc, then it must be ngsr as well.  However, the
ngsr-check tests properties that are irrelevant for the nrc-property.
In order to check fast some tighter necessary conditions for an
nrc-triangulation, proceed as follows:
\begin{itemize}
\item Generate all non-regular triangulations by extension.
\item For each non-regular triangulation found, heuristically try to
  \begin{itemize}
  \item greedily upflip towards the respective target triangulation
    (regular or not);
  \item greedily downflip towards the respective target triangulation;
  \item greedlily upflip towards the respective target triangulation
    w.r.t.\ $\no$ random permutations of the $\no$~points;
  \item greedily upflip towards some regular triangulation;
  \item greedily downflip towards some regular triangulation;
  \item greedlily upflip towards some regular triangulation w.r.t.\
    $\no$ random permutations of the $\no$~points.
  \end{itemize}
  A non-regular triangulation is \emph{possibly non-regular component
    (pnrc)} if neither the target triangulation nor a regular
  triangulation has been found during these flipping heuristics.
\end{itemize}
Since regularity checks are expensive, the above heuristics tries to
find regularity-check-free certificates for ``rc-triangulation'' in
the first three checks.

In order to conclusively generate only nrc-triangulations, a full
breadth-first-search needs to be employed for each pnrc-triangulation.
Since one can expect to find many triangulations during this
exploration, regularity checks should be avoided alltogether this
time.  The idea is to proceed as follows:
\begin{itemize}
\item For each pnrc-triangulation, run two breadth-first-search
  explorations up to symmetry without any restrictions on
  triangulations and flips, in particular, without (the expensive)
  regularity checks.  One seeded at the triangulation in question, one
  seeded at a (regular) placing triangulation.
\item As soon as the seed of one exploration is found in the other up
  to symmetry, the flip-graph components coincide, and the
  triangulation in question is rc.
\item As soon as one exploration has finished and has not found the
  seed of the other up to symmetry, then the flip-graph components do
  not coincide, and the triangulation in question is nrc.
\end{itemize}
If there are reasons to believe that one component is much smaller
than the other,\footnote{For examples that are currently tractable,
  one can expect that the component of the pnrc-triangulation is
  smaller} explore only the allegedly smaller component.

\begin{remark}
  The selection and order of the steps in the above procedure may
  appear somewhat arbitrary.  The rationale behind the choices made in
  this paper is the following: for an rc-triangulation a flip-path to
  a regular triangulation shall be found fast; for an
  nrc-triangulation the conclusive flip-graph-component exploration
  shall finish fast.
\end{remark}

The new method has been applied to the following configurations:
$3\standardsimplex{3}$ (where it is known that all triangulations are
subregular from combining the results in
\cite{JordanJoswigKastner_Parallelenumerationtriangulations_2018} and
this paper), $4\standardsimplex{3}$, $3\standardsimplex{4}$, the
pyritohedron, $\simplexproduct{4}{4}$, $\santospointconfseventeen{0}$,
$\santospointconftwentysix$, and $\santospointconftwentysixmod$.  The
example $\simplexproduct{4}{4}$ is of special interest because of the
following: It is known that $\simplexproduct{k}{\ell}$ has a connected
flip-graph for all $k \le 3$
\cite{Liu_FlipConnectivityTriangulations_2020} and that
$\simplexproduct{4}{\ell}$ has a non-connected flip-graph for $\ell$
sufficiently large \cite{Liu_ZonotopeProductTwo_2018} (by a
non-constructive probabilistic proof for $\ell \approx 4 \cdot 10^4$).
The result does not exclude that $\simplexproduct{4}{4}$ has a
non-connected flip-graph.  In order to make computations tractable,
symmetries have been prescribed throughout.  Cyclic symmetries were
often chosen, since many famous non-regular triangulations exhibit
cyclic symmetries in one way or another.

\begin{table}[htbp]
  \centering{%
    \sffamily\footnotesize
    \begin{tabular}{llrrrrr}
      \toprule
      $\Conf$
      & restriction
      & \llap{\# feas.}
      & \# triang's
      & \# regularity
      & cpu time\\
      && \llap{symm's} & up to symm. & checks & [hh:mm:ss]\\
      \toprule
      *pyrit.                  & $\mathbb{Z}_2$-inv./non-regular      &  24 & 157,909 &  1,313,581 &  0:14:08 \\
      *pyrit.                  & $\mathbb{Z}_2$-inv./ngsr             &  24 &       0 &  1,471,490 &  0:24:15 \\
      *pyrit.                  & $\mathbb{Z}_2$-inv./pnrc             &  24 &       0 &  1,313,581 &  0:22:51 \\
      *pyrit.                  & $\mathbb{Z}_3$-inv./non-regular      &   6 &    2222 &       7968 &  0:00:05 \\
      *pyrit.                  & $\mathbb{Z}_3$-inv./ngsr             &   6 &       0 &     10,190 &  0:00:13 \\
      *pyrit.                  & $\mathbb{Z}_3$-inv./pnrc             &   6 &       0 &       7968 &  0:00:12 \\
      \midrule
      *$3\standardsimplex{3}$  & $\mathbb{Z}_4$-inv./non-regular      &   8 &      24 &         98 &  0:00:00 \\
      *$3\standardsimplex{3}$  & $\mathbb{Z}_4$-inv./ngsr             &   8 &       0 &        122 &  0:00:00 \\
      *$3\standardsimplex{3}$  & $\mathbb{Z}_4$-inv./pnrc             &   8 &       0 &         98 &  0:00:00 \\
      \midrule
      *$4\standardsimplex{3}$  & $\mathbb{Z}_4$-inv./non-regular      &   8 & 580,117 &    836,982 &  0:48:37 \\
      *$4\standardsimplex{3}$  & $\mathbb{Z}_4$-inv./ngsr             &   8 &     720 &  1,440,695 &  2:46:01 \\
      *$4\standardsimplex{3}$  & $\mathbb{Z}_4$-inv./pnrc             &   8 &       0 &    836,982 &  3:02:40 \\
      \midrule
      *$3\standardsimplex{4}$  & $\mathbb{Z}_5$-inv./non-regular      &  20 &  35,367 &     43,882 &  0:17:49 \\
      *$3\standardsimplex{4}$  & $\mathbb{Z}_5$-inv./ngsr             &  20 &       8 &     79,493 &  0:33:54 \\
      *$3\standardsimplex{4}$  & $\mathbb{Z}_5$-inv./pnrc             &  20 &       0 &     43,882 &  0:42:36 \\
      \midrule
      *$\simplexproduct{4}{4}$ & $\mathbb{Z}_5$-inv./non-regular      & 200 &     247 &        317 &  0:03:50 \\
      *$\simplexproduct{4}{4}$ & $\mathbb{Z}_5$-inv./ngsr             & 200 &       0 &        317 &  0:04:32 \\
      *$\simplexproduct{4}{4}$ & $\mathbb{Z}_5$-inv./pnrc             & 200 &       0 &        317 &  0:04:34 \\
      \midrule
      *$\santospointconfseventeen{0}$   & $\santossymgroupseventeen$-inv./non-regular    &  64 &       1 &          1 &  0:00:01 \\
      *$\santospointconfseventeen{0}$   & $\santossymgroupseventeen$-inv./ngsr           &  64 &       1 &         20 &  0:00:02 \\
      *$\santospointconfseventeen{0}$   & $\santossymgroupseventeen$-inv./pnrc           &  64 &       1 &        295 &  0:00:16 \\
      *$\santospointconfseventeen{0}$   & $\santossymgroupseventeen$-inv./nrc            &  64 &       1 &        332 & 10:19:35 \\
      \midrule
      *$\santospointconftwentysix$      & $\santossymgrouptwentysix$-inv./non-regular    &  48 &     102 &        102 &  0:05:34 \\
      *$\santospointconftwentysix$      & $\santossymgrouptwentysix$-inv./ngsr           &  48 &      41 &       3419 &  0:07:13 \\
      *$\santospointconftwentysix$      & $\santossymgrouptwentysix$-inv./pnrc           &  48 &       0 &        102 &  0:07:01 \\
      \midrule
      *$\santospointconftwentysixmod$   & $\santossymgrouptwentysix$-inv./non-regular    &  48 &      90 &         90 &  0:07:08 \\
      *$\santospointconftwentysixmod$   & $\santossymgrouptwentysix$-inv./ngsr           &  48 &      90 &       9400 &  0:10:20 \\
      *$\santospointconftwentysixmod$   & $\santossymgrouptwentysix$-inv./pnrc           &  48 &      90 &    291,059 &  1:47:45 \\
      \bottomrule
    \end{tabular}
  }
  \caption[Computational results on flip-graph connectivity of
  triangulations]{Computational results using 16 threads for
    triangulations with special symmetries that were checked for
    non-regularity, non-greedy-subregularity (``ngsr'' =
    non-greedy-subregular), and flip-graph connectivity to the
    component of the regular triangulations (``nrc'' =
    non-regular-component, ``pnrc'' = possibly non-regular-component,
    i.e., the heuristic could not flip to a regular triangulation;
    numbers with a ``*'' are new)}
  \label{tab:results-restricted-triangulations-nrc}
\end{table}

Table~\ref{tab:results-restricted-triangulations-nrc} show the results
with some details concerning the effort in terms of the number of
regularity checks (which constitute the dominating factor as soon as
they are applied).  Wherever the number of regularity checks for the
enumeration of pnrc triangulations did not exceed the number of
regularity checks for the enumeration of non-regular triangulations,
the heuristic could greedily upflip to the regular target
triangulation.  It can be seen that this was very often the case.  The
efficiency advantage of avoiding regularity checks as much as possible
is significant for larger instances.  For the first time, to the best
of my knowledge, non-subregular triangulations have been found by
identifying ngsr-triangulations, namely in $4\standardsimplex{3}$ and
in $3\standardsimplex{4}$.  On the other hand, all of the
ngsr-triangulations are rc (they can actually all be flipped to a
regular triangulation by only GKZ-increasing or only GKZ-decreasing
flips). Together this means: \texttt{mptopcom 1.4} on this input with
the given order of points (see the end of
Section~\ref{sec:applications-environment} on
page~\pageref{sec:applications-environment}) would inevitably miss out
on some rc-triangulations of $4\standardsimplex{3}$ and
$3\standardsimplex{4}$.  Note that without prescribed symmetries the
computational approach above would have been intractable for
$4\standardsimplex{3}$, $3\standardsimplex{4}$,
$\simplexproduct{4}{4}$, $\santospointconfseventeen{0}$,
$\santospointconftwentysix$, and $\santospointconftwentysixmod$ (given
today's computational power).  Recall that all
$\santossymgroupseventeen$-invariant nrc-triangulations of
$\santospointconfseventeen{0}$ could be enumerated up to
$\santossymgroupseventeen$-feasible symmetries.  Since there is only
one $\santossymgroupseventeen$-invariant triangulation up to symmetry,
only one triangulation had to be checked for flip-connectivity to a
regular triangulation.  It took less than 17 seconds to find out that
it is pnrc.  It took less than 11 hours (including the pnrc check) to
find out that it is nrc.\footnote{The regularity-check-free
  exploration with a regular target triangulation was several times
  faster than an exploration checking each found triangulation for
  regularity; the latter took several weeks to finish (with the same
  result).}  For $\santospointconftwentysix$, the new method could
confirm that all its $102$ $\santossymgrouptwentysix$-invariant
triangulations are non-regular and $41$ of them are ngsr.  However,
most surprisingly, in about seven minutes all of them could be flipped
to an rc-target triangulation by GKZ-increasing flips for some
permutation (found by choosing $26$ times a permutation uniformly at
random).

Meanwhile, the conflict between this result and the result by Santos
in~\cite{Santos_Nonconnectedtoric_2005} has been sorted out in
personal communication: In one spot, a lemma in
\cite{Santos_Nonconnectedtoric_2005} cannot be applied to
$\santospointconftwentysix$.  If certain $\sfrac{1}{2}$-coordinates
are changed to $\sfrac{1}{3}$ like in~$\santospointconftwentysixmod$,
then all the GKZ-increasing flip-paths to regular triangulations
disappear, and so does the flaw
in~\cite{Santos_Nonconnectedtoric_2005}.  A conclusive
breadth-first-search enumeration of the flip graph component did not
finish (even on HPC$^{24}$ with larger RAM) because the HPC-memory
limit of 2\,TB was hit.

The following prototypical theorem summarizes the findings that
demonstrate the potential of the method.  The result
on~$\santospointconfseventeen{0}$ confirms the result
in~\cite{Santos_Geometricbistellarflips_2006}, also described
in~\cite[Sec.~7.4.2]{DeLoeraRambauSantos_TriangulationsStructuresApplications_2010}.
The result on~$\santospointconftwentysix$ contradicts the result in
\cite{Santos_Nonconnectedtoric_2005}. And the result
on~$\santospointconftwentysixmod$ supports the fix, though not
conclusively.
\begin{theorem}
  There are $317$ $\mathbb{Z}_5$-invariant triangulations of
  $\simplexproduct{4}{4}$ up to $\mathbb{Z}_5$-feasible symmetries
  ($9630$ in total); $247$ of them are non-regular ($8260$ in
  total). All these triangulations are connected to the flip-graph
  component of the regular triangulations by greedy-GKZ-increasing
  flips.  In particular, they are all gsr, hence subregular.

  There are $7968$ $\mathbb{Z}_3$-invariant triangulations of the
  pyritohedron up to $\mathbb{Z}_3$-feasible symmetries ($15{,}889$ in
  total); $2222$ of them are non-regular ($4430$ in total). All these
  triangulations are connected to the flip-graph component of the
  regular triangulations by greedy-GKZ-increasing flips.  In
  particular, they are all gsr, hence subregular.
  
  There are $1{,}313{,}581$ $\mathbb{Z}_2$-invariant triangulations of
  the pyritohedron up to $\mathbb{Z}_2$-feasible symmetries
  ($15{,}761{,}630$ in total); $157{,}909$ of them are non-regular
  ($1{,}894{,}796$ in total). All these triangulations are connected
  to the flip-graph component of the regular triangulations by
  greedy-GKZ-increasing flips.  In particular, they are all gsr, hence
  subregular.
  
  There are $836{,}982$ $\mathbb{Z}_4$-invariant triangulations of
  $4\standardsimplex{3}$ up to $\mathbb{Z}_4$-feasible symmetry
  ($1{,}670{,}895$ in total); $580{,}117$ of them are non-regular
  ($1{,}159{,}626$ in total), and $720$ of them are
  non-greedy-subregular ($1440$ in total).  (In particular, there are
  non-subregular triangulations -- not necessarily
  $\mathbb{Z}_4$-invariant).  All these triangulations are connected
  to the flip-graph component of the regular triangulations (by
  greedy-GKZ-increasing flips w.r.t.\ randomly generated
  permutations).
  
  There are $43{,}882$ $\mathbb{Z}_5$-invariant triangulations of
  $3\standardsimplex{4}$ up to $\mathbb{Z}_5$-feasible symmetries
  ($175{,}441$ in total); $35{,}367$ of them are non-regular
  ($141{,}442$ in total), and $8$ of them are non-greedy-subregular
  ($32$ in total).  (In particular, there are non-subregular
  triangulations -- not necessarily $\mathbb{Z}_5$-invariant.)  All
  these triangulations are connected to the flip-graph component of
  the regular triangulations (by greedy-GKZ-increasing flips w.r.t.\
  randomly generated permutations).

  There is exactly one $\santossymgroupseventeen$-invariant
  triangulation of $\santospointconfseventeen{0}$ up to
  $\santossymgroupseventeen$-feasible symmetries (two in total, one of
  which is Santos's triangulation); it is non-regular and
  non-greedy-subregular.  (In particular, there are non-subregular
  triangulations -- not necessarily
  $\santossymgroupseventeen$-invariant.) The flip-graph component of
  it has $7{,}329{,}890$ triangulations up to symmetry
  ($883{,}875{,}076$ in total), that can all be reached from Santos's
  triangulation in at most $29$ flips.  All components together have
  $2{,}239{,}192{,}475{,}127$ triangulations up to symmetry
  ($282{,}379{,}150{,}400{,}112$ in total).  In particular, the flip
  graph is not connected.

  There are $102$ $\santossymgrouptwentysix$-invariant triangulations
  of $\santospointconftwentysix$ up to
  $\santossymgrouptwentysix$-feasible symmetries ($204$ in total, one
  of which is Santos's triangulation); all of them are non-regular,
  and $41$ of them are non-greedy-subregular ($82$ in total).  (In
  particular, there are non-subregular triangulations -- not
  necessarily $\santossymgrouptwentysix$-invariant.)  All these
  triangulations are connected to the flip-graph component of the
  regular triangulations (by greedy-GKZ-increasing flips w.r.t.\ some
  randomly generated permutations).

  There are $90$ $\santossymgrouptwentysix$-invariant triangulations
  of $\santospointconftwentysixmod$ up to
  $\santossymgrouptwentysix$-feasible symmetries ($180$ in total); all
  of them are non-regular and non-greedy-subregular.  (In particular,
  there are non-subregular triangulations -- not necessarily
  $\santossymgrouptwentysix$-invariant.)  None of them could be
  flipped to a regular triangulation by GKZ-increasing flips w.r.t.\
  randomly generated permutations (using $26$ random permutations for
  each triangulation).  In particular, they are all possibly
  non-regular-component.\qed
\end{theorem}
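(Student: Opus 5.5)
This theorem is a computational result, so the proof is a certified computation assembled from the correctness statements proved earlier. I would organize it in the same way for each configuration.

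First, the counts of $\groupStyle{H}$-invariant triangulations up to $\groupStyle{H}$-feasible symmetries come from running $\symLSRSFeaswithData$ on the downset induced by the $\groupStyle{H}$-compatibility graph. Correctness rests on three ingredients:
\begin{itemize}
\item Theorem~\ref{thm:symLsymSRS-main-triang}: every found subset with no free interior facets is an $\groupStyle{H}$-invariant triangulation.
\item Conversely, every $\groupStyle{H}$-invariant triangulation consists of pairwise $\groupStyle{H}$-properly intersecting $\groupStyle{H}$-feasible simplices, because its simplices are closed under $\groupStyle{H}$. It is therefore a clique of $\GG_{\groupStyle{H}}$.
\item Lemma~\ref{thm:lexmin-lemma}, together with the correctness of the lex-min check (Lemma~\ref{thm:char-lex-min-cet}), guarantees that every orbit is counted exactly once.
\end{itemize}
Soundness of the pruning also has to be verified. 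Lex-breaking and lex-pruning only discard subsets that are not right-extendable, by Theorems~\ref{thm:triangulations:lexBreakExpansion} and~\ref{thm:triangulations:semiIsNotRightExt}. The totals follow by summing the orbit sizes $\abs{\GrG}/\abs{\stabGroup{\GrG}{\{\setStyle{T}\}}}$ over the representatives.

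Second, the non-regular counts come from an exact LP regularity check (cddlib) on each representative. Regularity is invariant under the feasible symmetries, which are affine, so the check only needs to run once per orbit.

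Third, the flip statements. gsr and ngsr are established by explicitly computing the greedy GKZ-lex-max upflip path from each non-regular representative until it either reaches a regular triangulation or gets stuck. A path that reaches a regular triangulation certifies subregularity. A path that gets stuck ends in a non-regular triangulation without upflips, which is therefore non-subregular. For ``connected to the regular component'', it suffices to exhibit some flip path, possibly using a random point permutation, that ends in a regular triangulation. The pnrc statement for $\santospointconftwentysixmod$ is only a report that the heuristic failed, not a proof.

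For $\santospointconfseventeen{0}$ the claims need more. Uniqueness of the invariant triangulation, and the fact that Santos's triangulation is among the two in total, come from the symmetric enumeration. The nrc claim and the size of its flip component come from a complete breadth-first exploration up to symmetry from Santos's triangulation; this exploration must terminate without meeting a regular triangulation. Recording BFS depths gives the bound of $29$ flips. Disconnectedness then follows by comparing the component size $883{,}875{,}076$ with the full extension-based count $282{,}379{,}150{,}400{,}112$ obtained by the unrestricted enumeration.

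I expect the main obstacle to be the rigorous side of the nrc claim. Termination of the component BFS must be trustworthy, which requires a correct canonical-form test for flip neighbours up to symmetry and an exhaustive generation of all bistellar flips. I would justify this by appealing to the flip-neighbour characterization via circuits in \cite{DeLoeraRambauSantos_TriangulationsStructuresApplications_2010} and by cross-checking the component count against an independent run.
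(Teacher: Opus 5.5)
Your proposal is correct and follows essentially the same computational pipeline as the paper. That pipeline is: symmetric enumeration on the downset of the $\groupStyle{H}$-compatibility graph with lex-breaking and lex-pruning, one LP regularity check per representative, greedy GKZ-upflip paths for the gsr/ngsr classification, flip heuristics under random point orders for the regular-component claims, and, for $\santospointconfseventeen{0}$, a complete exploration of the flip-graph component together with the unrestricted count. The one difference is the nrc certificate: the paper explores the component of Santos's triangulation without any regularity checks and only verifies that the orbit of the regular placing triangulation is never met, which suffices because the regular triangulations form a connected subgraph of the flip graph; your criterion ``no regular triangulation is met'' is also valid, but it is the slower variant the paper also ran, with a regularity check on every representative, taking several weeks instead of under eleven hours.
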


To the best of my knowledge, a computation of the numbers with
prescribed symmetries has not even been considered with the methods
known prior to this paper, most likely for the following reason: For
all known flip-based algorithms prescribed symmetries have no chance
to reduce the effort of enumeration, since the flip-graph of these
triangulations is, in general, not connected, not even for the regular
triangulations: just consider $2\standardsimplex{2}$ and its two
symmetry classes of regular triangulations with 120-degree rotational
symmetry with one and four simplices, respectively.  Since their
cardinalities differ, they can not be equivalent w.r.t.{} to the
symmetries of $2\standardsimplex{2}$.  Moreover, they are not
connected by a flip.  This marks a systematic advantage of
extension-based enumeration algorithms like \symLSRSFeaswithData.

\begin{table}[htbp]
  \centering{%
    \sffamily\footnotesize
    
    \begin{tabular}{l@{\qquad}rrr@{\qquad\qquad}rrr}
      \toprule
      $\Conf$
      & \multicolumn{6}{c}{regularity checks~\ldots}\\
      & \multicolumn{3}{c}{\ldots~in triangulations only}
      & \multicolumn{3}{c}{\ldots~in all nodes}\\
      & \# nodes & \# reg.~checks & CPU time & \# nodes & \# reg.~checks & CPU time\\
      &&& [hh:mm:ss] &&& [hh:mm:ss]\\
      \midrule
      $\hypercube{4}$         &  3,446,659 &   247,451 & 0:00:54 & 3,382,448 & 1,791,034 & 0:04:59\\
      $(\hypercube{4})^*$     & 18,316,313 &    75,756 & 0:01:38 &   192,448 &    79,266 & 0:01:11\\
      \bottomrule
    \end{tabular}
  }
  \caption[Comparison of regularity-check strategies]{Regularity
    checks in each node versus regularity checks only for
    triangulations on the $4$-cube (235,277 regular out of 247,451
    triangulations) and its Gale dual (490 regular out of 75,756
    triangulations)}
  \label{tab:comparison-earlyregcheck}
\end{table}

Regularity checks can reduce the enumeration effort of extension-based
algorithms if regularity is checked for each partial triangulation.
Because regularity checks are quite expensive compared to the
enumeration operations, the benefits are outweighed by the effort for
the examples seen so far.  However, there are examples where the early
regularity checks pay off.  One such example is the enumeration of
regular triangulations of the (totally cyclic) vector configuration
corresponding to a Gale dual of the $4$-cube, as can be seen in
Table~\ref{tab:comparison-earlyregcheck}.  There, the number of
necessary regularity checks is only ever-so-slightly increased by
checking each node.  Since for this example many triangulations are
non-regular, the resulting reduced effort in the enumeration tree
leads to shorter CPU times.  Note that for these early regularity
checks there is a warm-start opportunity if the LPs are stored withing
the local auxiliary data of a node.  This would speed-up the
regularity checks in each node. Since \texttt{TOPCOM}'s default LP
solver from \texttt{cddlib} does not support warm-starts yet,
\texttt{TOPCOM} does not yet support warm-starts either.

In contrast to most of the other mentioned restrictions, requiring
regularity is known to reduce the effort for flip-based algorithms as
well, since the exploration of the flip-graph can be restricted to the
flip-graph of regular triangulations, which is connected, in contrast
to the flip-graph of all triangulations
\cite[Chapter~5]{DeLoeraRambauSantos_TriangulationsStructuresApplications_2010}.
The flip-graph of all regular conical (and, thus, central)
triangulations of a $\dimension$-dimensional point
configuration~$\Conf$ with $\no$ points is connected, too: if the apex
point is the first point, then the central triangulations form the
face of the secondary polytope induced by the supporting hyperplane
$\{z \in \mathbb{R}^{\no} : z_1 = \vol_{\dimension}(\conv \Conf)
\}$. Moreover, any face of the secondary polytope is a polytope
itself.  Thus, it has a connected edge graph, and each edge in this
graph corresponds to a flip
\cite[Chapter~5]{DeLoeraRambauSantos_TriangulationsStructuresApplications_2010}.

\begin{algorithm}[htbp]
  \TitleOfAlgo{\semiIsNotRightExtLexFlag{$\setStyle{T}', \downsetStyle{D},
      \setsysStyle{F}, \globalData, \localData, \localData'$}}
  
  \KwIn{A subset $\setStyle{T}' = \setStyle{T} \cup \{ s \}$ of
    simplex indices in~$\indexSet{\noOfSimplices}$ with
    $s \in \admissibles(\setStyle{T})$,
    $\setStyle{S} = \lexIndexSimp(s)$, the downset $\downsetStyle{D}$
    of subsets indexing partial triangulations, the set
    $\setsysStyle{F}$ of flag triangulations (given implicitly by
    $\isFeasible$), global data $\globalData$ encompassing the
    interior-facets table $\iftTable$ and the admissibles table
    $\admTable$ of~$\Conf$, local data $\localData$ of~$\setStyle{T}$
    encompassing the uncovered critical cliques $\uncovCliques$
    of~$\setStyle{T}$ and the admissibles $\admSet$ of~$\setStyle{T}$,
    local data $\localData'$ with the admissibles $\admSet'$
    of~$\setStyle{T}'$}
    
  \KwOut{$(\true, - )$ if $\setStyle{T}'$ is not right-extendable to a flag
    triangulation, $(\false, \uncovCliques')$ if $\setStyle{T}'$ may be
    right-extendable to a flag triangulation, where $\uncovCliques'$
    contains the uncovered critical cliques in~$\setStyle{T}'$}
  
  $\uncovCliques' \gets \emptyset$
  \tcc*{initialize}
  \For(\tcc*[f]{for all old uncovered critical cliques}){
    $\setStyle{C} \in \uncovCliques$}{
    \If(\tcc*[f]{critical clique is a face of~$\setStyle{S}$?}){
      $\setStyle{C} \subseteq \setStyle{S}$}{
      continue
      \tcc*{next clique}
    }
    \If(\tcc*[f]{new adm.~lex-too-large?}){
      $\setStyle{C} \lexsmaller \lexksubset{\lexIndexSimp\bigl(\min(\admSet')\bigr)}{\abs{\setStyle{C}}}$}{
      \Return (\true, -)
      \tcc*{no flag extension possible}
    }
    \Else{
      $\uncovCliques' \gets \uncovCliques' \cup \{\setStyle{C}\}$
      \tcc*{add $\setStyle{C}$ to new uncovered cliques}
    }
  }
  \For(\tcc*[f]{for all pairs of indices in~$\setStyle{T}'$}){
    $\{s_1, s_2\} \in \tbinom{\setStyle{T}'}{2}$}{
    $\setStyle{S}_1 = \lexIndexSimp(s_1$),
    $\setStyle{S}_2 = \lexIndexSimp(s_2)$
    \tcc*{generate simplices}
    $\setStyle{C}' = \bigl(\setStyle{S} \cap \setStyle{S}_1\bigr)
    \cup  \bigl(\setStyle{S} \cap \setStyle{S}_2\bigr)
    \cup  \bigl(\setStyle{S}_1 \cap \setStyle{S}_2\bigr)$
    \tcc*{build critical clique for $\setStyle{S}, \setStyle{S}_1, \setStyle{S}_2$}
    \If(\tcc*[f]{clique too large?}){
      $\abs{\setStyle{C}'} > \rank$
    }{
      \Return (\true, -)
      \tcc*{no flag extension possible}
    }
    \If(\tcc*[f]{critical clique is a face?}){
      $\setStyle{C}' \subseteq \setStyle{S}'$ with
      $\lexSimpIndex(\setStyle{S}') \in \setStyle{T}'$}{  
      continue
      \tcc*{next clique}
    }
    \If(\tcc*[f]{new adm.~lex-too-large?}){
      $\setStyle{C}' \lexsmaller \lexksubset{\lexIndexSimp\bigl(\min(\admSet')\bigr)}{\abs{\setStyle{C}'}}$}{
      \Return (\true, -)
      \tcc*{no flag extension possible}
    }

    \Else{
      $\uncovCliques' \gets \uncovCliques' \cup \{\setStyle{C}'\}$
      \tcc*{add $\setStyle{C}'$ to new uncovered cliques}
    }
  }
  \Return (\false, $\uncovCliques'$)
  \tcc*{flag extension might be possible}
  
  \caption[Lex-Flag-Pruning]{The lex-flag-pruning semi-check from
    \cite{LoeraFerroniMoralesRambau_Therearematroid_2026} whether a
    partial triangulation in rank~$\rank$ can certainly not be
    right-extended to a flag triangulation}
  \label{alg:semiIsNotRightExt_triangulations_flaglex}  
\end{algorithm}

A more complicated enhancement concerns \emph{flagness} of a
triangulation.  A
triangulation is \emph{flag} if it is the clique complex of its
edge-graph.  That is: Whenever all edges of a face are present, the
face itself is present in the triangulation.  This condition cannot be
guaranteed by restrictions on the set of admissible simplices alone.
However, the check can be integrated into the right-extension check
\semiIsNotRightExt.  To this end, the following characterization of
flagness from \cite{BetreZhangEdmond_PureSimplicialClique_2024} can be
used:
\begin{lemma}[Critical-Clique Condition~{\cite[Thm.~3.1]{BetreZhangEdmond_PureSimplicialClique_2024}}]
  A triangulation~$\setsysStyle{T}$ of a point configuration~$\Conf$
  is flag if and only if for all triples of simplices
  $\setStyle{S}_1, \setStyle{S}_2, \setStyle{S}_3 \in \setsysStyle{T}$
  there is a simplex $\setStyle{S} \in \setsysStyle{T}$ with
  \begin{equation}
    \setStyle{C}
    :=
    \bigl(\setStyle{S}_1 \cap \setStyle{S}_2\bigr)
    \cup
    \bigl(\setStyle{S}_2 \cap \setStyle{S}_3\bigr)
    \cup
    \bigl(\setStyle{S}_3 \cap \setStyle{S}_1\bigr)
    \subseteq
    \setStyle{S}.
  \end{equation}
  The set~$\setStyle{C}$ is called a \emph{critical clique}.\qed
\end{lemma}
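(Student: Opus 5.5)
The plan is to argue purely combinatorially. I will regard the triangulation $\setsysStyle{T}$ as the simplicial complex whose faces are all subsets of its maximal simplices. Its edge-graph has an edge $\{u,v\}$ exactly when $u$ and $v$ lie in a common simplex of~$\setsysStyle{T}$. Flagness then means that every clique of the edge-graph is a subset of some $\setStyle{S} \in \setsysStyle{T}$. I prove the two directions separately.

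For ``flag $\Rightarrow$ critical-clique condition'', fix $\setStyle{S}_1, \setStyle{S}_2, \setStyle{S}_3 \in \setsysStyle{T}$ and show that the critical clique $\setStyle{C}$ really is a clique of the edge-graph. Take $u, v \in \setStyle{C}$ with $u \in \setStyle{S}_i \cap \setStyle{S}_j$ and $v \in \setStyle{S}_k \cap \setStyle{S}_l$, where $\{i,j\}$ and $\{k,l\}$ are $2$-subsets of $\{1,2,3\}$. Any two $2$-subsets of a $3$-set intersect, by pigeonhole. Hence some index $t$ lies in both, so $u, v \in \setStyle{S}_t$ and $\{u,v\}$ is an edge. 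Flagness then makes $\setStyle{C}$ a face, i.e., $\setStyle{C} \subseteq \setStyle{S}$ for some $\setStyle{S} \in \setsysStyle{T}$.

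For the converse, assume the critical-clique condition and show by induction on $\abs{K}$ that every clique $K$ of the edge-graph lies in some simplex of~$\setsysStyle{T}$.
\begin{itemize}
\item For $\abs{K} \le 2$ this holds by the definition of the edge-graph (vertices and edges are faces by construction).
\item For $\abs{K} \ge 3$, choose three distinct vertices $a, b, c \in K$. By induction the smaller cliques $K \setminus \{a\}$, $K \setminus \{b\}$ and $K \setminus \{c\}$ lie in simplices $\setStyle{S}_a, \setStyle{S}_b, \setStyle{S}_c \in \setsysStyle{T}$, respectively.
\item Then $\setStyle{S}_b \cap \setStyle{S}_c \supseteq K \setminus \{b,c\}$, and cyclically for the other two pairs. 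Every $x \in K$ is missing from at most one of the three sets $K \setminus \{b,c\}$, $K \setminus \{a,c\}$, $K \setminus \{a,b\}$: if $x \notin \{a,b\}$ it lies in $K \setminus \{a,b\}$, and if $x = a$ it lies in $K \setminus \{b,c\}$. Hence $K$ is contained in the critical clique $\setStyle{C}$ of $\setStyle{S}_a, \setStyle{S}_b, \setStyle{S}_c$.
\item The condition yields $\setStyle{S} \in \setsysStyle{T}$ with $K \subseteq \setStyle{C} \subseteq \setStyle{S}$, which completes the induction.
\end{itemize}

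The main point needing care is bookkeeping rather than depth. The three simplices in the hypothesis need not be distinct, which is harmless. More importantly, ``face of the triangulation'' must be identified with ``subset of a maximal simplex'' in the label-set representation used for $\setsysStyle{T}$. With that identification fixed, the pigeonhole step in the first direction and the covering step $K \subseteq \setStyle{C}$ in the second carry the whole argument. No geometric input from $\Conf$ is needed beyond $\setsysStyle{T}$ being a simplicial complex.
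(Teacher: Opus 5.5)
Your proof is correct, and it does more than the paper does here. The paper gives no argument for this lemma: it imports the statement from Theorem~3.1 of Betre, Zhang, and Edmond and closes it with \qed. Your proposal therefore supplies a self-contained proof where the paper relies on the citation.

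Your route makes clear that the equivalence is purely combinatorial. It holds for any finite simplicial complex presented by its facets, and it uses neither purity nor any geometry of $\Conf$. It also tolerates repeated simplices in the triple. The forward direction is just the fact that two $2$-subsets of a $3$-set meet. The backward direction amounts to the classical statement that a complex is flag exactly when all its minimal non-faces have two elements. A minimal non-face $N$ with distinct elements $a,b,c$ would be contained in the critical clique of facets containing $N\setminus\{a\}$, $N\setminus\{b\}$, and $N\setminus\{c\}$, which is your induction step in contrapositive form. The citation buys brevity; your argument buys transparency and generality.

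Two minor bookkeeping points need fixing.
\begin{itemize}
\item The sentence claiming that every $x\in K$ is ``missing from at most one'' of $K\setminus\{b,c\}$, $K\setminus\{a,c\}$, $K\setminus\{a,b\}$ is false as stated: $x=a$ is missing from two of them. What you need, and what your case check actually proves, is that $x$ lies in at least one of them. The case $x=b$, which lies in $K\setminus\{a,c\}$, should be added alongside $x=a$.
\item Your base cases implicitly use two conventions. First, the edge-graph's vertex set consists only of the labels actually used by $\setsysStyle{T}$. Second, $\setsysStyle{T}\neq\emptyset$, which is needed for $K=\emptyset$. Both hold for the paper's notion of triangulation, but they should be stated.
\end{itemize}
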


\begin{definition}[Uncovered Critical Cliques]  
  For a partial triangulation~$\setsysStyle{T}$ of a point
  configuration~$\Conf$ a critical clique is \emph{uncovered} if it is
  not a subset of any simplex~$\setStyle{S} \in \setsysStyle{T}$; it
  is \emph{covered} otherwise.  The set
  $\setsysStyle{U}(\setsysStyle{T})$ is the set of uncovered critical
  cliques in~$\setsysStyle{T}$.
\end{definition}

Thus, a triangulation is flag if and only if all its critical cliques
are covered.  The motivation to extend this to partial triangulations
is the following necessary condition for right-extendability to a flag
triangulation.  This was first noticed in a slightly different
phrasing in~\cite{LoeraFerroniMoralesRambau_Therearematroid_2026}.
Recall that $\lexksubset{S}{k}$ the unique $k$-element lex-subset
of~$\setStyle{S}$.
\begin{lemma}[{\cite[App.~A]{LoeraFerroniMoralesRambau_Therearematroid_2026}}]
  \label{thm:lex-flag-pruning}
  Let $\setStyle{T}$ index a partial triangulation $\setsysStyle{T}$
  of a point configuration~$\Conf$.  If there is a right-completion
  of~$\setStyle{T}$ indexing a flag triangulation of~$\Conf$, then the
  following holds for all uncovered critical cliques $\setStyle{C}$
  in~$\setsysStyle{T}$:
  \begin{equation}
    \setStyle{C}
    \lexgreaterequal
    \lexksubset{\lexIndexSimp\Bigl(\min\bigl(\admissibles(\setStyle{T})\bigr)\Bigr)}{\abs{\setStyle{C}}}
  \end{equation}
\end{lemma}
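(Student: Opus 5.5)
We argue by contraposition, in the same spirit as the proof of Theorem~\ref{thm:triangulations:semiIsNotRightExt}. Let $\setStyle{T}'$ be a right-completion of~$\setStyle{T}$ indexing a flag triangulation $\setsysStyle{T}'$, and let $\setStyle{C}$ be an uncovered critical clique of~$\setsysStyle{T}$, say $\setStyle{C} = (\setStyle{S}_1 \cap \setStyle{S}_2) \cup (\setStyle{S}_2 \cap \setStyle{S}_3) \cup (\setStyle{S}_3 \cap \setStyle{S}_1)$ with $\setStyle{S}_1, \setStyle{S}_2, \setStyle{S}_3 \in \setsysStyle{T}$. Since $\setsysStyle{T} \subseteq \setsysStyle{T}'$, the same three simplices lie in $\setsysStyle{T}'$. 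The Critical-Clique Condition therefore yields a simplex $\setStyle{S} \in \setsysStyle{T}'$ with $\setStyle{C} \subseteq \setStyle{S}$. Because $\setStyle{C}$ is uncovered in~$\setsysStyle{T}$, we have $\setStyle{S} \notin \setsysStyle{T}$. Hence $\lexSimpIndex(\setStyle{S}) \in \setStyle{T}' \setminus \setStyle{T}$.

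The new indices of a right-completion lie in $\admissibles(\setStyle{T})$. They are larger than every index of~$\setStyle{T}$, since $\setStyle{T}$ is a lex-subset of~$\setStyle{T}'$, and they index simplices intersecting every simplex of $\setsysStyle{T}$ properly. So $s := \lexSimpIndex(\setStyle{S}) \ge s_0 := \min\bigl(\admissibles(\setStyle{T})\bigr)$, which gives $\setStyle{S} \lexgreaterequal \setStyle{S}_0 := \lexIndexSimp(s_0)$ in the subset-lexicographic order on $\rank$-subsets.

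Next we transfer this to the $k$-subsets with $k = \abs{\setStyle{C}}$, which are the objects the statement compares. This is the main step: we need that for any $k$-subset $\setStyle{C} \subseteq \setStyle{S}$ one has $\setStyle{C} \lexgreaterequal \lexksubset{\setStyle{S}}{k}$, and that $\setStyle{S} \lexgreaterequal \setStyle{S}_0$ implies $\lexksubset{\setStyle{S}}{k} \lexgreaterequal \lexksubset{\setStyle{S}_0}{k}$.

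For the first claim, compare the sorted elements. If $\setStyle{C} = \{c_1 < \dots < c_k\} \subseteq \setStyle{S} = \{t_1 < \dots < t_\rank\}$, then $c_j \ge t_j$ for all $j$. Hence the first position where $\setStyle{C}$ and $\lexksubset{\setStyle{S}}{k} = \{t_1,\dots,t_k\}$ differ has $c_j > t_j$. By the recursive definition of $\lexsmaller$ (compare minima, then recurse), this gives $\lexksubset{\setStyle{S}}{k} \lexsmaller \setStyle{C}$, or the two sets are equal.

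For the second claim, the recursive definition again does the work. Two equal-size sets are compared by their sorted sequences elementwise. If the first difference between $\setStyle{S}$ and $\setStyle{S}_0$ occurs at position $j \le k$, the truncations differ at the same position in the same direction. If $j > k$, the truncations are equal. Either way monotonicity holds. The only care needed is to use the recursive definition and not the symmetric-difference characterization, since the latter is phrased only for equal cardinalities. Here all compared sets do have equal cardinality, so no issue arises.

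Chaining the two claims gives $\setStyle{C} \lexgreaterequal \lexksubset{\setStyle{S}}{k} \lexgreaterequal \lexksubset{\setStyle{S}_0}{k}$, which is the assertion. We must also treat the degenerate case $\admissibles(\setStyle{T}) = \emptyset$. Then no new simplex exists, so an uncovered critical clique cannot become covered, and the hypothesis fails unless no uncovered clique exists. In that case the statement holds vacuously, and this corresponds to the convention that the minimum of the empty set is undefined or handled separately.
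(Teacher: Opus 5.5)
Your proof is correct and takes essentially the same route as the paper. The uncovered clique $\setStyle{C}$ must be covered in the flag right-completion by a new, hence admissible, simplex, so $\setStyle{C}$ is lex-at-least the lex-minimal $\abs{\setStyle{C}}$-subset of the lex-minimal admissible simplex. The paper leaves implicit what you spell out: the two facts about the subset-lexicographic order (that $\setStyle{C} \subseteq \setStyle{S}$ forces $\setStyle{C}$ to be lex-at-least $\lexksubset{\setStyle{S}}{k}$, and that truncation to the first $k$ elements is monotone), and the degenerate case of an empty admissible set.
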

\begin{proof}
  Any right-completion of~$\setStyle{T}$ indexing a flag triangulation
  has all its critical cliques covered.  Thus, each uncovered critical
  clique~$\setStyle{C}$ in $\setsysStyle{U}(\setsysStyle{T})$ is a
  $\abs{\setStyle{C}}$-element subset of an admissible
  simplex~$\setStyle{S}$ with
  $\lexSimpIndex(S) \in \admissibles(\setStyle{T})$.  As such, it must
  be be lex-greater than or equal to the lex-minimal
  $\abs{\setStyle{C}}$-element subset of the lex-minimal admissible
  simplex.  The assertion is exactly this in formula.
\end{proof}
Note that, since in this particular setup the critical cliques have
not been mapped to the integers like the simplices and their facets
(because of their large variety of cardinalities in general), one
needs to evaluate the lexicographic order of subsets explicitly to
make use of this criterion.

Call the application of Lemma~\ref{thm:lex-flag-pruning}
\emph{lex-flag-pruning}.
Algorithm~\ref{alg:semiIsNotRightExt_triangulations_flaglex} shows one
possible implementation that can be used in conjunction with
lex-pruning.  The algorithm then correctly computes for a partial
triangulation
$\setsysStyle{T}' = \setsysStyle{T} \cup \{ \setStyle{S} \}$ its set of
uncovered critical cliques $\uncovCliques'$ or detects that
$\setsysStyle{T}'$ cannot be right-completed to a flag triangulation.

With this method, all flag triangulations of, e.g., the $4$-cube
and~$\simplexproduct{6}{2}$ could be computed using substantially
fewer nodes than without the flag-restriction.  However, for the
$4$-cube the CPU time was substantially larger, whereas for
$\simplexproduct{6}{2}$ the CPU time was substantially smaller.  In
\cite{LoeraFerroniMoralesRambau_Therearematroid_2026} an attempt was
made to enumerate all unimodular flag triangulations of the Fano
matroid base polytope in order to attack the Herzog-Hibi question
in~\cite[p.~242 Question~a)]{HerzogHibi_DiscretePolymatroids_2002} for
this instance.  The computation on HPC$^{192}$ has meanwhile found
$73{,}965$ such triangulations of the Fano matroid base polytope, all
non-regular, but did not finish so far.  The conclusive result in
\cite{LoeraFerroniMoralesRambau_Therearematroid_2026} that there is no
regular unimodular flag triangulation of the Fano matroid base
polytope was obtained by other means, namely a SAT representation of
the question.

Finally, one can use the methods in this paper to design optimization
algorithms as follows.  Any variant of algorithm \symLSRS\ -- as any
enumeration algorithm -- can be turned into a branch-and-bound
optimization algorithm by specifying an objective function and a
dual-bound procedure.  This was used to compute the minimal number of
simplices in a triangulation for some examples.  Details are omitted,
since the method is straight-forward. Example results are: a minimal
triangulation of the product of a square and a triangle has 10
simplices (a result originally proved in
\cite{SeacrestSu_LowerBoundTechnique_2018} with quite some effort,
since in that paper more general dissections are allowed), which took
less than a tenth of a second; a minimal triangulation of the $4$-cube
has 16 simplices, which took less than half a second; a minimal
triangulation of the regular dodecahedron has 23 simplices, which took
less than 10 minutes; a minimal triangulation of the pyritohedron has
23 simplices, too, which took less than 4 hours.  And a minimal
unrestricted triangulation of the full, centered root polytope in
ambient $5$-space has 46 simplices, which took less than 11 hours,
whereas the enumeration of all its triangulations took around 95
hours.  A minimal centrally symmetric central triangulation of it has
70 simplices, which took half a second. All computation times for
optimization are significantly shorter than those for the complete
enumeration.  In branch-and-bound, the exploitation of symmetries is
especially vital, since none of the equivalent branches leading to an
optimal leaf can ever be pruned.

\subsection{Notes on the Use of GKZ Vectors}
\label{sec:triangs:notes}

The switch-table method to compute canonical representatives from
\cite{JordanJoswigKastner_Parallelenumerationtriangulations_2018} is
particularly efficient if it can be based on a representation of
triangulations by the so-called \emph{GKZ-vector}
(Gelfand-Kapranov-Zelevinsky vector, see
\cite[Chapter~5]{DeLoeraRambauSantos_TriangulationsStructuresApplications_2010}).
The GKZ-vector is a vector with a component for each point in the
configuration.  Such vectors can be compared lexicographically like
subsets of simplex indices.  Subsets of simplices (as in this paper)
can be interpreted as characteristic vectors with as many components
as there are simplices.  While each triangulation can be represented
by its characteristic vector, each regular triangulation can be
uniquely represented by its GKZ-vector.  Since, in general, the number
of points is much smaller than the number of simplices spanned by the
points, the representation by GKZ-vectors is significantly more
compact than the representation as characteristic vectors.  Moreover,
when using switch tables the lexicographic comparison of two
GKZ-vectors can be much faster than the lexicographic comparison of
two characteristic vectors.  Thus, one may ask whether the
lexmin-check in $\symLSRSFeaswithData$ could be accelerated by
utilizing GKZ-vectors, at least for the enumeration of regular
triangulations.  The answer is no, at least not in any
straight-forward way.

The reason is the basic justification of $\symLSRS$: a suitable
equivalent of Lemma~\ref{thm:lexmin-lemma} does not hold in any
straight-forward variant for canonicals based on the order of
GKZ-vectors.  Here is why.  The inspection of the subset poset of
partial triangulations of a square with the points ordered as
\begin{equation}
  \Conf = 
  \begin{pmatrix}
    0 & 1 & 0 & 1\\
    0 & 0 & 1 & 1\\
    1 & 1 & 1 & 1
  \end{pmatrix}
\end{equation}
shows that picking the lexmax GKZ-vector as the canonical
representative would result in $\{ 124, 134 \}$ with GKZ-vector
$(2,1,1,2)$ being canonical.  Note that the symmetries of the square
act transitively on points, simplices, and triangulations.  However,
neither the subset $\{124\}$ with GKZ-vector $(1,1,0,1)$ nor the
subset $\{134\}$ with GKZ-vector $(1,0,1,1)$ would be canonical, since
only $\{1,2,3\}$ with GKZ-vector $(1,1,1,0)$ is canonical.  Thus,
there is no way to sort the simplices such that an equivalent of
Lemma~\ref{thm:lexmin-lemma} holds with a canonical defined by the
lexmax GKZ-vector.

Picking instead the lexmin GKZ-vector as the canonical representative
and sorting the simplices by increasing GKZ-vector is, therefore, the
only way to make an equivalent of Lemma~\ref{thm:lexmin-lemma} true
for this point configuration.  Now consider the following six-point
configuration whose symmetries are horizontal and vertical reflection
as well as rotation by 180 degrees.
\begin{equation}
  \Conf' = 
  \begin{pmatrix}
    0 & 1 & 1 & 1 & 0 & 0\\
    0 & 0 & 1 & 2 & 2 & 1\\
    1 & 1 & 1 & 1 & 1 & 1
  \end{pmatrix}    
\end{equation}
For the partial triangulation
$\setsysStyle{T} = \{ 356, 345, 236 \}$ the GKZ-vector is
$(0,1,3,1,2,2)$, which is lexmin (= canonical) in its orbit.
Removing the simplex $236$ with the lexmax GKZ-vector leads to the
partial triangulation $\{ 356, 345 \}$ with GKZ-vector
$(0,0,2,1,2,1)$, which is \emph{not} lexmin in its orbit, since by
horizontal reflection the partial triangulation $\{ 456, 346 \}$ is
equivalent and has the lexsmaller GKZ-vector $(0,0,1,2,1,2)$.

Thus, no straight-forward equivalent of Lemma~\ref{thm:lexmin-lemma}
holds true, and, hence, there is no correct $\symLSRS$ available in
general based on GKZ-vectors.

\section{Conclusions}
\label{sec:conclusions}

The new general enumeration algorithm symmetric lexicographic
symmetric-subset reverse search has been introduced to enumerate
maximal, co-minimal, and feasible symmetric subsets in a downset of
subsets of a finite set up to symmetry.  New versions of
lex-minimality checks and new pruning methods for partial cocircuits
and partial triangulations have been presented and analyzed.  The new
methods allowed for the computation of many new cardinalities, among
them the number, up to symmetry, of cocircuits of the $9$-cube, the
number of circuits of the~$8$-cube, and the number of all
triangulations of the pyritohedron, the dodecahedron,
$\simplexproduct{5}{3}$, and~$\santospointconfseventeen{0}$.
Moreover, it could be computationally confirmed for the first time
that $\santospointconfseventeen{0}$ has a disconnected flip-graph with
the Santos triangulation residing in a purely non-regular flip-graph
component.  In contrast to this, the Santos triangulation
of~$\santospointconftwentysix$ is contained in the flip-graph
component of the regular triangulations.  A small modification
$\santospointconftwentysixmod$ of~$\santospointconftwentysix$ fixes
Santos's construction, and so far no flip-path from the Santos
triangulation of~$\santospointconftwentysixmod$ to a regular
triangulation could be found.

The algorithm \symLSRS can be enhanced.  First, further types of
restrictions imposed on the objects to be enumerated (like
unimodularity for triangulations) can reduce the enumeration effort by
directly invalidating branches in the enumeration tree.  This was
shown for triangulations.  However, the largest efficiency boost stems
from prescribed symmetries.  One major application concerns full root
polytopes.  For ambient dimension six, all central and centrally
symmetric triangulations could be computed for the first time.

It was observed that an objective function and a dual-bound procedure
together with \symLSRS generate a straight-forward optimization
algorithm.  This was applied to find triangulations with a minimal
number of simplices for the $4$-cube and the regular dodecahedron in
much less time than the enumeration takes.  The efficiency of such an
optimization algorithm will depend on the quality of the dual-bound
procedure.  It would be interesting to see where such an approach
would be competitive to other special optimization algorithms (like
the universal-polytope approach
\cite{LoeraHostenSantosSturmfels_polytopealltriangulations_1996} for
the minimal triangulation).

The new methods should have many more applications beyond the three
applications in this paper like the enumeration of maximal cliques in
a graph up to symmetry.

\bibliographystyle{plainurl}
\bibliography{SymLexSubsetRS}

\section*{Statements and Declarations}

\subsection*{Funding}

The author declares that no funds, grants, or other support were
received for the preparation of this manuscript.

\subsection*{Competing Interests}

The author has no conflicts of interest to declare that are relevant
to the content of this article.

\subsection*{Availability of data and materials}

The software package \texttt{TOPCOM} is available online via the professional
web page of the author under
\url{https://www.wm.uni-bayreuth.de/de/team/rambau_joerg/TOPCOM/}.  It
is further installable as a package in several major Linux
distributions.


\end{document}
